\newtheorem{theorem}{Theorem}[section]
\newtheorem{lemma}[theorem]{Lemma}
\newtheorem{proposition}[theorem]{Proposition}
\newtheorem{corollary}[theorem]{Corollary}
\newtheorem*{thm*}{\protect\theoremname}
\theoremstyle{definition}
\newtheorem{definition}[theorem]{Definition}
\newtheorem{remark}[theorem]{Remark}
\newtheorem{example}[theorem]{Example}
\newtheorem*{cor*}{\protect\corollaryname}
\long\def\@savemarbox#1#2{\global\setbox#1\vtop{\hsize\marginparwidth 
  \@parboxrestore\tiny\raggedright #2}}
\renewcommand{\d}{{\rm d}}
\newcommand{\Cc}{{\mathcal C}}
\newcommand{\GG}{\mathsf G}
\newcommand{\PSL}{\mathsf{PSL}}
\newcommand{\mfg}{{\mathfrak g}}
\newcommand{\mfa}{{\mathfrak a}}
\newcommand{\mfk}{{\mathfrak k}}
\newcommand{\mfp}{{\mathfrak p}}
\newcommand{\mfu}{{\mathfrak u}}
\newcommand{\abs}[1]{\left|#1\right|}
\newcommand{\Oc}{\mathcal O}
\newcommand{\Pc}{\mathcal P}
\newcommand{\Zc}{\mathcal Z}
\DeclareMathOperator{\id}{\mathrm{id}}
\DeclareMathOperator{\Ab}{\mathbb{A}}
\DeclareMathOperator{\Cb}{\mathbb{C}}
\DeclareMathOperator{\Hb}{\mathbb{H}}
\DeclareMathOperator{\Nb}{\mathbb{N}}
\DeclareMathOperator{\Pb}{\mathbb{P}}
\DeclareMathOperator{\Rb}{\mathbb{R}}
\DeclareMathOperator{\Zb}{\mathbb{Z}}
\DeclareMathOperator{\Usf}{\mathsf{U}}
\DeclareMathOperator{\Psf}{\mathsf{P}}
\DeclareMathOperator{\Ksf}{\mathsf{K}}
\DeclareMathOperator{\Fc}{\mathcal{F}}
\DeclareMathOperator{\SL}{\mathsf{SL}}
\newcommand{\norm}[1]{\left\|#1\right\|}
\newcommand{\ip}[1]{\left\langle #1\right\rangle}
\providecommand{\corollaryname}{Corollary}
\providecommand{\theoremname}{Theorem}
\begin{document}

\title{Patterson--Sullivan measures for relatively Anosov groups}
\author[Canary]{Richard Canary}
\address{University of Michigan}
\author[Zhang]{Tengren Zhang}
\address{National University of Singapore}
\author[Zimmer]{Andrew Zimmer}
\address{University of Wisconsin-Madison}
\thanks{Canary was partially supported by grant  DMS-1906441 from the National Science Foundation.
Zhang was partially supported by the NUS-MOE grant A-8000458-00-00. Zimmer was partially supported by a Sloan research fellowship and grants DMS-2105580 and DMS-2104381 from the
National Science Foundation.}

\begin{abstract} 
We establish existence, uniqueness and ergodicity results for Patterson--Sullivan measures for relatively Anosov groups. 
As applications we obtain an entropy gap theorem and a strict concavity result for entropies associated to linear functionals. 
\end{abstract} 

\maketitle

\tableofcontents

\section{Introduction}

Anosov subgroups and relatively Anosov subgroups of semisimple Lie groups are respectively natural generalizations of convex cocompact subgroups and geometrically finite subgroups of rank one semisimple Lie groups to the higher rank setting. Patterson--Sullivan measures for Anosov subgroups have been extensively studied, see \cite{dey-kapovich, sambarino-dichotomy, BLLO, lee-oh-invariant}. 
They have also been studied for relatively Anosov subgroups of the projective general linear group which preserve a properly convex domain, see~\cite{blayac, BZ, Bray, zhu-ergodicity,BrayTiozzo}.  More generally, Patterson--Sullivan measures can be constructed for transverse subgroups, of which Anosov subgroups, relatively Anosov subgroups, and discrete subgroups of rank one semisimple Lie groups are examples, see \cite{CZZ3,KOW1, KOW2}. For a more detailed historical discussion of Patterson--Sullivan measures for discrete subgroups of semisimple Lie groups see~\cite[Sec.\ 1.1]{CZZ3}. 

In this paper we study Patterson--Sullivan measures for relatively Anosov subgroups of semisimple Lie groups. The key new result here is that the Poincar\'e series associated to such a Patterson--Sullivan measure always diverges at its critical exponent if this critical exponent is finite. As a consequence, we establish uniqueness and ergodicity results for such Patterson--Sullivan measures. We then derive an entropy gap theorem and a strict concavity result for the entropy.  

\bigskip

Delaying precise definitions until Sections~\ref{sec: ss Lie group background},~\ref{sec:rel hyp groups background} and~\ref{sec: discrete subgroup background}, we informally introduce the notation
necessary to state our main results. 
For the entire paper, $\GG$ will be a connected semisimple Lie group without compact factors and with finite center. We fix a Cartan decomposition $\mathfrak{g} = \mathfrak{k} + \mathfrak{p}$ 
of the Lie algebra of $\GG$, a Cartan subspace $\mfa \subset \mfp$, and a Weyl chamber $\mathfrak{a}^+\subset\mathfrak a$. Let $\Delta \subset \mfa^*$ be the corresponding system of simple restricted roots, and let $\kappa:\GG\to\mfa^+$ denote the associated Cartan projection.
Given a subset 
$\theta \subset \Delta$, we let $\Psf_\theta \subset \GG$ denote the associated parabolic subgroup and let $\Fc_\theta=\GG / \Psf_\theta$ denote the associated flag manifold. We will always assume that $\theta$ is symmetric.

Suppose $\Gamma \subset \GG$ is a non-elementary discrete subgroup which, as an abstract group, is relatively hyperbolic with respect to 
a finite collection $\Pc$   of subgroups in $\Gamma$. Let $\partial(\Gamma,\Pc)$ denote its associated Bowditch boundary.
Then  $\Gamma$ is \emph{$\Psf_\theta$-Anosov relative to $\Pc$} if $\alpha\circ\kappa$ is proper on $\Gamma$ for all $\alpha\in\theta$, its limit set $\Lambda_\theta(\Gamma)$ in 
$\Fc_\theta$ consists of mutually transverse flags and there exists a continuous $\Gamma$-equivariant map
$$
\xi : \partial(\Gamma, \Pc) \rightarrow \Fc_\theta
$$
which is a homeomorphism onto $\Lambda_\theta(\Gamma)$. 

The action of $\GG$ on $\Fc_\theta$ preserves a vector valued cocycle $B_\theta : \GG \times \Fc_\theta \rightarrow \mfa_\theta$, called the \emph{partial Iwasawa cocycle}, whose image lies in a subspace $\mfa_\theta \subset \mfa$ associated to $\theta$. If $\phi \in \mfa_\theta^*$, then the composition $\phi \circ B_\theta$ is a real valued cocycle, which allows us to define Patterson--Sullivan measures associated to such $\phi$. More precisely, if $\phi \in \mfa_\theta^*$ and $\Gamma \subset \GG$ is $\Psf_\theta$-Anosov relative to $\Pc$, then a \emph{$\phi$-Patterson--Sullivan measure for $\Gamma$ of dimension $\beta$} is a Borel probability measure $\mu$ on $\Fc_\theta$ where
\begin{enumerate}
\item ${\rm supp}(\mu) \subset \Lambda_\theta(\Gamma)$, 
\item for every $\gamma \in \Gamma$ the measures $\gamma_* \mu$, $\mu$ are absolutely continuous and 
$$
\frac{d\gamma_* \mu}{d \mu}(F) = e^{-\beta \phi (B_\theta(\gamma^{-1}, F))}
$$
for $\mu$-almost every $F \in \Fc_\theta$. 
\end{enumerate} 

In the case when $\GG = \mathsf{SO}_0(d,1)$ is the group of orientation-preserving isometries of real hyperbolic $d$-space $\Hb^d$, there is a single simple restricted root $\Delta =\{\alpha\}$ and 
$\Fc_{\alpha}$ naturally identifies with the geodesic boundary of $\Hb^d_{\Rb}$. Further (up to scaling), $\alpha \circ B_\alpha$ identifies with the usual Busemann cocycle. 
Since a discrete subgroup $\Gamma$ of $\mathsf{SO}_0(d,1)$ is relatively $\Psf_\alpha$-Anosov if and only if it is geometrically finite,  the above definition encompasses the classical notion of Patterson--Sullivan measures for geometrically finite Kleinian groups. 

As in the classical theory, there is an associated Poincar\'e series and critical exponent. Let $\kappa_\theta : \GG \rightarrow \mfa_\theta$ denote the \emph{partial Cartan projection} 
defined in Section~\ref{sec: ss Lie group background}. Then given $\phi \in \mfa_\theta^*$, $s>0$ and a discrete group $\Gamma \subset \GG$, the \emph{$\phi$-Poincar\'e series} is 
$$
Q_\Gamma^\phi(s) := \sum_{\gamma \in \Gamma} e^{-s\phi(\kappa_\theta(\gamma))}.
$$
The \emph{$\phi$-critical exponent}, denoted $\delta^\phi(\Gamma) \in [0,+\infty]$, is the critical exponent of the above series, that is $Q_\Gamma^\phi(s)$ converges 
when $s > \delta^\phi(\Gamma) $ and diverges when $0<s < \delta^\phi(\Gamma)$. 
Given $\phi \in \mfa_\theta^*$ let $\bar{\phi} \in  \mfa_\theta^*$ be the unique functional where $\bar{\phi}(\kappa_\theta(g)) = \phi(\kappa_\theta(g^{-1}))$ for all $g\in \GG$. 
Notice that $Q^\phi_\Gamma = Q^{\bar{\phi}}_\Gamma$ and so $\delta^\phi(\Gamma)=\delta^{\bar{\phi}}(\Gamma)$. 

The main result of this paper is that  if $\Gamma$ is relatively Anosov and its $\phi$-critical exponent is finite, then its $\phi$-Poincar\'e series diverges at its critical exponent.

\begin{theorem}[Theorem \ref{rel anosov divergence}]
\label{intro: rel anosov divergent}
If $\Gamma \subset \GG$ is a $\Psf_\theta$-Anosov subgroup relative to $\Pc$, $\phi \in \mfa_\theta^*$ and $\delta^\phi(\Gamma) < +\infty$,
then $Q_\Gamma^\phi$ diverges at its critical exponent.
\end{theorem}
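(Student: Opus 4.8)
The plan is to adapt the classical Patterson--Sullivan argument for geometrically finite Kleinian groups, where divergence at the critical exponent is proved by a careful analysis of the contribution of the parabolic (cusp) subgroups. The dichotomy between divergence and convergence is controlled by the growth of the peripheral subgroups relative to the growth of the whole group: if the peripheral subgroups grow strictly slower than $\delta^\phi(\Gamma)$ (in the appropriate sense), the group behaves like a convex cocompact/Anosov group and divergence is relatively soft; the delicate case is when some peripheral subgroup $P \in \Pc$ has $\phi$-critical exponent equal to $\delta^\phi(\Gamma)$.

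First I would recall that each $P \in \Pc$ is itself discrete in $\GG$, virtually nilpotent (since $\Gamma$ is relatively hyperbolic), and, being quasiconvex in $\Gamma$ in the relative sense, one can compare $\kappa_\theta$ on $P$ with the word metric. A key structural input, available from the relatively Anosov hypothesis, is that for each $\alpha\in\theta$ the function $\alpha\circ\kappa$ is proper on $\Gamma$, and more quantitatively that on each peripheral $P$ the Cartan projection $\kappa_\theta(p)$ grows coarsely linearly in the word length $|p|_P$ (this is where the nilpotent structure and the RCA/relatively Anosov estimates enter). This reduces the computation of $\delta^\phi(P)$ to a lattice-point count in a nilpotent group, which in turn is governed by polynomial growth; in particular $\delta^\phi(P)<\infty$ and, crucially, the Poincar\'e series $Q_P^\phi(s)$ is itself \emph{divergent} at $s=\delta^\phi(P)$ (polynomial growth always gives a divergent series at the critical exponent).

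Next I would set up the standard shadow/conical-limit-point machinery. Using the boundary map $\xi:\partial(\Gamma,\Pc)\to\Lambda_\theta(\Gamma)$ and the partial Iwasawa cocycle, one builds from any $\phi$-Patterson--Sullivan measure $\mu$ of dimension $\delta^\phi(\Gamma)$ (which exists by the general transverse-group construction cited, e.g.\ \cite{CZZ3}) a shadow lemma: the $\mu$-measure of the shadow of a ball around $\gamma\cdot o$ is comparable to $e^{-\delta^\phi(\Gamma)\,\phi(\kappa_\theta(\gamma))}$, with multiplicative constants that may degenerate as $\gamma$ penetrates deep into a cusp, but in a controlled way quantified by the distance to the thick part — exactly as in Stratmann--Velani's ``global measure formula''. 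Assuming for contradiction that $Q_\Gamma^\phi$ converges at $s=\delta^\phi(\Gamma)$, a Borel--Cantelli argument with these shadows shows $\mu$ gives full measure to the complement of the conical limit set, hence is supported on the (countably many) bounded parabolic points; but a purely atomic such measure is incompatible with the transformation rule (2) once one knows each $P$ is of divergence type at its own critical exponent and $\delta^\phi(P)\le \delta^\phi(\Gamma)$ — the $P$-orbit sum of Radon--Nikodym cocycles at a parabolic fixed point would have to be finite, contradicting divergence of $Q_P^\phi$ at $\delta^\phi(\Gamma)$. Therefore $Q_\Gamma^\phi$ diverges at $\delta^\phi(\Gamma)$.

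The main obstacle, and where most of the work lies, is the cusp estimate: showing that along each peripheral subgroup the functional $\phi\circ\kappa_\theta$ is coarsely a norm (or at least has polynomial counting behavior) and matching the Radon--Nikodym cocycle $\phi(B_\theta(p^{-1},\xi(\text{parabolic point})))$ with $\phi(\kappa_\theta(p))$ up to bounded additive error. In the convex-projective case this is handled via the Hilbert metric and the parabolic normal forms; in our general semisimple setting one must instead use the relatively Anosov boundary map together with transversality of $\Lambda_\theta(\Gamma)$ to get uniform angle bounds, and exploit that $P$ is virtually nilpotent to control the Iwasawa versus Cartan discrepancy. I expect the argument to be organized as: (i) peripheral growth estimates and divergence of $Q_P^\phi$; (ii) shadow lemma with cusp-dependent constants; (iii) the Borel--Cantelli / atomicity dichotomy; (iv) ruling out the atomic case using (i). Step (i) is the heart of the matter.
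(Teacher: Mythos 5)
Your overall architecture (analyze the peripheral subgroups first, then run a Patterson--Sullivan/Dal'bo--Otal--Peign\'e style atom-exclusion at the bounded parabolic points, using the Hopf--Tsuji--Sullivan dichotomy) is indeed the route the paper takes, but the step you yourself identify as ``the heart of the matter'' is not actually proved, and the justification you give for it is incorrect. First, on a peripheral subgroup $P$ the Cartan projection does \emph{not} grow coarsely linearly in the word length of $P$: for unipotent directions $\phi(\kappa_\theta(p))$ grows like the \emph{logarithm} of the word length (e.g.\ $\log\sigma_1(u^n)\asymp\log n$ for $u$ unipotent), which is exactly why $\delta^\phi(P)$ is a nontrivial positive number rather than $0$. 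Second, the slogan ``polynomial growth always gives a divergent series at the critical exponent'' is false as a general principle (a counting function with a $\log$-correction, such as weights $\log n+2\log\log n$ on $\Zb$, gives a convergent series at its critical exponent), so it cannot substitute for a proof. In the paper, divergence of $Q_P^\phi$ at $\delta^\phi(P)$ is the main new technical content: one needs the structure theorem that $P$ is a cocompact lattice in a group $\mathsf{L}\ltimes\mathsf{U}$ with $\mathsf{L}$ compact and $\mathsf{U}$ unipotent, the comparison $e^{\omega_\alpha(\kappa(\exp Y))}\asymp R_\alpha(Y)^{1/M_\alpha}$ with $R_\alpha$ rational on the Lie algebra of $\mathsf{U}$, the reduction of $Q_P^\phi$ to an integral $\int_{\mathfrak u}\big(R_1^{\ell_1}\cdots R_m^{\ell_m}\big)^{-s}d\lambda$, and then Hironaka's resolution of singularities (generalizing Benoist--Oh's $m=1$ case) to show that such an integral diverges at its critical exponent. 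None of this is routine, and your sketch supplies no replacement for it.

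There is also a logical slip in your endgame. What the argument actually requires is the \emph{strict} gap $\delta^\phi(P)<\delta^\phi(\Gamma)$, obtained by combining peripheral divergence with the gap criterion of Theorem~\ref{thm: entropy gap CZZ3}; one then uses \emph{convergence} of $Q_P^\phi$ at an exponent strictly below $\delta^\phi(\Gamma)$ to show that a Patterson-type measure of dimension $\delta^\phi(\Gamma)$ has no atoms at parabolic points, and concludes via Theorem~\ref{thm:dichotomy}. You instead invoke ``divergence of $Q_P^\phi$ at $\delta^\phi(\Gamma)$,'' which neither follows from your step (i) (you only claimed divergence at $\delta^\phi(P)$, and a priori $\delta^\phi(P)$ could be smaller) nor yields the contradiction you describe: the $P$-orbit of its own fixed point $\xi(x_P)$ is a single point, and the Iwasawa cocycle $\phi(B_\theta(p^{-1},\xi(x_P)))$ for $p\in P$ is essentially bounded, so there is no divergent orbit sum at the parabolic point to contradict. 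The comparison that matters is between $\phi\circ B_\theta$ and $\phi\circ\kappa_\theta$ along coset representatives transverse to the cusp (in the paper this is packaged as the multiplicative estimates of Proposition~\ref{prop:multiplicative estimate v1} applied to the decomposition $\Gamma=P\cdot\Gamma'$), together with the tail bound coming from $Q_P^\phi(\delta^\phi(P)+\epsilon)<+\infty$. So as written the proposal has two genuine gaps: the peripheral divergence itself, and the mechanism by which peripheral information excludes atoms.
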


\begin{remark} As mentioned above, in this paper we assume that relatively Anosov groups are non-elementary. Theorem~\ref{rel anosov divergence} holds for elementary transverse groups (see Remark~\ref{rmk:main theorem in elementary case}), however the non-elementary assumption is necessary for many of the applications of the theorem. \end{remark} 

Later, we will discuss some important consequences of Theorem \ref{intro: rel anosov divergent}, and also provide an outline of its proof (in Sections \ref{consequences} and \ref{outline} respectively). Notice that Theorem \ref{intro: rel anosov divergent} fails in the setting of transverse groups, since the Poincar\'e series of any finitely generated, 
geometrically infinite discrete subgroup of $\mathsf{SO}(3,1)$ whose limit
set is not all of $\partial\mathbb H^3$ converges at its critical exponent (see \cite[Cor.\ 4.2]{canary-laplacian} and \cite[Thm.\ 2.17,\, Cor.\ 2.18]{sullivan-positivity}).

We also provide a characterization of the functionals with finite critical exponent, surprisingly the only requirement is that $\phi(\kappa_\theta(\cdot))$ converges to infinity along escaping sequences in the group. We also show that if $\Psf_\theta$ contains no simple factors of $\GG$, then $\phi(\kappa_\theta(\cdot))$ has linear lower and upper bounds in terms of the distance 
$\d_M$ on the Riemannian symmetric space $M$ associated to $\GG$.

\begin{theorem}[see Section~\ref{sec:characterizing finite entropy functionals}]\label{thm:characterizing finite entropy functionals} Suppose $\Gamma\subset \GG$ is a $\Psf_\theta$-Anosov subgroup relative to $\Pc$ and  $\phi\in \mathfrak{a}_\theta^*$. The following are equivalent: 
\begin{enumerate}
\item $\lim_{n \rightarrow \infty} \phi(\kappa_\theta(\gamma_n))=+\infty$ for every sequence of distinct elements $\{\gamma_n\}$ in $\Gamma$.
\item $\delta^\phi(\Gamma) < +\infty$. 
\end{enumerate}
Moreover, if $\Psf_\theta$ contains no simple factors of $\GG$, then the above conditions are equivalent to: 
\begin{enumerate}
\setcounter{enumi}{2}
\item If $x_0\in M$ , there exist constants  $c\ge 1$ and $C > 0$ such that 
$$
  \frac{1}{c}\d_M(\gamma(x_0),x_0)- C \le\phi(\kappa_\theta(\gamma)) \le c\d_M(\gamma(x_0),x_0)+C
$$
for all $\gamma \in \Gamma$. 
\end{enumerate} 
\end{theorem}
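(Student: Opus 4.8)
The plan is to prove the implications $(3) \Rightarrow (1) \Rightarrow (2) \Rightarrow (3)$, where the last implication is only claimed under the extra hypothesis that $\Psf_\theta$ contains no simple factors of $\GG$.

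The implication $(3) \Rightarrow (1)$ is immediate: a sequence of distinct elements $\{\gamma_n\}$ in a discrete group must escape to infinity in $\GG$, so $\d_M(\gamma_n(x_0), x_0) \to +\infty$, and then the lower bound in $(3)$ forces $\phi(\kappa_\theta(\gamma_n)) \to +\infty$.

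For $(1) \Rightarrow (2)$, the plan is to exploit that $\Gamma$ is a finitely generated group (being relatively hyperbolic, hence finitely generated), so it has at most exponential growth: there are constants $a, b > 0$ with $\#\{\gamma \in \Gamma : |\gamma|_S \le r\} \le a e^{br}$ for any fixed finite generating set $S$ and the associated word metric $|\cdot|_S$. On the other hand, condition $(1)$ says $\phi(\kappa_\theta(\cdot))$ is a proper function on $\Gamma$; combined with the fact that $\gamma \mapsto \phi(\kappa_\theta(\gamma))$ is coarsely Lipschitz with respect to the word metric (since $\kappa_\theta$ is coarsely Lipschitz with respect to $\d_M$ and the orbit map $\Gamma \to M$ is Lipschitz with respect to $|\cdot|_S$), properness upgrades to a \emph{linear} lower bound $\phi(\kappa_\theta(\gamma)) \ge \lambda |\gamma|_S - C$ for some $\lambda > 0$. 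This is a standard argument: a proper, coarsely Lipschitz function on a finitely generated group whose values grow must grow at least linearly in the word metric, because the set where $\phi(\kappa_\theta(\cdot)) \le r$ has bounded diameter in the word metric (by properness together with the fact that balls are finite), and coarse Lipschitzness controls how fast the diameter can grow. Given such a linear lower bound, $Q_\Gamma^\phi(s) = \sum_\gamma e^{-s\phi(\kappa_\theta(\gamma))} \le e^{sC} \sum_\gamma e^{-s\lambda |\gamma|_S}$, which converges for $s > b/\lambda$ by the exponential growth bound, so $\delta^\phi(\Gamma) \le b/\lambda < +\infty$.

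The implication $(2) \Rightarrow (3)$ is the one requiring the extra hypothesis and is the main obstacle. One direction of $(3)$, the upper bound $\phi(\kappa_\theta(\gamma)) \le c\,\d_M(\gamma(x_0),x_0) + C$, should hold unconditionally: $\kappa_\theta$ is the composition of the Cartan projection $\kappa$ with the linear projection $\mfa \to \mfa_\theta$, and $\|\kappa(\gamma)\|$ is comparable to $\d_M(\gamma(x_0),x_0)$, so $\phi \circ \kappa_\theta$ is bounded above by a multiple of $\d_M(\gamma(x_0),x_0)$ for \emph{any} $\phi$. The content is the lower bound. Here the plan is: since $\Psf_\theta$ contains no simple factors, the functional $\phi$, when pulled back appropriately, is not annihilated on any simple factor's Cartan subspace, so $\phi(\kappa_\theta(\cdot))$ controls the full Cartan projection up to the kernel directions — but one must rule out that $\gamma(x_0)$ drifts far in directions invisible to $\phi \circ \kappa_\theta$. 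This is where relative Anosovness enters: I would use that $\alpha \circ \kappa$ is proper on $\Gamma$ for every $\alpha \in \theta$, together with the structure of the limit set and the boundary map $\xi$, to show that along $\Gamma$ the Cartan projection $\kappa(\gamma)$ stays within a bounded distance of a cone on which all functionals in $\theta$ (and hence, using the no-simple-factors hypothesis, a multiple of $\d_M$-growth) are comparable to $\phi \circ \kappa_\theta$. Concretely, I expect to invoke a known ``coarse cone'' or ``Morse-type'' property of relatively Anosov groups — that $\kappa(\gamma)$ lies within bounded distance of the sum of a linear function of $\d_M(\gamma(x_0),x_0)$ in the $\theta$-directions — and then combine it with $(1)$ (already equivalent to $(2)$) applied to each $\alpha \in \theta$ to promote properness of $\alpha \circ \kappa$ to linear growth, finally assembling these linear lower bounds into the desired bound on $\phi \circ \kappa_\theta$. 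The delicate point will be handling the directions in $\mfa_\theta$ not seen by the roots in $\theta$ but seen by $\phi$: the no-simple-factors condition is exactly what guarantees $\phi$ restricted to the relevant subspace is dominated by $\sum_{\alpha \in \theta} \alpha$ up to scaling, closing the argument.
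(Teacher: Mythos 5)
There is a genuine gap, and it sits exactly at the point where the paper has to work hardest. Your implication $(1)\Rightarrow(2)$ rests on the claim that a proper, coarsely Lipschitz function on a finitely generated group must grow at least linearly in the word metric. That is false: properness only says that the sublevel sets $\{\phi(\kappa_\theta(\cdot))\le r\}$ are finite; it gives no control on their word-metric diameter as a function of $r$. (For instance $f(\gamma)=\log(1+|\gamma|_S)$ on a free group is proper and Lipschitz, yet $\sum_\gamma e^{-sf(\gamma)}$ diverges for every $s$.) Worse, in the setting of the theorem the linear bound $\phi(\kappa_\theta(\gamma))\ge \lambda|\gamma|_S-C$ is genuinely false whenever a peripheral subgroup contains a unipotent element $u$: singular values of unipotent iterates grow polynomially, so $\phi(\kappa_\theta(u^n))\asymp \log n$, while $|u^n|_S\asymp n$ since peripheral subgroups are undistorted. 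This already happens for geometrically finite Kleinian groups with cusps, where conditions (1)--(3) all hold. So the step "properness upgrades to a linear lower bound in the word metric" cannot be repaired; the finiteness of $\delta^\phi$ along the peripheral subgroups is precisely where the content lies, and in the paper it is obtained from the algebraic structure of the peripherals (cocompact lattices in compact-by-unipotent groups, Theorem~\ref{thm:properties of relatively Anosov representations}) together with the rational-function estimates of Proposition~\ref{prop:properties of unipotent subgroups}(5) and Theorem~\ref{thm:critical exponent of products of rational functions}(2).

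Your sketch of $(2)\Rightarrow(3)$ inherits the same problem: "promote properness of $\alpha\circ\kappa$ to linear growth" is again the invalid leap, and the "coarse cone / Morse-type property" you propose to invoke is not an off-the-shelf fact but essentially the theorem being proved. In the paper the lower bound in (3) is established by first proving the linear bound against the Groves--Manning metric, $\phi(\kappa_\theta(\gamma))\ge c\,\d_X(\gamma,\id)-C$ (condition (5) of Theorem~\ref{thm:characterizing finite entropy functionals in paper}): one cuts a Groves--Manning geodesic from $\id$ to $\gamma$ into segments whose endpoints lie in $\Gamma$, controls the peripheral segments by Lemma~\ref{lem: good growth along peripherals}, and assembles the pieces with the multiplicative estimate of Proposition~\ref{prop:multiplicative estimate v2}. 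The passage from $\d_X$ to $\d_M$ is then Theorem~\ref{thm:properties of relatively Anosov representations}(1), and this is where the hypothesis that $\Psf_\theta$ contains no simple factors of $\GG$ actually enters (cf.\ Example~\ref{ex:stupid example}); it is not a statement about $\phi$ being dominated by $\sum_{\alpha\in\theta}\alpha$, as your sketch suggests. The parts of your proposal that are correct --- $(3)\Rightarrow(1)$, the unconditional upper bound in (3), and (implicitly) the trivial $(2)\Rightarrow(1)$ --- are the easy ones; the two substantive implications are not established.
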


We will observe later, see Section \ref{sec: example}, that in Theorem \ref{thm:characterizing finite entropy functionals} , Condition (3) is not equivalent to Conditions (1) and (2) without the assumption that $\Psf_\theta$ contains no simple factors of $\GG$. Also, in the spirit of Sambarino's analogous result for Anosov groups \cite{sambarino-dichotomy}, we observe that Theorem~\ref{thm:characterizing finite entropy functionals} implies that
$\phi\in\mathfrak{a}_\theta^*$ has finite critical exponent if and only if  $\phi$ is  positive on the $\theta$-Benoist limit cone, see Section~\ref{sec:characterizing finite entropy functionals}.

\subsection{Consequences of Theorem \ref{intro: rel anosov divergent}}\label{consequences}  We now recall some results from our previous work~\cite{CZZ3} from which we can derive several consequences of Theorem \ref{intro: rel anosov divergent}.

A discrete subgroup  $\Gamma\subset\GG$ is \emph{$\Psf_\theta$-transverse} if $\alpha\circ\kappa$ is proper on $\Gamma$ for all $\alpha\in\theta$ and  its limit set $\Lambda_\theta(\Gamma)$ in 
$\Psf_\theta$ consists of mutually transverse flags. A $\Psf_\theta$-transverse subgroup acts on this limit set as a convergence group, so one can define the set of conical limit points $\Lambda^{\rm con}_\theta(\Gamma) \subset \Lambda_\theta(\Gamma)$ using the standard convergence group action definition. In this setting, we established in \cite{CZZ3} the following analogue of the Hopf-Tsuji-Sullivan dichotomy.

\begin{theorem}[{\cite[Thm.\ 1.4]{CZZ3}}]\label{thm:dichotomy}  Suppose $\Gamma\subset\GG$ is a non-elementary $\Psf_\theta$-transverse subgroup, $\phi\in \mathfrak{a}^*_\theta$ and $\delta:=\delta^\phi(\Gamma) < +\infty$. 
\begin{itemize}
\item If 
$Q_\Gamma^\phi(\delta)=+\infty$, 
then there exists a unique $\phi$-Patterson--Sullivan measure $\mu_\phi$  for $\Gamma$ of dimension $\delta$ and there exists a unique $\bar{\phi}$-Patterson--Sullivan measure $\mu_{\bar{\phi}}$  for $\Gamma$ of dimension $\delta$. Moreover:
\begin{enumerate}
\item $\mu_\phi( \Lambda_\theta^{\rm con}(\Gamma)) = 1=\mu_{\bar{\phi}}( \Lambda_\theta^{\rm con}(\Gamma))$. 
\item The action of $\Gamma$ on $(\Lambda_\theta(\Gamma), \mu_\phi)$ and  $(\Lambda_\theta(\Gamma), \mu_{\bar{\phi}})$  is ergodic. 
\item The action of $\Gamma$ on $(\Lambda_\theta(\Gamma)^2, \mu_{\bar{\phi}} \otimes \mu_{\phi})$ is ergodic. 
\end{enumerate} 
\item If $Q_\Gamma^\phi(\delta) < +\infty$, then $\mu( \Lambda_\theta^{\rm con}(\Gamma)) = 0$ for any $\phi$-Patterson--Sullivan measure $\mu$ for $\Gamma$. 
\end{itemize}
\end{theorem}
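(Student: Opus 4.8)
The plan is to run the Patterson--Sullivan/Hopf--Tsuji--Sullivan machinery in the transverse setting, with $\phi$ applied to the partial Iwasawa cocycle $B_\theta$ and to the partial Cartan projection $\kappa_\theta$ playing the roles that the Busemann cocycle and the displacement function play in the rank-one theory (note that, being conditional on its divergence/convergence hypothesis, the statement is logically independent of Theorem~\ref{intro: rel anosov divergent}, which supplies that hypothesis in the relatively Anosov case). The first step is a \emph{shadow lemma}: after fixing a basepoint, one associates to each $\gamma\in\Gamma$ and $r>0$ a shadow $\Oc_r(\gamma)\subset\Fc_\theta$, defined via the partial Iwasawa cocycle as in higher-rank Patterson--Sullivan theory, and proves that there are $r_0>0$ and $C\geq1$ such that for every $\phi$-Patterson--Sullivan measure $\mu$ of dimension $\delta$, every $r\geq r_0$ and every $\gamma\in\Gamma$,
$$
\tfrac1C\,e^{-\delta\,\phi(\kappa_\theta(\gamma))}\ \leq\ \mu\big(\Oc_r(\gamma)\big)\ \leq\ C\,e^{-\delta\,\phi(\kappa_\theta(\gamma))},
$$
and symmetrically with $\bar{\phi}$. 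This follows by integrating the conformality relation (property (2) in the definition) over $\Oc_r(\gamma)$, using that $\phi(B_\theta(\gamma^{-1},F))=\phi(\kappa_\theta(\gamma))+O(1)$ uniformly for $F$ in the shadow; the lower bound additionally uses that a fixed shadow carries positive $\mu$-mass, which is where non-elementarity enters, while the $O(1)$ estimate rests on transversality of $\Lambda_\theta(\Gamma)$ and the convergence dynamics of $\Gamma$ on it.

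Granting the shadow lemma, the convergent case is immediate: if $Q^\phi_\Gamma(\delta)<+\infty$ and $\mu$ is any $\phi$-Patterson--Sullivan measure, then by the standard convergence-group description a flag $F\in\Lambda_\theta(\Gamma)$ is conical exactly when it lies in $\Oc_r(\gamma)$ for infinitely many $\gamma\in\Gamma$, for some fixed $r\geq r_0$, and $\sum_{\gamma\in\Gamma}\mu(\Oc_r(\gamma))\leq C\,Q^\phi_\Gamma(\delta)<+\infty$, so Borel--Cantelli gives $\mu\big(\limsup_\gamma\Oc_r(\gamma)\big)=0$, i.e.\ $\mu(\Lambda^{\mathrm{con}}_\theta(\Gamma))=0$. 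In the divergent case we first produce $\mu_\phi$ and $\mu_{\bar{\phi}}$ by Patterson's construction: for $s>\delta$ let $\mu_s$ be the probability measure on $\Fc_\theta$ obtained by normalizing the finite atomic measure that puts mass $e^{-s\phi(\kappa_\theta(\gamma))}$ at the attracting flag $\U_\theta(\gamma)$ of a Cartan decomposition of $\gamma$, for each $\gamma\in\Gamma$; any weak-$*$ subsequential limit as $s\downarrow\delta$ is supported on $\Lambda_\theta(\Gamma)$ and is $\phi$-conformal of dimension $\delta$, because $Q^\phi_\Gamma$ diverges at $\delta$ (so that Patterson's slowly-growing-weight modification is unnecessary).

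The heart of the divergent case is an abstract geodesic flow. On $Z:=\Lambda_\theta(\Gamma)^{(2)}\times\Real$ (ordered transverse pairs of flags together with a real time parameter) there are commuting $\Gamma$- and $\Real$-actions, and the measure $d\mu_{\bar{\phi}}\otimes d\mu_\phi\otimes dt$, weighted by the exponential of a $\phi$-Gromov product of the two flags, is invariant under both and descends to an $\Real$-invariant measure $\ms m$ on $\Gamma\backslash Z$. Upgrading the shadow lemma to a global measure formula for flow boxes $B$, one gets $\sum_{\gamma\in\Gamma}\ms m\big(B\cap\gamma B\big)\asymp Q^\phi_\Gamma(\delta)=+\infty$, whence $\ms m$ is conservative; conservativity (Halmos' recurrence theorem) then forces $\ms m$-almost every orbit to return infinitely often to a fixed compact subset of $\Gamma\backslash Z$, which translates into $\mu_\phi(\Lambda^{\mathrm{con}}_\theta(\Gamma))=1=\mu_{\bar{\phi}}(\Lambda^{\mathrm{con}}_\theta(\Gamma))$, proving (1). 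Conservativity together with the Hopf argument — the ratio ergodic theorem plus the local product structure of $\ms m$ — yields ergodicity of the flow, hence ergodicity of the diagonal $\Gamma$-action on $(\Lambda_\theta(\Gamma)^{(2)},\mu_{\bar{\phi}}\otimes\mu_\phi)$, which is (3); projecting to a factor gives ergodicity of $\Gamma$ on $(\Lambda_\theta(\Gamma),\mu_\phi)$ and on $(\Lambda_\theta(\Gamma),\mu_{\bar{\phi}})$, which is (2). Since the conservativity input depends only on $\Gamma$ and $\phi$, these arguments apply to \emph{any} $\phi$-Patterson--Sullivan measure of dimension $\delta$; uniqueness then follows, because if $\mu,\mu'$ are two of them, $\tfrac12(\mu+\mu')$ is again one, so $\Gamma$ acts ergodically on $(\Lambda_\theta(\Gamma),\mu+\mu')$, and as $d\mu/d(\mu+\mu')$ is $\Gamma$-invariant it is almost everywhere constant, forcing $\mu=\mu'$; the same argument with $\bar{\phi}$ gives uniqueness of $\mu_{\bar{\phi}}$.

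The step I expect to be the main obstacle is the flow-space package itself in this generality: constructing the weighted Bowen--Margulis--Sullivan measure $\ms m$, proving the global measure formula that turns the shadow estimate into control of $\ms m$ on dynamically defined boxes, and thereby establishing the equivalence ``$Q^\phi_\Gamma$ diverges at $\delta$'' $\iff$ ``$\ms m$ is conservative'' — all in a setting where $\ms m$ may be infinite and where there is no ambient negatively curved space, only the convergence action on $\Lambda_\theta(\Gamma)$. Once this is in hand, recurrence, the Hopf argument, and the uniqueness deduction are essentially classical.
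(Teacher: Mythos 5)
This statement is not proved in the present paper at all: it is quoted verbatim from the authors' earlier work \cite{CZZ3}, so the ``paper's proof'' here is just a citation. Your outline — shadow lemma for $\phi$-Patterson--Sullivan measures, Borel--Cantelli in the convergent case, Patterson's construction, a Gromov-product-weighted Bowen--Margulis--Sullivan measure on $\Lambda_\theta(\Gamma)^{(2)}\times\Rb$ with conservativity equivalent to divergence, and the Hopf argument yielding ergodicity and hence uniqueness — is essentially the same Hopf--Tsuji--Sullivan strategy carried out in \cite{CZZ3}, and you correctly identify the flow-space package as the main technical burden.
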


If $\Gamma$ is a $\Psf_\theta$-Anosov subgroup relative to $\Pc$, then $\Gamma$ is $\Psf_\theta$-transverse. Thus, we obtain, as a consequence of Theorem \ref{intro: rel anosov divergent} and Theorem \ref{thm:dichotomy}, ergodicity and uniqueness results for their Patterson--Sullivan measures. These generalize results earlier obtained for Anosov groups (see \cite{dey-kapovich, sambarino-dichotomy, BLLO, lee-oh-invariant}).

\begin{corollary}
\label{rel anosov hst}
Suppose $\Gamma \subset \GG$ is a $\Psf_\theta$-Anosov subgroup relative to $\Pc$, $\phi \in \mfa_\theta^*$ and $\delta:=\delta^\phi(\Gamma) < +\infty$. Then:
\begin{enumerate}
\item There is a unique $\phi$-Patterson--Sullivan measure $\mu_\phi$ for $\Gamma$ of dimension $\delta$ and a unique $\bar{\phi}$-Patterson--Sullivan measure $\mu_{\bar{\phi}}$ for $\Gamma$ of dimension $\delta$.
\item $\Gamma$ acts ergodically on $(\Lambda_\theta(\Gamma), \mu_\phi)$ and $(\Lambda_\theta(\Gamma), \mu_{\bar{\phi}})$.
\item $\Gamma$ acts ergodically on $(\Lambda_\theta(\Gamma) \times \Lambda_\theta(\Gamma), \mu_\phi \otimes \mu_{\bar{\phi}})$. 
\end{enumerate}
\end{corollary}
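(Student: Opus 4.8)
The plan is to obtain the corollary as an immediate consequence of Theorem~\ref{intro: rel anosov divergent} together with the Hopf--Tsuji--Sullivan dichotomy recorded in Theorem~\ref{thm:dichotomy}. First I would verify that the hypotheses of Theorem~\ref{thm:dichotomy} hold. By the definition of being $\Psf_\theta$-Anosov relative to $\Pc$, the function $\alpha\circ\kappa$ is proper on $\Gamma$ for every $\alpha\in\theta$ and the limit set $\Lambda_\theta(\Gamma)$ consists of mutually transverse flags, so $\Gamma$ is $\Psf_\theta$-transverse. Moreover relatively Anosov subgroups are non-elementary by the standing convention of this paper, and $\delta:=\delta^\phi(\Gamma)<+\infty$ is assumed. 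Hence Theorem~\ref{thm:dichotomy} applies to $\Gamma$ and $\phi$.

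Next, Theorem~\ref{intro: rel anosov divergent} asserts exactly that $Q_\Gamma^\phi(\delta)=+\infty$, so we are in the divergent branch of the dichotomy. Applying the first bullet of Theorem~\ref{thm:dichotomy} then directly yields the existence and uniqueness of the $\phi$-Patterson--Sullivan measure $\mu_\phi$ and of the $\bar{\phi}$-Patterson--Sullivan measure $\mu_{\bar{\phi}}$ of dimension $\delta$, giving part~(1); the ergodicity of the $\Gamma$-action on $(\Lambda_\theta(\Gamma),\mu_\phi)$ and on $(\Lambda_\theta(\Gamma),\mu_{\bar{\phi}})$, giving part~(2); and the ergodicity of the diagonal $\Gamma$-action on $(\Lambda_\theta(\Gamma)^2,\mu_{\bar{\phi}}\otimes\mu_\phi)$.

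The last of these is stated in Theorem~\ref{thm:dichotomy} for the product $\mu_{\bar{\phi}}\otimes\mu_\phi$, whereas part~(3) of the corollary is phrased for $\mu_\phi\otimes\mu_{\bar{\phi}}$; I would dispose of this by noting that the coordinate flip $(F,F')\mapsto(F',F)$ is a Borel automorphism of $\Lambda_\theta(\Gamma)^2$ that commutes with the diagonal $\Gamma$-action and interchanges the two product measures, so ergodicity of one system is equivalent to ergodicity of the other. No step here presents a genuine obstacle: the entire content of the corollary is carried by Theorem~\ref{intro: rel anosov divergent}, whose proof occupies the bulk of the paper, while the passage to the corollary is purely formal.
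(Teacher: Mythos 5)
Your proposal is correct and follows exactly the route the paper takes: the corollary is deduced immediately from Theorem~\ref{intro: rel anosov divergent} (divergence at the critical exponent) combined with the dichotomy of Theorem~\ref{thm:dichotomy}, after observing that a relatively Anosov group is a non-elementary $\Psf_\theta$-transverse group. Your extra remark about the coordinate flip identifying $(\Lambda_\theta(\Gamma)^2,\mu_{\bar{\phi}}\otimes\mu_\phi)$ with $(\Lambda_\theta(\Gamma)^2,\mu_\phi\otimes\mu_{\bar{\phi}})$ is a valid and harmless detail the paper leaves implicit.
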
 

In \cite{CZZ3}, we also established a criterion for when the critical exponent of a subgroup  of a transverse group is strictly less than that of the entire group. 

\begin{theorem}[{\cite[Thm.\ 4.1]{CZZ3}}]\label{thm: entropy gap CZZ3}
Suppose $\Gamma\subset\GG$ is a non-elementary $\Psf_\theta$-transverse subgroup, $\phi\in\mathfrak{a}_\theta^*$ and $\delta^\phi(\Gamma) < +\infty$. 
If $\Gamma_0$ is a subgroup of $\Gamma$ such that $Q_{\Gamma_0}^\phi$ diverges at its critical exponent and $\Lambda_\theta(\Gamma_0)$ is a proper subset of $\Lambda_\theta(\Gamma)$, then $\delta^\phi(\Gamma)>\delta^\phi(\Gamma_0).$
\end{theorem}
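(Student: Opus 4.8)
The inequality $\delta^\phi(\Gamma)\geq\delta^\phi(\Gamma_0)$ is automatic from $\Gamma_0\subset\Gamma$, since $Q_{\Gamma_0}^\phi(s)\leq Q_\Gamma^\phi(s)$ term by term; the content is strictness. I would prove it by the ``ping--pong / free product'' strategy familiar from the rank-one theory: locate a loxodromic element $g\in\Gamma$ (i.e.\ a $\Psf_\theta$-proximal element, acting with north--south dynamics on $\Lambda_\theta(\Gamma)$) whose attracting and repelling flags lie far from $\Lambda_\theta(\Gamma_0)$, exhibit a free product $\langle g^N\rangle\ast\Gamma_0$ inside $\Gamma$ for $N$ large, and bound below the contribution to $Q_\Gamma^\phi$ of the elements obtained by interleaving arbitrary elements of $\Gamma_0$ with powers of $g^N$. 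The hypothesis $\Lambda_\theta(\Gamma_0)\subsetneq\Lambda_\theta(\Gamma)$ is what makes room to place $g$, and divergence of $Q_{\Gamma_0}^\phi$ at $\delta_0:=\delta^\phi(\Gamma_0)$ is what forces the resulting geometric series to have ratio $\geq1$ for $s$ just above $\delta_0$, giving $\delta^\phi(\Gamma)>\delta_0$.

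First I would construct $g$. Since $\Gamma$ is non-elementary, $\Lambda_\theta(\Gamma)$ is perfect, so $\Lambda_\theta(\Gamma)\setminus\Lambda_\theta(\Gamma_0)$ is a nonempty, hence uncountable, relatively open set; since $\Gamma_0$ acts on $\Lambda_\theta(\Gamma)$ as a convergence group, each of its nontrivial elements has at most two fixed flags, so only countably many flags are fixed by some nontrivial element of $\Gamma_0$, and I may fix $\xi_0\in\Lambda_\theta(\Gamma)\setminus\Lambda_\theta(\Gamma_0)$ with trivial $\Gamma_0$-stabiliser. Using density of attracting/repelling fixed-flag pairs of loxodromic elements of $\Gamma$ among pairs of distinct flags of $\Lambda_\theta(\Gamma)$, I pick a loxodromic $g\in\Gamma$ with $g^+,g^-$ arbitrarily close to $\xi_0$. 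Because $\Gamma_0$ acts properly discontinuously on $\Lambda_\theta(\Gamma)\setminus\Lambda_\theta(\Gamma_0)$, a short extraction argument (using $\xi_0\notin\Lambda_\theta(\Gamma_0)$ and its trivial stabiliser) shows that for $g^\pm$ close enough to $\xi_0$ the set $\{g^+,g^-\}$ has a $\Gamma_0$-wandering neighbourhood; combining this with the north--south dynamics of $g^N$ produces, for $N$ large, a ping-pong table witnessing $\langle g^N,\Gamma_0\rangle=\langle g^N\rangle\ast\Gamma_0$ inside $\Gamma$. In particular the elements $w=g^N\gamma_1 g^N\gamma_2\cdots g^N\gamma_n$ with $n\geq1$ and $\gamma_i\in\Gamma_0\setminus\{e\}$ are pairwise distinct.

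The technical heart --- and the step I expect to require real work --- is a uniform coarse estimate
$$
\phi\big(\kappa_\theta(g^N\gamma_1 g^N\gamma_2\cdots g^N\gamma_n)\big)\ \leq\ \sum_{i=1}^{n}\phi\big(\kappa_\theta(\gamma_i)\big)\ +\ n\,\phi\big(\kappa_\theta(g^N)\big)\ +\ Cn,
$$
with $C$ depending only on $g$, $N$, $\Gamma_0$ and the chosen neighbourhoods --- in particular not on $n$ or on the $\gamma_i$. This is the analogue for the partial Cartan projection of the fact that a concatenation of quasigeodesic pieces meeting at definite angles is a quasigeodesic: the ping-pong configuration forces uniform transversality in $\Fc_\theta$ at each junction, and a local-to-global (Morse-type) principle for $\Psf_\theta$-transverse subgroups then makes the partial Cartan projections coarsely additive along the word. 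One can in fact arrange that $\langle g^N\rangle\ast\Gamma_0$ is itself $\Psf_\theta$-transverse and coarsely isometrically embedded over $\Gamma_0$, with $\kappa_\theta$ of a reduced word coarsely equal to the sum of the $\kappa_\theta$ of its syllables; only the displayed upper bound is needed below. Note that no naive subadditivity of $\phi\circ\kappa_\theta$ is available here, as $\phi$ need not be dominant.

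Granting the estimate, I would conclude by a geometric-series computation. Distinctness of the $w$'s gives
$$
Q_\Gamma^\phi(s)\ \geq\ \sum_{n\geq1}\Big(e^{-s(\phi(\kappa_\theta(g^N))+C)}\sum_{\gamma\in\Gamma_0\setminus\{e\}}e^{-s\phi(\kappa_\theta(\gamma))}\Big)^{n}.
$$
Since $\delta^\phi(\Gamma)<\infty$, convergence of $Q_\Gamma^\phi$ beyond $\delta^\phi(\Gamma)$ forces $\phi\circ\kappa_\theta$ to be proper on $\Gamma$; since $Q_{\Gamma_0}^\phi$ diverges at $\delta_0$, monotone convergence gives $\sum_{\gamma\in\Gamma_0\setminus\{e\}}e^{-s\phi(\kappa_\theta(\gamma))}=Q_{\Gamma_0}^\phi(s)-1\to+\infty$ as $s\downarrow\delta_0$, while $e^{-s(\phi(\kappa_\theta(g^N))+C)}$ stays bounded below by a positive constant for $s\in[\delta_0,\delta_0+1]$. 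Hence there is $\eps\in(0,1]$ for which the bracketed quantity at $s=\delta_0+\eps$ is $\geq1$, the series diverges, $Q_\Gamma^\phi(\delta_0+\eps)=+\infty$, and so $\delta^\phi(\Gamma)\geq\delta_0+\eps>\delta^\phi(\Gamma_0)$. The same argument covers elementary $\Gamma_0$, with $\Lambda_\theta(\Gamma_0)$ a one- or two-point set; $\Gamma_0$ finite does not occur, since then $Q_{\Gamma_0}^\phi$ would converge already at $\delta_0=0$.
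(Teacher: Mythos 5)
You have reformulated the problem correctly, and the final geometric-series computation is sound (including the observation that $\delta^\phi(\Gamma)<+\infty$ forces $\phi\circ\kappa_\theta\to+\infty$ along escaping sequences, which legitimizes the monotone-convergence step as $s\searrow\delta^\phi(\Gamma_0)$). But note that this paper does not prove Theorem~\ref{thm: entropy gap CZZ3} at all: it is imported from \cite[Thm.\ 4.1]{CZZ3}, where it is obtained by measure-theoretic arguments -- the Patterson--Sullivan/shadow-lemma machinery behind Theorem~\ref{thm:dichotomy}, playing divergence of $Q^\phi_{\Gamma_0}$ at the common exponent against minimality of the $\Gamma$-action on $\Lambda_\theta(\Gamma)$ (Proposition~\ref{prop: convergence group}) and the properness of $\Lambda_\theta(\Gamma_0)$. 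Your route is the genuinely different Dal'bo--Otal--Peign\'e-style counting argument: ping-pong a high power of a loxodromic against $\Gamma_0$ and force $Q_\Gamma^\phi(s)=+\infty$ for some $s>\delta^\phi(\Gamma_0)$. That route avoids constructing any conformal measure, but it transfers all the difficulty into quantitative multiplicative estimates, and there it is not complete as written.

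Two concrete gaps. First, the displayed coarse-additivity bound is the technical heart and does not follow from Proposition~\ref{prop:multiplicative estimate v1} alone: to iterate that proposition along $w_n=g^N\gamma_1\cdots g^N\gamma_n$ you must control, uniformly in $n$ and in the syllables, where $U_\theta$ of each partial product and of its inverse lies. That requires an additional quantitative ``attracting basin'' lemma for products of uniformly transverse, uniformly regular elements (a statement about $U_\theta(\gamma\eta)$ being close to $U_\theta(\gamma)$, which is not in this paper), together with separate treatment of the finitely many short elements of $\Gamma_0$, for which $U_\theta(\gamma_i)$ need not be near $\Lambda_\theta(\Gamma_0)$ and $\min_{\alpha\in\theta}\alpha(\kappa(\gamma_i))$ is not large; one must fix that finite exceptional set first and then enlarge $N$ accordingly. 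Second, your choice of $\xi_0$ rests on the claim that every nontrivial element of $\Gamma_0$ fixes at most two flags; that is true for infinite-order elements of a convergence group but unjustified for torsion elements (and for a possible finite kernel of the $\Gamma$-action on $\Lambda_\theta(\Gamma)$), so the existence of a point of $\Lambda_\theta(\Gamma)\setminus\Lambda_\theta(\Gamma_0)$ with trivial $\Gamma_0$-stabilizer, and hence the freeness/distinctness of your words, needs a different argument (or you should settle for distinctness up to a uniformly bounded-to-one map, which still suffices for the series bound). Both issues look repairable, but as written they are genuine gaps rather than routine details.
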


Suppose that $\Gamma\subset\GG$ is $\Psf_\theta$-Anosov relative to $\Pc$. A subgroup $\Gamma_0\subset\Gamma$ is \emph{relatively quasiconvex} if its action on its limit set $\Lambda(\Gamma_0)\subset \partial(\Gamma,\Pc)$ is geometrically finite. We prove that non-elementary, relatively quasiconvex subgroups of $\Gamma$ are themselves $\Psf_\theta$-Anosov relative to $\Pc$ (see Proposition~\ref{prop:rel quasiconvex are rel Anosov}), and that the limit set of any infinite index quasiconvex subgroup of $\Gamma$ is a closed, proper subset of the limit set of $\Gamma$ (see Lemma \ref{lem:limit set is a proper subset}). Combining these facts with Theorems \ref{intro: rel anosov divergent} and \ref{thm: entropy gap CZZ3}, we may then prove the following result for relatively quasiconvex subgroups of infinite index.

\begin{corollary}[Corollary \ref{cor:entropy gap rel anosov 1}]
\label{cor:entropy gap rel anosov}
Suppose $\Gamma \subset \GG$ is a $\Psf_\theta$-Anosov subgroup relative to $\Pc$, and $\phi \in \mfa_\theta^*$ such that $\delta^\phi(\Gamma) < +\infty$. 
If $\Gamma_0$ is an infinite index relatively quasiconvex subgroup of $(\Gamma, \Pc)$, then
$$\delta^\phi(\Gamma)>\delta^\phi(\Gamma_0).$$
\end{corollary}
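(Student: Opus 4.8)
The plan is to reduce to Theorem \ref{thm: entropy gap CZZ3}, whose hypotheses require three things about the subgroup $\Gamma_0$: that the ambient group $\Gamma$ be a non-elementary $\Psf_\theta$-transverse subgroup with $\delta^\phi(\Gamma)<+\infty$ (given), that $Q^\phi_{\Gamma_0}$ diverge at its critical exponent, and that $\Lambda_\theta(\Gamma_0)$ be a proper subset of $\Lambda_\theta(\Gamma)$. So the argument breaks into verifying these last two conditions for an infinite index relatively quasiconvex subgroup $\Gamma_0$ of $(\Gamma,\Pc)$, using the auxiliary facts advertised in the paragraph preceding the statement. First I would invoke Proposition~\ref{prop:rel quasiconvex are rel Anosov}: since $\Gamma_0$ is non-elementary (an infinite index relatively quasiconvex subgroup of a non-elementary relatively hyperbolic group is non-elementary — or one can simply note that the elementary case is handled separately) and relatively quasiconvex in $(\Gamma,\Pc)$, it is itself $\Psf_\theta$-Anosov relative to its induced peripheral structure $\Pc_0$. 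In particular $\Gamma_0$ is a non-elementary $\Psf_\theta$-transverse subgroup of $\GG$.

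Next, I would control its critical exponent: since $\delta^\phi(\Gamma_0)\le\delta^\phi(\Gamma)<+\infty$ (the Poincar\'e series for $\Gamma_0$ is dominated term-by-term by that for $\Gamma$), Theorem~\ref{intro: rel anosov divergent} (equivalently Theorem~\ref{rel anosov divergence}), applied to the relatively Anosov group $\Gamma_0$ and the functional $\phi$, shows that $Q^\phi_{\Gamma_0}$ diverges at its critical exponent. This is the second hypothesis of Theorem~\ref{thm: entropy gap CZZ3}. For the third, I would cite Lemma~\ref{lem:limit set is a proper subset}, which asserts precisely that an infinite index quasiconvex subgroup of $\Gamma$ has limit set a closed proper subset of $\Lambda_\theta(\Gamma)$; combined with the identification of $\Lambda_\theta(\Gamma_0)$ with $\xi(\Lambda(\Gamma_0))$ via the boundary map $\xi$, this gives $\Lambda_\theta(\Gamma_0)\subsetneq\Lambda_\theta(\Gamma)$.

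With all three hypotheses of Theorem~\ref{thm: entropy gap CZZ3} verified — $\Gamma$ non-elementary $\Psf_\theta$-transverse with $\delta^\phi(\Gamma)<+\infty$, $Q^\phi_{\Gamma_0}$ divergent at its critical exponent, and $\Lambda_\theta(\Gamma_0)\subsetneq\Lambda_\theta(\Gamma)$ — the conclusion $\delta^\phi(\Gamma)>\delta^\phi(\Gamma_0)$ follows immediately. I expect the main obstacle to be purely bookkeeping: one must be careful that "relatively quasiconvex subgroup of $(\Gamma,\Pc)$'' induces a genuine peripheral structure on $\Gamma_0$ (intersections of conjugates of $\Pc$-groups with $\Gamma_0$, up to finitely many conjugacy classes) for which Proposition~\ref{prop:rel quasiconvex are rel Anosov} applies, and that "infinite index'' together with relative quasiconvexity genuinely forces the limit set to be proper — this is where Lemma~\ref{lem:limit set is a proper subset} does the real work, and all the analytic content sits inside Theorems~\ref{intro: rel anosov divergent} and \ref{thm: entropy gap CZZ3} rather than in this corollary itself. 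The non-elementarity of $\Gamma_0$ should be noted explicitly since both cited theorems require it; an infinite-index relatively quasiconvex subgroup could in principle be elementary, in which case $\delta^\phi(\Gamma_0)$ is finite and strictly smaller than $\delta^\phi(\Gamma)$ for more elementary reasons, so that degenerate case can be dispatched separately.
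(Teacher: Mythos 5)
For the case when $\Gamma_0$ is non-elementary, your argument is exactly the paper's: Proposition~\ref{prop:rel quasiconvex are rel Anosov} makes $\Gamma_0$ relatively Anosov (so Theorem~\ref{rel anosov divergence} gives divergence of $Q^\phi_{\Gamma_0}$ at its critical exponent, which is finite since it is at most $\delta^\phi(\Gamma)$), Lemma~\ref{lem:limit set is a proper subset} together with the identification $\Lambda_\theta(\Gamma_0)=\xi(\Lambda(\Gamma_0))$ gives properness of the limit set, and Theorem~\ref{thm: entropy gap CZZ3} concludes. That part is fine.

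The genuine gap is the elementary case, which you never actually handle. Your parenthetical claim that an infinite index relatively quasiconvex subgroup of a non-elementary relatively hyperbolic group is non-elementary is false (any cyclic subgroup generated by an infinite order element, or any subgroup of a peripheral subgroup, is a counterexample), and your fallback — that the elementary case goes through ``for more elementary reasons'' — is not correct as stated. The paper splits this case according to $\#\Lambda(\Gamma_0)\in\{0,1,2\}$, and the one-point case is not elementary at all: there $\Gamma_0$ is (up to conjugacy) an infinite subgroup of a peripheral subgroup $P\in\Pc$, and the strict inequality $\delta^\phi(\Gamma_0)\le\delta^\phi(P)<\delta^\phi(\Gamma)$ requires Corollary~\ref{cor:entropy gap with peripherals}, i.e.\ the peripheral entropy gap coming from Theorem~\ref{thm:entropy gap for rel Anosov} — the main technical result of the paper (the Hironaka resolution argument), not a soft observation. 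Nothing in your write-up indicates how you would get strictness for such $\Gamma_0$; note in particular that Theorem~\ref{thm: entropy gap CZZ3} is unavailable here because an elementary $\Gamma_0$ is excluded from its hypotheses. The remaining elementary sub-cases also need an argument, though a lighter one: when $\#\Lambda(\Gamma_0)=2$, $\Gamma_0$ is virtually cyclic generated by a hyperbolic element and one uses the multiplicative estimate (Proposition~\ref{prop:multiplicative estimate v1}) to show $\delta^\phi(\Gamma_0)=0$, and when $\Lambda(\Gamma_0)=\emptyset$, $\Gamma_0$ is finite; in both of these one must still check $\delta^\phi(\Gamma)>0$, which the paper gets from a free subgroup of rank two. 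So your proof covers the main case correctly but leaves a real hole exactly where ``infinite index relatively quasiconvex'' includes parabolic and elementary subgroups.
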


Finally, in \cite{CZZ3}, we established that the critical exponent is strictly concave on the space of linear functionals which diverge at their critical exponent, except when
there is agreement of length functionals. More precisely, given $\phi \in \mfa_\theta^*$ and $g \in \GG$, the \emph{$\phi$-length} of $g$ is 
$$
\ell^\phi(g) : = \lim_{n \rightarrow \infty} \frac{1}{n} \phi(\kappa_\theta(g^n)).
$$

\begin{theorem}[{\cite[Thm.\ 1.5]{CZZ3}}]\label{thm:manhattan curve}
Suppose $\Gamma$ is a non-elementary $\mathsf{P}_{\theta}$-transverse subgroup of $\GG$ and $\phi_1,\phi_2 \in \mathfrak{a}_\theta^*$ satisfy $\delta^{\phi_1}(\Gamma) = \delta^{\phi_2}(\Gamma)=1$. If $\phi = \lambda \phi_1 + (1-\lambda) \phi_2$ for some $\lambda \in (0,1)$, then  
$$
\delta:=\delta^\phi(\Gamma) \le 1.
$$
Moreover, if $Q_{\Gamma}^\phi(\delta)=+\infty$, then equality occurs if and only if $\ell^{\phi_1}(\gamma) = \ell^{\phi_2}(\gamma)$ for all $\gamma \in \Gamma$.  
\end{theorem}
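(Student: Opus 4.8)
The plan is to prove the inequality $\delta\le 1$ by a direct application of H\"older's inequality, and then to recast the equality statement as a rigidity property of the Manhattan curve of the pair $(\phi_1,\phi_2)$, which I would settle using the Hopf--Tsuji--Sullivan dichotomy (Theorem~\ref{thm:dichotomy}). For the inequality: for $s>1$ and $\gamma\in\Gamma$ write
\[
e^{-s\phi(\kappa_\theta(\gamma))}=\bigl(e^{-s\phi_1(\kappa_\theta(\gamma))}\bigr)^{\lambda}\bigl(e^{-s\phi_2(\kappa_\theta(\gamma))}\bigr)^{1-\lambda},
\]
and apply H\"older with exponents $1/\lambda$ and $1/(1-\lambda)$ to the sum over $\Gamma$ to obtain $Q_\Gamma^\phi(s)\le Q_\Gamma^{\phi_1}(s)^{\lambda}\,Q_\Gamma^{\phi_2}(s)^{1-\lambda}$; both factors on the right are finite for $s>1$ since $\delta^{\phi_1}(\Gamma)=\delta^{\phi_2}(\Gamma)=1$, so $\delta:=\delta^\phi(\Gamma)\le 1$. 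The same estimate shows that the set $U:=\{(a,b)\in\mathbb R^2:\sum_{\gamma\in\Gamma}e^{-(a\phi_1+b\phi_2)(\kappa_\theta(\gamma))}<+\infty\}$ has convex closure $\overline U$ (which, using Theorem~\ref{thm:characterizing finite entropy functionals} to bound $\phi_1\circ\kappa_\theta$ and $\phi_2\circ\kappa_\theta$ below, is also upward closed), that $(1,0),(0,1)\in\partial\overline U$, and that $\delta^\phi(\Gamma)=1$ is equivalent to $(\lambda,1-\lambda)\in\partial\overline U$.

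Assume now $\delta=1$ and $Q_\Gamma^\phi(\delta)=+\infty$. Then $(\lambda,1-\lambda)$ is a boundary point of the convex set $\overline U$ lying in the open segment between $(1,0)$ and $(0,1)$, and both endpoints lie in $\overline U$; an elementary convexity argument then forces the whole segment $[(1,0),(0,1)]$ into $\partial\overline U$, so $\delta^{\phi_a}(\Gamma)=1$ for every $a\in[0,1]$, where $\phi_a:=a\phi_1+(1-a)\phi_2$ --- the Manhattan curve of $(\phi_1,\phi_2)$ is flat along this segment. Moreover, writing $\phi_\lambda$ as a convex combination of $\phi_{a'}$ and $\phi_a$ with $a'<\lambda<a$ in the H\"older estimate shows that $Q_\Gamma^{\phi_{a'}}(1)$ and $Q_\Gamma^{\phi_a}(1)$ cannot both be finite, so $Q_\Gamma^{\phi_a}(1)=+\infty$ for all $a$ in a subinterval $I\subset[0,1]$ of positive length containing $\lambda$. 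By Theorem~\ref{thm:dichotomy}, each $a\in I$ then carries a unique $\phi_a$-Patterson--Sullivan measure $\mu_a$ of dimension one, supported on $\Lambda_\theta(\Gamma)$, giving full mass to the conical limit set, with ergodic $\Gamma$-action.

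The crux is to deduce from this that $(\phi_1-\phi_2)\circ B_\theta$ is an additive coboundary over the $\Gamma$-action on $\Lambda_\theta(\Gamma)$. Granting that for $a,b\in I$ the measures $\mu_a,\mu_b$ are mutually absolutely continuous with density $f=d\mu_a/d\mu_b$, a comparison of their transformation rules yields $\log f\circ\gamma^{-1}-\log f=(b-a)(\phi_1-\phi_2)(B_\theta(\gamma^{-1},\cdot))$ $\mu_b$-almost everywhere for every $\gamma\in\Gamma$, which is the desired coboundary relation. The mutual absolute continuity is precisely where the flatness is needed --- one really needs the full ``variance'' of $\phi_1-\phi_2$ against the measure to vanish, not merely its mean --- and I would extract it either from the shadow lemma for the divergent measures $\mu_a$, which gives $\mu_a(\text{shadow of }\gamma)\asymp e^{-\phi_a(\kappa_\theta(\gamma))}$, combined with a Borel--Cantelli and ergodicity argument, or by passing to the associated geodesic flow space and using Abramov's formula together with the equality case of Jensen's inequality. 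A Livsic-type regularity argument (the action on $\Lambda_\theta(\Gamma)$ is a minimal convergence action and $(\phi_1-\phi_2)\circ B_\theta$ is continuous) then replaces $f$ by a continuous transfer function; evaluating the coboundary relation at the attracting fixed points of loxodromic elements --- where $B_\theta$ recovers the Jordan projection --- and using that these are dense in the limit cone of $\Gamma$, gives $\ell^{\phi_1}(\gamma)=\ell^{\phi_2}(\gamma)$ for all $\gamma\in\Gamma$. Conversely, if $\ell^{\phi_1}\equiv\ell^{\phi_2}$ then $\psi:=\phi_1-\phi_2$ vanishes on the limit cone of $\Gamma$; since the Cartan projections of $\Gamma$ concentrate near the limit cone in the sense that $\dist(\kappa_\theta(\gamma),\text{limit cone})=o(\norm{\kappa_\theta(\gamma)})$, and since $\phi_1(\kappa_\theta(\gamma))\gtrsim\norm{\kappa_\theta(\gamma)}$ (valid because $\delta^{\phi_1}(\Gamma)<+\infty$ forces $\phi_1$ to be positive on the limit cone), we get $\phi(\kappa_\theta(\gamma))/\phi_1(\kappa_\theta(\gamma))\to 1$ and hence $\delta^\phi(\Gamma)=\delta^{\phi_1}(\Gamma)=1$.

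The main obstacle is the forward implication above: upgrading flatness of the Manhattan curve to an honest coboundary, and running a Livsic-type theorem in the $\Psf_\theta$-transverse setting, where the natural flow space is non-compact because of the peripheral subgroups. The uniqueness, conical-full-measure and ergodicity conclusions of Theorem~\ref{thm:dichotomy}, which are available precisely because of the hypothesis $Q_\Gamma^\phi(\delta)=+\infty$, are the tools I would lean on, and controlling the behaviour of the relevant Patterson--Sullivan and Bowen--Margulis--Sullivan measures near the cusps is where I expect most of the effort to go.
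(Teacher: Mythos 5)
This statement is not proved in the paper at all: it is quoted verbatim from \cite[Thm.\ 1.5]{CZZ3} and used as a black box, so there is no in-paper proof to compare against; your proposal therefore has to stand on its own. The first part does: the H\"older estimate $Q_\Gamma^\phi(s)\le Q_\Gamma^{\phi_1}(s)^{\lambda}Q_\Gamma^{\phi_2}(s)^{1-\lambda}$ gives $\delta\le 1$, and your convexity argument correctly upgrades $\delta=1$, $Q_\Gamma^\phi(1)=+\infty$ to ``the Manhattan curve is flat and $Q_\Gamma^{\phi_a}(1)=+\infty$ for all $a$ in a nondegenerate interval $I$ adjacent to $\lambda$'' (the appeal to Theorem~\ref{thm:characterizing finite entropy functionals} for upward-closedness is both unavailable here, since $\Gamma$ is only transverse, and unnecessary: $\delta^{\phi_i}(\Gamma)<+\infty$ already forces $\phi_i\circ\kappa_\theta\to+\infty$ along escaping sequences).

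The genuine gap is the heart of the forward implication. Your argument rests on two claims that are asserted, not proved: (i) that for $a,b\in I$ the dimension-one measures $\mu_a,\mu_b$ produced by Theorem~\ref{thm:dichotomy} are mutually absolutely continuous, and (ii) a ``Livsic-type'' upgrade of the measurable Radon--Nikodym coboundary to a continuous transfer function that can be evaluated at attracting fixed points. Claim (i) is essentially equivalent to the conclusion of the theorem: when $\ell^{\phi_1}\not\equiv\ell^{\phi_2}$ the measures $\mu_a$ and $\mu_b$ are expected to be mutually singular, so absolute continuity cannot be extracted ``softly'' from flatness of the curve --- the divergence, the shadow estimates $\mu_a(\text{shadow of }\gamma)\asymp e^{-\phi_a(\kappa_\theta(\gamma))}$, and the ergodicity have to be combined into an actual argument, and you only name the tools. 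Claim (ii) is also nontrivial: you have no flow space, only a convergence action on a possibly non-compact-quotient limit set, and even granting a measurable transfer function, evaluating it at the two fixed points of a loxodromic element is exactly the step a measurable coboundary does not permit. Finally, your converse direction is also incomplete: it invokes that Jordan directions exhaust the $\theta$-Benoist limit cone, that $\dist(\kappa_\theta(\gamma),B_\theta(\Gamma))=o(\norm{\kappa_\theta(\gamma)})$, and that $\phi_1(\kappa_\theta(\gamma))\gtrsim\norm{\kappa_\theta(\gamma)}$; these are Benoist-type facts proved for Zariski dense (or, in this paper, relatively Anosov) groups and are not available for a general non-elementary transverse subgroup, and tellingly your converse never uses the hypothesis $Q_\Gamma^\phi(\delta)=+\infty$ under which the equivalence is asserted. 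As written, the proposal is a plausible outline of the classical Manhattan-curve strategy, but the rigidity step that carries the actual content of the theorem is missing.
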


Together, Theorem~\ref{intro: rel anosov divergent} and Theorem \ref{thm:manhattan curve} give the following result.

\begin{corollary} 
\label{rel anosov concavity}
Suppose $\Gamma \subset \GG$ is a $\Psf_\theta$-Anosov subgroup relative to $\Pc$ and $\phi_1,\phi_2 \in \mfa_\theta^*$ satisfy $\delta^{\phi_1}(\Gamma) = \delta^{\phi_2}(\Gamma)=1$. If $\phi = \lambda \phi_1 + (1-\lambda)\phi_2$ for some $\lambda \in (0,1)$, then 
$$
\delta^\phi(\Gamma) \le 1
$$
with equality if and only if $\ell^{\phi_1}(\gamma)=\ell^{\phi_2}(\gamma)$ for all $\gamma \in \Gamma$. 
\end{corollary}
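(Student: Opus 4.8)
The plan is to simply combine the two results that have already been established. Corollary~\ref{rel anosov concavity} follows immediately by feeding Theorem~\ref{intro: rel anosov divergent} into Theorem~\ref{thm:manhattan curve}, so the proof is essentially a two-line deduction. First I would observe that a $\Psf_\theta$-Anosov subgroup relative to $\Pc$ is, by hypothesis, non-elementary (this is our standing convention) and is $\Psf_\theta$-transverse, as noted in the paragraph preceding Corollary~\ref{rel anosov hst}. Hence Theorem~\ref{thm:manhattan curve} applies to $\Gamma$ and the functionals $\phi_1,\phi_2$, giving at once the inequality $\delta:=\delta^\phi(\Gamma)\le 1$, where $\phi=\lambda\phi_1+(1-\lambda)\phi_2$.

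Next I would handle the equality case. Theorem~\ref{thm:manhattan curve} gives the characterization of equality only under the extra hypothesis that $Q_\Gamma^\phi(\delta)=+\infty$, i.e.\ that the $\phi$-Poincar\'e series diverges at its critical exponent. So the remaining point is to verify that this divergence hypothesis is automatic in our setting. Since $\delta^{\phi_1}(\Gamma)=\delta^{\phi_2}(\Gamma)=1$ are finite, and since $\delta^\phi(\Gamma)\le\max\{\delta^{\phi_1}(\Gamma),\delta^{\phi_2}(\Gamma)\}=1$ by the elementary convexity of $s\mapsto\log Q_\Gamma^{\phi_s}$ in the functional (or more simply by the inequality just obtained), the $\phi$-critical exponent $\delta$ is finite. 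Now $\Gamma$ is $\Psf_\theta$-Anosov relative to $\Pc$ and $\phi\in\mfa_\theta^*$ has $\delta^\phi(\Gamma)<+\infty$, so Theorem~\ref{intro: rel anosov divergent} applies directly and yields $Q_\Gamma^\phi(\delta)=+\infty$. With this in hand, the equality clause of Theorem~\ref{thm:manhattan curve} gives that $\delta^\phi(\Gamma)=1$ if and only if $\ell^{\phi_1}(\gamma)=\ell^{\phi_2}(\gamma)$ for all $\gamma\in\Gamma$, which is exactly the assertion of the corollary.

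I do not expect any genuine obstacle here: the entire content has been pushed into Theorems~\ref{intro: rel anosov divergent} and~\ref{thm:manhattan curve}, and the corollary is a formal consequence. The only mild subtlety worth stating explicitly in the write-up is the one flagged above, namely that one must first know $\delta^\phi(\Gamma)<+\infty$ before invoking Theorem~\ref{intro: rel anosov divergent} to get the divergence needed for the equality case; but this finiteness is immediate from the inequality $\delta^\phi(\Gamma)\le 1$. I would therefore keep the proof to a short paragraph: cite transversality and non-elementarity, apply Theorem~\ref{thm:manhattan curve} for the inequality, note finiteness of $\delta^\phi(\Gamma)$, apply Theorem~\ref{intro: rel anosov divergent} for divergence at the critical exponent, and conclude with the equality characterization from Theorem~\ref{thm:manhattan curve}.
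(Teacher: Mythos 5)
Your proposal is correct and matches the paper's argument exactly: the corollary is obtained by applying Theorem~\ref{thm:manhattan curve} (valid since a relatively Anosov group is non-elementary and $\Psf_\theta$-transverse) to get $\delta^\phi(\Gamma)\le 1$, noting this gives $\delta^\phi(\Gamma)<+\infty$, and then invoking Theorem~\ref{intro: rel anosov divergent} to supply the divergence hypothesis needed for the equality characterization. Your explicit remark that finiteness of $\delta^\phi(\Gamma)$ must precede the appeal to Theorem~\ref{intro: rel anosov divergent} is the only nontrivial checkpoint, and you handle it correctly.
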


In the Zariski dense case, see \cite[Cor.\ 1.6]{CZZ3}, a theorem of Benoist \cite{benoist-asymptotic} implies that distinct length functions cannot agree.

\begin{corollary}\label{cor: manahattan curve Z dense case} Suppose $\Gamma \subset \GG$ is a Zariski dense $\Psf_\theta$-Anosov subgroup relative to $\Pc$, and $\phi_1,\phi_2 \in \mfa_\theta^*$ are distinct and satisfy $\delta^{\phi_1}(\Gamma) = \delta^{\phi_2}(\Gamma)=1$. If $\phi = \lambda \phi_1 + (1-\lambda)\phi_2$ for some $\lambda \in (0,1)$, then $\delta^\phi(\Gamma) < 1$.
\end{corollary}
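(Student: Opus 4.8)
The plan is to obtain this as an immediate consequence of Corollary~\ref{rel anosov concavity} together with a theorem of Benoist on limit cones \cite{benoist-asymptotic}; the only substantive point is to rule out the equality case of Corollary~\ref{rel anosov concavity} under the Zariski density hypothesis. (Note first that a Zariski dense subgroup of $\GG$ is automatically non-elementary, so all the hypotheses needed to invoke the cited results are satisfied.) Applying Corollary~\ref{rel anosov concavity} to $\phi_1,\phi_2$ and $\lambda\in(0,1)$ gives $\delta^\phi(\Gamma)\le 1$, with equality if and only if $\ell^{\phi_1}(\gamma)=\ell^{\phi_2}(\gamma)$ for every $\gamma\in\Gamma$. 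Hence it suffices to show that, when $\phi_1\neq\phi_2$, the length functions $\ell^{\phi_1}$ and $\ell^{\phi_2}$ cannot agree on all of $\Gamma$.

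To this end I would first record that for $\gamma\in\Gamma$ one has $\ell^\phi(\gamma)=\phi\big(\lambda_\theta(\gamma)\big)$, where $\lambda_\theta(\gamma):=\lim_{n\to\infty}\tfrac1n\kappa_\theta(\gamma^n)\in\mfa_\theta$ is the $\theta$-Jordan projection of $\gamma$; this limit exists because $\tfrac1n\kappa(\gamma^n)$ converges to the Jordan projection of $\gamma$ in $\mfa$, and $\kappa_\theta$ is the composition of $\kappa$ with a fixed linear projection $\mfa\to\mfa_\theta$, while $\phi$ is continuous. Consequently $\ell^{\phi_1}\equiv\ell^{\phi_2}$ on $\Gamma$ is equivalent to the linear functional $\phi_1-\phi_2\in\mfa_\theta^*$ vanishing on $\{\lambda_\theta(\gamma):\gamma\in\Gamma\}$, and hence on the $\theta$-Benoist limit cone of $\Gamma$. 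By Benoist's theorem \cite{benoist-asymptotic} (see also the discussion in \cite{CZZ3}), Zariski density of $\Gamma$ forces the Benoist limit cone in $\mfa$ to have nonempty interior; projecting by the surjective (hence open) linear map $\mfa\to\mfa_\theta$ then shows the $\theta$-Benoist limit cone has nonempty interior in $\mfa_\theta$. Since a linear functional vanishing on a subset with nonempty interior is identically zero, this would give $\phi_1=\phi_2$, contradicting the hypothesis that $\phi_1$ and $\phi_2$ are distinct. Therefore equality in Corollary~\ref{rel anosov concavity} cannot occur, and $\delta^\phi(\Gamma)<1$.

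The only mildly delicate step is the transfer of Benoist's statement from $\mfa$ to $\mfa_\theta$: one must check that the canonical projection $\mfa\to\mfa_\theta$ is surjective, so that it is open and sends a cone with nonempty interior to a cone with nonempty interior, and that it carries Jordan projections to their $\theta$-versions so that $\ell^\phi(\gamma)=\phi(\lambda_\theta(\gamma))$ as claimed. Both facts are immediate from the definitions of $\mfa_\theta$ and $\kappa_\theta$ recalled in Section~\ref{sec: ss Lie group background}, so apart from this bookkeeping the argument is entirely formal once Corollary~\ref{rel anosov concavity} is in hand.
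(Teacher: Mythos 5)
Your proof is correct and follows essentially the same route as the paper, which deduces the corollary from Corollary~\ref{rel anosov concavity} together with Benoist's theorem that the limit cone of a Zariski dense subgroup has nonempty interior (so distinct functionals in $\mfa_\theta^*$ cannot have identical length functions); your write-up merely makes explicit the standard bookkeeping with $\ell^\phi(\gamma)=\phi(\lambda_\theta(\gamma))$ and the surjective projection $p_\theta:\mfa\to\mfa_\theta$.
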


\subsection{Outline of the proof of Theorem~\ref{intro: rel anosov divergent}} \label{outline}
The strategy of the proof of Theorem~\ref{intro: rel anosov divergent} is inspired by earlier work of Blayac--Zhu~\cite{BZ} in the context of relatively hyperbolic groups preserving properly convex domains and Patterson--Sullivan measures defined using the Busemann functions associated to the Hilbert distance. The key technical result needed to prove Theorem~\ref{intro: rel anosov divergent} is that if $\Gamma$ is $\Psf_\theta$-Anosov relative to $\Pc$ and $\delta^\phi(\Gamma)<+\infty$, then the $\phi$-Poincar\'e series of any peripheral
subgroup diverges at its critical exponent.

\begin{theorem}[Theorem \ref{thm:entropy gap for rel Anosov}]
\label{peripheral divergent}
 Suppose $\Gamma \subset \GG$ is a $\Psf_\theta$-Anosov subgroup relative to $\Pc$, $\phi \in \mfa_\theta^*$ and $\delta^\phi(\Gamma) < +\infty$.
If $P\in\Pc$, then $Q_P^\phi$ diverges at its critical exponent.
\end{theorem}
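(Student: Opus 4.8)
The plan is to reduce the divergence of $Q_P^\phi$ to a quasi-isometry statement between the peripheral subgroup $P$, equipped with the "metric" $\phi\circ\kappa_\theta$, and $P$ equipped with a word metric (or equivalently the restriction of the distance function on the symmetric space $M$), and then to invoke the classical fact that a virtually nilpotent group has divergent Poincar\'e series for any left-invariant metric which is quasi-isometric to a word metric. First I would recall that, since $\Gamma$ is relatively hyperbolic with respect to $\Pc$ and is finitely generated, each $P\in\Pc$ is finitely generated and undistorted-up-to-the-peripheral-structure; more importantly, the relatively Anosov hypothesis forces each $P$ to act (via a boundary map) with a single fixed point $\xi(P)\in\Lambda_\theta(\Gamma)$ which is a bounded parabolic point of the convergence action. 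The key geometric input is the control of the Cartan projection near the peripheral cusp: using the transversality of the limit set together with the properness of $\alpha\circ\kappa$ for $\alpha\in\theta$, one shows that for $p\in P$ the partial Iwasawa/Cartan data $\kappa_\theta(p)$ is coarsely determined by the "visual size'' of the horoball excursion of $p$, which in turn is coarsely linear in $\d_M(p(x_0),x_0)$.

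The main steps, in order, would be: (1) Show that the hypothesis $\delta^\phi(\Gamma)<+\infty$ implies, via Theorem~\ref{thm:characterizing finite entropy functionals}(1), that $\phi(\kappa_\theta(\gamma_n))\to+\infty$ along every escaping sequence in $\Gamma$, hence in particular along every escaping sequence in $P$; combined with convexity/subadditivity properties of $\kappa_\theta$ this yields a coarse \emph{lower} bound $\phi(\kappa_\theta(p))\ge \tfrac1c \d_M(p(x_0),x_0)-C$ on $P$. (2) Obtain a matching coarse \emph{upper} bound on $P$: this is where one uses that $P$ fixes the parabolic point $\xi(P)$, so elements of $P$ move $x_0$ "parallel'' to the cusp direction and $\kappa_\theta(p)$ grows at most linearly in $\d_M(p(x_0),x_0)$ with a uniform constant; here one may quote the general bound $\|\kappa_\theta(g)\|\le \d_M(g(x_0),x_0)$ up to normalization. (3) Conclude that on $P$ the function $\phi\circ\kappa_\theta$ is within bounded additive error of a positive multiple of $\d_M(\cdot(x_0),x_0)$, hence of a word metric $|\cdot|_P$, so $Q_P^\phi(s)=\sum_{p\in P}e^{-s\phi(\kappa_\theta(p))}$ has the same critical exponent $\delta$ and the same convergence/divergence behaviour at $\delta$ as the word-metric Poincar\'e series $\sum_{p\in P}e^{-s|p|_P}$. (4) Invoke the fact that $P$, being a peripheral subgroup of a relatively hyperbolic group that is moreover $\Psf_\theta$-Anosov relative to $\Pc$, is (virtually) nilpotent — more precisely, its action on $M$ lands in a horospherical cusp group — and for such a group the word-metric Poincar\'e series always diverges at its critical exponent, since balls grow polynomially and the critical exponent equals the exponential growth rate (which is $0$ in the nilpotent case, giving divergence trivially, or one argues directly via the growth estimate $\#\{p : |p|_P\le R\}\asymp R^{k}$).

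The step I expect to be the main obstacle is step~(2), the uniform coarse \emph{upper} bound for $\phi\circ\kappa_\theta$ on the peripheral subgroup. A general escaping sequence in $\GG$ can have $\phi(\kappa_\theta(g_n))$ growing much faster than $\d_M(g_n(x_0),x_0)$ is false but $\phi\circ\kappa_\theta$ can certainly fail to be comparable to $\d_M$ when $\Psf_\theta$ contains a simple factor — this is exactly the phenomenon flagged after Theorem~\ref{thm:characterizing finite entropy functionals} and explored in Section~\ref{sec: example}. So the argument must genuinely use that $P$ is peripheral: the limit point $\xi(P)$ is a single point, so the "attracting/repelling'' directions for elements of $P$ degenerate, and one must extract from the relatively Anosov boundary map and the transversality of $\Lambda_\theta(\Gamma)$ a quantitative statement that $\kappa_\theta(p)$ stays in a bounded neighbourhood of a ray in $\mfa_\theta^+$ as $p$ ranges over $P$. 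I would handle this by choosing the basepoint $x_0$ in a $P$-invariant horoball, using the equivariance $B_\theta(p,\xi(P))=0$-type normalization, and then estimating $\kappa_\theta(p)$ via the pair $(\xi(P), \text{second direction})$ using the standard relation between Cartan projections and pairs of transverse flags together with the fact that these pairs stay in a compact set because $P$ fixes the first coordinate. The remaining steps are then bookkeeping: (1) is a soft consequence of Theorem~\ref{thm:characterizing finite entropy functionals} and subadditivity, (3) is immediate from (1)--(2), and (4) is classical once one records that peripheral subgroups of relatively Anosov groups are virtually nilpotent.
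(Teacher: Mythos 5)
There is a genuine gap, and it sits exactly where the real difficulty of the theorem lies. Your step (3) asserts that on $P$ the weight $\phi\circ\kappa_\theta$ is coarsely a word metric $|\cdot|_P$, and step (4) then concludes divergence "trivially" because the word-metric critical exponent of a virtually nilpotent group is $0$. But peripheral subgroups are cocompact lattices in groups of the form (compact)$\ltimes$(unipotent), and unipotent subgroups are exponentially distorted in the symmetric space: by Proposition~\ref{prop:properties of unipotent subgroups}(4), $\norm{\kappa(\exp(Y))}\le C+C\log(1+\norm{Y})$, so $\d_M(p(x_0),x_0)$ grows like the \emph{logarithm} of $|p|_P$, not linearly in it. Consequently $\phi\circ\kappa_\theta$ on $P$ is comparable to $\d_M$ but not to a word metric, and $\delta^\phi(P)$ is in general strictly positive (already for a rank-one cusp group $\cong\Zb^k$ one has $\delta^\phi(P)=k/2$). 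So the reduction to the word-metric Poincar\'e series, and with it the "trivial divergence at $s=0$," collapses. A further problem is that even a corrected coarse comparison cannot settle the question: divergence \emph{at} the critical exponent is not invariant under multiplicative (or even sub-additive $o(\text{weight})$) perturbations of the weight --- compare $\sum_n n^{-s}$ with $\sum_n (n\log^2 n)^{-s}$, whose weights differ by $2\log\log n$ yet have opposite behavior at $s=1$. So polynomial growth of $P$ plus a quasi-isometric identification of the weight is structurally insufficient for the conclusion.

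What the paper actually does, and what any proof must supply, is an identification of the weight up to \emph{bounded additive} error with an explicit function of algebraic type: after reducing (Proposition~\ref{prop:reducing to no simple factors}) to trivial center and no simple factors in $\Psf_\theta$, Theorem~\ref{thm:properties of relatively Anosov representations} realizes $P$ as a cocompact lattice in $\mathsf{L}\ltimes\mathsf{U}$, and Proposition~\ref{prop:properties of unipotent subgroups}(5) gives $e^{\phi(\kappa_\theta(\exp Y))}\asymp R(Y)$ for a product $R$ of real powers of positive rational functions on the Lie algebra of $\mathsf{U}$. The series $Q_P^\phi(s)$ is then comparable, factor by factor in $s$ with controlled constants, to $\int R^{-s}\,d\lambda$, and the heart of the proof is Theorem~\ref{thm:critical exponent of products of rational functions}: such an integral diverges at its critical exponent, proved via Hironaka's resolution of singularities (Benoist--Oh covers only the case of a single rational function). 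Your proposal contains no substitute for this analytic step; the "upper bound" you flag as the main obstacle in step (2) is in fact the easy direction ($\phi(\kappa_\theta(g))\le \norm{\phi}\,\norm{\kappa(g)}$ always), while the real obstacle --- controlling the behavior of the series exactly at criticality --- is not addressed.
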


Together, Theorems~\ref{thm: entropy gap CZZ3} and~\ref{peripheral divergent} imply that $\delta^\phi(P) < \delta^\phi(\Gamma)$ for all $P\in\Pc$. We may then adapt arguments of Dal'bo--Otal--Piegn\'e \cite{DOP} to our setting to conclude Theorem \ref{intro: rel anosov divergent}.

The proof of Theorem \ref{peripheral divergent} makes use of Hironaka's famous result on the resolution of singularities. More precisely, in Section~\ref{sec: relating to integral}, we relate the Poincar\'e series $Q_P^\phi(s)$ associated to a peripheral subgroup $P$ to an integral of the form 
\begin{equation}\label{eqn:dumb integral}
 \int_{\Rb^n} \big(R_1^{ \ell_1} \cdots R_m^{\ell_m}\big)^{-s} d\lambda 
 \end{equation}
where $\lambda$ is the Lebesgue measure, $R_1,\dots, R_m : \Rb^n \rightarrow \Rb$ are positive rational functions that are defined everywhere and $\ell_1,\dots, \ell_m$ are real numbers such that $R_1^{ \ell_1} \cdots R_m^{\ell_m}$ is a proper function. In Section~\ref{sec: resolution of singularities}, we use Hironaka's resolution of singularities to show that the integral in Equation~\eqref{eqn:dumb integral} diverges at its critical exponent. 
This in turn implies that the Poincar\'e series associated to any peripheral subgroup diverges at its critical exponent. 

 In Blayac and Zhu's setting, the integral in Equation~\eqref{eqn:dumb integral} involves a single rational function (i.e. \ $m=1$) and one can deduce that it diverges at its critical exponent from a result  of  Benoist--Oh~\cite[Prop.\ 7.2]{benoist-oh} (see the proof of ~\cite[Lem.\ 8.9]{BZ}). 
The case when $m > 1$ is more technical and our use of  Hironaka's resolution of singularities to understand the integral at infinity is motivated by Benoist and Oh's arguments.

\subsection{Conditions (1) and (2) are not equivalent to (3) in Theorem \ref{thm:characterizing finite entropy functionals} }\label{sec: example}

We now give an example which shows that Condition (3) in Theorem \ref{thm:characterizing finite entropy functionals}
may fail to be equivalent to  Conditions (1) and (2) when $\Psf_\theta$ contains a simple factor of $\GG$.

\begin{example}\label{ex:stupid example} Let $\GG = \PSL(2,\Rb) \times \PSL(2,\Rb)$. Note that
$$
\mathfrak{a}: = \left\{ {\rm diag}(a_1,\dots,a_4) : a_1+a_2=0=a_3+a_4\right\}
$$
is a Cartan subspace of $\GG$ and $\Delta : =\{\alpha_1, \alpha_1^\prime\}\subset\mathfrak a^*$ is a system of simple restricted roots, where 
$$
\alpha_1({\rm diag}(a_1,\dots,a_4)) = a_1 - a_2 \quad \text{and} \quad \alpha_1^\prime({\rm diag}(a_1,\dots,a_4)) = a_3 - a_4.
$$
Set $\theta: =\{ \alpha_1\}$, and note that
$$
\mfa_\theta = \{ {\rm diag}(a, -a, 0, 0) : a \in \Rb\}
$$
and 
$$
\kappa_\theta( (g,h)) = (\log \sigma_1(g), \log \sigma_2(g), 0, 0)
$$
where $\sigma_1(g) \ge \sigma_2(g)$ are the singular values of some (any) lift of $g$ to $\SL(2,\Rb)$. Also,
$$
\Psf_\theta = \Psf_1 \times \PSL(2,\Rb)
$$
where $\Psf_1 \subset \PSL(2,\Rb)$ stabilizes the line $\Rb \cdot \begin{pmatrix} 1 \\ 0 \end{pmatrix}$. In particular, $\Psf_\theta$ contains a simple factor of $\GG$. 

Let $S$ be a thrice punctured sphere equipped with a complete hyperbolic structure such that one of its ends is a cusp while the other two ends are hyperbolic funnels. Then there is a discrete subgroup $\Gamma_0\subset\PSL(2,\Rb)$ such that $S = \Gamma_0 \backslash \Hb^2$. Let $h$ be a hyperbolic element in $\PSL(2,\Rb)$ whose conjugacy class $[h]$ corresponds to an oriented geodesic going once around one of the hyperbolic funnels of $S$, and let $u$ be a unipotent element in $\PSL(2,\Rb)$ whose conjugacy class $[u]$ corresponds to an oriented closed curve going once around the cusp of $S$. We may assume that $\Gamma_0=\ip{u}*\ip{h}$. Then $\Gamma_0$ is $\Psf_1$-Anosov relative to $\Pc_0:=\{ \ip{u}\}$. Thus, if we set $\Gamma : = \ip{ \gamma, \eta} \subset \GG$, where 
\[\gamma := (u,h) \in \GG\quad\text{ and }\quad\eta := (h, \id) \in \GG,\]
then $\Gamma$ is $\Psf_\theta$-Anosov relative to $\Pc : = \{ \ip{\gamma}\}$. Furthermore,
$$
\lim_{n \rightarrow \infty} \alpha_1(\kappa_\theta(g_n))=+\infty
$$ 
for every sequence of distinct elements $\{g_n\}$ in $\Gamma$, so (1) in Theorem~\ref{thm:characterizing finite entropy functionals} holds. 

On the other hand, the Riemannian symmetric space $M$ associated to $\GG$ is the product 
\[M = \Hb^2_{\Rb} \times \Hb^2_{\Rb}\] 
where $\Hb^2_{\Rb}$ is the real hyperbolic 2-space. Then for any $p_0=(x_0, y_0) \in M$ we have 
$$
\liminf_{n \rightarrow \infty} \frac{1}{n} \log \d_M(\gamma^n(p_0), p_0) =  \liminf_{n \rightarrow \infty} \frac{1}{n} \log \d_{\Hb_{\Rb}^2}(h^n(y_0), y_0) > 0
$$
since $h$ is hyperbolic and $u$ is unipotent. However,  
$$
\limsup_{n \rightarrow \infty} \frac{1}{n} \log \phi(\kappa_\theta(\gamma^n)) = \limsup_{n \rightarrow \infty} \frac{1}{n} \log \frac{\sigma_1(u^n)}{\sigma_2(u^n)}=0
$$
since $u$ is unipotent. So (3) in Theorem~\ref{thm:characterizing finite entropy functionals} does not hold. 
\end{example} 

\subsection*{Acknowledgements} We thank the referee for their comments and their careful reading of the original manuscript.

\section{Semisimple Lie groups}\label{sec: ss Lie group background}

In this expository section we introduce some of the notation used throughout the paper. All the notation introduced is the same as in \cite{CZZ3}.

\textbf{As in the introduction (and the rest of the paper)}, let $\GG$ be a connected semisimple Lie group without compact factors and with finite center. Fix a Cartan decomposition
\[\mfg = \mfk \oplus \mfp\] 
of the Lie algebra $\mfg$ of $G$, a Cartan subspace $\mathfrak a\subset\mathfrak p$, and a Weyl chamber by $\mfa^+\subset\mfa$. Let $\Sigma^+$ be the set of positive restricted roots associated to $\mathfrak a^+$, let $\Delta\subset\mathfrak a^*$ be the system of simple restricted roots associated to $\mathfrak a^+$, and let $\Ksf\subset\GG$ denote the maximal compact Lie subgroup whose Lie algebra is $\mfk$. 

\subsection{Cartan projection}
The \emph{Cartan projection} is the map $\kappa : \GG \rightarrow \mfa^+$ with the defining property that $\kappa(g) \in \mfa^+$ is the unique element such that 
$$
g = m e^{\kappa(g)} \ell
$$
for some $m, \ell \in \Ksf$ (in general $m$ and $\ell$ are not uniquely determined by $g$). Such a decomposition $g = m e^{\kappa(g)} \ell$ is called a \emph{$\mathsf{KAK}$-decomposition} of $g$, see \cite[Chap.\ IX, Thm.\ 1.1]{Helgason}. 

There is a unique involutive linear map $\iota : \mfa \rightarrow \mfa$, called the \emph{opposition involution}, such that $\iota(-\mfa^+) = \mfa^+$ and 
$$
\iota( \kappa(g)) = \kappa(g^{-1})
$$
for all $g \in \GG$. The dual of $\iota$ satisfies 
$$
\iota^*(\Delta) = \Delta.
$$

As in the introduction we let $M $ denote the standard Riemannian symmetric space associated to $\GG$, and let $\d_M$ denote the distance function on $M$ induced by the Riemannian metric. As a $\GG$-space, $M=\GG / \Ksf$, and $\d_M$ satisfies 
$$
\d_M(g\Ksf, h\Ksf) = \norm{\kappa(g^{-1}h)} \quad \text{for all} \quad g,h \in \GG,
$$
where $\norm{\cdot}$ is the norm on $\mathfrak a$ induced by the restriction of the Killing from to $\mathfrak a$. The Cartan projection satisfies the following estimates. 

\begin{lemma}[{see e.g.\ \cite[Lem.\ 2.3]{Kassel2008}}]\label{lem:kappa multiplication estimate} If $g,h \in \GG$, then 
$$
\norm{\kappa(gh)-\kappa(h)} \le \norm{\kappa(g)} \quad \text{and} \quad \norm{\kappa(gh)-\kappa(g)} \le \norm{\kappa(h)}.
$$
\end{lemma}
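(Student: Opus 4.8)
The plan is to reduce both inequalities to the single statement
$$
(\star)\qquad \norm{\kappa(a)-\kappa(b)}\le\norm{\kappa(a^{-1}b)}\quad\text{for all }a,b\in\GG,
$$
and then to prove $(\star)$ using the Riemannian geometry of $M$. For the reduction, note first that the opposition involution $\iota$ is a linear isometry of $(\mfa,\norm{\cdot})$: since $\iota$ is linear and $\norm{\iota\kappa(g)}=\norm{\kappa(g^{-1})}=\d_M(e\Ksf,g^{-1}\Ksf)=\d_M(g\Ksf,e\Ksf)=\norm{\kappa(g)}$ for every $g$ (using that $\GG$ acts on $M$ by isometries), $\iota$ preserves $\norm{\cdot}$ on $\kappa(\GG)=\mfa^+$, which has nonempty interior in $\mfa$. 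Now taking $a=g$, $b=gh$ in $(\star)$ gives $\norm{\kappa(gh)-\kappa(g)}\le\norm{\kappa(h)}$, the second inequality; and applying $\iota$ and then $(\star)$ with $a=h^{-1}$, $b=h^{-1}g^{-1}$ gives
$$
\norm{\kappa(gh)-\kappa(h)}=\norm{\kappa(h^{-1}g^{-1})-\kappa(h^{-1})}\le\norm{\kappa(g^{-1})}=\norm{\kappa(g)},
$$
the first inequality.

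To prove $(\star)$, recall that $\norm{\kappa(a^{-1}b)}=\d_M(a\Ksf,b\Ksf)$, so $(\star)$ says precisely that the map $\Phi\colon M\to\mfa^+$, $\Phi(g\Ksf):=\kappa(g)$ (well defined since $\kappa$ is right $\Ksf$-invariant), is $1$-Lipschitz. I would prove this by factoring $\Phi$ through the tangent space $\mfp=T_oM$ at the basepoint $o:=e\Ksf$. Since $M$ is a symmetric space its geodesics through $o$ are $t\mapsto\exp(tX)\cdot o$ with $X\in\mfp$, so the Riemannian exponential $\exp_o$ is $X\mapsto\exp(X)\cdot o$; as $\GG$ is semisimple without compact factors, $M$ is Hadamard, so $\exp_o\colon\mfp\to M$ is a diffeomorphism whose differential is everywhere norm non-decreasing (Rauch comparison, non-positive curvature), hence it does not shorten curves, and since it is onto this yields $\d_M(\exp_o X,\exp_o X')\ge\norm{X-X'}$; thus $\exp_o^{-1}$ is $1$-Lipschitz. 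Next, writing $X\in\mfp$ as $X=\ad(k)Y$ with $k\in\Ksf$ and $Y\in\mfa^+$ (every element of $\mfp$ is $\ad(\Ksf)$-conjugate into $\mfa^+$), we get $\exp(X)=k\exp(Y)k^{-1}$, so $\exp_o(X)=k\exp(Y)\cdot o$; since $k\exp(Y)$ is a $\mathsf{KAK}$-decomposition with $\mathsf A$-part $e^Y$, we obtain $\Phi(\exp_o X)=Y=:\pi(X)$, the unique point of $\mfa^+$ in the $\ad(\Ksf)$-orbit of $X$. Hence $\Phi=\pi\circ\exp_o^{-1}$, and it remains to show that the ``folding'' map $\pi\colon\mfp\to\mfa^+$ is $1$-Lipschitz for $\norm{\cdot}$.

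This last point is the crux and, I expect, the main obstacle. Since $\ad(\Ksf)$ acts on $(\mfp,\norm{\cdot})$ by isometries and $\pi$ is $\ad(\Ksf)$-invariant, it suffices to bound $\norm{\pi(X)-Z}$ for $X\in\mfp$ and $Z\in\mfa^+$; writing $\pi(X)=Y$ and using $\norm{X}=\norm{Y}$, this reduces to $\langle X,Z\rangle\le\langle Y,Z\rangle$, i.e.\ to
$$
\max_{k\in\Ksf}\,\langle\ad(k)Y,Z\rangle=\langle Y,Z\rangle\qquad\text{for }Y,Z\in\mfa^+,
$$
where $\langle\cdot,\cdot\rangle$ denotes the Killing form (positive definite on $\mfp$). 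This is a consequence of Kostant's convexity theorem—the orthogonal projection of $\ad(\Ksf)Y$ onto $\mfa$ is the convex hull of the Weyl-group orbit of $Y$, and $\max_{w}\langle wY,Z\rangle=\langle Y,Z\rangle$ when $Y,Z$ lie in the same closed Weyl chamber—and can also be obtained directly: at a maximizer $X_0=\ad(k_0)Y$ one has $\langle[\xi,X_0],Z\rangle=0$ for all $\xi\in\mfk$, hence $\langle X_0,[\mfk,Z]\rangle=0$ by invariance of the Killing form; when $Z$ is regular the restricted root space decomposition gives $[\mfk,Z]^{\perp}\cap\mfp=\mfa$, so $X_0\in\mfa$, and therefore $X_0$ lies in the Weyl orbit of $Y$, the general case following by continuity. (Alternatively, one can recognize $(\star)$ itself as the $\mfa^+$-valued triangle inequality for $M$, due to Kapovich, Leeb, and Millson.) Combining the three steps, $\Phi=\pi\circ\exp_o^{-1}$ is $1$-Lipschitz, which proves $(\star)$, and with it the lemma.
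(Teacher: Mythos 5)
Your proof is correct. Note that the paper does not prove this lemma at all -- it simply cites \cite[Lem.\ 2.3]{Kassel2008} -- and your argument is essentially the standard one underlying that citation: reduce both inequalities (via the opposition involution $\iota$, which you correctly verify is an isometry) to the statement that $g\Ksf \mapsto \kappa(g)$ is $1$-Lipschitz from $(M,\d_M)$ to $(\mfa,\norm{\cdot})$, and prove that by factoring through $\exp_o^{-1}$, which is $1$-Lipschitz by nonpositive curvature, and the folding map $\mfp \rightarrow \mfa^+$, which is $1$-Lipschitz by the Kostant/Weyl-chamber inequality $\langle \ad(k)Y, Z\rangle \le \langle Y, Z\rangle$ for $Y,Z \in \mfa^+$. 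Each step you invoke is standard and correctly applied, so the argument is complete.
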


\subsection{Parabolic subgroups and flag manifolds} Given a subset $\theta \subset \Delta$, the \emph{parabolic subgroup associated to $\theta$}, denoted by $\Psf_\theta \subset \GG$, is the normalizer of 
 $$
\mfu_\theta:=  \bigoplus_{\alpha \in \Sigma^+_\theta} \mfg_\alpha
$$
where $\Sigma^+_\theta := \Sigma^+ - {\rm Span}( \Delta - \theta)$. Then the \emph{flag manifold associated to $\theta$} is
\[\Fc_\theta:=\mathsf{G}/\mathsf{P}_\theta.\]

 In this paper will only consider the case when $\theta \subset \Delta$ is \emph{symmetric}, that is $\iota^*(\theta) = \theta$. In this case, there is a unique open $\GG$-orbit in 
$$
\Fc_\theta \times \Fc_\theta
$$
and we say that two flags $F_1, F_2 \in \Fc_\theta$ are \emph{transverse} if $(F_1, F_2)$ is in this orbit. For a flag $F\in\Fc_\theta$, we let 
$$
\Zc_F \subset \Fc_\theta
$$
denote the set of flags that are not transverse to $F$.

Following the notation in~\cite{GGKW}, we define a map 
\[U_\theta: \GG \rightarrow \Fc_\theta\] 
by fixing a $\mathsf{KAK}$-decomposition $g = m_g e^{\kappa(g)} \ell_g$ for each $g \in \GG$ and then letting $U_\theta(g) := m_g \Psf_\theta$. One can show that if $\alpha(\kappa(g)) > 0$ for all $\alpha \in \theta$, then $U_\theta(g)$ is independent of the choice of $\mathsf{KAK}$-decomposition, see \cite[Chap.\ IX, Thm.\ 1.1]{Helgason}, and hence $U_\theta$ is continuous on the set 
$$
\left\{ g \in \GG : \alpha(\kappa(g)) > 0 \text{ for all } \alpha \in \theta\right\}.
$$ 

The action of $\GG$ on $\Fc_\theta$ has the following north-south like dynamics. 

\begin{proposition}[{see e.g.\ \cite[Prop.\ 2.3]{CZZ3}}]\label{prop:characterizing convergence in general symmetric case} Suppose $\theta \subset \Delta$ is symmetric, $F^\pm \in \Fc_\theta$ and $\{g_n\}$ is a sequence in $\GG$. The following are equivalent:  
\begin{enumerate}
\item $U_\theta(g_n) \rightarrow F^+$, $U_\theta(g_n^{-1}) \rightarrow F^-$ and $\lim_{n \rightarrow \infty} \alpha(\kappa(g_n)) = \infty$ for every $\alpha \in \theta$,
\item $g_n(F) \to F^+$ for all $F \in \Fc_\theta - \Zc_{F^-}$, and this convergence is uniform on compact subsets of $\Fc_\theta - \Zc_{F^-}$. 
\item $g_n^{-1}(F) = F^-$ for all $F \in \Fc_\theta - \Zc_{F^+}$, and this convergence is uniform on compact subsets of $\Fc_\theta - \Zc_{F^+}$. 
\item there are open sets $\mathcal{U}^\pm\subset\Fc_\theta$ such that $g_n(F) \to F^+$ for all $F \in \mathcal U^+$ and $g_n^{-1}(F) \to F^-$ for all $F \in \mathcal U^-$.
\end{enumerate}
\end{proposition}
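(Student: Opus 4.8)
The plan is to prove the core dynamical implication $(1)\Rightarrow(2)$ directly in the open Bruhat cell, to deduce $(1)\Rightarrow(3)$ and $(1)\Rightarrow(4)$ formally, and then to establish $(4)\Rightarrow(1)$ by a compactness argument. Fix once and for all a representative $w_0\in\Ksf$ of the longest element of the restricted Weyl group, normalizing $\mfa$, so that $\operatorname{Ad}(w_0)(-\kappa(g))=\kappa(g^{-1})$ for all $g\in\GG$. Since $\theta$ is symmetric, $w_0\Psf_\theta$ is a flag transverse to the base flag $\Psf_\theta$, the open Bruhat cell $\mathcal{C}:=\Fc_\theta\setminus\Zc_{w_0\Psf_\theta}$ is the image of the diffeomorphism $\mfu_\theta^-:=\bigoplus_{\alpha\in\Sigma^+_\theta}\mfg_{-\alpha}\to\mathcal{C}$, $u\mapsto\exp(u)\Psf_\theta$, and in this chart the torus $A=\exp(\mfa)\subset\Psf_\theta$ acts linearly, with $\exp(X)$ scaling the $\mfg_{-\alpha}$-summand by $e^{-\alpha(X)}$. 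Since every $\alpha\in\Sigma^+_\theta$ is a nonnegative combination of simple roots with positive coefficient on some root of $\theta$, the condition ``$\alpha(\kappa(g_n))\to+\infty$ for all $\alpha\in\theta$'' forces $\alpha(\kappa(g_n))\to+\infty$ for all $\alpha\in\Sigma^+_\theta$, hence $\operatorname{Ad}(e^{\kappa(g_n)})\to 0$ on $\mfu_\theta^-$.

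To prove $(1)\Rightarrow(2)$ I would argue by contradiction. If $(2)$ failed, there would be a compact set $K\subset\Fc_\theta\setminus\Zc_{F^-}$, an $\eps>0$, a subsequence $\{g_{n_k}\}$ and points $F_k\in K$ with $\dist(g_{n_k}(F_k),F^+)\ge\eps$. Choosing $\mathsf{KAK}$-decompositions $g_{n_k}=m_ke^{\kappa(g_{n_k})}\ell_k$ and passing to a further subsequence so that $m_k\to m$, $\ell_k\to\ell$ in $\Ksf$ and $F_k\to F_\infty\in K$, hypothesis $(1)$ yields $m\Psf_\theta=\lim U_\theta(g_{n_k})=F^+$, while rewriting $g_{n_k}^{-1}=(\ell_k^{-1}w_0^{-1})\,e^{\kappa(g_{n_k}^{-1})}\,(w_0m_k^{-1})$ as a $\mathsf{KAK}$-decomposition identifies $F^-=\lim U_\theta(g_{n_k}^{-1})=\ell^{-1}w_0\Psf_\theta$. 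As $F_\infty\notin\Zc_{F^-}$, its image $\ell(F_\infty)$ lies in $\mathcal{C}$, so $\ell_k(F_k)$ eventually lies in a fixed compact subset of $\mathcal{C}$; combined with $\operatorname{Ad}(e^{\kappa(g_{n_k})})\to0$ on $\mfu_\theta^-$ this gives $e^{\kappa(g_{n_k})}\ell_k(F_k)\to\Psf_\theta$ and hence $g_{n_k}(F_k)=m_ke^{\kappa(g_{n_k})}\ell_k(F_k)\to m\Psf_\theta=F^+$, contradicting $\dist(g_{n_k}(F_k),F^+)\ge\eps$. Applying this implication to $\{g_n^{-1}\}$ (legitimate because $\iota^*\theta=\theta$ and $\alpha(\kappa(g^{-1}))=(\iota^*\alpha)(\kappa(g))$) gives $(1)\Rightarrow(3)$, and $(1)\Rightarrow(4)$ then follows by taking $\mathcal{U}^{\pm}:=\Fc_\theta\setminus\Zc_{F^{\mp}}$, which are nonempty open sets because each $\Zc_F$ is a proper Zariski-closed, hence nowhere dense, subset of $\Fc_\theta$.

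To prove $(4)\Rightarrow(1)$ I would assume $(4)$ and suppose $(1)$ fails, dividing into three cases. First, if $\kappa(g_{n_k})$ stays bounded along a subsequence, then after a further passage $g_{n_k}\to g\in\GG$, so the diffeomorphism $g$ would be constant $(=F^+)$ on the open set $\mathcal{U}^+$, which is absurd. Second, if no subsequence has bounded $\kappa$ but some $\alpha_0\in\theta$ has $\alpha_0(\kappa(g_{n_k}))$ bounded along a subsequence, I pass further so that every $\alpha(\kappa(g_{n_k}))$ converges in $[0,+\infty]$, say $\alpha_0(\kappa(g_{n_k}))\to t_0<\infty$, and $m_k\to m$, $\ell_k\to\ell$; then $\ell(\mathcal{U}^+)$ meets $\mathcal{C}$ in a nonempty open subset of $\mfu_\theta^-$, so using $\dim\mfg_{-\alpha_0}\ge1$ I may choose $F,F'\in\mathcal{U}^+$ whose chart images differ by a vector with nonzero $\mfg_{-\alpha_0}$-component; the limiting diagonal operator on $\mfu_\theta^-$ scales that component by $e^{-t_0}>0$, so $g_{n_k}(F)$ and $g_{n_k}(F')$ converge to distinct points of $\Fc_\theta$, contradicting that both equal $F^+$ by $(4)$. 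In the only remaining case, $\kappa(g_n)\to\infty$ and $\alpha(\kappa(g_n))\to+\infty$ for all $\alpha\in\theta$, I observe that along any subsequence one may pass further so that $U_\theta(g_{n_k})\to F_1$ and $U_\theta(g_{n_k}^{-1})\to F_2$; then $(1)$ holds for $\{g_{n_k}\}$ with $(F_1,F_2)$, so by the already-established $(1)\Rightarrow(2)$ and $(1)\Rightarrow(3)$ the maps $g_{n_k}$ converge pointwise to $F_1$ off $\Zc_{F_2}$ and $g_{n_k}^{-1}$ to $F_2$ off $\Zc_{F_1}$; comparing with $(4)$ on $\mathcal{U}^+$ and $\mathcal{U}^-$ (which are not contained in these nowhere-dense sets) forces $F_1=F^+$ and $F_2=F^-$, and since this occurs along every subsequence the full sequences converge, so $(1)$ holds after all.

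The step I expect to be the main obstacle is the second case of $(4)\Rightarrow(1)$: showing that a single bounded simple-root value $\alpha_0(\kappa(g_{n_k}))$ is incompatible with $g_{n_k}$ converging---even just pointwise---to a constant on an open set. This is precisely where the explicit Bruhat-cell model is indispensable; one needs the linear $A$-action on $\mfu_\theta^-$, the nowhere-density of $\Zc_{w_0\Psf_\theta}$ so that an open set genuinely enters the cell, and the freedom to pick two test points whose difference is non-degenerate in the $\mfg_{-\alpha_0}$-direction. By contrast the forward implication $(1)\Rightarrow(2)$ is routine, the one slightly delicate point being the $\mathsf{KAK}$-bookkeeping for $g_n^{-1}$ used to pin down the repelling flag $F^-=\ell^{-1}w_0\Psf_\theta$.
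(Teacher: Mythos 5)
Your implications $(1)\Rightarrow(2)$, $(1)\Rightarrow(3)$, $(1)\Rightarrow(4)$ and $(4)\Rightarrow(1)$ look essentially correct: the affine-chart contraction argument (with $\operatorname{Ad}(e^{\kappa(g_n)})\to 0$ on $\bigoplus_{\alpha\in\Sigma^+_\theta}\mfg_{-\alpha}$ because each $\alpha\in\Sigma_\theta^+$ has a positive coefficient on some root of $\theta$) and the three-case compactness argument for $(4)\Rightarrow(1)$ are sound, and the small slip $w_0$ versus $w_0^{-1}$ is harmless since $w_0^2$ centralizes $\mfa$ and lies in $\Psf_\theta$. Note also that the paper itself gives no proof of this statement (it is quoted from \cite[Prop.\ 2.3]{CZZ3}), so you are being compared against the standard argument rather than an in-paper one.

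The genuine gap is that your implication graph never exits $(2)$ or $(3)$: you prove $1\Leftrightarrow 4$ and $1\Rightarrow 2,3$, but nothing you establish yields $(2)\Rightarrow(1)$ (or $(2)\Rightarrow(4)$) or $(3)\Rightarrow(1)$, so the four-way equivalence is not proved. These missing implications are not formal consequences: statement $(2)$ carries no direct information about $g_n^{-1}$, so one must recover $F^-$ from the ``uniform on compact subsets of $\Fc_\theta-\Zc_{F^-}$'' clause. Your Case 1/Case 2 machinery does give, from $(2)$ alone, that $\alpha(\kappa(g_n))\to\infty$ for all $\alpha\in\theta$ and (as in your Case 3) that $U_\theta(g_n)\to F^+$; but if $F_2$ is a subsequential limit of $U_\theta(g_n^{-1})$, the natural use of uniformity --- take $W\notin\Zc_{F^+}$ with $W\neq F^+$, set $x_k:=g_{n_k}^{-1}(W)\to F_2$, and note that if $F_2$ had a compact neighborhood inside $\Fc_\theta-\Zc_{F^-}$ then uniform convergence would force $W=g_{n_k}(x_k)\to F^+$ --- only shows $F_2\in\Zc_{F^-}$, not $F_2=F^-$. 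To finish you need an additional identification of the repelling flag, e.g.\ that convergence $g_{n_k}(F)\to F^+$ fails (even pointwise, or at least locally uniformly) at every point of $\Zc_{F_2}$, combined with the fact that $\Zc_{F_2}\subset\Zc_{F^-}$ forces $F_2=F^-$ (injectivity of $F\mapsto\Zc_F$); neither of these is in your proposal, and neither is immediate from the Bruhat-cell lemma as you stated it. Until $(2)\Rightarrow(1)$ and $(3)\Rightarrow(1)$ (or equivalents) are supplied, only the equivalence $(1)\Leftrightarrow(4)$ together with $(1)\Rightarrow(2),(3)$ has been established.
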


\subsection{Fundamental weights and partial Cartan projections}

For any $\alpha \in \Delta$, let $\omega_\alpha \in \mfa^*$ denote the associated fundamental weight. 

Given a subset $\theta \subset \Delta$, the \emph{partial Cartan subspace associated to $\theta$} is 
$$
\mfa_\theta := \{ H \in \mfa : \alpha(H) = 0 \text{ for all } \alpha \in \Delta - \theta\}.
$$
Then there is a unique projection 
\[p_\theta : \mfa \rightarrow \mfa_\theta\] 
such that $\omega_\alpha(X)=\omega_\alpha(p_\theta(X))$ for all $\alpha\in\theta$ and $X\in\mathfrak a$. The \emph{partial Cartan projection associated to $\theta$} is
\[\kappa_\theta:=p_\theta\circ\kappa:\GG\to\mathfrak a_\theta.\]
One can show that $\{ \omega_\alpha|_{\mfa_\theta} : \alpha \in \theta\}$ is a basis of $\mfa_\theta^*$ and hence we will identify 
$$
\mfa_\theta^*={\rm Span}\{ \omega_\alpha : \alpha \in \theta\}\subset\mathfrak a^*.
$$
Note that $\omega_\alpha(\kappa_\theta(g))=\omega_\alpha(\kappa(g))$ for all $\alpha\in\theta$ and $g\in\GG$ and so 
\begin{equation}
\label{eqn:stupid equality between projections} 
\phi(\kappa_\theta(g))=\phi(\kappa(g))
\end{equation} 
for all $\phi \in \mfa_\theta^*$ and $g\in\GG$.

\subsection{The partial Iwasawa cocycle} Let $\Usf := \exp( \mfu_\Delta)$. The \emph{Iwasawa decomposition} states that the map 
$$
(k,a, u) \in \Ksf \times \exp(\mfa) \times \Usf \mapsto kau \in \GG
$$
is a diffeomorphism, see \cite[Chap.\ VI, Prop.\ 6.46]{Knapp}. Using this, Quint \cite{quint-ps} defined the \emph{Iwasawa cocycle}
\[B : \GG \times \Fc_\Delta \rightarrow \mfa\] 
with the defining property that $gk \in \Ksf \cdot \exp(B(g,F)) \cdot \Usf$ for all $(g,F)\in \GG \times \Fc_\Delta$, where $k\in\Ksf$ is an element such that $F = k \Psf_\Delta$. 

For any $\theta\subset\Delta$, note that $\Psf_\Delta\subset \Psf_\theta$, so the identity map on $\GG$ induces a surjection \hbox{$\Pi_\theta:\Fc_\Delta\to\Fc_\theta$}. 
The \emph{partial Iwasawa cocycle} is the map 
\[B_\theta: \GG \times \Fc_\theta \rightarrow \mfa_\theta\] 
defined by $B_\theta(g,F) = p_\theta( B(g,F') )$ for some (all) $F'\in\Pi_\theta^{-1}(F)$.
By~\cite[Lem.\ 6.1 and 6.2]{quint-ps}, this is a well-defined cocycle, that is 
$$
B_\theta(gh, F) = B_\theta(g, hF) + B_\theta(h, F)
$$
for all $g,h \in \GG$ and $F \in \Fc_\theta$. 

\subsection{The Linear Case}\label{sec:linear case} We now briefly describe the above notations when $\GG = \PSL(d,\Rb)$. Let $\mathfrak{sl}(d,\Rb) = \mathfrak{k}  + \mathfrak{p} $ denote the standard Cartan decomposition of $\mathfrak{sl}(d,\Rb)$, that is 
\begin{align*}
\mathfrak{k} = \{ X \in \mathfrak{sl}(d,\Rb) : {^tX} = -X\}\quad\text{and}\quad \mathfrak{p} = \{ X \in \mathfrak{sl}_d(\Rb) : {^tX} = X\}.
\end{align*}
Also let 
$$
\mathfrak{a}:=\{{\rm diag}(a_1,\ldots,a_d)\in\mathfrak{sl}(d,\Rb) :\ a_1+\cdots+a_d=0 \}\subset  \mathfrak{p}
$$
denote the standard Cartan subspace with the standard positive Weyl chamber 
$$
\mathfrak{a}^+:=\{{\rm diag}(a_1,\ldots,a_d)\in\mathfrak{a} :\ a_1 \ge  \dots \ge a_d \}\subset  \mathfrak{p}.
$$
Then the opposite involution is given by 
$$
\iota({\rm diag}(a_1,\ldots,a_d)) ={\rm diag}(-a_d,\ldots,-a_1)
$$
and the Cartan projection is given by 
$$
\kappa(g)=(\log\sigma_1(g),\cdots,\log\sigma_d(g))
$$ 
where $\sigma_1(g) \ge \cdots \ge \sigma_d(g)$ are the singular values of any lift of $g$ to $\mathsf{SL}(d,\Rb)$.

The standard system of simple restricted roots is $\Delta :=\{ \alpha_1,\dots, \alpha_{d-1}\} \subset \mathfrak{a}^*$ where
$$
\alpha_j( {\rm diag}(a_1,\ldots,a_d)) = a_j-a_{j+1}
$$
for all ${\rm diag}(a_1,\ldots,a_d) \in \mathfrak{a}$. Then the fundamental weights are given by 
$$
\omega_{\alpha_j}( {\rm diag}(a_1,\ldots,a_d))  = a_1+\dots + a_j
$$
and so $\iota^*(\omega_{\alpha_j}) = \omega_{\alpha_{d-j}}$. 

Let $e_1,\dots, e_d$ denote the standard basis of $\Rb^d$ and let $F_0^j := \ip{e_1,\dots, e_j}$ be the subspace spanned by $e_1,\dots, e_j$. Then given $\theta=\{ \alpha_{j_1}, \dots, \alpha_{j_n}\} \subset \Delta$ with $j_1 < j_2 < \dots < j_n$, the parabolic subgroup $\Psf_\theta \subset \PSL(d,\Rb)$ is the stabilizer of the flag 
$$
( F_0^{j_1}, F_0^{j_2}, \cdots, F^{j_n}_0)
$$
and $\Fc_\theta$ is the partial flag manifold 
$$
\mathcal F_\theta  = \left\{ (F^{j_1},\dots, F^{j_n}) :\ \mathrm{dim}\left(F^{j_i}\right)=j_i  \text{ and }F^{j_1}\subset F^{j_2} \subset\cdots\subset F^{j_n}\right\}.
$$ 
In the case when $\theta$ is symmetric, two flags $F_1, F_2 \in \mathcal F_\theta$ are transverse if and only if they are transverse in the usual sense: $F_1^j \oplus F_2^{d-j} = \Rb^d$ for all $\alpha_j \in \theta$. 

In the linear case we often replace subscripts of the form $\theta = \{ \alpha_{j_1}, \dots, \alpha_{j_n}\}$ with $j_1,\cdots ,j_n$. So for instance 
$$
\Fc_{1,d-1} = \Fc_{\{\alpha_1,\alpha_{d-1}\}}
$$
denotes the line/hyperplane partial flag manifold. 

\subsection{Properties of unipotent subgroups}
Recall that a $d\times d$ real matrix $A$ is \emph{unipotent} if 1 is the only eigenvalue of $A$ (over $\Cb$) and a subgroup $\mathsf{U} \subset \mathsf{GL}(d,\Rb)$ is called \emph{unipotent} if every element in $\mathsf{U}$ is unipotent. When $\GG$ has trivial center, a subgroup $\mathsf{U} \subset \GG$ is \emph{unipotent} if ${\rm Ad}(\mathsf{U}) \subset \SL(\mathfrak{g})$ is unipotent. Given a Lie subgroup $\mathsf{H} \subset \mathsf{G}$, the \emph{unipotent radical} of $\mathsf{H}$ is the maximal connected normal unipotent subgroup of $\mathsf{H}$.

Unipotent subgroups have the following well-known properties. 

\begin{proposition}\label{prop:properties of unipotent subgroups}
Suppose $Z(\mathsf{G})$ is trivial and $\mathfrak{u}$ is the Lie algebra of a connected unipotent subgroup $\mathsf{U} \subset \GG$. Then: 
\begin{enumerate}
\item The exponential map induces a diffeomorphism $\mathfrak{u} \rightarrow \mathsf{U}$. 
\item Let $\lambda$ be a  measure on $\mathfrak{u}$ which is obtained by pulling back the Lebesgue measure on $\Rb^{\dim \mathfrak{u}}$ via a linear isomorphism (notice that any two measures obtained this way are scalar multiplies of each other). Then the push-forward $\mu : = \exp_*\lambda$ is a Haar measure on $\mathsf{U}$.
\item If $\Phi : \GG \rightarrow \SL(d,\Rb)$ is a representation, then $\Phi(\mathsf{U}) \subset \SL(d,\Rb)$ is unipotent. 
\item There exists $C > 0$ such that
$$
\norm{\kappa(\exp(Y))} \le C+C\log (1+\norm{Y})
$$
for all $Y \in \mathfrak{u}$. 
\item  For any $\alpha \in \Delta$ there exist $M_\alpha \in \Nb$, $C_\alpha>1$ and a positive everywhere defined rational function $R_\alpha: \mathfrak{u} \rightarrow \Rb$ where 
$$
\frac{1}{C_\alpha} R_\alpha(Y)^{1/M_\alpha} \le e^{\omega_\alpha(\kappa(\exp(Y)))} \le C_\alpha R_\alpha(Y)^{1/M_\alpha}
$$
for all $Y \in \mathfrak{u}$.
 
\end{enumerate} 
\end{proposition}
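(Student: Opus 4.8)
The plan is to establish the five items essentially independently, each time reducing matters to the nilpotency of $\operatorname{ad}Y$ for $Y\in\mfu$. Write $N=\dim\mfg$; since $\mathsf U$ is unipotent, for every $Y\in\mfu$ the operator $\operatorname{ad}Y$ is nilpotent with $(\operatorname{ad}Y)^{N}=0$, so $\mfu$ is a nilpotent Lie algebra all of whose elements are $\operatorname{ad}$-nilpotent, and (as $\mfg$ is semisimple) each such $Y$ is a nilpotent element of $\mfg$. For (1) I would use that, $Z(\GG)$ being trivial, $\operatorname{Ad}\colon\GG\to\SL(\mfg)$ is a Lie group isomorphism onto its image and identifies $\mathsf U$ with a connected unipotent matrix group $\operatorname{Ad}(\mathsf U)$; on the latter the exponential and logarithm power series terminate and are mutually inverse polynomial maps, so $\exp\colon\operatorname{ad}(\mfu)\to\operatorname{Ad}(\mathsf U)$ is a diffeomorphism, hence (composing with the linear isomorphism $\operatorname{ad}\colon\mfu\to\operatorname{ad}(\mfu)$) so is $\exp\colon\mfu\to\mathsf U$; in particular $\mathsf U$ is simply connected. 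For (2), granting (1), $\mathsf U$ is a simply connected nilpotent Lie group, for which it is classical (see, e.g., \cite{Knapp,Helgason}) that in the exponential chart the multiplication is polynomial and that Lebesgue measure pushes forward to a bi-invariant Haar measure; since any $\lambda$ as in the statement is a positive multiple of Lebesgue, $\exp_*\lambda$ is a Haar measure. For (3), each $Y\in\mfu$ is a nilpotent element of the semisimple Lie algebra $\mfg$, and representations of semisimple Lie algebras preserve the Jordan decomposition, so $d\Phi(Y)$ is a nilpotent endomorphism for any $\Phi\colon\GG\to\SL(d,\Rb)$; hence $\Phi(\exp Y)=\exp(d\Phi(Y))$ is unipotent, and since $\mathsf U=\exp(\mfu)$ by (1), the group $\Phi(\mathsf U)$ is unipotent.

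For (4), I would first reduce to a polynomial estimate on $\operatorname{Ad}$. With a suitable $\operatorname{Ad}(\Ksf)$-invariant inner product on $\mfg$, the $\mathsf{KAK}$-decomposition together with the root-space decomposition gives $\log\norm{\operatorname{Ad}(g)}_{\mathrm{op}}=F(\kappa(g))$, where $F(H):=\max_\chi\chi(H)$, the maximum being over the weights $\chi$ of $\operatorname{Ad}$ (i.e.\ over the restricted roots together with $0$); the same formula holds for $\operatorname{Ad}(g)^{-1}$ because the weight set is stable under the opposition involution. Since $Z(\GG)$ is finite, $\operatorname{Ad}$ is proper, so $F(\kappa(g))\to+\infty$ whenever $\norm{\kappa(g)}\to+\infty$; as $F$ is nonnegative on $\mfa^+$ and positively homogeneous, this forces $F(H)\ge c\norm H$ on $\mfa^+$ for some $c>0$. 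On the other hand, for $Y\in\mfu$, nilpotency gives $\operatorname{Ad}(\exp Y)=e^{\operatorname{ad}Y}=\sum_{j=0}^{N-1}(\operatorname{ad}Y)^{j}/j!$, so $\norm{\operatorname{Ad}(\exp Y)}_{\mathrm{op}}\le C(1+\norm Y)^{N}$ using linearity of $Y\mapsto\operatorname{ad}Y$. Combining, $\norm{\kappa(\exp Y)}\le\tfrac1cF(\kappa(\exp Y))=\tfrac1c\log\norm{\operatorname{Ad}(\exp Y)}_{\mathrm{op}}\le C'+C'\log(1+\norm Y)$.

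For (5), I would fix, for each $\alpha\in\Delta$, a representation $\rho_\alpha\colon\GG\to\mathsf{GL}(V_\alpha)$ whose highest restricted weight is $M_\alpha\omega_\alpha$ for some $M_\alpha\in\Nb$ — such representations exist, see \cite{GGKW} — and observe, exactly as in (4), that for any inner product on $V_\alpha$ one has $\norm{\rho_\alpha(g)}_{\mathrm{op}}\asymp\norm{\rho_\alpha(g)}_{\mathrm{HS}}\asymp e^{M_\alpha\omega_\alpha(\kappa(g))}$. Then I would set $R_\alpha(Y):=\norm{\rho_\alpha(\exp Y)}_{\mathrm{HS}}^{2}$, i.e.\ the sum of the squares of the entries of the matrix of $\rho_\alpha(\exp Y)$ in a fixed basis of $V_\alpha$. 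Because $Y$ is a nilpotent element of $\mfg$, $d\rho_\alpha(Y)$ is nilpotent of index at most $\dim V_\alpha$, so $\rho_\alpha(\exp Y)=\sum_{j<\dim V_\alpha}d\rho_\alpha(Y)^{j}/j!$ has entries that are polynomials in $Y$; hence $R_\alpha$ is a polynomial, defined everywhere, and strictly positive since $\rho_\alpha(\exp Y)$ is invertible. The comparison above then reads $R_\alpha(Y)^{1/(2M_\alpha)}\asymp e^{\omega_\alpha(\kappa(\exp Y))}$, which is the claim with $M_\alpha$ replaced by $2M_\alpha$ and a suitable $C_\alpha>1$.

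Although every ingredient is standard, the step I expect to require the most care is the comparison of $\norm{\kappa(g)}$ with $\log^+\norm{\operatorname{Ad}(g)}_{\mathrm{op}}$ used in (4) — equivalently, the strict positivity of $F$ on $\mfa^+\setminus\{0\}$, which hinges on properness of $\operatorname{Ad}$ — together with the existence over $\Rb$ of the representations $\rho_\alpha$ in (5) with highest weight proportional to $\omega_\alpha$; granting these, the rest reduces to manipulating terminating exponential series and using equivalence of matrix norms.
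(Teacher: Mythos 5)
Your proposal is correct, and on items (1) and (2) it is essentially the paper's argument (identify $\mathsf{U}$ with $\operatorname{Ad}(\mathsf{U})$, use the terminating exponential/logarithm series, and quote the standard fact that Lebesgue measure in exponential coordinates gives Haar measure on a simply connected nilpotent group). The differences are in (3)--(5). For (3) you argue directly via preservation of the Jordan decomposition, where the paper just cites \cite{ZimmerBook}; both work. For (4) you obtain the key lower bound $\max_\chi \chi(H)\ge c\norm{H}$ on $\mfa^+$ from properness of $\operatorname{Ad}$ together with homogeneity and the $\iota$-invariance of the weight set, while the paper uses that $\sigma_1\circ\operatorname{ad}$ is a norm (injectivity of $\operatorname{ad}$, i.e.\ trivial center) and that $\operatorname{ad}(\mfa)$ diagonalizes; these are two packagings of the same use of the trivial-center hypothesis, and your direct estimate $\norm{e^{\operatorname{ad}Y}}_{\mathrm{op}}\le C(1+\norm{Y})^{\dim\mfg}$ bypasses the paper's auxiliary polynomial bound. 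The genuine divergence is (5): you take, for each $\alpha\in\Delta$, an irreducible proximal (Tits) representation $\rho_\alpha$ of $\GG$ with highest restricted weight $M_\alpha\omega_\alpha$ (existence as in \cite{GGKW}), use highest-weight dominance on the closed Weyl chamber and a compatible inner product to get $\sigma_1(\rho_\alpha(g))\asymp e^{M_\alpha\omega_\alpha(\kappa(g))}$ for all $g$, and then $R_\alpha(Y):=\norm{\rho_\alpha(\exp Y)}_{\mathrm{HS}}^2$ is an honest positive polynomial because $d\rho_\alpha(Y)$ is nilpotent. The paper instead only has at its disposal its reduction Proposition~\ref{prop:reduction to the linear case}, which requires weights with strictly positive coefficients on all of $\Delta$, so it writes $\omega_\alpha=\chi_{\omega_\alpha}-\chi_0$ and produces $R_\alpha$ as a quotient of two such polynomials; that quotient trick is the only reason the statement speaks of rational rather than polynomial functions. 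Your route rests on the same GGKW machinery that underlies the paper's reduction proposition, is legitimate, and yields a marginally stronger conclusion; the one step you should record explicitly is the standard fact that the inner product on $V_\alpha$ can be chosen with $\rho_\alpha(\Ksf)$ orthogonal and $\rho_\alpha(\exp\mfa)$ diagonal, which is what makes $\log\sigma_1(\rho_\alpha(g))=M_\alpha\omega_\alpha(\kappa(g))$ exact (and hence valid up to a bounded error for any other inner product).
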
  

We provide a proof of Proposition \ref{prop:properties of unipotent subgroups} in Appendix \ref{app: unipotent subgroups}.

\section{Relatively hyperbolic groups}\label{sec:rel hyp groups background} 

In this expository section we recall one of the many equivalent definitions of a relatively hyperbolic group, for more background and other definitions see~\cite{Bowditch_relhyp, DS2005, Osin, Yaman}.

\subsection{Relatively hyperbolic groups}\label{sec: defining rel hyp groups} Recall that the action, by homeomorphisms, of a  group $\Gamma$ on a 
compact metrizable space $M$ is said to be a (discrete)  {\em convergence group action} if whenever
$\{\gamma_n\}$ is a sequence of distinct elements in $\Gamma$, then there are points $x,y\in M$ and a subsequence $\{\gamma_{n_j}\}$ so that $\gamma_{n_j}(z)$ converges to  $x$ for all $z\in M-\{y\}$ and the convergence is uniform on compact subsets of $M-\{y\}$.

Suppose that $\Gamma$ acts on $M$ as a convergence group, then: 
\begin{itemize}
 \item A point $x \in M$ is a \emph{conical limit point} if there exist $a, b \in M$ distinct and a sequence $\{\gamma_n\}$ in $\Gamma$ such that $\gamma_n(x) \to a$ and $\gamma_n(y) \to b$ for all $y \in M - \{x\}$. 
 \item An element $\gamma \in \Gamma$ is \emph{parabolic} if  it has infinite order and fixes exactly one point in $M$.
 \item A infinite order subgroup $H \subset \Gamma$ is \emph{parabolic} if it fixes some point of $M$ and each infinite order element in $H$ is parabolic. The fixed point of a parabolic subgroup is called a \emph{parabolic point}.
 \item A parabolic point $x \in M$ is \emph{bounded} if the quotient ${\rm Stab}_\Gamma(x) \backslash (M - \{x\})$ is compact.

\end{itemize}
Finally, $\Gamma$ is called a \emph{geometrically finite convergence group} if every point in $M$ is either a conical limit point or a bounded parabolic point.

\begin{definition}\label{defn:RH}
Given a finitely generated group $\Gamma$ and a collection $\Pc$ of finitely generated infinite subgroups, we say that $\Gamma$ is \emph{hyperbolic relative to $\Pc$}, or that $(\Gamma,\Pc)$ is \emph{relatively hyperbolic}, if $\Gamma$ acts on a compact perfect metrizable space $M$ as a geometrically finite convergence group and the maximal parabolic subgroups are exactly the set
$$
 \{ \gamma P \gamma^{-1} : P \in \Pc, \gamma \in \Gamma\}.
$$
To avoid confusion with parabolic subgroups in the Lie group sense, we also sometimes call a maximal parabolic subgroup of $\Gamma$ a \emph{peripheral subgroup}.
\end{definition}

\begin{remark}
Notice that by definition we assume that a relatively hyperbolic group is non-elementary (i.e.\  $M$ is perfect and hence is infinite) and finitely generated. \end{remark}

By a theorem of Bowditch \cite[Thm.\ 9.4]{Bowditch_relhyp}, given a relatively hyperbolic group $(\Gamma,\Pc)$, any two compact perfect metrizable spaces satisfying Definition~\ref{defn:RH} are $\Gamma$-equivariantly homeomorphic. 
This unique topological space is then denoted by $\partial(\Gamma,\Pc)$ and called the \emph{Bowditch boundary of $(\Gamma, \Pc)$}.

\subsection{The Groves--Manning cusp space}  
Given a relatively hyperbolic group $(\Gamma,\mathcal P)$, 
Groves and Manning \cite{GrovesManning} constructed  a Gromov hyperbolic space which $\Gamma$ acts on proper discontinuously so that subgroups
in $\mathcal P$ preserve ``combinatorial horoballs.'' These spaces, now called Groves--Manning cusp spaces, are central tools in the study of relatively hyperbolic groups.
We recall their construction here.

\begin{definition} Suppose $Y$ is a graph with the simplicial distance $\d_Y$. The \emph{combinatorial horoball} $\mathcal{H}(Y)$ is the graph, also equipped with the simplicial distance, that has vertex set $Y^{(0)} \times \Nb$ and two types of edges:
	\begin{itemize}
		\item \emph{vertical edges} joining vertices $(v,n)$ and $(v,n+1)$, 
		\item \emph{horizontal edges} joining vertices $(v,n)$ and $(w,n)$ when $\d_Y(v,w) \le 2^{n-1}$. 
	\end{itemize}
\end{definition} 

\begin{definition} \label{def:cusp spaces} \label{defn: cusped Cayley graph}
Suppose that $(\Gamma,\Pc)$ is relatively hyperbolic. A finite symmetric generating set $S \subset \Gamma$ is \emph{adapted} if $S \cap P$ is a generating set of $P$ for every $P \in \Pc$. Given such an $S$, we let $\Cc(\Gamma, S)$ and $\Cc(P, S \cap P)$ denote the associated Cayley graphs. Then the associated \emph{Groves--Manning cusp space}, denoted $\Cc_{GM}(\Gamma, \Pc, S)$, is obtained from the Cayley graph $\Cc(\Gamma, S)$ by attaching, for each $P \in \Pc$ and each coset $\gamma P \in \Gamma / P $, a copy of the combinatorial horoball $\mathcal{H}( \gamma\Cc(P, S \cap P))$  by identifying $\gamma\Cc(P, S \cap P)\subset\Cc(\Gamma, S)$ with the $n=1$ level of $\mathcal{H}( \gamma\Cc(P, S \cap P))$.

\end{definition}

\begin{theorem}[{\rm{Groves--Manning} \cite[Thm.\ 3.25]{GrovesManning}}] If $(\Gamma, \Pc)$ is relatively hyperbolic and $S$ is an adapted finite generating set, then 
$\Cc_{GM}(\Gamma, \Pc, S)$ is a proper geodesic Gromov hyperbolic space such that
\begin{enumerate}
\item $\Gamma$ acts properly discontinuously on $\Cc_{GM}(\Gamma, \Pc, S)$ by isometries, 
\item
every point in $X$ is within a uniformly bounded distance of a bi-infinite geodesic, and
\item there exists a $\Gamma$-equivariant homeomorphism between $\partial_\infty \Cc_{GM}(\Gamma, \Pc, S)$, the Gromov boundary of $\Cc_{GM}(\Gamma, \Pc, S)$, 
and $\partial(\Gamma, \Pc)$.
\end{enumerate}
\end{theorem}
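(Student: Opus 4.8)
The plan is to verify the three listed properties of $\Cc_{GM}(\Gamma,\Pc,S)$ in turn, exploiting the combinatorial structure of the combinatorial horoballs that are glued onto the Cayley graph.

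\textbf{Properness and the action.} First I would check that the construction is locally finite. The Cayley graph $\Cc(\Gamma,S)$ is locally finite since $S$ is finite, and so is each $\Cc(P,S\cap P)$; a combinatorial horoball $\mathcal H(Y)$ over a locally finite graph $Y$ is again locally finite, because a vertex $(v,n)$ is joined by vertical edges only to $(v,n\pm1)$ and by horizontal edges only to vertices $(w,n)$ with $\d_Y(v,w)\le 2^{n-1}$, and balls of finite radius in a locally finite graph are finite; finally, only finitely many horoballs are attached along any given vertex $g$ of $\Cc(\Gamma,S)$, since $g$ lies in the coset $\gamma P$ only when $\gamma P=gP$, so $g$ is an attaching vertex for exactly one horoball per $P\in\Pc$. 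Hence $\Cc_{GM}(\Gamma,\Pc,S)$ is a connected locally finite graph, so as a geodesic metric space it is proper. The left $\Gamma$-action on $\Cc(\Gamma,S)$ extends by functoriality of the combinatorial horoball: $g\in\Gamma$ carries the copy of $\mathcal H(\gamma\Cc(P,S\cap P))$ attached along $\gamma P$ isometrically onto the copy attached along $g\gamma P$, and these assemble into a simplicial $\Gamma$-action by isometries; since $\Gamma$ acts freely on the vertex set and the space is proper, the action is properly discontinuous.

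\textbf{Hyperbolicity (the main obstacle).} The heart of the argument is that $\Cc_{GM}(\Gamma,\Pc,S)$ is Gromov hyperbolic, and I expect this to be where the real work lies. I would do it in two stages. Stage one: each combinatorial horoball $\mathcal H(Y)$ is $\delta_0$-hyperbolic for a \emph{universal} $\delta_0$, independent of $Y$. This follows from an explicit description of geodesics — ascend vertically to a level $L\approx\max\{n,m,\log_2(\d_Y(v,w)+1)\}$, traverse at most three horizontal edges at level $L$, then descend vertically — which gives the coarse distance formula $\d_{\mathcal H}((v,n),(w,m))\approx(L-n)+(L-m)$ and exhibits $\mathcal H(Y)$ as coarsely tree-like, so the four-point inequality holds with an absolute constant; one also records that $\mathcal H(Y)$ has exactly one point at infinity (its ``cusp point''), with $Y\times\{1\}$ logarithmically distorted inside it. Stage two is a combination argument: collapsing, for each coset $\gamma P$, the levels $n\ge 2$ of its horoball to a point yields, up to quasi-isometry, the coned-off Cayley graph $\widehat{\Cc}(\Gamma,S)$, which is $\delta_1$-hyperbolic and \emph{fine} precisely because $(\Gamma,\Pc)$ is relatively hyperbolic — here I would invoke Bowditch's characterization of relative hyperbolicity (starting from the convergence-group Definition~\ref{defn:RH}) in terms of hyperbolicity and fineness of $\widehat{\Cc}(\Gamma,S)$, equivalently the bounded coset penetration property of $\Cc(\Gamma,S)$ relative to $\{\gamma P\}$. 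Given a geodesic triangle in $\Cc_{GM}$, its image in $\widehat{\Cc}(\Gamma,S)$ is a $\delta_1$-thin triangle; the portion of each side lying inside a fixed horoball is a horoball-geodesic controlled by the universal $\delta_0$, while the portions outside are pulled back from the thin triangle in $\widehat{\Cc}(\Gamma,S)$, with fineness/bounded coset penetration limiting how far these portions can wander through the peripheral cosets. Putting these together yields a uniform thinness constant $\delta$. Bookkeeping this interaction of geodesics with an a priori unbounded family of (possibly very large) horoballs is the delicate part, and it is exactly where relative hyperbolicity is used in an essential way.

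\textbf{Bi-infinite geodesics and identification of the boundary.} Write $\Cc^{\mathrm{thin}}$ for $\Cc_{GM}$ with the open horoballs (levels $n\ge2$) deleted, so $\Cc^{\mathrm{thin}}=\Cc(\Gamma,S)$, on which $\Gamma$ acts cocompactly (the quotient is a finite graph). For property (2): a vertex $(v,n)$ with $n\ge2$ lies on the vertical ray to its cusp point, and following this ray down to level $1$ and then out along an escaping geodesic ray of $\Cc(\Gamma,S)$ produces a geodesic ray through $(v,n)$; since $\partial_\infty\Cc_{GM}$ is perfect (the group is non-elementary, so $\partial(\Gamma,\Pc)$ is perfect), the two ends can be chosen to be distinct boundary points, giving a bi-infinite geodesic within bounded distance, and for vertices in $\Cc^{\mathrm{thin}}$ one uses the finitely many $\Gamma$-orbits together with non-elementarity to exhibit such a geodesic for one representative of each orbit. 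For property (3): $\Gamma$ acts properly discontinuously by isometries on the proper hyperbolic space $\Cc_{GM}$, so the induced action on $\partial_\infty\Cc_{GM}$ is a convergence action; the cusp point of the horoball attached along $\gamma P$ is fixed exactly by the stabilizer of that horoball, which is $\gamma P\gamma^{-1}$ (it must preserve the attaching subgraph $\gamma\Cc(P,S\cap P)$), and it is a bounded parabolic point because $\gamma P\gamma^{-1}$ acts cocompactly on the complement of a horoball neighbourhood, hence on the complement of the cusp point in the boundary; any other boundary point is the limit of a ray returning to $\Cc^{\mathrm{thin}}$ infinitely often (a ray trapped in one horoball converges to its cusp point), and cocompactness of $\Gamma\curvearrowright\Cc^{\mathrm{thin}}$ makes it conical. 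Thus $\Gamma\curvearrowright\partial_\infty\Cc_{GM}$ is a geometrically finite convergence action whose maximal parabolic subgroups are exactly the conjugates of the $P\in\Pc$, so $\partial_\infty\Cc_{GM}$ satisfies Definition~\ref{defn:RH}, and Bowditch's uniqueness theorem (cited above) yields the $\Gamma$-equivariant homeomorphism $\partial_\infty\Cc_{GM}\cong\partial(\Gamma,\Pc)$.
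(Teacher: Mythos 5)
First, note that the paper offers no proof of this statement: it is quoted verbatim from Groves--Manning \cite[Thm.\ 3.25]{GrovesManning} (with the boundary identification going back to Bowditch), so the only comparison available is with that cited argument, which your outline does not follow (Groves--Manning do not deduce hyperbolicity of the cusped space by pushing triangles to the coned-off Cayley graph).

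Your architecture (uniform hyperbolicity of combinatorial horoballs, a combination step, then identification of $\partial_\infty \Cc_{GM}(\Gamma,\Pc,S)$ with $\partial(\Gamma,\Pc)$ via geometric finiteness of the boundary action and Bowditch's uniqueness theorem) is reasonable, and parts (1) and (3) are essentially right modulo standard details. The genuine gap is in the step you yourself flag as the heart of the matter: you assert that the image in $\widehat{\Cc}(\Gamma,S)$ of a geodesic triangle of $\Cc_{GM}(\Gamma,\Pc,S)$ is a $\delta_1$-thin triangle and that thinness can then be pulled back. But the collapse map does not send geodesics to geodesics; the image of a cusped-space geodesic is merely some path, and showing it is a uniform (unparametrized) quasi-geodesic with bounded backtracking through the cone points --- and then converting fineness/BCP control in $\widehat{\Cc}(\Gamma,S)$ into a uniform thinness constant for the original triangle, horoball excursions included --- is precisely the content of the theorem, not a bookkeeping afterthought. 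As written, the argument assumes what must be proved, and no mechanism is given for bounding how a $\Cc_{GM}$-geodesic interacts with the (arbitrarily large, exponentially distorted) horoballs. A secondary, fixable inaccuracy occurs in part (2): descending vertically to level $1$ and then following an escaping geodesic ray of $\Cc(\Gamma,S)$ does not produce a geodesic ray of the cusped space, since $\Cc(\Gamma,S)$ is exponentially distorted in $\Cc_{GM}(\Gamma,\Pc,S)$; the correct route is to use vertical rays to the cusp point together with thinness of ideal triangles, which again presupposes the hyperbolicity you have not yet established.
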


\section{Discrete subgroups of semisimple Lie groups}\label{sec: discrete subgroup background}

In this expository section we introduce three classes of discrete subgroups in $\GG$ and state some of their basic properties. \textbf{In the rest of the paper}, we  assume that $\theta \subset \Delta$ is symmetric.

\subsection{Divergent groups} A discrete subgroup $\Gamma \subset \GG$ is called \emph{$\Psf_\theta$-divergent} if 
$$
\lim_{n \rightarrow +\infty} \min_{\alpha \in \theta} \alpha(\kappa_\theta(\gamma_n)) = +\infty
$$
whenever $\{\gamma_n\}$ is a sequence of distinct elements in $\Gamma$. The \emph{limit set} $\Lambda_\theta(\Gamma)$ of such a subgroup is the set of accumulation points of $\{ U_\theta(\gamma) : \gamma \in \Gamma\}$. We note that in the literature, divergent groups are sometimes called regular groups (e.g.\ \cite{KLP1}).

The limit set of a divergent group can be used to compactify it.

\begin{lemma}[{see e.g.\ \cite[Prop.\ 2.3]{CZZ3}}]\label{lem: limit set compactifies} 
If $\Gamma \subset \GG$ is $\Psf_\theta$-divergent, then the set $\Gamma \cup \Lambda_\theta(\Gamma)$ has a topology that makes it a compactification of $\Gamma$. More precisely:
\begin{enumerate}
\item $\Gamma \cup \Lambda_\theta(\Gamma)$ is a compact metrizable space.
\item If $\Gamma$ has the discrete topology, then $\Gamma \hookrightarrow \Gamma \cup \Lambda_\theta(\Gamma)$ is an embedding.
\item If $\Lambda_\theta(\Gamma)$ has the subspace topology from $\Fc_\theta$, then $\Lambda_\theta(\Gamma) \hookrightarrow \Gamma \cup \Lambda_\theta(\Gamma)$ is an embedding.
\item A sequence $\{\gamma_n\}$ in $\Gamma$ converges to $F$ in $\Lambda_\theta(\Gamma)$ if and only if 
\[
\lim_{n \rightarrow +\infty} \min_{\alpha \in \theta} \alpha(\kappa_\theta(\gamma_n)) = +\infty\quad\text{and}\quad U_\theta(\gamma_n) \rightarrow F.
\] 
\item The natural left action of $\Gamma$ on $\Gamma \cup \Lambda_\theta(\Gamma)$ is by homeomorphisms. 
\end{enumerate}
\end{lemma}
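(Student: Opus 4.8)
\emph{Proof idea.}
The plan is to realise $\Gamma \cup \Lambda_\theta(\Gamma)$ as the closure of an embedded copy of $\Gamma$ inside a concrete compact metrizable space, and then to read off (1)--(5). We may assume $\Gamma$ is infinite (otherwise $\Lambda_\theta(\Gamma)=\emptyset$ and everything is trivial), so $\Gamma$ is countably infinite. Fix a bijection $j\colon\Gamma\to\Nb$ and let $\overline{\Nb}=\Nb\cup\{\infty\}$ be the one-point compactification, in which $\Nb$ is open and discrete and a sequence tends to $\infty$ exactly when it eventually leaves every finite set. Set $K:=\Fc_\theta\times\overline{\Nb}$, which is compact and metrizable since the flag manifold $\Fc_\theta$ is. By $\Psf_\theta$-divergence, $U_\theta$ is continuous at all but finitely many points of $\Gamma$, so I would form $\Psi\colon\Gamma\to K$, $\Psi(\gamma)=(U_\theta(\gamma),j(\gamma))$, and put $X:=\overline{\Psi(\Gamma)}\subseteq K$. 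Because $j$ is injective and $\{j(\gamma)\}$ is open in $\overline{\Nb}$, the open set $\Fc_\theta\times\{j(\gamma)\}$ of $K$ meets $\Psi(\Gamma)$ only in $\Psi(\gamma)$; hence $\Psi$ is injective, $\Psi(\Gamma)=X\cap(\Fc_\theta\times\Nb)$ is open in $X$, and $\Psi$ is a homeomorphism from $\Gamma$ (with the discrete topology) onto $\Psi(\Gamma)$. One then takes $\Gamma\cup\Lambda_\theta(\Gamma)$ to be $X$, with $\gamma$ identified with $\Psi(\gamma)$, which already supplies a candidate for (1) and for the embedding in (2).

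Next I would describe $X\setminus\Psi(\Gamma)$. If $\Psi(\gamma_n)\to(F,m)$ with $m\in\Nb$, then $j(\gamma_n)\to m$ forces $\gamma_n$ to be eventually constant and $(F,m)\in\Psi(\Gamma)$; so every point of $X\setminus\Psi(\Gamma)$ has the form $(F,\infty)$, and $\Psi(\gamma_n)\to(F,\infty)$ iff $U_\theta(\gamma_n)\to F$ and $j(\gamma_n)\to\infty$. Now $j(\gamma_n)\to\infty$ says $\{\gamma_n\}$ leaves every finite subset of $\Gamma$, which by $\Psf_\theta$-divergence (every sublevel set $\{\gamma:\min_{\alpha\in\theta}\alpha(\kappa_\theta(\gamma))\le R\}$ is finite, and conversely any escaping sequence of distinct elements has $\min_{\alpha\in\theta}\alpha(\kappa_\theta(\gamma_n))\to+\infty$) is equivalent to $\min_{\alpha\in\theta}\alpha(\kappa_\theta(\gamma_n))\to+\infty$. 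Using the definition of $\Lambda_\theta(\Gamma)$ as the accumulation set of $\{U_\theta(\gamma):\gamma\in\Gamma\}$, one checks that $(F,\infty)\in X$ precisely when $F\in\Lambda_\theta(\Gamma)$; identifying $(F,\infty)$ with $F$ and noting that $F\mapsto(F,\infty)$ is an embedding of $\Fc_\theta$ into $K$, the subspace topology $X$ induces on $\Lambda_\theta(\Gamma)$ is the one coming from $\Fc_\theta$. With these identifications, (1), (2), (3) and (4) all follow.

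For (5), fix $g\in\Gamma$. On the open discrete subset $\Psi(\Gamma)\cong\Gamma$ the map $x\mapsto gx$ is left translation, hence a homeomorphism; on $\Lambda_\theta(\Gamma)$ it is the restriction of the continuous $\GG$-action on $\Fc_\theta$. Since $X$ is metrizable, the only remaining point is sequential continuity at a point $F\in\Lambda_\theta(\Gamma)$ along a sequence $\gamma_n\in\Gamma$ with $\gamma_n\to F$, i.e.\ (by (4)) with $U_\theta(\gamma_n)\to F$ and $\{\gamma_n\}$ escaping; I must show $g\gamma_n\to gF$ in $X$. Passing to a subsequence so that also $U_\theta(\gamma_n^{-1})\to F^-$ for some $F^-$, Proposition~\ref{prop:characterizing convergence in general symmetric case} applies to $\{\gamma_n\}$ and gives $\gamma_n(H)\to F$ for every $H\in\Fc_\theta\setminus\Zc_{F^-}$ and $\gamma_n^{-1}(H)\to F^-$ for every $H\in\Fc_\theta\setminus\Zc_F$. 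Composing with $g$ yields $(g\gamma_n)(H)\to gF$ for $H\in\Fc_\theta\setminus\Zc_{F^-}$ and $(g\gamma_n)^{-1}(H)\to F^-$ for $H\in\Fc_\theta\setminus\Zc_{gF}$; since $\Fc_\theta\setminus\Zc_{F^-}$ and $\Fc_\theta\setminus\Zc_{gF}$ are nonempty open sets, the implication (4)$\,\Rightarrow\,$(1) of Proposition~\ref{prop:characterizing convergence in general symmetric case} gives $U_\theta(g\gamma_n)\to gF$ (that $\{g\gamma_n\}$ is $\Psf_\theta$-divergent also follows from Lemma~\ref{lem:kappa multiplication estimate}). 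As this holds along every subsequence, $U_\theta(g\gamma_n)\to gF$ for the full sequence, so $g\gamma_n\to gF$ in $X$. Running the same argument for $g^{-1}$ shows $x\mapsto gx$ is a homeomorphism of $X$, and applying it to a sequence of distinct $\gamma_n\to F$ shows $g\Lambda_\theta(\Gamma)=\Lambda_\theta(\Gamma)$; hence $\Gamma$ acts on $X$ by homeomorphisms, giving (5).

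I expect the one genuinely non-formal step to be (5): reconciling the left-translation action on $\Gamma$ with the $\GG$-action on the limit set at the newly added points, which is exactly where the north--south dynamics of Proposition~\ref{prop:characterizing convergence in general symmetric case} (and the Cartan-projection estimate of Lemma~\ref{lem:kappa multiplication estimate}) do the work. Parts (1)--(4) are the routine description of the closure of an escaping subset of $\Gamma$ inside $\Fc_\theta\times\overline{\Nb}$.
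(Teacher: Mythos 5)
Your construction is correct, and it is essentially self-contained. Note that this paper does not actually prove the lemma -- it is quoted from \cite[Prop.\ 2.3]{CZZ3} -- so the comparison is with the intended argument rather than an in-paper proof; your route of embedding $\Gamma$ into the compact metrizable space $\Fc_\theta\times\overline{\Nb}$ via $\gamma\mapsto(U_\theta(\gamma),j(\gamma))$ and taking the closure is a clean way to manufacture the topology, and it automatically delivers (1)--(4): the sublevel-set argument correctly converts ``escaping'' into $\min_{\alpha\in\theta}\alpha(\kappa_\theta(\gamma_n))\to+\infty$ under $\Psf_\theta$-divergence, the second coordinate isolates the points of $\Gamma$, and the boundary inherits the $\Fc_\theta$-topology. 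Your treatment of (5) is also the right one: the only genuine content is continuity of $x\mapsto gx$ at limit points along sequences from $\Gamma$, and the subsequence trick (extract $U_\theta(\gamma_n^{-1})\to F^-$, apply the equivalence (1)$\Leftrightarrow$(4) of Proposition~\ref{prop:characterizing convergence in general symmetric case} to $\{g\gamma_n\}$ using $\Zc_{gF}=g\Zc_F$, then upgrade to the full sequence) is exactly the mechanism that makes the translation action on $\Gamma$ match the $\GG$-action on the limit set. Two cosmetic points you may wish to smooth out: $U_\theta$ is everywhere defined (by the fixed choice of $\mathsf{KAK}$-decompositions) and only canonically so off a finite subset of $\Gamma$, which is why the resulting topology is independent of those choices; and at a boundary point one should also note that a mixed sequence in $X$ splits into a $\Gamma$-part and a $\Lambda_\theta(\Gamma)$-part, the latter handled by continuity of the $\GG$-action on $\Fc_\theta$ together with your identification of the boundary topology -- both are implicit in what you wrote and neither affects correctness.
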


\subsection{Transverse groups}  A $\Psf_\theta$-divergent subgroup $\Gamma\subset\GG$ is \emph{$\Psf_\theta$-transverse} if $\Lambda_\theta(\Gamma)$ is a transverse subset of $\Fc_\theta$, i.e. distinct pairs of flags in $\Lambda_\theta(\Gamma)$ are transverse. We note that in the literature, transverse groups are sometimes called regular antipodal groups (e.g.\ \cite{KLP1}). 

One crucial feature of $\Psf_\theta$-transverse groups is that  they act as a convergence group on their limit sets.

\begin{proposition}\cite[Prop.\ 5.38]{KLP2}\label{prop: convergence group}
If $\Gamma$ is $\Psf_\theta$-transverse, then $\Gamma$ acts on $\Lambda_\theta(\Gamma)$ as a convergence group. 
In particular, if $\Gamma$ is non-elementary, then $\Gamma$ acts on $\Lambda_\theta(\Gamma)$ minimally, and $\Lambda_\theta(\Gamma)$ is perfect. 
\end{proposition}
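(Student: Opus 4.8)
The plan is to verify the defining property of a discrete convergence group action by hand, using the north--south dynamics of Proposition~\ref{prop:characterizing convergence in general symmetric case} together with the transversality of $\Lambda_\theta(\Gamma)$. First I would record that $\Lambda_\theta(\Gamma)$ is a compact metrizable space and a closed, $\Gamma$-invariant subset of $\Fc_\theta$ (part of Lemma~\ref{lem: limit set compactifies}), so that the action of $\Gamma$ on $\Lambda_\theta(\Gamma)$ makes sense. The mechanism is then: given distinct $\{\gamma_n\}$, pass to a subsequence along which $U_\theta(\gamma_n)\to F^+$ and $U_\theta(\gamma_n^{-1})\to F^-$ in $\Fc_\theta$, note $F^\pm\in\Lambda_\theta(\Gamma)$, apply Proposition~\ref{prop:characterizing convergence in general symmetric case} to get the collapsing $\gamma_n(F)\to F^+$ off the exceptional set $\Zc_{F^-}$, and use transversality to see that $\Zc_{F^-}$ misses $\Lambda_\theta(\Gamma)\setminus\{F^-\}$.

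Carrying this out: let $\{\gamma_n\}$ be distinct elements of $\Gamma$. By compactness of $\Fc_\theta$, after passing to a subsequence we may assume $U_\theta(\gamma_n)\to F^+$ and $U_\theta(\gamma_n^{-1})\to F^-$ for some $F^\pm\in\Fc_\theta$; since $\{\gamma_n\}$ and $\{\gamma_n^{-1}\}$ consist of distinct elements, $F^+$ and $F^-$ are accumulation points of $\{U_\theta(\gamma):\gamma\in\Gamma\}$ and hence lie in $\Lambda_\theta(\Gamma)$. Since $\Gamma$ is $\Psf_\theta$-divergent, $\min_{\alpha\in\theta}\alpha(\kappa_\theta(\gamma_n))\to+\infty$, which forces $\alpha(\kappa(\gamma_n))\to+\infty$ for every $\alpha\in\theta$ as well, so condition (1) of Proposition~\ref{prop:characterizing convergence in general symmetric case} holds and part (2) of that proposition gives $\gamma_n(F)\to F^+$ for all $F\in\Fc_\theta\setminus\Zc_{F^-}$, uniformly on compact subsets of $\Fc_\theta\setminus\Zc_{F^-}$. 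Now $F^-\in\Lambda_\theta(\Gamma)$ and $\Lambda_\theta(\Gamma)$ is transverse, so every $F\in\Lambda_\theta(\Gamma)\setminus\{F^-\}$ is transverse to $F^-$, i.e.\ $F\notin\Zc_{F^-}$; thus $\Lambda_\theta(\Gamma)\setminus\{F^-\}\subset\Fc_\theta\setminus\Zc_{F^-}$, and every compact subset of $\Lambda_\theta(\Gamma)\setminus\{F^-\}$ is a compact subset of $\Fc_\theta\setminus\Zc_{F^-}$. Restricting the convergence above, $\gamma_n(F)\to F^+$ uniformly on compact subsets of $\Lambda_\theta(\Gamma)\setminus\{F^-\}$ --- exactly the convergence group property, with attracting point $F^+\in\Lambda_\theta(\Gamma)$ and repelling point $F^-\in\Lambda_\theta(\Gamma)$.

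For the final assertion, assume $\Gamma$ is non-elementary, so $\Lambda_\theta(\Gamma)$ is infinite; by the construction above every point of $\Lambda_\theta(\Gamma)$, being a limit $\lim U_\theta(\gamma_n)$ along distinct $\gamma_n$, is an attracting point of the action, so $\Lambda_\theta(\Gamma)$ equals the limit set of this convergence action. For minimality, let $C\subset\Lambda_\theta(\Gamma)$ be non-empty, closed and $\Gamma$-invariant; since a convergence group with infinite limit set has no non-empty finite invariant subset, $C$ is infinite. Given $y\in\Lambda_\theta(\Gamma)$, write $y=\lim U_\theta(\gamma_n)$ with $\{\gamma_n\}$ distinct, extract a subsequence as above with attracting point $F^+=y$ and repelling point $F^-$, and pick $z\in C\setminus\{F^-\}$; then $\gamma_n(z)\in C$ and $\gamma_n(z)\to y$, so $y\in C$, whence $C=\Lambda_\theta(\Gamma)$. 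For perfectness, the set $I$ of isolated points of $\Lambda_\theta(\Gamma)$ is $\Gamma$-invariant and open, so $\Lambda_\theta(\Gamma)\setminus I$ is closed and $\Gamma$-invariant, hence by minimality equals $\Lambda_\theta(\Gamma)$ or $\emptyset$; if it were $\emptyset$ then $\Lambda_\theta(\Gamma)=I$ would be discrete and compact, hence finite, contradicting non-elementarity. So $I=\emptyset$, i.e.\ $\Lambda_\theta(\Gamma)$ has no isolated points.

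I expect the main obstacle not to be the convergence group property itself --- which drops out of Proposition~\ref{prop:characterizing convergence in general symmetric case} and transversality once the subsequences are extracted --- but the classical structural input used for the last assertion: that a convergence group with infinite limit set has no non-empty finite invariant subset (equivalently, contains loxodromic elements with distinct axes and has no global fixed point). This is a statement about abstract convergence groups, independent of the Lie-theoretic setting, which one either imports as a black box (as in \cite{KLP2}) or reproves by feeding suitable products of the $\gamma_n$ back into the north--south dynamics.
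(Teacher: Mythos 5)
The paper offers no internal proof of this proposition --- it is imported verbatim from \cite[Prop.\ 5.38]{KLP2} --- so your self-contained derivation from Proposition~\ref{prop:characterizing convergence in general symmetric case} plus transversality is a genuinely different (and entirely standard) route; in outline it is the right argument, and leaning on classical convergence-group theory for the ``in particular'' clause is no worse than what the citation to KLP2 does. However, one step is false as written: from $\min_{\alpha\in\theta}\alpha(\kappa_\theta(\gamma_n))\to+\infty$ you cannot conclude $\alpha(\kappa(\gamma_n))\to+\infty$ for $\alpha\in\theta$. The projection $p_\theta$ preserves the fundamental weights $\omega_\alpha$, $\alpha\in\theta$, not the simple roots: e.g.\ for $\mathfrak{sl}(4,\Rb)$ with $\theta=\{\alpha_1,\alpha_3\}$, the elements $X_n={\rm diag}(n,n,-n,-n)\in\mfa^+$ satisfy $\alpha_1(X_n)=0$ while $p_\theta(X_n)={\rm diag}(n,0,0,-n)$, so $\alpha_1(p_\theta(X_n))=n\to\infty$. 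Thus a sequence can be ``divergent'' in the $\kappa_\theta$-form of the definition in Section~\ref{sec: discrete subgroup background} without satisfying hypothesis (1) of Proposition~\ref{prop:characterizing convergence in general symmetric case}. The slip is understandable given how that definition is worded, but the correct fix is to use the definition of $\Psf_\theta$-transverse as stated in the introduction and in \cite{CZZ3}: $\alpha\circ\kappa$ is proper on $\Gamma$ for every $\alpha\in\theta$, which is verbatim the regularity needed, with no bridging implication required.

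On the final assertion, be aware that your argument is mildly circular as a self-contained proof: ``a convergence group with infinite limit set has no non-empty finite invariant subset'' and ``non-elementary implies $\Lambda_\theta(\Gamma)$ is infinite'' belong to the same classical package (Tukia, Gehring--Martin, Bowditch) that already contains minimality and perfectness, so they are not more elementary than what you are establishing. The cleanest version of your plan is: after observing (as you do) that every point of $\Lambda_\theta(\Gamma)$ is an attracting point, so that $\Lambda_\theta(\Gamma)$ coincides with the limit set of the convergence action, quote the classical theorem that a convergence action whose limit set has at least three points has perfect limit set on which the group acts minimally --- which is exactly the clause in question. If instead you want self-containedness, the substantive point to supply is the absence of a finite $\Gamma$-invariant subset of $\Lambda_\theta(\Gamma)$ when $\#\Lambda_\theta(\Gamma)\ge 3$: ruling out a finite limit set is a quick pigeonhole with the attractor/repeller against three fixed points of a finite-index subgroup, but ruling out a global fixed point genuinely requires feeding products of the $\gamma_n$ back into the north--south dynamics, as you anticipate in your closing remark.
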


When $\Gamma \subset \GG$ is $\Psf_\theta$-transverse, the set of conical limit points for the action of $\Gamma$ on $\Lambda_\theta(\Gamma)$ is called the {\em $\theta$-conical limit set} and is denoted $\Lambda^{\rm con}_\theta(\Gamma)$.

\subsection{Relatively Anosov subgroups} There are several equivalent definitions of relatively Anosov groups. The definition we use comes from~\cite{KL}. 

A $\Psf_\theta$-transverse subgroup $\Gamma \subset \GG$ is  \emph{$\Psf_\theta$-Anosov relative to $\Pc$}, a finite collection of subgroups of $\Gamma$, if $(\Gamma, \Pc)$ is relatively hyperbolic with Bowditch boundary $\partial(\Gamma, \Pc)$ and there is a continuous $\Gamma$-equivariant map
$$
\xi \colon \partial(\Gamma,\Pc) \to \Fc_\theta
$$
which is a homeomorphism onto $\Lambda_{\theta}(\Gamma)$. Observe that such a $\xi$ is unique, so we refer to it as the \emph{limit map} of $\Gamma$.

The next result shows that this limit maps plays nicely with the Gromov boundary of a Groves--Manning cusp space. 

\begin{proposition}\label{prop:compactifications are the same} Suppose  $\Gamma \subset \GG$ is $\Psf_\theta$-Anosov relative to $\Pc$, with limit map $\xi : \partial (\Gamma,\Pc) \rightarrow  \Fc_\theta$. Let $X$ be a Groves--Manning cusp space for $(\Gamma, \Pc)$,  and let $b_0 \in X$. If $\{\gamma_n\}$ is a sequence in $\Gamma$ and $\gamma_n(b_0) \rightarrow x \in \partial_\infty X = \partial (\Gamma,\Pc)$, then $U_\theta(\gamma_n) \rightarrow \xi(x)$. 
\end{proposition}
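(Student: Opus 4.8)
The plan is to reduce the statement to the north-south dynamics characterization in Proposition~\ref{prop:characterizing convergence in general symmetric case} together with the already-established properties of the limit map $\xi$. First I would observe that since $\Gamma$ is $\Psf_\theta$-Anosov relative to $\Pc$, it is in particular $\Psf_\theta$-transverse, hence $\Psf_\theta$-divergent, so $\lim_{n\to\infty}\min_{\alpha\in\theta}\alpha(\kappa_\theta(\gamma_n))=+\infty$ for any sequence of distinct elements. After passing to a subsequence we may therefore assume $U_\theta(\gamma_n)\to F^+$ and $U_\theta(\gamma_n^{-1})\to F^-$ for some $F^\pm\in\Lambda_\theta(\Gamma)$; it suffices to show $F^+=\xi(x)$, since then every subsequence has a further subsequence converging to $\xi(x)$, forcing $U_\theta(\gamma_n)\to\xi(x)$.

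Next I would use the dynamics: by Proposition~\ref{prop:characterizing convergence in general symmetric case}, the conditions $U_\theta(\gamma_n)\to F^+$, $U_\theta(\gamma_n^{-1})\to F^-$, and $\alpha(\kappa_\theta(\gamma_n))\to\infty$ imply that $\gamma_n(F)\to F^+$ uniformly on compact subsets of $\Fc_\theta-\Zc_{F^-}$. In particular, picking any flag $F\in\Lambda_\theta(\Gamma)$ transverse to $F^-$ — which exists because $\Lambda_\theta(\Gamma)$ is perfect (Proposition~\ref{prop: convergence group}) and consists of mutually transverse flags, so only the single point $\xi(\xi^{-1}(F^-))$ could fail transversality — we get $\gamma_n(F)\to F^+$. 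Writing $F=\xi(z)$ for the corresponding $z\in\partial(\Gamma,\Pc)$, equivariance of $\xi$ gives $\gamma_n(F)=\xi(\gamma_n z)$, so $\xi(\gamma_n z)\to F^+$.

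It then remains to identify $F^+$ with $\xi(x)$ using the hypothesis $\gamma_n(b_0)\to x$ in $\partial_\infty X$. Here I would invoke the convergence-group dynamics on the Groves--Manning cusp space $X$: since $\gamma_n(b_0)\to x$ and (after a further subsequence) $\gamma_n^{-1}(b_0)\to y$ for some $y\in\partial_\infty X$, the sequence $\{\gamma_n\}$ acts on $\partial_\infty X$ with source $y$ and sink $x$, so $\gamma_n w\to x$ for every $w\neq y$ in $\partial_\infty X=\partial(\Gamma,\Pc)$. Choosing the point $z$ above to additionally satisfy $z\neq y$ (again possible since $\partial(\Gamma,\Pc)$ is perfect), we get $\gamma_n z\to x$, hence $\xi(\gamma_n z)\to\xi(x)$ by continuity of $\xi$. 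Comparing with the previous paragraph yields $F^+=\xi(x)$, which completes the argument.

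The main obstacle I anticipate is the bookkeeping needed to choose the auxiliary point $z$ so that $\xi(z)$ is transverse to $F^-$ \emph{and} $z\neq y$ simultaneously — this requires knowing that the "bad" set (the preimage of $\Zc_{F^-}\cap\Lambda_\theta(\Gamma)$ under $\xi$, together with $\{y\}$) is a proper subset of $\partial(\Gamma,\Pc)$, which follows from transversality of the limit set plus perfectness, but must be stated carefully. A secondary point to verify is that the convergence-group action of $\Gamma$ on $\partial_\infty X$ genuinely has the stated source/sink behavior along our subsequence; this is standard for proper discontinuous actions on proper Gromov hyperbolic spaces, but one should cite it cleanly. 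Everything else is a direct application of the cited propositions.
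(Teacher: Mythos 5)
Your proposal is correct and follows essentially the same route as the paper's proof: pass to a subsequence where $U_\theta(\gamma_n^{\pm 1})$ converge, use the source--sink dynamics of $\{\gamma_n\}$ on $\partial_\infty X$ together with Proposition~\ref{prop:characterizing convergence in general symmetric case}, and compare the two limits at a test point $z\in\partial(\Gamma,\Pc)$ chosen off the two bad points, using equivariance and continuity of $\xi$ plus transversality within $\Lambda_\theta(\Gamma)$. The bookkeeping you flag (choosing $z$ avoiding both $y$ and $\xi^{-1}(F^-)$, which is possible since $\partial(\Gamma,\Pc)$ is perfect) is exactly how the paper handles it.
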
 

\begin{proof}Since $\Fc_\theta$ is compact, it suffices to show that every convergent subsequence of $\{U_\theta(\gamma_n)\}$ converges to $\xi(x)$. Suppose $U_\theta(\gamma_{n_j}) \rightarrow F^+$. Passing to a further subsequence we can suppose that $\gamma_{n_j}^{-1}(b_0) \rightarrow y \in \partial (\Gamma,\Pc)$ and $U_\theta(\gamma_{n_j}^{-1}) \rightarrow F^-$. Then, by properties of Gromov hyperbolic spaces, $\gamma_{n_j}(z) \rightarrow x$ for all $z \in \partial (\Gamma,\Pc) - \{ y\}$. Also, by Proposition~\ref{prop:characterizing convergence in general symmetric case},
$$
\lim_{j \rightarrow \infty} \gamma_{n_j}(F) = F^+
$$ 
for all $F \in \Fc_\theta$ transverse to $F^-$.

Notice that Proposition~\ref{prop:characterizing convergence in general symmetric case} implies that $F^- \in \Lambda_\theta(\Gamma)$ and so $F^- = \xi(y^\prime)$ for some $y^\prime \in \partial(\Gamma, \Pc)$. Fix $z \in \partial (\Gamma,\Pc) -\{y,y^\prime\}$. Then 
$$
\xi(x) = \lim_{j \rightarrow \infty} \xi(\gamma_{n_j}(z)) = \lim_{n \rightarrow \infty} \gamma_{n_j} \xi(z) = F^+
$$
since $\xi(z)$ is transverse to $F^-=\xi(y^\prime)$. 
\end{proof}

%
%

The following theorem was established in ~\cite{zhu-zimmer1} when $\GG = \SL(d,\Rb)$. In Appendix~\ref{appendix:properties of pers} we will explain why it is also true in the following setting. In the following theorem and elsewhere in the paper, given a Lie group $\mathsf{H}$ let $\mathsf{H}^0$ denote the connected component of the identity in $\mathsf{H}$. 

\begin{theorem}\label{thm:properties of relatively Anosov representations} Assume $Z(\mathsf{G})$ is trivial and $\Psf_\theta$ contains no simple factors of $\GG$. Suppose $\Gamma\subset \GG$ is a non-elementary $\Psf_\theta$-Anosov subgroup relative to $\Pc$.
\begin{enumerate}
\item If $X$ is a Groves--Manning cusp space for $(\Gamma, \Pc)$ and $M :=\GG/\Ksf$ is a Riemannian symmetric space associated to $\GG$, then there exist $c > 1$, $C> 0$ such that 
$$
\frac{1}{c} \d_M(\gamma\Ksf, \Ksf)- C \le \d_X(\gamma, \id) \le c \d_M(\gamma\Ksf, \Ksf) + C
$$
for all $\gamma \in \Gamma$. 
\item If $P \in \Pc$, then $P$ is a cocompact lattice in a closed Lie subgroup $\mathsf{H} \subset \mathsf{G}$ with finitely many components. Moreover,
\begin{enumerate} 
\item $\mathsf{H} = \mathsf{L} \ltimes \mathsf{U}$ where $\mathsf{L}$ is compact and $\mathsf{U}$ is the unipotent radical of $\mathsf{H}$. 
\item $\mathsf{H}^0 = \mathsf{L}^0 \times \mathsf{U}$ and $\mathsf{L}^0$ is Abelian. 

\end{enumerate} 
\end{enumerate} 

\end{theorem}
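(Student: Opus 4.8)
The plan is to reduce both parts to the case $\GG=\SL(d,\Rb)$, which is \cite{zhu-zimmer1}. Since $Z(\GG)$ is trivial, $\GG$ is a linear algebraic group of adjoint type, so (cf.\ \cite{GGKW}) we may fix a faithful representation $\Phi\colon\GG\to\SL(d,\Rb)$ with closed image together with a symmetric subset $\theta'\subset\Delta_{\SL(d)}$ such that $\Psf_\theta=\Phi^{-1}(\Psf_{\theta'})$; in particular $\Phi$ induces an embedding $\Fc_\theta\hookrightarrow\Fc_{\theta'}$ carrying transverse pairs to transverse pairs. Fixing a suitable inner product on $\Rb^d$ we may also assume that $\Phi(\Ksf)\subseteq\Ksf'$ for the chosen maximal compact $\Ksf'$ of $\SL(d,\Rb)$ and that $\d\Phi$ maps $\mfp$ into the symmetric matrices. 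As $\Phi$ is a closed embedding, $\Phi(\Gamma)$ is a discrete subgroup of $\SL(d,\Rb)$ isomorphic to $\Gamma$, and $\Phi(\Pc):=\{\Phi(P):P\in\Pc\}$ is a finite collection of subgroups of $\Phi(\Gamma)$. Write $M':=\SL(d,\Rb)/\Ksf'$.

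The crucial step --- and the one I expect to be the main obstacle --- is to check that $\Phi(\Gamma)$ is $\Psf_{\theta'}$-Anosov relative to $\Phi(\Pc)$. Relative hyperbolicity of $(\Phi(\Gamma),\Phi(\Pc))$ is immediate since $\Phi$ identifies it with $(\Gamma,\Pc)$, so the real point is that $\Phi(\Gamma)$ is $\Psf_{\theta'}$-\emph{divergent}, i.e.\ that $\alpha'\circ\kappa$ is proper on $\Phi(\Gamma)$ for every $\alpha'\in\theta'$. Given this, the embedding $\Fc_\theta\hookrightarrow\Fc_{\theta'}$ intertwines $U_\theta$ and $U_{\theta'}$ along $\Gamma$, so $\Lambda_{\theta'}(\Phi(\Gamma))$ is the (transverse) image of $\Lambda_\theta(\Gamma)$ and the limit map of $\Phi(\Gamma)$ is $\xi$ followed by $\Fc_\theta\hookrightarrow\Fc_{\theta'}$. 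It is precisely for the divergence of $\Phi(\Gamma)$ that we need $\Psf_\theta$ to contain no simple factor of $\GG$: this prevents a peripheral subgroup of $\Gamma$ from acting through a loxodromic-type element in a simple factor of $\GG$ that $\Psf_\theta$ absorbs, which is the obstruction exhibited by Example~\ref{ex:stupid example} (there, after passing to the ambient group some root stays bounded along a peripheral subgroup, so the reduction --- and the conclusion of the theorem --- genuinely fails). Granting this, parts (1) and (2) follow by transporting the conclusions of \cite{zhu-zimmer1} for $\Phi(\Gamma)\subset\SL(d,\Rb)$ back along $\Phi$; and $\Psf_{\theta'}$ contains no simple factor of the simple group $\SL(d,\Rb)$ since $\theta\ne\emptyset$ forces $\theta'\ne\emptyset$.

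For part (1): a Groves--Manning cusp space $X$ for $(\Gamma,\Pc)$ depends only on the abstract pair $(\Gamma,\Pc)$ and an adapted generating set, so it serves as a cusp space for $\Phi(\Gamma)$ as well, and \cite{zhu-zimmer1} yields constants with
\[
\frac{1}{c'}\,\d_{M'}\!\big(\Phi(\gamma)\Ksf',\Ksf'\big)-C'\;\le\;\d_X(\gamma,\id)\;\le\;c'\,\d_{M'}\!\big(\Phi(\gamma)\Ksf',\Ksf'\big)+C'
\]
for all $\gamma\in\Gamma$. With $\d\Phi(\mfp)$ symmetric and $\Phi(\Ksf)\subseteq\Ksf'$, a $\mathsf{KAK}$-decomposition of $g$ shows that the singular values of $\Phi(g)$ are $e^{\chi(\kappa(g))}$ as $\chi$ runs over the weights of $\Phi$, so $\d_{M'}(\Phi(g)\Ksf',\Ksf')=\norm{\kappa_{\SL(d)}(\Phi(g))}$ equals the value at $\kappa(g)$ of $H\mapsto\big(\sum_\chi\chi(H)^2\big)^{1/2}$. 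This is a norm on the finite-dimensional space $\mfa$ because the weights of the faithful representation $\Phi$ span $\mfa^*$, hence it is bi-Lipschitz equivalent to the Killing norm; since $\d_M(g\Ksf,\Ksf)=\norm{\kappa(g)}$, we get $\d_{M'}(\Phi(g)\Ksf',\Ksf')\asymp\d_M(g\Ksf,\Ksf)$, and combining with the displayed inequalities gives part (1).

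For part (2): \cite{zhu-zimmer1} gives, for $P\in\Pc$, that $\Phi(P)$ is a cocompact lattice in a closed $\mathsf{H}'\subset\SL(d,\Rb)$ with finitely many components, $\mathsf{H}'=\mathsf{L}'\ltimes\mathsf{U}'$ with $\mathsf{L}'$ compact, $\mathsf{U}'$ the unipotent radical, $(\mathsf{H}')^0=(\mathsf{L}')^0\times\mathsf{U}'$ and $(\mathsf{L}')^0$ Abelian. Set $\mathsf{H}:=\Phi^{-1}(\mathsf{H}'\cap\Phi(\GG))$, which is closed in $\GG$ since $\Phi(\GG)$ is closed; then $P$ is cocompact in $\mathsf{H}$ because $\Phi(P)\backslash(\mathsf{H}'\cap\Phi(\GG))$ is a closed subspace of the compact space $\Phi(P)\backslash\mathsf{H}'$, so $P$ is a cocompact lattice in $\mathsf{H}$ and $\mathsf{H}$ is compactly generated, with finitely many components. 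Since $\Phi(\GG)$ is algebraic, $\mathsf{U}'\cap\Phi(\GG)$ is a connected normal unipotent subgroup of $\mathsf{H}'\cap\Phi(\GG)$, and it is the unipotent radical there because $(\mathsf{H}'\cap\Phi(\GG))/(\mathsf{U}'\cap\Phi(\GG))$ embeds in the compact group $\mathsf{L}'$, which has no nontrivial unipotents; hence $\mathsf{U}:=\Phi^{-1}(\mathsf{U}'\cap\Phi(\GG))$ is the unipotent radical of $\mathsf{H}$, and by the Levi decomposition $\mathsf{H}=\mathsf{L}\ltimes\mathsf{U}$ with $\mathsf{L}$ compact and $\mathsf{L}^0$ Abelian (as $\mathsf{L}^0$ embeds in $(\mathsf{L}')^0$). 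Finally, $(\mathsf{L}')^0$ is central in $(\mathsf{H}')^0=(\mathsf{L}')^0\times\mathsf{U}'$ and any compact subgroup of $(\mathsf{H}')^0$ has trivial image in the unipotent group $\mathsf{U}'$, which forces $\mathsf{L}^0$ into the central factor; hence $\mathsf{L}^0$ commutes with $\mathsf{U}$ and $\mathsf{H}^0=\mathsf{L}^0\times\mathsf{U}$, as claimed.
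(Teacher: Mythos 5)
There is a genuine gap: the entire reduction to $\SL(d,\Rb)$ rests on a step you explicitly ``grant'' rather than prove, and that step is where all the content of the theorem lies. You assume the existence of a faithful representation $\Phi:\GG\to\SL(d,\Rb)$ with closed image, a symmetric $\theta'$ with $\Psf_\theta=\Phi^{-1}(\Psf_{\theta'})$, and a transversality-preserving equivariant embedding $\Fc_\theta\hookrightarrow\Fc_{\theta'}$, and then you assume that $\Phi(\Gamma)$ is $\Psf_{\theta'}$-Anosov relative to $\Phi(\Pc)$. Neither assertion is automatic. The condition $\Psf_\theta=\Phi^{-1}(\Psf_{\theta'})$ by itself gives no quantitative comparison between $\alpha'(\kappa(\Phi(g)))$ for $\alpha'\in\theta'$ and $\alpha(\kappa(g))$ for $\alpha\in\theta$, which is exactly what is needed to transfer divergence, to identify $U_{\theta'}\circ\Phi$ with the image of $U_\theta$, and hence to match limit sets and limit maps. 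In the paper this is supplied by the GGKW-type representation of Proposition~\ref{prop:reduction to the linear case} (i.e.\ \cite[Prop.\ B.1]{CZZ3}), whose properties $\log\sigma_1(\Phi(g))=N\chi(\kappa(g))$ and $\alpha_1(\kappa(\Phi(g)))=\min_{\alpha\in\theta}\alpha(\kappa(g))$ yield parts (5)--(6), i.e.\ the transfer of relative Anosov-ness to a $\Psf_{1,d-1}$-Anosov subgroup; your proposal replaces this proposition by an unproved claim, so the reduction never gets off the ground. Moreover, since your argument as written never verifiably uses the hypothesis that $\Psf_\theta$ contains no simple factors of $\GG$ (faithfulness of $\Phi$ is simply assumed), it would ``prove'' the conclusion in the setting of Example~\ref{ex:stupid example}, where it is false; this is a concrete sign that the omitted transfer step cannot be waved through.

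The role of the hypothesis is also different from what you suggest. In the paper's proof the no-simple-factor assumption is used to show that the (a priori non-faithful) GGKW representation is injective: its kernel acts trivially on $\Fc_\theta$, hence lies in $\Psf_\theta$, hence is discrete, hence central, hence trivial (Lemma~\ref{lem:properties of Gstar}); the Anosov transfer itself is automatic for that representation. Your heuristic that the hypothesis is needed ``precisely for divergence of $\Phi(\Gamma)$'' is plausible but nowhere established. Secondary points in your part (2) also need the conclusion $\Phi(\GG)=\GG_\star^0$ of Lemma~\ref{lem:properties of Gstar} (or some algebraicity substitute): e.g.\ the claim that $\mathsf{H}=\Phi^{-1}(\mathsf{H}'\cap\Phi(\GG))$ has finitely many components, and the closedness of the image of $\mathsf{H}/\mathsf{U}$ in the compact group $\mathsf{L}'$ needed for a compact Levi complement, do not follow merely from cocompactness of $P$; the paper avoids this by pulling back $\mathsf{H}_\star\cap\GG_\star^0$, $\mathsf{L}_\star\cap\GG_\star^0$ and $\mathsf{U}_\star$ directly. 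Your part (1) computation (weights of a faithful representation span $\mfa^*$, giving $\d_{M'}(\Phi(g)\Ksf',\Ksf')\asymp\norm{\kappa(g)}$) is fine and is a mild variant of the paper's Mostow-based estimate, but it sits downstream of the unproved reduction.
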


Example~\ref{ex:stupid example} provides an example where $\Psf_\theta$ contains a simple factor of $\GG$ and the conclusions of Theorem~\ref{thm:properties of relatively Anosov representations} fail.

\subsection{Helpful reductions}\label{sec: a helpful reduction}  We first explain why one can often reduce to the case where the center $Z(\GG)$ of $\GG$ is trivial and $\Psf_\theta$ contains no simple factors of $\GG$.  We then explain how one can often reduce to the case where $\GG=\SL(d,\Rb)$.

Decompose the Lie algebra $\mfg$ of $G$ into a product of simple Lie algebras, $\mfg = \oplus_{j=1}^m \mfg_j$. For each $1 \le j \le m$, let $\GG_j \subset \GG$ denote the closed connected normal subgroup with Lie algebra $\mfg_j$. Then 
$$
\GG = \GG_1 \cdots \GG_m
$$
is an almost direct product and $\GG_1, \dots, \GG_m$ are called the \emph{simple factors} of $\GG$. 

\begin{proposition}[{\cite[Prop.\ 2.9]{CZZ3}}]\label{prop:reducing to no simple factors} Suppose $\theta \subset \Delta$ is symmetric and $\mathsf{H} : = Z(\GG) \prod \{ \GG_j : \GG_j \subset \Psf_\theta\}$. Let $p : \GG \rightarrow \GG^\prime := \GG / \mathsf{H}$ be the quotient map. Then:
\begin{enumerate}
\item $\GG^\prime$ is a semisimple Lie group without compact factors and with trivial center. 
\item There is a Cartan decomposition $\mfg^\prime = \mfk^\prime + \mfp^\prime$ of the Lie algebra of $\GG^\prime$, a Cartan subspace $\mfa^\prime \subset \mfp^\prime$, a system of simple restricted roots $\Delta^\prime \subset (\mfa^\prime)^*$ and a subset $\theta^\prime \subset \Delta^\prime$ such that 
$$
p(\Psf_\theta) = \Psf^\prime_{\theta^\prime}
$$
(where $\Psf^\prime_{\theta^\prime}$ is the parabolic subgroup of $\GG^\prime$ associated to $\theta^\prime$). Moreover, $\Psf_{\theta^\prime}^\prime$ contains no simple factors of $\GG^\prime$. 
\item $\d p$ induces an isomorphism of the partial Cartan subspaces $\mfa_\theta$ and $\mfa_{\theta^\prime}^\prime$. Moreover, the partial Cartan projections satisfies
$$
\d p(\kappa_\theta(g)) = \kappa_{\theta^\prime}^\prime(p(g)) \quad \text{for all $g \in \GG$}. 
$$
\item The map $\xi : \Fc_\theta \rightarrow \Fc_{\theta^\prime} = \GG^\prime / \Psf_{\theta^\prime}^\prime$ defined by $\xi(g\Psf_\theta) = p(g) \Psf_{\theta^\prime}^\prime$ is a diffeomorphism which preserves transversality. Moreover, the partial Iwasawa cocycles satisfies
$$
\d p(B_\theta(g,F)) = B^\prime_{\theta^\prime}(p(g), \xi(F)) \quad \text{for all $g \in \GG$ and $F \in \Fc_\theta$}.
$$
\end{enumerate}

\end{proposition}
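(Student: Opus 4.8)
The strategy is to observe that every structure appearing in the statement---the Cartan involution, the Cartan subspace $\mfa$, its Weyl chamber, the restricted root system, the opposition involution, the fundamental weights, and the Iwasawa decomposition---splits as a ``block-diagonal'' product over the simple factors $\GG_1,\dots,\GG_m$ of $\GG$, and that passing from $\GG$ to $\GG'=\GG/\mathsf{H}$ merely deletes the blocks indexed by those $\GG_j$ lying in $\Psf_\theta$. Concretely, I would fix a Cartan involution $\vartheta$ of $\mfg$; a short argument with the $\vartheta$-twisted Killing form shows that $\vartheta$ preserves each simple ideal $\mfg_j$, and hence so do $\mfk$, $\mfp$, the maximal abelian subspace $\mfa\subset\mfp$ (so $\mfa=\bigoplus_j\mfa_j$, $\mfa_j:=\mfa\cap\mfg_j$), and---for a compatibly chosen positive system---the simple restricted roots ($\Delta=\bigsqcup_j\Delta_j$ with $\Delta_j$ a simple system of $\mfg_j$). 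The combinatorial heart of the matter is the equivalence $\GG_j\subset\Psf_\theta\iff\theta\cap\Delta_j=\emptyset$: writing $\mfp_\theta$ for the Lie algebra of $\Psf_\theta$, a root space $\mfg_\alpha$ lies in $\mfp_\theta$ exactly when all $\theta$-coefficients of $\alpha$ are nonnegative, so (as the roots of $\mfg_j$ occur in $\pm$ pairs) $\mfg_j\subset\mfp_\theta$ forces them all to vanish, i.e.\ $\theta\cap\Delta_j=\emptyset$, and conversely $\theta\cap\Delta_j=\emptyset$ gives $\mfg_j\subset\mfp_\theta$, hence (since $\mfu_\theta$ is an ideal of $\mfp_\theta$) $\GG_j=\langle\exp\mfg_j\rangle\subset N_\GG(\mfu_\theta)=\Psf_\theta$. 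Thus $\mathfrak{h}:=\operatorname{Lie}(\mathsf{H})=\bigoplus\{\mfg_j:\theta\cap\Delta_j=\emptyset\}$; I set $\GG''$ to be the connected subgroup with Lie algebra $\mfg'':=\bigoplus\{\mfg_j:\theta\cap\Delta_j\ne\emptyset\}$, so that $\GG=\mathsf{H}\GG''$ is an almost direct product, $\mathsf{H}$ is closed and normal with $\lvert\pi_0(\mathsf{H})\rvert\le\lvert Z(\GG)\rvert<\infty$, and $p|_{\GG''}\colon\GG''\to\GG'$ is an isogeny whose differential identifies $\mfg''$ with the Lie algebra $\mfg'$ of $\GG'$.

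With this identification in hand, items (1)--(3) become transport of structure. For (1): $\mfg'\cong\mfg''$ is a sum of noncompact simple ideals, and $\GG'=\bigl(\GG/Z(\GG)\bigr)\big/\bigl(\mathsf{H}/Z(\GG)\bigr)$ is a quotient of the adjoint group $\ad(\GG)\cong\prod_j\bar{\GG}_j$ by a subproduct of its centerless simple factors, so $\GG'$ is centerless. For (2): I take the Cartan data of $\GG'$ to be the $\d p$-image of the restriction to $\mfg''$ of that of $\GG$, with $\Delta':=\bigsqcup\{\Delta_j:\theta\cap\Delta_j\ne\emptyset\}$ and $\theta':=\theta$ regarded inside $\Delta'$; then $\theta'$ is symmetric because the opposition involution also respects the factor decomposition. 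Since $\GG=\mathsf{H}\GG''$ and $\mathsf{H}\subset\Psf_\theta$, one checks $\Psf_\theta=\mathsf{H}\cdot(\Psf_\theta\cap\GG'')$, and $\Psf_\theta\cap\GG''$ is the parabolic subgroup of $\GG''$ of type $\theta'$ (its Lie algebra $\mfp_\theta\cap\mfg''$ and its nilradical, which is still $\mfu_\theta$, match those of the $\theta'$-parabolic of $\GG''$); hence $p(\Psf_\theta)=p(\Psf_\theta\cap\GG'')$, and a short computation with nilradicals shows that the isogeny $p|_{\GG''}$ carries this parabolic onto $\Psf'_{\theta'}$. Moreover $\Psf'_{\theta'}$ contains no simple factor of $\GG'$: these correspond to the $\GG_j$ with $\theta\cap\Delta_j\ne\emptyset$, for which the equivalence above (applied to $\GG'$) gives $\GG'_j\not\subset\Psf'_{\theta'}$. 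For (3): from $\Delta=\Delta'\sqcup(\Delta\setminus\Delta')$ with $\Delta\setminus\Delta'$ lying in the $\mathfrak{h}$-factors one gets $\mfa_\theta\subset\mfg''$ and $\d p(\mfa_\theta)=\mfa'_{\theta'}$; the fundamental weights $\omega_\alpha$, $\alpha\in\theta$, are supported on $\mfg''$ and match those of $\GG'$, giving $\d p\circ p_\theta=p'_{\theta'}\circ\d p$; and pushing a $\mathsf{KAK}$-decomposition of $g$ forward (noting $\d p$ maps the closed Weyl chamber into the closed Weyl chamber of $\GG'$) gives $\kappa'(p(g))=\d p(\kappa(g))$, whence $\d p(\kappa_\theta(g))=\kappa'_{\theta'}(p(g))$.

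For (4): since $p(\Psf_\theta)=\Psf'_{\theta'}$ and $\mathsf{H}\subset\Psf_\theta$, the assignment $\xi(g\Psf_\theta):=p(g)\Psf'_{\theta'}$ is a well-defined $p$-equivariant bijection; its differential at the basepoint is the map $\mfg/\mfp_\theta\to\mfg'/\mfp'_{\theta'}$ induced by $\d p$, which is an isomorphism because $\ker\d p=\mathfrak{h}\subset\mfp_\theta$, so by equivariance $\xi$ is a local diffeomorphism, hence a diffeomorphism, and $\xi\times\xi$ carries the unique open $\GG$-orbit in $\Fc_\theta\times\Fc_\theta$ onto the unique open $\GG'$-orbit, i.e.\ $\xi$ preserves transversality. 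For the cocycle, given $F\in\Fc_\theta$ choose a representative $\tilde F=k\Psf_\Delta\in\Fc_\Delta$ of $F$ with $k\in\Ksf$; pushing the Iwasawa decomposition $\GG=\Ksf\exp(\mfa)\Usf$ of $gk$ forward (it splits over factors, and $p\circ\exp=\exp\circ\d p$ gives $p(\Usf)=\Usf'$) yields $B'(p(g),p(k)\Psf'_{\Delta'})=\d p(B(g,\tilde F))$; since $p(k)\Psf'_{\Delta'}$ projects to $\xi(F)$ under $\Pi'_{\theta'}$ and $\d p\circ p_\theta=p'_{\theta'}\circ\d p$, applying $p_\theta$ and $p'_{\theta'}$ respectively gives $\d p(B_\theta(g,F))=B'_{\theta'}(p(g),\xi(F))$.

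The main obstacle---really the only point that is not pure bookkeeping---is the foundational claim that all the chosen structure is \emph{simultaneously} compatible with the decomposition $\mfg=\bigoplus_j\mfg_j$ into simple ideals, that is, that $\vartheta$, the Cartan subspace $\mfa$, the positive system, and the opposition involution can be taken not to mix the $\mfg_j$. Granting this, the whole proposition is organized around the single equivalence $\GG_j\subset\Psf_\theta\iff\theta\cap\Delta_j=\emptyset$, which is at once what guarantees $\mathfrak{h}\subset\mfp_\theta$ (so that $\xi$, $\kappa'$ and $B'$ are even defined) and what guarantees that $\Psf'_{\theta'}$ contains no simple factor of $\GG'$.
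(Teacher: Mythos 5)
Your proposal is correct. Note that this paper does not prove the proposition itself---it is imported verbatim from \cite[Prop.\ 2.9]{CZZ3}---so there is no in-paper argument to compare against; your direct transport-of-structure verification, organized around the equivalence $\GG_j \subset \Psf_\theta \iff \theta \cap \Delta_j = \emptyset$ and the isogeny $p|_{\GG''} : \GG'' \to \GG'$, is exactly the kind of factorwise bookkeeping the citation stands in for, and the steps you leave compressed (the Cartan involution preserving each simple ideal, closedness of $\mathsf{H}$ as the kernel of $\GG \to \prod_{j \notin J} \bar\GG_j$, the nilradical computation identifying $p(\Psf_\theta \cap \GG'')$ with $\Psf'_{\theta'}$) are all routine and fillable as you indicate.
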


Using the discussion in~\cite[Section 3]{GGKW} it is possible to prove the following result which allows one to reduce many calculations to the linear case, see ~\cite[Prop.\ B.1]{CZZ3} for details. The statement of the result uses the notation introduced in Section~\ref{sec:linear case}.

\begin{proposition}\label{prop:reduction to the linear case} For any symmetric $\theta \subset \Delta$ and $\chi \in \sum_{\alpha \in \theta} \Nb \cdot \omega_\alpha$ there exist $d \in \Nb$,  an irreducible linear representation $\Phi : \GG \rightarrow \mathsf{SL}(d,\Rb)$ and a $\Phi$-equivariant smooth embedding
$$
\xi : \Fc_\theta \rightarrow \Fc_{1,d-1}(\Rb^d)
$$
such that: 
\begin{enumerate}
 \item $F_1, F_2 \in \Fc_\theta$ are transverse if and only if $\xi(F_1)$ and $\xi(F_2)$ are transverse. 
 \item There exists $N \in \Nb$ such that 
$$
\log\sigma_1(\Phi(g)) =N\chi(\kappa(g))
$$
for all $g \in \GG$. 
\item $\alpha_1(\kappa(\Phi(g))) = \min_{\alpha \in \theta} \alpha( \kappa(g))$ for all $g \in \GG$.
\item If $\min_{\alpha \in \theta} \alpha(\kappa(g)) > 0$, then 
$$
\xi( U_\theta(g)) = U_{1,d-1}(\Phi(g)). 
$$
\item  $\Gamma \subset \GG$ is $\Psf_{\theta}$-divergent (respectively $\Psf_{\theta}$-transverse) if and only if $\Phi(\Gamma)$ is $\Psf_{1,d-1}$-divergent (respectively $\Psf_{1,d-1}$-transverse). Moreover, in this case 
$$
\xi( \Lambda_{\theta}(\Gamma)) = \Lambda_{1,d-1}(\Phi(\Gamma)).
$$
\item If $\Gamma \subset \GG$ is discrete and $\Pc$ is a finite collection of subgroups in $\Gamma$, then $\Gamma \subset \GG$ is $\Psf_{\theta}$-Anosov relative to $\Pc$ if and only if $\Phi(\Gamma)$ is $\Psf_{1,d-1}$-Anosov relative to $\Pc^\prime: = \{ \Phi(P) : P \in \Pc\}$.

\end{enumerate}
\end{proposition}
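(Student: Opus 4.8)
The plan is to realize $\Fc_\theta$ equivariantly inside the line/hyperplane flag manifold of a single, well-chosen irreducible representation of $\GG$, following the classical proximalization construction (cf.\ \cite[Sec.\ 3]{GGKW} and \cite[Prop.\ B.1]{CZZ3}). Since $\chi$ is a positive integer combination of the fundamental weights $\{\omega_\alpha : \alpha\in\theta\}$, its support is exactly $\theta$; as $\GG$ has finite center, one may choose $N\in\Nb$ so that $N\chi$ is the highest weight of an irreducible representation $\Phi\colon\GG\to\SL(V)$. Put $d:=\dim V$, and fix an inner product on $V\cong\Rb^d$ (obtained by averaging the $\Ksf$-action and using $\mfa\subset\mfp$) for which $\Phi(\Ksf)\subset\SO$, the operators $\Phi(\exp\mfa)$ are positive definite symmetric, and distinct restricted $\mfa$-weight spaces of $V$ are pairwise orthogonal.

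For $g\in\GG$ fix a $\mathsf{KAK}$-decomposition $g = m\,e^{\kappa(g)}\,\ell$; then $\Phi(g) = \Phi(m)\,e^{\d\Phi(\kappa(g))}\,\Phi(\ell)$ with $\Phi(m),\Phi(\ell)\in\SO$, so the singular values of $\Phi(g)$ are exactly the numbers $e^{\mu(\kappa(g))}$, with $\mu$ running over the restricted weights of $\Phi$ counted with multiplicity. Since $\kappa(g)\in\mfa^+$ and $N\chi$ dominates every weight of $\Phi$ on $\mfa^+$, and the highest weight space is a line, $\sigma_1(\Phi(g)) = e^{N\chi(\kappa(g))}$, which gives (2). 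For (3) one uses that $N\chi$ has support $\theta$: the only weights of $\Phi$ of the form $N\chi-\beta$ with $\beta$ a simple root are those with $\beta\in\theta$, and every weight $\mu\neq N\chi$ of $\Phi$ satisfies $N\chi-\mu\geq\alpha$ in the root order for some $\alpha\in\theta$. Hence the second largest singular value equals $e^{\max_{\alpha\in\theta}(N\chi-\alpha)(\kappa(g))} = e^{N\chi(\kappa(g))-\min_{\alpha\in\theta}\alpha(\kappa(g))}$, so $\alpha_1(\kappa(\Phi(g))) = \log\sigma_1(\Phi(g)) - \log\sigma_2(\Phi(g)) = \min_{\alpha\in\theta}\alpha(\kappa(g))$, proving (3).

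Next, the embedding. Because $\theta$ is symmetric and $\mathrm{supp}(N\chi)=\theta$, the stabilizer in $\GG$ of the highest weight line $\ell_0:=V^{N\chi}$ is $\Psf_\theta$, and the stabilizer of the hyperplane $H_0$ spanned by all weight spaces other than the lowest one is $\Psf_{\iota^*\theta}=\Psf_\theta$; as $\ell_0\subset H_0$, the assignment $g\Psf_\theta\mapsto(\Phi(g)\ell_0,\Phi(g)H_0)$ is a well-defined, injective, $\Phi$-equivariant smooth map $\xi\colon\Fc_\theta\to\Fc_{1,d-1}(\Rb^d)$, hence an embedding. Proximality of $\Phi$ together with $\mathrm{supp}(N\chi)=\theta$ shows that $F_1,F_2$ are transverse in $\Fc_\theta$ if and only if $\xi(F_1),\xi(F_2)$ are transverse, which is (1); and comparing Cartan attracting data using the inner product above gives $\xi(U_\theta(g)) = U_{1,d-1}(\Phi(g))$ whenever $\min_{\alpha\in\theta}\alpha(\kappa(g))>0$, which is (4). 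Properties (5) and (6) are then formal: by (3), $\min_{\alpha\in\theta}\alpha\circ\kappa$ is proper on $\Gamma$ iff $\alpha_1\circ\kappa$ is proper on $\Phi(\Gamma)$; by (4) and continuity $\xi(\Lambda_\theta(\Gamma))=\Lambda_{1,d-1}(\Phi(\Gamma))$; and by (1) one limit set is transverse iff the other is, which gives (5). For (6), $\ker(\Phi|_\Gamma)$ is central and finite, hence intersects each infinite peripheral subgroup trivially and leaves the relatively hyperbolic structure unchanged, so $(\Gamma,\Pc)$ and $(\Phi(\Gamma),\Pc')$ have the same Bowditch boundary, and post-composing the limit map of $\Gamma$ with $\xi$ (resp.\ $\xi^{-1}$) exhibits the equivalence of the two relative Anosov structures.

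I expect the weight combinatorics underlying (3) to be the main point: one must check that inside the weight diagram of the irreducible $\GG$-module of highest weight $N\chi$ the only simple roots that can be subtracted from $N\chi$ lie in $\theta=\mathrm{supp}(N\chi)$, and that each of them genuinely can, so that the first singular value gap is controlled precisely by $\min_{\alpha\in\theta}\alpha(\kappa(g))$ and not by a proper subset or superset of $\theta$. Secondary technical points are producing a single inner product on $V$ that is simultaneously $\Ksf$-invariant and adapted to the $\mfa$-weight decomposition — this is what makes both the singular value computation and the identity $\xi\circ U_\theta = U_{1,d-1}\circ\Phi$ work — and choosing $N$ large enough that the representation is defined on $\GG$ itself rather than on a cover.
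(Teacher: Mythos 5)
Your route is the intended one — the paper itself gives no argument and defers to the proximalization construction of \cite[Sec.\ 3]{GGKW} and \cite[Prop.\ B.1]{CZZ3}, which is what you are reconstructing — but as written there is a genuine gap at the foundation of (2)--(4). For a real group $\GG$, an irreducible representation whose highest \emph{restricted} weight is $N\chi$ need not have a one-dimensional highest restricted weight space, and your computation silently uses that it does ("the highest weight space is a line''); choosing $N$ merely so that $N\chi$ integrates to a representation of $\GG$ does not arrange this. For example, for $\GG = \SL(2,\Cb)$ viewed as a real group, both the adjoint representation and the representation on Hermitian $2\times 2$ matrices are $\Rb$-irreducible with restricted highest weight $\alpha = 2\omega_\alpha$, but the adjoint representation has a two-dimensional highest restricted weight space, so for it $\sigma_1 \equiv \sigma_2$ on the image, (3) fails identically, and the attracting line in (4) is not even well defined. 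The missing ingredient is Tits' theorem: for each $\alpha \in \theta$ there is a \emph{proximal} irreducible representation with highest weight $l_\alpha \omega_\alpha$, and one takes a Cartan product of these to obtain a proximal $\Phi$ with highest weight $N\chi$; this, together with the Bruhat-cell characterization of transversality by non-vanishing of the highest-weight matrix coefficient (which you also only gesture at in the converse direction of (1)), is exactly the content of the cited GGKW discussion. Your identification of the weight combinatorics (every weight $\neq N\chi$ lies below $N\chi - \alpha$ for some $\alpha\in\theta$, and each $N\chi-\alpha$, $\alpha\in\theta$, occurs) is correct and is indeed the other key point.

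Separately, in (5)--(6) the claim that $\ker(\Phi|_\Gamma)$ is "central and finite'' is wrong in general: since $N\chi$ vanishes on every simple factor of $\GG$ contained in $\Psf_\theta$ (those whose simple roots miss $\theta$), $\ker\Phi$ contains all such factors, which are neither central nor finite. For the forward implications this is harmless, because $\Psf_\theta$-divergence of $\Gamma$ forces $\Gamma \cap \ker\Phi$ to be finite ($\alpha\circ\kappa$ vanishes on those factors for $\alpha\in\theta$), but your argument for the converse directions of (5) and (6) does not go through as written when $\Gamma$ meets $\ker\Phi$ in an infinite subgroup; that case has to be treated (or excluded) explicitly rather than dismissed via finiteness of the kernel.
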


\begin{remark} Part (6) is not explicitly stated in ~\cite[Prop.\ B.1]{CZZ3}, however it follows immediately from part (5) and the definitions. 
\end{remark} 

\section{Multiplicative estimates}

It is a general principle from linear algebra that if $U_\theta(A^{-1})$ is uniformly transverse to $U_\theta(B)$, then $\kappa_\theta(AB)$ is coarsely equal to 
$\kappa_\theta(A)+\kappa_\theta(B)$,
see, for example, \cite[Lem.\ A.7]{BPS}. We make use of two manifestations of this principle, the first in the context of transverse groups and the second in the context of relatively Anosov
groups.  Previous instances of this principle in our work include \cite[Lem.\ 6.2]{CZZ2} and \cite[Prop.\ 6.3]{CZZ3}.

In  the results of this section, $\norm{\cdot}$ denotes the norm on $\mathfrak a$ induced by the restriction of the Killing from to $\mathfrak a$. 

\begin{proposition}\label{prop:multiplicative estimate v1} Suppose $\Gamma \subset \GG$ is $\Psf_\theta$-transverse and $\d_{\Fc_\theta}$ is a distance on $\Fc_\theta$ which is induced by a Riemannian metric. For any $\epsilon > 0$ there exists $C =C(\epsilon) > 0$ such that: if $\gamma, \eta \in \Gamma$ and $\d_{\Fc_\theta}(U_\theta(\gamma^{-1}), U_\theta(\eta)) > \epsilon$, then 
$$
\norm{\kappa_\theta(\gamma\eta) - \kappa_\theta(\gamma)-\kappa_\theta(\eta)} \le C.
$$
\end{proposition}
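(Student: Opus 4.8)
The plan is to reduce to the linear case $\GG = \SL(d,\Rb)$ via Proposition~\ref{prop:reduction to the linear case} and then invoke the standard linear-algebra principle. Concretely, pick the representation $\Phi : \GG \to \SL(d,\Rb)$ and the embedding $\xi : \Fc_\theta \to \Fc_{1,d-1}$ from that proposition, with $\chi = \sum_{\alpha\in\theta}\omega_\alpha$ (say). By part (5), $\Phi(\Gamma)$ is $\Psf_{1,d-1}$-transverse, and by part (4) the embedding $\xi$ carries $U_\theta(g)$ to $U_{1,d-1}(\Phi(g))$ whenever $\min_{\alpha\in\theta}\alpha(\kappa(g))>0$. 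Since $\Gamma$ is $\Psf_\theta$-divergent, only finitely many $\gamma\in\Gamma$ fail that inequality, so this is a harmless exception to be absorbed into the constant. Because $\xi$ is a smooth embedding of compact manifolds, the hypothesis $\d_{\Fc_\theta}(U_\theta(\gamma^{-1}), U_\theta(\eta)) > \epsilon$ translates (after shrinking) into $\d_{\Fc_{1,d-1}}(\xi U_\theta(\gamma^{-1}),\xi U_\theta(\eta)) > \epsilon'$ for some $\epsilon' = \epsilon'(\epsilon) > 0$, i.e.\ $U_{1,d-1}(\Phi(\gamma)^{-1})$ and $U_{1,d-1}(\Phi(\eta))$ are $\epsilon'$-transverse.

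Next I would run the linear-algebra estimate. In $\SL(d,\Rb)$, writing $A = \Phi(\gamma)$, $B = \Phi(\eta)$, one has the statement that if the line $U_1(A^{-1})$ (the most-contracted direction of $A$) stays a definite angle away from the hyperplane $U_{d-1}(B)$, then $\log\sigma_i(AB) = \log\sigma_i(A) + \log\sigma_i(B) + O_{\epsilon'}(1)$ for $i = 1$ and $i = d-1$, which is exactly the content of \cite[Lem.\ A.7]{BPS} (cited in the excerpt) applied to $A$ and $B$; transversality of the flags $U_{1,d-1}(A^{-1})$ and $U_{1,d-1}(B)$ is precisely the requisite angular separation. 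This gives a bound on $\norm{\kappa_{1,d-1}(AB) - \kappa_{1,d-1}(A) - \kappa_{1,d-1}(B)}$ depending only on $\epsilon'$ and $d$. Finally I would push this back to $\GG$: by part (2) of Proposition~\ref{prop:reduction to the linear case}, $\chi(\kappa(g))$ is a fixed multiple of $\log\sigma_1(\Phi(g))$, and more generally the functionals $\omega_\alpha\circ\kappa$ for $\alpha\in\theta$ are recovered from $\log\sigma_1$ of suitable exterior-power / tensor constructions of $\Phi$ (or, more simply, from a finite family of such representations); since $\{\omega_\alpha|_{\mfa_\theta} : \alpha\in\theta\}$ is a basis of $\mfa_\theta^*$ and $\norm{\cdot}$ on $\mfa_\theta$ is equivalent to $\max_{\alpha\in\theta}|\omega_\alpha(\cdot)|$, controlling each $\omega_\alpha(\kappa_\theta(\gamma\eta) - \kappa_\theta(\gamma) - \kappa_\theta(\eta))$ yields the desired bound on $\norm{\kappa_\theta(\gamma\eta) - \kappa_\theta(\gamma) - \kappa_\theta(\eta)}$.

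An alternative, more self-contained route avoids iterating the reduction over all $\omega_\alpha$: apply \cite[Prop.\ 6.3]{CZZ3} or \cite[Lem.\ 6.2]{CZZ2} directly, which are cited in the excerpt as prior manifestations of exactly this principle; those results already phrase the estimate intrinsically in terms of $\kappa_\theta$ and transversality of $U_\theta$-flags, and the present proposition is essentially a repackaging in which the transversality hypothesis is stated quantitatively via the distance $\d_{\Fc_\theta}$ rather than via a neighborhood of the non-transverse locus $\Zc_{U_\theta(\gamma^{-1})}$. The translation between ``$\d_{\Fc_\theta}(U_\theta(\gamma^{-1}), U_\theta(\eta)) > \epsilon$'' and ``$U_\theta(\eta)$ lies outside an $\epsilon'$-neighborhood of $\Zc_{U_\theta(\gamma^{-1})}$'' is a compactness argument: $\Zc_F$ varies continuously (indeed smoothly) with $F$, and $U_\theta(\eta)\in\Zc_{U_\theta(\gamma^{-1})}$ would force $\d_{\Fc_\theta}(U_\theta(\gamma^{-1}),U_\theta(\eta))$ to be bounded by the ``radius'' of $\Zc$, so a uniform $\epsilon'$ exists.

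The main obstacle I anticipate is bookkeeping rather than genuine difficulty: making precise the two-way dictionary between the Riemannian distance $\d_{\Fc_\theta}$ on flags and the uniform-transversality condition needed by the linear estimate, uniformly over all pairs $(\gamma,\eta)$, and correctly handling the finitely many elements $\gamma$ with $\min_{\alpha\in\theta}\alpha(\kappa(\gamma))$ small (where $U_\theta$ may not even be well-defined by the chosen $\mathsf{KAK}$-decomposition) — these must be excluded and absorbed into $C(\epsilon)$. Everything else is a direct appeal to \cite[Lem.\ A.7]{BPS} / \cite[Prop.\ 6.3]{CZZ3} together with the reduction of Proposition~\ref{prop:reduction to the linear case}.
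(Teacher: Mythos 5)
Your overall architecture (reduce to $\SL(d,\Rb)$ via Proposition~\ref{prop:reduction to the linear case}, apply the singular-value estimate of \cite[Lem.\ A.7]{BPS}, and recover all of $\kappa_\theta$ from the fundamental weights $\omega_\alpha$, $\alpha\in\theta$) is the same as the paper's. But there is a genuine gap at the crucial step where you convert the hypothesis $\d_{\Fc_\theta}(U_\theta(\gamma^{-1}), U_\theta(\eta)) > \epsilon$ into the angular separation needed by the linear estimate. Distance bounded below does \emph{not} imply transversality, let alone uniform transversality: the non-transverse locus $\Zc_F$ is a positive-codimension subvariety that contains flags arbitrarily far from $F$ (in $\Fc_{1,d-1}$, take a line contained in the hyperplane of $F$ but far from the line of $F$), so your claimed dictionary --- that $U_\theta(\eta)\in\Zc_{U_\theta(\gamma^{-1})}$ would force the distance to be small, ``bounded by the radius of $\Zc$'' --- is false; that ``radius'' is comparable to the diameter of $\Fc_\theta$. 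For a general discrete group the conclusion of the proposition is simply wrong under the distance hypothesis alone, so the hypothesis that $\Gamma$ is $\Psf_\theta$-\emph{transverse} must enter, and in your write-up it never does: you only use divergence (to discard finitely many elements) and part (5) of the reduction proposition in passing.

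The paper closes exactly this gap with a compactness/contradiction argument that routes through the limit set: if the constant did not exist, one extracts escaping sequences $\{\gamma_n\},\{\eta_n\}$ (escaping by Lemma~\ref{lem:kappa multiplication estimate}) with $U_\theta(\gamma_n^{-1})\to F_1$ and $U_\theta(\eta_n)\to F_2$; these limits lie in $\Lambda_\theta(\Gamma)$, the $\epsilon$-separation only serves to guarantee $F_1\neq F_2$, and then transversality of $F_1,F_2$ comes from the defining property of a transverse group, not from the distance. Only after that does one pick (for the offending basis functional $\omega_\alpha$, or a suitable $\chi\in\sum_{\alpha\in\theta}\Nb\cdot\omega_\alpha$) a representation $\Phi$ as in Proposition~\ref{prop:reduction to the linear case} and apply \cite[Lem.\ A.7]{BPS} to the images, where transversality of $\xi(F_1)$ and $\xi(F_2)$ supplies the uniform $\sin\theta$ lower bound along the sequence and yields the contradiction. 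So to repair your argument you should either run this contradiction scheme, or, if you want a direct statement ``separation $\Rightarrow$ uniform angle gap,'' prove it by the same limiting argument restricted to pairs accumulating on $\Lambda_\theta(\Gamma)\times\Lambda_\theta(\Gamma)$; it cannot be obtained from continuity of $F\mapsto\Zc_F$ alone. The remaining bookkeeping in your proposal (handling finitely many $\gamma$ with some $\alpha(\kappa(\gamma))=0$, and assembling the bound from the basis $\{\omega_\alpha|_{\mfa_\theta}\}$) is fine and matches the paper.
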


\begin{proof}
We use the following special case of \cite[Lem.\ A.7]{BPS}.

\begin{lemma}\label{lem:multiplicative estimate linear case} Let $e_1,\dots, e_d$ denote the standard basis of $\Rb^d$. Suppose $g_1, g_2 \in \SL(d, \Rb)$ have singular value decomposition $g_1 =  m_1 a_1 \ell_1$ and $g_2 =  m_2 a_2 \ell_2$. Then 
$$
\sigma_1(g_1)\sigma_1(g_2)\sin(\theta) \le \sigma_1(g_1 g_2) \le  \sigma_1(g_1)\sigma_1(g_2)
$$
where $\theta: = \angle\left( m_2\ip{e_1}, \ell_1^{-1}\ip{e_2,\dots, e_d}\right)$ is the Euclidean angle between the subspaces $m_2\ip{e_1}$ and $\ell_1^{-1}\ip{e_2,\dots, e_d}$.
\end{lemma}

If the proposition fails, then for every $n \ge 1$ there exist $\gamma_n, \eta_n \in \Gamma$ where 
$$
\d_{\Fc_\theta}(U_\theta(\gamma_n^{-1}), U_\theta(\eta_n)) > \epsilon
$$
and 
$$
\norm{\kappa_\theta(\gamma_n\eta_n) - \kappa_\theta(\gamma_n)-\kappa_\theta(\eta_n)} \ge n.
$$ 
Lemma~\ref{lem:kappa multiplication estimate} implies that $\{\gamma_n\}$ and $\{\eta_n\}$ are escaping sequences in $\Gamma$. 
So by passing to a subsequence we can suppose that $U_\theta(\gamma_n^{-1}) \rightarrow F_1 \in \Lambda_\theta(\Gamma)$ and $U_\theta(\eta_n) \rightarrow F_2 \in \Lambda_\theta(\Gamma)$. 
Then $\d_{\Fc_\theta}(F_1, F_2) \ge \epsilon$ and hence $F_1$ and $F_2$ are transverse (since $\Gamma$ is $\Psf_\theta$-transverse).

Since $\{ \omega_\alpha|_{\mathfrak{a}_\theta} : \alpha \in \theta\}$ is a basis for $\mathfrak{a}_\theta^*$, after passing to a subsequence there exists some $\chi \in \sum_{\alpha \in \theta} \Nb \cdot \omega_\alpha$ such that 
$$
\lim_{n \rightarrow \infty} \abs{\chi\Big(\kappa_\theta(\gamma_n\eta_n) - \kappa_\theta(\gamma_n)-\kappa_\theta(\eta_n)\Big)} =\infty.
$$
Let $N \in \Nb$, $\Phi : \GG \rightarrow \PSL(d,\Rb)$ and $\xi : \Fc_\theta \rightarrow \Fc_{1,d-1}(\Rb^d)$ satisfy Proposition~\ref{prop:reduction to the linear case} for $\chi$. 
Then by Proposition~\ref{prop:reduction to the linear case} Claim (2), 
$$
\log \sigma_1(\Phi(g)) =N\chi(\kappa(g))=N\chi(\kappa_\theta(g))
$$
for all $g \in \GG$.

Let $\hat{\gamma}_n : = \Phi(\gamma_n)$ and $\hat{\eta}_n : =\Phi(\eta_n)$. Since $\Gamma$ is $\Psf_\theta$-transverse, $\alpha(\kappa(\gamma_n)) \rightarrow +\infty$ for all $\alpha \in \theta$. So by Proposition~\ref{prop:reduction to the linear case} Claim (4), 
$$
\lim_{n \rightarrow \infty} U_{1,d-1}(\hat{\gamma}_n ^{-1}) = \lim_{n \rightarrow \infty} \xi(U_\theta(\gamma_n^{-1})) = \xi(F_1).
$$
Likewise, $U_{1,d-1}(\hat{\eta}_n) \rightarrow \xi(F_2)$. Since $F_1$ and $F_2$ are transverse,  Proposition~\ref{prop:reduction to the linear case} Claim (1) implies that $\xi(F_1)$ and $\xi(F_2)$ are transverse. So Lemma~\ref{lem:multiplicative estimate linear case} implies that there exists a constant $C > 0$ such that 
$$
\abs{ \log \sigma_1(\hat{\gamma}_n\hat{\eta}_n) - \log \sigma_1(\hat{\gamma}_n) - \log \sigma_1(\hat{\eta}_n)} \le C
$$
for all $n \ge 1$. So,
$$
\abs{\chi\Big(\kappa_\theta(\gamma_n\eta_n)-\kappa_\theta(\gamma_n) -\kappa_\theta(\eta_n)\Big)} \le \frac{C}{N}
$$
and we have a contradiction. 
\end{proof} 

Our result for relatively Anosov groups involves a choice of Groves--Manning cusp space.

\begin{proposition}\label{prop:multiplicative estimate v2} Suppose $\Gamma \subset \GG$ is $\Psf_\theta$-Anosov relative to $\Pc$ and $X$ is a Groves--Manning cusp space for $(\Gamma, \Pc)$. There exists $C  > 0$ such that: if $f : [0,T] \rightarrow X$ is a geodesic with $f(0)=\id$ and $f(T) \in \Gamma$, then 
$$
\norm{\kappa_\theta(f(T)) - \kappa_\theta(f(t))-\kappa_\theta(f(t)^{-1}f(T))} \le C
$$
whenever $t \in [0,T]$ and $f(t) \in \Gamma$.
\end{proposition}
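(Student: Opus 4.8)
The plan is to argue by contradiction and reduce to the multiplicative estimate for transverse groups, Proposition~\ref{prop:multiplicative estimate v1}. Suppose no such $C$ exists. Then for each $n \ge 1$ there is a geodesic $f_n \colon [0,T_n] \to X$ with $f_n(0) = \id$ and $f_n(T_n) \in \Gamma$, and a time $t_n \in [0,T_n]$ with $f_n(t_n) \in \Gamma$, such that, setting $g_n := f_n(t_n) \in \Gamma$ and $h_n := f_n(t_n)^{-1} f_n(T_n) \in \Gamma$, we have
$$
\norm{\kappa_\theta(g_n h_n) - \kappa_\theta(g_n) - \kappa_\theta(h_n)} \ge n .
$$
Since $g_n h_n = f_n(T_n)$, the whole point is to verify the hypothesis of Proposition~\ref{prop:multiplicative estimate v1}: that, after passing to a subsequence, there is $\epsilon_0 > 0$ with $\d_{\Fc_\theta}\big(U_\theta(g_n^{-1}), U_\theta(h_n)\big) > \epsilon_0$ for all large $n$ (for some fixed Riemannian distance $\d_{\Fc_\theta}$ on $\Fc_\theta$). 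That proposition would then bound the left-hand side above by $C(\epsilon_0)$, contradicting the displayed inequality.

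First I would observe that $\norm{\kappa(g_n)} \to \infty$ and $\norm{\kappa(h_n)} \to \infty$. Indeed, if $\norm{\kappa(g_n)}$ stayed bounded along a subsequence then, applying the linear map $p_\theta$ to the estimates of Lemma~\ref{lem:kappa multiplication estimate}, both $\norm{\kappa_\theta(g_n h_n) - \kappa_\theta(h_n)}$ and $\norm{\kappa_\theta(g_n)}$ would be bounded along that subsequence, contradicting the displayed lower bound; the case of $\norm{\kappa(h_n)}$ is symmetric. As $\Gamma$ is discrete in $\GG$, this forces $\{g_n\}$ and $\{h_n\}$ to leave every finite subset of $\Gamma$, so (passing to subsequences) they consist of distinct elements. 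Since $\Gamma$ acts properly discontinuously, by isometries, on the proper space $X$, writing $b_0 := \id \in X$ the set $\{\gamma \in \Gamma : \d_X(b_0, \gamma b_0) \le R\}$ is finite for every $R$, whence $\d_X(b_0, g_n b_0) \to \infty$, $\d_X(b_0, h_n b_0) \to \infty$, and $\d_X(g_n^{-1} b_0, b_0) = \d_X(b_0, g_n b_0) \to \infty$.

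The geometric heart of the argument is to compute the limits of $U_\theta(g_n^{-1})$ and $U_\theta(h_n)$. Applying the isometry $g_n^{-1}$ to $f_n$ gives a geodesic $g_n^{-1} f_n \colon [0,T_n] \to X$ passing through $b_0$ at time $t_n$, with $g_n^{-1} f_n(0) = g_n^{-1} b_0$ and $g_n^{-1} f_n(T_n) = h_n b_0$. Thus $g_n^{-1} b_0$, $b_0$ and $h_n b_0$ lie in this order on a geodesic, so the Gromov product satisfies $(g_n^{-1} b_0 \mid h_n b_0)_{b_0} = 0$, while their distances to $b_0$ tend to infinity. Since $X$ is proper, after passing to a subsequence $g_n^{-1} b_0 \to x^- \in \partial_\infty X$ and $h_n b_0 \to x^+ \in \partial_\infty X$; and by Gromov hyperbolicity the boundary Gromov product $(x^- \mid x^+)_{b_0}$ is finite (bounded in terms of the hyperbolicity constant), hence $x^- \ne x^+$. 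Now Proposition~\ref{prop:compactifications are the same}, applied to the sequences $\{g_n^{-1}\}$ and $\{h_n\}$ and using the identification $\partial_\infty X = \partial(\Gamma,\Pc)$, yields $U_\theta(g_n^{-1}) \to \xi(x^-)$ and $U_\theta(h_n) \to \xi(x^+)$. Since $\xi$ is injective, $\xi(x^-) \ne \xi(x^+)$, so $\d_{\Fc_\theta}\big(U_\theta(g_n^{-1}), U_\theta(h_n)\big) \to \d_{\Fc_\theta}\big(\xi(x^-), \xi(x^+)\big) =: 2\epsilon_0 > 0$. This provides the $\epsilon_0$ required above, and Proposition~\ref{prop:multiplicative estimate v1} then gives the contradiction.

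The main obstacle is the geometric step of the third paragraph: the estimate is driven by recognizing $g_n^{-1} f_n$ as a geodesic of $X$ through the basepoint whose two ends both escape to infinity, so that its ideal endpoints are forced to be distinct points of $\partial_\infty X$, which — by the defining property of a relatively Anosov group together with the Groves--Manning theorem — is the Bowditch boundary $\partial(\Gamma,\Pc)$ on which the limit map $\xi$ is defined. This is exactly where the relatively Anosov hypothesis and the choice of cusp space $X$ enter. I note that no reduction is needed here: Propositions~\ref{prop:multiplicative estimate v1} and~\ref{prop:compactifications are the same} are already available for general $\GG$, so the argument never has to pass to $\SL(d,\Rb)$ or to the case where $\Psf_\theta$ contains no simple factors.
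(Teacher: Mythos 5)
Your proposal is correct and follows essentially the same route as the paper: argue by contradiction, use Lemma~\ref{lem:kappa multiplication estimate} to see the two sequences escape, translate the geodesic by $f_n(t_n)^{-1}$ so it passes through the basepoint and hence has distinct ideal endpoints, then apply Proposition~\ref{prop:compactifications are the same} and Proposition~\ref{prop:multiplicative estimate v1}. Your extra detail (the Gromov product computation forcing $x^- \neq x^+$) just makes explicit what the paper asserts in one line.
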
 

\begin{proof}
Suppose not. Then for every $n \ge 1$ there is a geodesic $f_n : [0,T_n] \rightarrow X$ and some $t_n \in [0,T_n]$ such that $f_n(0)=\id$, $f_n(t_n) \in \Gamma$, $f_n(T_n)\in \Gamma$, and 
$$
\norm{\kappa_\theta(f_n(T_n)) - \kappa_\theta(f_n(t_n))-\kappa_\theta(f_n(t_n)^{-1}f_n(T_n))} \ge n.
$$
Let $\gamma_n := f(t_n)$ and $\eta_n := f_n(t_n)^{-1}f_n(T_n)$. By Lemma~\ref{lem:kappa multiplication estimate}, both $\{\gamma_n\}$ and $\{\eta_n\}$ are escaping sequences in $\Gamma$. So by passing to a subsequence we can suppose that $\gamma_n^{-1} \rightarrow x \in \partial_\infty X$ and $\eta_n \rightarrow y \in \partial_\infty X$. Since $t \mapsto f_n(t_n)^{-1} f_n(t)$ is a geodesic in $X$ passing through $\id$ and joining $\gamma_n^{-1}$ to $\eta_n$, we must have $x \neq y$.

Let $\xi : \partial_\infty X \rightarrow \Lambda_\theta(\Gamma)$ be the limit map. Proposition~\ref{prop:compactifications are the same} implies that $U_\theta(\gamma_n^{-1}) \rightarrow \xi(x)$ and $U_\theta(\eta_n) \rightarrow \xi(y)$. Since $\xi(x) \neq \xi(y)$, Proposition~\ref{prop:multiplicative estimate v1} implies that there exists $C > 0$ such that 
\begin{align*}
\norm{\kappa_\theta(f_n(T_n)) - \kappa_\theta(f_n(t_n))-\kappa_\theta(f_n(t_n)^{-1}f_n(T_n))} =\norm{\kappa_\theta(\gamma_n\eta_n) - \kappa_\theta(\gamma_n)-\kappa_\theta(\eta_n)} \le C
\end{align*} 
for all $n \ge 1$. So we have a contradiction. 
\end{proof}

\section{Resolution of singularities}\label{sec: resolution of singularities} 

In this section we study the asymptotic behavior of proper positive functions which are products of powers of rational functions. This is the key technical step needed to prove that the Poincar\'e series of a peripheral subgroup of a relatively Anosov subgroup diverges at its critical exponent, see Theorem \ref{thm:entropy gap for rel Anosov}.

Let $\lambda$ denote the Lebesgue measure on $\Rb^d$, and $\norm{\cdot}$ the standard Euclidean norm on $\Rb^d$.  We say that $R$ is a rational function on $\Rb^d$ if $R = \frac{f}{g}$ where $f,g : \Rb^d \rightarrow \Rb$ are polynomials and $R$ has domain $\{ g \neq 0\}$.

The main result of this section is the following theorem. Its proof is motivated by arguments of Benoist--Oh~\cite[Prop.\ 7.2]{benoist-oh}, which implies the special case when $m=1$. 

\begin{theorem}\label{thm:critical exponent of products of rational functions} 
Suppose $R_1,\dots, R_m$ are rational functions on $\Rb^d$ which are positive and everywhere defined, and let 
$$
R := R_1^{\ell_1} \cdots R_m^{\ell_m}
$$
where $\ell_1,\dots, \ell_m\in\Rb$. If $R$ is a proper function, then:
\begin{enumerate}
\item There exists $\delta=\delta(R) > 0$ such that 
$$
\int_{\Rb^d} R^{-s} d\lambda
$$
converges when $s \in (\delta,+\infty)$ and diverges when $s \in [0,\delta]$.
\item There exist $c=c(R)>0$ and $\epsilon=\epsilon(R)>0$ such that 
$$
R(x) \ge c(1+\norm{x})^\epsilon
$$
for all $x \in \Rb^d$. 
\end{enumerate}
\end{theorem}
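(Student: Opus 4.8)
The plan is to establish both statements at once, by reducing to the behaviour of $R$ near infinity and then using resolution of singularities to bring $R$ into a local monomial normal form. First I would record some easy structural facts. Since each $R_i=f_i/g_i$ is positive and everywhere defined, the polynomials $f_i,g_i$ vanish nowhere on $\Rb^d$, hence have constant sign, and after replacing $(f_i,g_i)$ by $(-f_i,-g_i)$ when necessary we may assume $f_i,g_i>0$ on $\Rb^d$. Thus $R$ is continuous and positive, and by properness bounded below by a positive constant; in particular $\int_{\{\norm{x}\le N\}}R^{-s}\,d\lambda<\infty$ for every $s\ge0$ and every $N$, and the inequality in (2) is automatic on any ball if $c$ is chosen small. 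Moreover each $R_i^{\pm1}$ is a continuous semialgebraic function on $\Rb^d$ and therefore polynomially bounded, so $R(x)\le C(1+\norm{x})^{A}$ for suitable $C,A>0$; consequently $\int R^{-s}\,d\lambda=+\infty$ for all $0\le s\le d/A$, which will give the strict positivity of the exponent we construct, namely $\delta(R)\ge d/A>0$. So everything reduces to understanding $R$ outside a large ball.

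Next I would compactify and resolve. Embed $\Rb^d\subset\Pb^d(\Rb)$ with hyperplane at infinity $H$; homogenising the numerators and denominators shows their zero loci all lie in $H$, hence the divisor on $\Pb^d$ of each rational function $R_i$ is supported in $H$. By Hironaka's resolution of singularities I would choose a smooth projective real variety $Y$ and a proper morphism $\pi\colon Y\to\Pb^d$ restricting to an isomorphism over $\Rb^d$, such that $E:=\pi^{-1}(H)_{\mathrm{red}}=\bigcup_\alpha E_\alpha$ is a simple normal crossings divisor along which each $\pi^*R_i$ is a nowhere-vanishing regular function times a monomial in local equations of the $E_\alpha$. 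Then $Y(\Rb)$ is a compact $d$-manifold, and near any $q\in E$ there are coordinates $(y_1,\dots,y_d)$ in which $E=\{y_1\cdots y_k=0\}$ locally and $\pi^*R=w_q(y)\prod_{j=1}^k\abs{y_j}^{m_j}$ with $w_q$ continuous and bounded above and below by positive constants and $m_j\in\Rb$. Computing the Jacobian of $\pi$ and using that in a projective chart Lebesgue measure is $\abs{s_0}^{-(d+1)}\,ds$ (with $s_0$ an equation of $H$), one gets $\pi^*d\lambda=v_q(y)\prod_{j=1}^k\abs{y_j}^{\beta_j}\,dy$ with $v_q$ a positive continuous unit and $\beta_j\in\Zb$, and similarly $\norm{x\circ\pi}\asymp\prod_{j=1}^k\abs{y_j}^{-a_j}$ with integers $a_j\ge1$. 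The decisive point, which I would extract from properness of $R$ (forcing $R(\pi(y))\to+\infty$ as $y\to E$), is that $m_j<0$ for all $j\le k$.

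Finally I would assemble the estimates. Cover the compact manifold $Y(\Rb)$ by finitely many such charts together with $\pi^{-1}$ of a large ball, take a subordinate partition of unity, and push $\int_{\Rb^d}R^{-s}\,d\lambda$ through the diffeomorphism $\pi^{-1}(\Rb^d)\to\Rb^d$. This expresses it as a finite sum of a term over the ball, finite for all $s\ge0$, and integrals of the form $\int_{(-\varepsilon,\varepsilon)^d}(\text{positive continuous})\cdot\prod_{j=1}^k\abs{y_j}^{-sm_j+\beta_j}\,dy$. By Tonelli's theorem such an integral is finite if and only if $-sm_j+\beta_j>-1$ for every $j$, i.e.\ (since $m_j<0$) if and only if $s>\tau_{q,j}:=(\beta_j+1)/m_j$, and is infinite otherwise. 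Summing, $\int R^{-s}\,d\lambda$ is finite precisely for $s>\delta:=\max_{q,j}\tau_{q,j}$; and for $s\le\delta$ it is infinite since $R\ge1$ off a compact set forces $R^{-s}\ge R^{-\delta}$ there. Together with $\delta\ge d/A>0$ this proves (1). For (2) one estimates, on each chart, $\log\pi^*R\ge -C+\sum_{j\le k}(-m_j)\log(1/\abs{y_j})\ge -C+\bigl(\min_{j}\tfrac{-m_j}{a_j}\bigr)\sum_{j\le k}a_j\log(1/\abs{y_j})\ge -C'+\epsilon\log\norm{x\circ\pi}$ with $\epsilon:=\min_{q,j}(-m_{q,j})/a_{q,j}>0$; combined with the positive lower bound for $R$ on the ball, this gives $R(x)\ge c(1+\norm{x})^{\epsilon}$.

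The hard part will be the resolution step: arranging a single logarithmic resolution that makes all the $\pi^*R_i$ monomial simultaneously while keeping $\pi$ an isomorphism over $\Rb^d$, carrying this out over $\Rb$ rather than $\Cb$, and then bookkeeping the three families of monomial exponents — $m_j$ coming from $R$, $\beta_j$ from the pulled-back Lebesgue measure, and $a_j$ from the distance to infinity — together with the sign issues that arise when forming the real powers $R_i^{\ell_i}$. Once the local monomial normal form is in place, the rest is the elementary computation of monomial integrals.
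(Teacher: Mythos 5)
Your proposal follows essentially the same route as the paper: compactify $\Rb^d$ projectively, use Hironaka's resolution of singularities to put $R$ (together with the pulled-back Lebesgue measure and the distance to infinity) into local monomial normal form along a normal crossings divisor at infinity, deduce the convergence/divergence dichotomy from elementary monomial integrals, and obtain the lower bound in (2) by comparing monomial exponents; the only real differences are that the paper applies Atiyah's local real-analytic form of Hironaka chart-by-chart to the single auxiliary polynomial $f_1\cdots f_m\, g_1\cdots g_m\, y_1$ and gets $\delta>0$ from the divergence of $\int |y_1|^{-1}$, whereas you invoke a global algebraic log resolution over $\Rb$ and get $\delta>0$ from polynomial boundedness of semialgebraic functions. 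Both variants are sound, so your argument is correct modulo the standard resolution input you yourself flag as the step needing care.
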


To prove Theorem \ref{thm:critical exponent of products of rational functions} , we compactify $\Rb^d$ by identifying it with the affine subspace
$$
\Ab_1:=\{ [x_0:\cdots :x_d ] : x_0 \neq 0\}\subset\Pb(\Rb^{d+1})$$ 
via the coordinate chart $\psi_1:\Ab_1\to\Rb^d$ given by
$$\psi_1:[1: x_1 : \cdots :x_d]\mapsto \left(x_1,\dots, x_d\right).$$
It now suffices to prove the following lemma, which is the analog of Theorem \ref{thm:critical exponent of products of rational functions} on ``neighborhoods of infinity.''

\begin{lemma} \label{lem: resolution}
For each $p \in \Pb(\Rb^{d+1}) - \Ab_1$, there exist $\delta_p, c_p, \epsilon_p > 0$ and an open neighborhood $\Oc_p\subset \Pb(\Rb^{d+1})$ of $p$ such that:
\begin{enumerate}
\item The integral 
$$
\int_{\psi_1(\Oc_p \cap \Ab_1)} R^{-s} d \lambda 
$$
converges when $s \in (\delta_p, +\infty)$ and diverges when $s \in [0,\delta_p]$. 
\item $R(x) \ge c_p(1+ \norm{x})^{\epsilon_p}$ for all $x \in \psi_1(\Oc_p \cap \Ab_1)\subset\Rb^d$. 
\end{enumerate}
\end{lemma}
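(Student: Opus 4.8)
The plan is to pass to an affine chart of $\Pb(\Rb^{d+1})$ containing $p$, rewrite both $R$ and Lebesgue measure in that chart, and then apply Hironaka's resolution of singularities to reduce the integral in (1) to a finite sum of elementary monomial integrals; properness of $R$ will translate into sign conditions on the exponents that yield exactly the asserted dichotomy (including divergence \emph{at} the critical value) together with the polynomial lower bound in (2). First, localizing at infinity: since $p\notin\Ab_1$, it lies in some chart $\Ab_j=\{x_j\neq 0\}$ with $1\le j\le d$; choose affine coordinates $u=(u_1,\dots,u_d)$ on $\Ab_j$ with $p$ at the origin and $u_1$ dual to $x_0$, so that $\Pb(\Rb^{d+1})-\Ab_1=\{u_1=0\}$ and $\Ab_1\cap\Ab_j=\{u_1\neq 0\}$. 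Homogenizing the numerators and denominators of the $R_i$ and dehomogenizing in the $u$-chart, one gets $R=|u_1|^{c}\,|Q_1|^{\lambda_1}\cdots|Q_N|^{\lambda_N}$ on $\{u_1\neq 0\}$, where $c,\lambda_1,\dots,\lambda_N\in\Rb$ and each $Q_k$ is $\pm$ a dehomogenized numerator or denominator of some $R_i$; since the $R_i$ are positive and everywhere defined on $\Rb^d$, each $Q_k$ is nonvanishing on $\{u_1\neq 0\}$, so $Z:=\{u_1Q_1\cdots Q_N=0\}=\{u_1=0\}$. A Jacobian computation gives $d\lambda=|u_1|^{-(d+1)}\,du$ on the overlap, and $\norm{x}\asymp|u_1|^{-1}$ near $p$. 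Hence it suffices to produce a neighbourhood $W$ of $0$ and constants $\delta_p,c_p,\epsilon_p>0$ such that $I(s):=\int_{W-Z}|u_1|^{-cs-(d+1)}\prod_k|Q_k|^{-\lambda_ks}\,du$ converges for $s>\delta_p$ and diverges for $s\in[0,\delta_p]$, and such that $R\,|u_1|^{\epsilon_p}\ge c_p$ on $W-Z$.

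By Hironaka's resolution of singularities there is a neighbourhood $W$ of $0$, a smooth manifold $Y$, and a proper analytic surjection $\pi\colon Y\to W$ that restricts to a diffeomorphism $Y-\pi^{-1}(Z)\to W-Z$, with $Y$ covered by finitely many charts (the finiteness uses properness of $\pi$; shrink $W$) identified with $(-1,1)^d$ with coordinates $v$ in which $u_1\circ\pi=v^{\delta}\cdot(\mathrm{unit})$, $Q_k\circ\pi=v^{\gamma^{(k)}}\cdot(\mathrm{unit})$ and $\operatorname{Jac}\pi=v^{\rho}\cdot(\mathrm{unit})$, all exponents in $\Zb_{\ge0}^d$ and all units nowhere vanishing. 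Pulling back (a measure isomorphism off the null set $\pi^{-1}(Z)$) and using a finite partition of unity, $I(s)$ converges if and only if each chart contributes finitely; on a chart the integrand equals $\prod_{i=1}^d|v_i|^{E_i(s)}$ times a factor which, for each fixed $s$, is bounded between positive constants on compact subsets, where $E_i(s)=-\alpha_is+\beta_i$ with $\alpha_i=c\delta_i+\sum_k\lambda_k\gamma^{(k)}_i$ and $\beta_i=\rho_i-(d+1)\delta_i$. Thus a chart contributes finitely iff $E_i(s)>-1$ for all $i$, i.e.\ $\alpha_is<\beta_i+1$ for all $i$, and otherwise the divergence is carried by a single factor $\int_{-1}^1|v_i|^{E_i(s)}\,dv_i=\infty$.

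The key step is to show that properness of $R$ forces, in each chart and for each $i$: either $\delta_i\ge1$ and $\alpha_i<0$, or else $\delta_i=0$, all $\gamma^{(k)}_i=0$, $\rho_i=0$, and hence $\alpha_i=\beta_i=0$. One first notes $\gamma^{(k)}_i\ge1\Rightarrow\delta_i\ge1$: the irreducible hypersurface $\{v_i=0\}$ lies in $\pi^{-1}\{Q_k=0\}\subset\pi^{-1}\{u_1=0\}$, which is a union of coordinate hyperplanes, so $\{v_i=0\}$ is one of those dividing $u_1\circ\pi$. Then if $\delta_i\ge1$, pushing $v$ to a generic point of $\{v_i=0\}\subset\pi^{-1}\{u_1=0\}$ makes $\pi(v)$ escape to infinity in $\Rb^d$ (as $u_1(\pi(v))\to0$), while $R(\pi(v))=|v|^{\alpha}\cdot(\mathrm{unit})$, so properness ($R\to\infty$) forces $\alpha_i<0$; and if $\delta_i=0$ then $\{v_i=0\}$ misses $\pi^{-1}(Z)$, where $\pi$ is a diffeomorphism, so $\operatorname{Jac}\pi$ is nonvanishing there and $\rho_i=0$. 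Given this, chart $l$ contributes finitely iff $s>s_l:=\max\{(\beta_i+1)/\alpha_i:\delta_i\ge1\}$ (set $s_l=0$ if no such $i$), so with $\delta_p:=\max_l s_l$, $I$ converges for $s>\delta_p$ and diverges for $s\in[0,\delta_p]$, the divergence at $s=\delta_p$ itself coming from the factor $\int_{-1}^1|v_i|^{-1}\,dv_i$ in the extremal chart; moreover $\delta_p>0$, since in a chart lying over a generic point of $\{u_1=0\}$ the strict transform of $\{u_1=0\}$ is a coordinate hyperplane $\{v_i=0\}$ with $\delta_i=1$ and (as $\pi$ is a local diffeomorphism there) $\rho_i=0$, so $\beta_i=-(d+1)<-1$ and $\alpha_i<0$, giving $s_l\ge d/|\alpha_i|>0$. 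Finally, for part (2), pulling back $R\,|u_1|^{\epsilon}$ on a chart gives $\prod_i|v_i|^{\alpha_i+\epsilon\delta_i}$ times units bounded below by a positive constant, so choosing $\epsilon_p:=\min\{-\alpha_i/\delta_i\}$ over all charts and all $i$ with $\delta_i\ge1$ (a minimum of positive numbers) makes every exponent $\le0$; hence the monomial is $\ge1$ on $(-1,1)^d$, giving $R\,|u_1|^{\epsilon_p}\ge c_p>0$ on $W-Z$, which via $\norm{x}\asymp|u_1|^{-1}$ becomes $R(x)\ge c_p'(1+\norm{x})^{\epsilon_p}$ on $\psi_1(\Oc_p\cap\Ab_1)$, taking $\Oc_p$ to be the image of $W$ in $\Pb(\Rb^{d+1})$.

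The main obstacle is precisely the dichotomy in the previous paragraph: converting the global hypothesis that $R$ is proper into the local combinatorial statement that $\alpha_i<0$ whenever $\delta_i\ge1$, at every point of the exceptional divisor. Everything else is a coordinate change, an application of Hironaka, or bookkeeping of exponents; the one remaining technical nuisance is tracking the $s$-dependent unit factors, but for each fixed $s$ they are continuous and positive on the relatively compact pieces of the charts, so they affect neither the convergence for $s>\delta_p$ nor the divergence for $s\le\delta_p$.
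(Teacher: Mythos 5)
Your overall strategy is the paper's: compactify $\Rb^d$ projectively, pass to an affine chart in which the hyperplane at infinity becomes $\{u_1=0\}$, pull the integrand back through a Hironaka resolution of the product of $u_1$ with the numerators and denominators, and read off convergence from the monomial exponents; your threshold $\max_i(\beta_i+1)/\alpha_i$ is exactly the paper's $\delta$ in Lemma~\ref{lem: resolution properties} (with your $\delta_i,\rho_i,-\alpha_i$ playing the roles of $c_{i,r},\gamma_{i,r},\beta_{i,r}$), and your dichotomy ``$\delta_i\ge 1\Rightarrow\alpha_i<0$, $\delta_i=0\Rightarrow\alpha_i=\beta_i=0$'' is the analogue of Claims (1)--(2) there. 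However, you assume more from the resolution than the statement actually being used (Theorem~\ref{thm: res}) provides. That theorem monomializes only the single function it is applied to; that each factor $u_1\circ\pi$ and $Q_k\circ\pi$ is a monomial times a \emph{nowhere-vanishing} unit requires the short product argument of Lemma~\ref{lem: resolution properties 0}(1), and nothing at all is asserted about $\det D\pi$. The paper never needs the Jacobian in monomial-times-unit form: it works with $\det D\Phi=z^{\gamma}\hat J$ where $\hat J$ is only known to be bounded and not identically zero on each coordinate hyperplane, and the convergence criterion (Lemma~\ref{lem:simple convergence criteria}) is engineered for exactly that weaker information. Your chart-by-chart bookkeeping, with a well-defined exponent vector $\rho$ and units bounded away from $0$, silently assumes the stronger ``log resolution'' package.

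The genuine gap is in your positivity argument for $\delta_p$. You invoke a chart over a generic point of $\{u_1=0\}$ in which the strict transform appears as $\{v_i=0\}$ with $u_1\circ\pi$ of order exactly $1$ and $\det D\pi$ nonvanishing there ($\delta_i=1$, $\rho_i=0$, because ``$\pi$ is a local diffeomorphism there''). This does not follow from the stated theorem: already in one variable the map $\pi(t)=t^3$ is proper, analytic, restricts to a diffeomorphism off $\pi^{-1}(0)$, and monomializes every relevant function and the Jacobian, yet has order $3$ and Jacobian order $2$, so no such chart need exist. (The property you want is true for resolutions built as compositions of blow-ups, which are isomorphisms over the locus where the divisor is already normal crossings, but you neither cite nor prove that refinement.) The paper's substitute is robust against an arbitrary resolution: it pulls back the manifestly divergent integral $\int_{\Oc-\Zc}|y_1|^{-1}\,d\lambda$ and applies Lemma~\ref{lem:simple convergence criteria} to produce indices with $c_{i,r}-\gamma_{i,r}\ge 1$, which forces the threshold to be at least $d\,c_{i,r}/\beta_{i,r}>0$ (Lemma~\ref{lem: resolution properties}(3)). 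Either import that trick, or explicitly invoke the strong form of embedded resolution; as written, this step does not go through. The remainder of your argument, including part (2) via $\epsilon_p=\min(-\alpha_i/\delta_i)$, matches the paper's proof of Lemma~\ref{lem: resolution 2} once the unit issues above are repaired.
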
 

Assuming Lemma \ref{lem: resolution}, we prove Theorem~\ref{thm:critical exponent of products of rational functions}. 

\begin{proof}[Proof of Theorem~\ref{thm:critical exponent of products of rational functions}]  
Since $ \Pb(\Rb^{d+1}) - \Ab_1$ is compact, there exist finitely many points $p_1,\dots, p_n$ in $ \Pb(\Rb^{d+1}) - \Ab_1$ such that 
$$
 \Pb(\Rb^{d+1}) - \Ab_1 \subset \bigcup_{j=1}^n \Oc_{p_j}.
 $$

Proof of (1). Let $\delta := \max\{ \delta_{p_j} : 1 \le j \le n\}$.  Since $R$ is positive,
\begin{align*}
\max_{1\le j \le n}\int_{\psi_1(\Oc_{p_j}\cap\Ab_1)}R^{-s} d\lambda&\le\int_{\Rb^d} R^{-s} d\lambda\le \int_{\psi_1(\Ab_1-  \bigcup_{j=1}^n \Oc_{p_j})} R^{-s} d\lambda+ \sum_{j=1}^n \int_{\psi_1(\Oc_{p_j}\cap\Ab_1)}R^{-s} d\lambda.
\end{align*}
Since $ \Ab_1 -  \bigcup_{j=1}^n \Oc_{p_j}$ is compact, the integral $\int_{\psi_1(\Ab_1-  \bigcup_{j=1}^n \Oc_{p_j})} R^{-s} d\lambda$ is finite, so Claim (1) of Lemma \ref{lem: resolution} implies that (1) holds. 

Proof of (2).  Let $\epsilon :=\min\{ \epsilon_{p_j} : 1 \le j \le n\}$. Since $R$ is positive and continuous on the compact set $K:=\psi_1(\Ab_1 -  \bigcup_{j=1}^n \Oc_{p_j})\subset\Rb^d$,  there exists $c_0 > 0$ such that 
\[
R(x) \ge c_0(1+\norm{x})^{\epsilon}
\] 
for all $x \in  K$. Thus, if we set $c:= \min\left( \{ c_0\} \cup \{ c_{p_j} : 1 \le j \le n\} \right)$, then Claim (2) of Lemma \ref{lem: resolution} implies that
\[R(x)\ge c(1+ \norm{x})^\epsilon\]
for all $x\in\Rb^d$.
\end{proof} 

To prove Lemma \ref{lem: resolution}, first note that we can assume that $p=[0:1:0:\dots:0]$ by changing coordinates. Then $p$ lies in the affine subspace
$$
\Ab_2:=\left\{ [x_0:\cdots :x_d ] : x_1 \neq 0\right\}\subset\Pb(\Rb^{d+1}).
$$
Let $\psi_2:\Ab_2\to\Rb^d$ be the coordinate chart given by
\[\psi_2:[y_1 : 1 : y_2 :\dots : y_d]\mapsto (y_1,\dots,y_d).\] 
Observe that $\psi_2(p)=0$ and 
\begin{align}\label{eqn: Zcdef}
\psi_2(\Ab_2 - \Ab_1)=\Zc:=\left\{(y_1,\dots,y_d)\in\Rb^d:y_1= 0\right\}.
\end{align}
Thus, the restriction 
\[\psi_1\circ\psi_2^{-1}|_{\Rb^d-\Zc}:\Rb^d-\Zc\to\Rb^d\]
is a well-defined embedding. 

Since $\psi_1\circ\psi_2^{-1}|_{\Rb^d-\Zc}$ is given by 
\begin{align}\label{eqn: Psi formula}
\psi_1\circ\psi_2^{-1}(y_1,\dots,y_d)=\left(\frac{1}{y_1},\frac{y_2}{y_1},\dots,\frac{y_d}{y_1}\right),
\end{align}  
for all $j \in \{1,\dots,m\}$,
\[T_j:=R_j \circ \psi_1\circ\psi_2^{-1}:\Rb^d-\Zc\to\Rb,\]
is a rational function that is well-defined and positive on $\Rb^d-\Zc$, and the assumption that $R$ is proper implies that $T:=T_1^{\ell_1}\dots T_m^{\ell_m}$ satisfies $\lim_{y\to z}T(y)=\infty$ for all $z\in\mathcal Z$. Also, \eqref{eqn: Psi formula} implies that the Jacobian $D(\psi_1\circ\psi_2^{-1})$ of $\psi_1\circ\psi_2^{-1}$ satisfies
\[\abs{\det D(\psi_1\circ\psi_2^{-1})}=\frac{1}{\abs{y_1}^{d+1}},\]
and
\[\norm{\psi_1\circ\psi_2^{-1}(y_1,\ldots,y_d)}=\sqrt{\frac{1+y_2^2+\dots+y_d^2}{y_1^2}}.\]
Thus, to prove Lemma \ref{lem: resolution}, it now suffices to prove the following lemma.

\begin{lemma} \label{lem: resolution 2}
Suppose that $T_1,\dots,T_m$ are rational functions on $\Rb^d$ which are positive and defined on $\Rb^d - \Zc$, and let 
\[T:=T_1^{\ell_1}\dots T_m^{\ell_m}\] 
for some $\ell_1,\dots,\ell_m\in\Rb$. If $\lim_{y\to z}T(y)=\infty$ for all $z\in\mathcal Z$, then there exist $\delta, c, \epsilon > 0$ and an open neighborhood $\Oc\subset \Rb^d$ of $0$ such that:
\begin{enumerate}
\item The integral 
$$
\int_{\Oc-\Zc} \frac{T^{-s}}{\abs{y_1}^{d+1}} \d \lambda 
$$
converges when $s \in (\delta, +\infty)$ and diverges when $s \in [0,\delta]$. 
\item For all $y=(y_1,\dots,y_d) \in \Oc-\Zc$, we have
\[T(y) \ge c\left(1+ \sqrt{\frac{1+y_2^2+\dots+y_d^2}{y_1^2}}\right)^{\epsilon}.\] 
\end{enumerate}
\end{lemma}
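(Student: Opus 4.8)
The plan is to deduce both statements from Hironaka's resolution of singularities, generalizing to $m>1$ the argument of Benoist--Oh~\cite[Prop.\ 7.2]{benoist-oh}. Write $T_j=f_j/g_j$ with $f_j,g_j$ polynomials on $\Rb^d$. The first observation I would record is structural: since each $T_j$ is positive and everywhere defined on $\Rb^d-\Zc$, neither $f_j$ nor $g_j$ vanishes on $\Rb^d-\Zc$, so their zero loci are contained in $\Zc=\{y_1=0\}$; hence, setting $D:=y_1\prod_j f_j g_j$, we have $\{D=0\}=\Zc$, so resolving $D$ modifies $\Rb^d$ only over $\Zc$.

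Next I would fix a compact neighborhood of $0$ and apply Hironaka's theorem in the form of a log resolution (principalization) of $\operatorname{div}(D)$, valid over $\Rb$ since the characteristic is zero: this produces a smooth manifold $\wt X$, a proper map $\pi\colon\wt X\to\Rb^d$ onto a neighborhood of $0$ restricting to a diffeomorphism over $\Rb^d-\Zc$, and finitely many coordinate charts covering $\pi^{-1}$ of a compact neighborhood of $0$ in which $\pi^*y_1$, every $\pi^*f_j$, every $\pi^*g_j$, and $\det D\pi$ are each a monomial in the chart coordinates $u_1,\dots,u_d$ times a non-vanishing smooth factor. Consequently $\pi^*T=\big(\prod_i\abs{u_i}^{b_i}\big)h$ with $b_i\in\Rb$ and $h>0$ smooth, and the pulled-back integrand on each chart is $\big(\prod_i\abs{u_i}^{e_i(s)}\big)H_s(u)$ with $e_i(s):=-s\,b_i-(d+1)a_i+c_i$ affine in $s$, where $a_i\in\Nb$ is the order of $\pi^*y_1$ along $\{u_i=0\}$, $c_i\in\Nb$ the order of $\det D\pi$, and $H_s>0$ bounded above and below by positive constants on relatively compact subcharts, uniformly for $s$ in compact intervals. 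The properness hypothesis enters here: the components of $\pi^{-1}(\Zc)$ meeting a chart are exactly the $\{u_i=0\}$ with $a_i\ge1$, and approaching such a component at a generic point pushes $\pi(u)$ to $\Zc$ while staying bounded, so $T(\pi(u))\to\infty$, which forces $b_i<0$; along the remaining (transverse) coordinates $\pi$ is a local diffeomorphism, so there $a_i=c_i=b_i=0$.

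For part (1) I would then use the elementary fact that $\int_{(0,\eta)^d}\prod_i u_i^{e_i(s)}\,\d u<\infty$ if and only if $e_i(s)>-1$ for all $i$, and is $+\infty$ otherwise. Since $b_i<0$ whenever $a_i\ge1$ and transverse coordinates impose no condition, each chart integral converges precisely for $s$ above an explicit finite threshold $\delta_k$ and diverges for $s\le\delta_k$. Integrating term by term against a partition of unity on $\pi^{-1}(\overline{\Oc})$ — legitimate since $\pi$ is a diffeomorphism over the full-measure set $\Rb^d-\Zc$ and $\{D=0\}=\Zc$ — shows that the integral in (1) converges for $s>\delta:=\max_k\delta_k$ and diverges for $s\in[0,\delta]$; for the divergence I would simply bound the integral below by its restriction to a small cube inside the chart realizing $\delta_k=\delta$, reducing to a single-variable divergence. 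To see $\delta>0$ I would isolate the chart lying over a \emph{generic} point of $\Zc$, where the resolution is trivial, a single coordinate occurs with $a=1$, $c=0$ and $b<0$, and the local threshold is $d/\abs{b}>0$; this also recovers the obvious fact that the integral diverges at $s=0$.

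For part (2) I would note that on a bounded set $1+\sqrt{(1+y_2^2+\cdots+y_d^2)/y_1^2}\le C/\abs{y_1}$, so it suffices to find $\epsilon,c>0$ with $T(y)\ge c\,\abs{y_1}^{-\epsilon}$ near $0$, equivalently $\pi^*T\cdot\abs{\pi^*y_1}^{\epsilon}\ge c$ on the resolution; on each chart this equals $\big(\prod_i\abs{u_i}^{b_i+\epsilon a_i}\big)$ times a positive smooth factor, and once $\epsilon<\min\{\abs{b_i}/a_i:a_i\ge1\}$ over the finitely many charts one has $b_i+\epsilon a_i\le0$ for all $i$, so the product is bounded below by a positive constant on relatively compact subcharts (using $\abs{u_i}^{e}\ge\eta^{e}$ for $e\le0$ and $0<\abs{u_i}\le\eta$); shrinking $\Oc$ so that $\pi^{-1}(\overline\Oc)$ is covered by such subcharts finishes the argument. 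I expect the main obstacle to be the bookkeeping in setting up the log resolution so that $\pi^*T$ genuinely takes the monomial-times-positive-unit form with the stated exponents, and in tracking the ``unit'' factors $H_s$ and $hh_0^{\epsilon}$ uniformly in $s$; granting that, everything collapses to the one-variable estimate $\int_0^\eta t^e\,\d t<\infty\iff e>-1$.
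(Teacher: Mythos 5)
Your outline is correct and follows essentially the same route as the paper's proof: resolve the single polynomial $y_1\prod_j f_jg_j$ via Hironaka, monomialize the integrand on finitely many charts, reduce to the one-variable criterion $\int_0^\eta t^e\,\d t<\infty$ iff $e>-1$, use the hypothesis that $T\to\infty$ along $\Zc$ to pin down the signs of the exponents (your dichotomy $a_i\ge 1\Rightarrow b_i<0$ and $a_i=0\Rightarrow b_i=c_i=0$ is the content of Lemma~\ref{lem: resolution properties}(1)--(2)), and prove part (2) by comparing $1/T$ with a small power of $\abs{y_1}$, exactly as in the paper.

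The one substantive caveat is that your chart-level normal forms assume more than the resolution statement the paper actually invokes (Theorem~\ref{thm: res}, Atiyah's formulation for a single function) provides. That statement monomializes only $F\circ\Phi$; it says nothing about $\det D\Phi$, nor directly about the individual pullbacks $f_j\circ\Phi$, $g_j\circ\Phi$, $y_1\circ\Phi$, and it does not assert that the resolution is an isomorphism over generic points of $\Zc$. The individual pullbacks are recoverable by the paper's trick (Lemma~\ref{lem: resolution properties 0}(1)): factor each as a monomial times an analytic function not divisible by any coordinate and note that the product of these factors is the nonvanishing unit of $F\circ\Phi$, so each is a unit. For the Jacobian, however, no such unit is available from the quoted theorem; the paper allows $\det D\Phi=z^\gamma\hat J$ with $\hat J$ merely bounded and not identically zero on each coordinate hyperplane, and compensates in the divergence direction of the chart criterion (Lemma~\ref{lem:simple convergence criteria}) by localizing to a sub-box of the hyperplane where $\hat J$ is bounded away from zero. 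Your claim that $\det D\pi$ is a monomial times a nonvanishing factor requires a stronger log-resolution statement (exceptional locus and Jacobian divisor in simple normal crossings) --- true and standard, but you should either cite such a form or adopt the paper's workaround. The same applies to your positivity argument for $\delta$: ``the chart lying over a generic point of $\Zc$, where the resolution is trivial'' is not guaranteed by the bare existence statement; either use a resolution that is an isomorphism over the locus where the divisor is already normal crossings, argue directly at a generic point of $\Zc$ without any resolution (there $T=\abs{y_1}^{-\beta}\cdot(\text{positive continuous})$ with $\beta>0$, giving $\delta\ge d/\beta$), or use the paper's device of pulling back the divergent integral $\int_{\Oc-\Zc}\abs{y_1}^{-1}\,\d\lambda$ (Lemma~\ref{lem: resolution properties}(3)). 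With either fix, the rest of your argument goes through.
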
 

For the remainder of this section, we will focus on the proof of Lemma \ref{lem: resolution 2}. An important tool used in the proof is the following version of Hironaka's theorem~\cite{MR0199184} on the resolution of singularities (as stated in~\cite[pg.\ 147]{MR256156}). 

\begin{theorem}[Resolution theorem]\label{thm: res} 
Let $F$ be a real analytic function defined in a neighborhood of $0 \in \Rb^d$, and let $\Zc$ be the set of zeroes of $F$. If $F$ is not identically zero, then there exists a neighborhood $U$ of $0$ in $\Rb^d$,  a real analytic manifold $M$ and a proper real analytic map $\Phi : M \rightarrow U$ such that 
\begin{enumerate} 
\item $\Phi$ restricts to a real-analytic diffeomorphism 
\[M - \Phi^{-1}(\Zc) \rightarrow U - \Zc.\]
\item For every $q \in M$ there exists an open neighborhood $V_q\subset M$ of $q$ and real analytic local coordinates $z_{q,1},\dots,z_{q,d}$ on $V_q$ centered at $q$ where
$$
F \circ \Phi(z_{q,1},\dots,z_{q,d}) = z_{q,1}^{k_{q,1}} \cdots z_{q,d}^{k_{q,d}} \cdot \hat F_q(z_{q,1},\dots, z_{q,d})
$$
for some $k_{q,1},\dots, k_{q,d} \in \Zb_{\ge 0}$ and some nowhere-vanishing real analytic function $\hat F_q:V_q\to\Rb$.
\end{enumerate} 
\end{theorem}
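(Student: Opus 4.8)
The plan is to deduce Theorem~\ref{thm: res} from Hironaka's theorem on resolution of singularities~\cite{MR0199184}; essentially all of the content is that deep result, and the only genuine work left is translating the usual (complex, global) formulation into the local real-analytic monomialization asserted here. Alternatively one can invoke a resolution theorem stated natively in the real analytic category, e.g.\ the version recorded in~\cite[pg.\ 147]{MR256156}, in which case the statement is essentially verbatim and one only has to localize and shrink neighborhoods.

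First I would complexify. Since $F$ is real analytic near $0\in\Rb^d$ and not identically zero, it extends to a holomorphic function $F^{\Cb}$, not identically zero, on some neighborhood $\Omega$ of $0$ in $\Cb^d$, and $\Zc=\{F=0\}=\{F^{\Cb}=0\}\cap\Rb^d$. Apply Hironaka's embedded resolution (principalization) to the hypersurface $\{F^{\Cb}=0\}\subset\Omega$: one obtains, after shrinking $\Omega$, a complex manifold $M^{\Cb}$ and a proper holomorphic map $\Phi^{\Cb}:M^{\Cb}\to\Omega$ which is a biholomorphism over $\Omega-\{F^{\Cb}=0\}$ and for which the divisor of $F^{\Cb}\circ\Phi^{\Cb}$ has simple normal crossings. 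The resolution is built as a composite of blow-ups along smooth centers, and these centers can be chosen functorially in $F$; since complex conjugation fixes $F$, the centers are defined over $\Rb$, so $M^{\Cb}$ carries a natural anti-holomorphic involution whose fixed-point set $M\subset M^{\Cb}$ is a real analytic manifold of dimension $d$. Restricting $\Phi^{\Cb}$ gives a real analytic map; after shrinking to a neighborhood $U$ of $0$ in $\Rb^d$ with $\overline U$ compact and replacing $M$ by $\Phi^{-1}(U)$, the map $\Phi:M\to U$ is proper, and over $U-\Zc$ it is the restriction of a biholomorphism, hence a real analytic diffeomorphism onto $U-\Zc$. This is item~(1).

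Next I would extract the local normal form. Fix $q\in M$. In the complex resolution, the real point $q$ lies in a coordinate chart of $M^{\Cb}$ in which the normal-crossings divisor is a union of coordinate hyperplanes, so $F^{\Cb}\circ\Phi^{\Cb}$ factors in those coordinates as a monomial times a nowhere-vanishing holomorphic function. The divisor components through the real point $q$ are themselves invariant under the involution, so one can choose the chart compatibly with the involution; its restriction to $M$ then furnishes real analytic coordinates $z_{q,1},\dots,z_{q,d}$ on a neighborhood $V_q\subset M$ of $q$, centered at $q$, in which
\[
F\circ\Phi(z_{q,1},\dots,z_{q,d})=z_{q,1}^{k_{q,1}}\cdots z_{q,d}^{k_{q,d}}\cdot\hat F_q(z_{q,1},\dots,z_{q,d}),
\]
where $k_{q,i}\in\Zb_{\ge 0}$ are the multiplicities of the divisor components through $q$ and $\hat F_q$ is the restriction of the holomorphic unit factor, hence real analytic and nowhere vanishing on $V_q$. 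This is item~(2).

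The main obstacle is Hironaka's theorem itself: its proof proceeds by a delicate induction on dimension using maximal contact and controlled (Tschirnhausen-type) transformations in characteristic zero, and none of that machinery can reasonably be reproduced here, so we use it as a black box. Granting it, the remaining points are purely bookkeeping — checking that the real locus of the complex resolution is a manifold of the right dimension, that $\Phi$ can be made proper by shrinking, and that the normal-crossings coordinates near a real point can be taken real analytic — and none of these presents a difficulty.
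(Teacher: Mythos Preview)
The paper does not prove this theorem at all: it is stated as a quotation of Hironaka's resolution theorem~\cite{MR0199184} in the form recorded by Atiyah~\cite[pg.\ 147]{MR256156}, and is used as a black box. Your proposal therefore goes well beyond what the paper does. Your sketch of how to extract the real-analytic local monomialization from the complex embedded resolution (complexify, resolve functorially so that the centers are conjugation-invariant, take the real locus, read off the normal-crossings coordinates) is the standard route and is essentially correct in outline; but since the paper itself simply cites the result, there is nothing to compare against, and for the purposes of this paper a one-line citation to~\cite{MR0199184} and~\cite{MR256156} is all that is expected.
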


The main idea of the proof of Claim (1) of Lemma \ref{lem: resolution 2} is to try to factor the integrand $\frac{T^{-s}}{\abs{y_1}^{d+1}}$ into the product of a bounded, nowhere vanishing function on $\Oc$, and  power functions on $\Oc-\Zc$ that are ``responsible" for how $\frac{T^{-s}}{\abs{y_1}^{d+1}}$ goes to infinity or zero near $\Zc$. Doing so allows us to compare the required integral with the integral of a product of power functions, whose convergence or lack thereof is well-understood. Unfortunately, such a global factorization of $\frac{T^{-s}}{\abs{y_1}^{d+1}}$ on $\Oc$ is not possible in general. However, Theorem \ref{thm: res} ensures that we do have such a factorization locally, but at the cost of pre-composing the integrand with a given real analytic function $\Phi$. We will show that the complications introduced to this strategy by using $\Phi$ are surmountable, and that it can indeed be used to prove Lemma \ref{lem: resolution 2}.

We first write, for each $j\in\{1,\dots,n\}$, the rational function $T_j$ as
\[T_j= \frac{f_j}{g_j}\] 
where $f_j,g_j:\Rb^d\to\Rb$ are polynomials whose zeroes lie in the set $\Zc$ defined by \eqref{eqn: Zcdef}. Then
\begin{align}\label{eqn: F}
F:=f_1 \cdots f_m \cdot g_1 \cdots g_m\cdot y_1:\Rb^d\to\Rb,
\end{align}
is a polynomial whose set of zeroes is $\Zc$. Applying Theorem \ref{thm: res} to this $F$, we get  an open neighborhood $U$ of $0\in\mathbb R^d$, a real analytic manifold $M$,
a proper real analytic map \hbox{$\Phi:M\to U$} such that for all $q\in M$, there exists an open neighborhood $V_q$ of $q$ with local coordinates   $\{z_{q,r}\}_{r=1}^d$ centered at $q$ such that 
$F\circ \Phi|_{V_q}=z_{q,1}^{k_{q,1}} \cdots z_{q,d}^{k_{q,d}} \cdot \hat F_q$ and $\hat F_q:V_q\to\mathbb R$ is a nowhere-vanishing real analytic function.
Notice that, via the local coordinates $(z_{q,1},\dots,z_{q,d})$ on $V_q$, the Lebesgue measure $\lambda$ on $\Rb^d$ induces a measure $\lambda_q$ on $V_q$.

Fix a neighborhood $\Oc\subset\Rb^d$ of $0$ whose closure is compact and lies in $U$. Also, for each $q \in M$, fix an open neighborhood $V_q^\prime$ of $q$ whose closure lies in $V_q$. Since $\Phi$ is proper, we can find finitely many points $q_1, \dots, q_n \in M$ such that 
\begin{align*}
\Phi^{-1}(\Oc) \subset \bigcup_{i=1}^n  V_{q_i}^\prime. 
\end{align*}
For all $i\in\{1,\dots,n\}$ and $r\in\{1,\dots,d\}$, set 
\[V_i:=V_{q_i}^\prime \cap \Phi^{-1}(\Oc)\subset V_{q_i},\quad\hat F_i:=\hat F_{q_i}: V_{q_i} \rightarrow \Rb, \quad z_{i,r}:=z_{q_i,r}, \quad k_{i,r}:=k_{q_i,r}\quad \text{and}\quad\lambda_i:=\lambda_{q_i}.\] 
Notice that by construction, 
\begin{equation}\label{eqn:Vi is compact in Vqi}
\overline{V_i} \subset V_{q_i}
\end{equation}
is compact, and
\begin{align}\label{eqn: cover}
\Oc = \bigcup_{i=1}^n  \Phi (V_i). 
\end{align}

\begin{remark}
In the above set up, one might be tempted to say that by shrinking $\Oc$, one can find some $q\in M$ such that $\Oc\subset\Phi(V_q')$. However, this might not be possible: Each $\Phi(V_q)\subset U$ might not contain any open neighborhoods of $\Phi(q)$ even if $V_q\subset M$ is an open neighborhood of $q$.
\end{remark}

Using the sets $V_i$ and the map $\Phi$, we have the following local criterion for when the integral
$$
\int_{\Oc-\Zc} \frac{T^{-s}}{\abs{y_1}^{d+1}} \d \lambda 
$$
converges.

\begin{lemma}\label{lem: integral simpify}
For any $s\ge 0$, the integral 
\[
\int_{\Oc-\Zc} \frac{T^{-s}}{\abs{y_1}^{d+1}} \d \lambda 
\] converges if and only if the integral
\[\int_{V_i- \Phi^{-1}(\Zc)} \frac{(T \circ\Phi)^{-s} }{\abs{y_1 \circ \Phi}^{d+1}} \abs{\det D(\Phi) } {\rm d} \lambda_i\]
converges for all $i\in\{1,\dots,n\}$. Here, $D(\Phi)$ is the Jacobian of $\Phi$ restricted to $V_{q_i}$, with respect to the local coordinates $(z_{i,1},\dots,z_{i,d})$ of $V_{q_i}$.
\end{lemma}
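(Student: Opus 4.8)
The plan is to obtain the equivalence from the change-of-variables formula for the diffeomorphism $\Phi\colon M-\Phi^{-1}(\Zc)\to U-\Zc$ furnished by Theorem~\ref{thm: res}, combined with the nonnegativity of the integrand and the fact that the sets $V_i$ cover $\Phi^{-1}(\Oc)$. Concretely, since $\overline{\Oc}$ is compact and contained in $U$, and $\Phi$ restricts to a diffeomorphism onto $U-\Zc$, the map $\Phi$ sends $\Phi^{-1}(\Oc)-\Phi^{-1}(\Zc)=\Phi^{-1}(\Oc-\Zc)$ bijectively onto $\Oc-\Zc$. Let $\Phi^*\lambda$ denote the pullback of Lebesgue measure $\lambda$ under this diffeomorphism; it is a genuine Borel measure on $M-\Phi^{-1}(\Zc)$, and in the local coordinates $(z_{i,1},\dots,z_{i,d})$ on $V_{q_i}$ it is represented by $\abs{\det D(\Phi)}\,\d\lambda_i$, this being precisely the usual change-of-variables density for the local diffeomorphism $\Phi|_{V_{q_i}-\Phi^{-1}(\Zc)}$. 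With $g:=T^{-s}/\abs{y_1}^{d+1}$, which is defined and nonnegative on $\Rb^d-\Zc\supseteq\Oc-\Zc$, we have $g\circ\Phi=(T\circ\Phi)^{-s}/\abs{y_1\circ\Phi}^{d+1}$ on $M-\Phi^{-1}(\Zc)$, and the change-of-variables formula yields
\[
\int_{\Oc-\Zc}\frac{T^{-s}}{\abs{y_1}^{d+1}}\,\d\lambda
=\int_{\Phi^{-1}(\Oc)-\Phi^{-1}(\Zc)}\frac{(T\circ\Phi)^{-s}}{\abs{y_1\circ\Phi}^{d+1}}\,\d(\Phi^*\lambda).
\]

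Next I would exploit nonnegativity together with the covering $\Phi^{-1}(\Oc)=\bigcup_{i=1}^n V_i$, which follows from $\Phi^{-1}(\Oc)\subset\bigcup_i V_{q_i}'$ and $V_i=V_{q_i}'\cap\Phi^{-1}(\Oc)$. Since the integrand on the right-hand side above is nonnegative, monotonicity and subadditivity of the integral give
\[
\max_{1\le i\le n}\int_{V_i-\Phi^{-1}(\Zc)}\frac{(T\circ\Phi)^{-s}}{\abs{y_1\circ\Phi}^{d+1}}\,\d(\Phi^*\lambda)
\le\int_{\Phi^{-1}(\Oc)-\Phi^{-1}(\Zc)}\frac{(T\circ\Phi)^{-s}}{\abs{y_1\circ\Phi}^{d+1}}\,\d(\Phi^*\lambda)
\le\sum_{i=1}^n\int_{V_i-\Phi^{-1}(\Zc)}\frac{(T\circ\Phi)^{-s}}{\abs{y_1\circ\Phi}^{d+1}}\,\d(\Phi^*\lambda).
\]
Because $V_i\subset V_{q_i}$, on $V_i$ the measure $\Phi^*\lambda$ equals $\abs{\det D(\Phi)}\,\d\lambda_i$, so the $i$-th integral here is exactly the integral appearing in the statement of the lemma. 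The right-hand inequality then shows that convergence of all the local integrals implies convergence of $\int_{\Oc-\Zc}T^{-s}/\abs{y_1}^{d+1}\,\d\lambda$, while the left-hand inequality gives the converse; this is the asserted equivalence.

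The only point requiring care is the measure-theoretic bookkeeping: verifying that $\Phi^*\lambda$ is a well-defined Borel measure on $M-\Phi^{-1}(\Zc)$ (legitimate because $\Phi$ is a diffeomorphism there, so a Borel isomorphism) and that, restricted to each chart $V_{q_i}$, it agrees with $\abs{\det D(\Phi)}\,\d\lambda_i$, so that the two sides really do match term by term. Everything else is a routine application of the change-of-variables formula and the monotonicity of the Lebesgue integral for nonnegative functions; properness of $\Phi$ has already been used, only to extract the finite subcover $\{V_i\}_{i=1}^n$, and plays no further role here.
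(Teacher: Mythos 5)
Your argument is correct and essentially the same as the paper's: both rest on the nonnegativity of the integrand, the finite cover coming from $\Phi^{-1}(\Oc)\subset\bigcup_i V_{q_i}'$, and the change of variables for the diffeomorphism $\Phi\colon M-\Phi^{-1}(\Zc)\to U-\Zc$. The only difference is cosmetic — you pull back $\lambda$ globally and then split over the $V_i$ upstairs, whereas the paper splits $\Oc-\Zc$ over the sets $\Phi(V_i)-\Zc$ downstairs and changes variables on each piece.
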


\begin{proof}
By \eqref{eqn: cover},
\begin{align*}
\max_{i=1,\dots,n}\int_{\Phi(V_i)-\Zc} \frac{T^{-s}}{\abs{y_1}^{d+1}} {\rm d} \lambda
\le \int_{\Oc-\Zc} \frac{T^{-s}}{\abs{y_1}^{d+1}} \d \lambda  \le\sum_{i=1}^n \int_{\Phi(V_i)-\Zc} \frac{T^{-s}}{\abs{y_1}^{d+1}} {\rm d} \lambda.
\end{align*}
Since $\Phi$ restricts to a diffeomorphism $M - \Phi^{-1}(\Zc) \rightarrow U- \Zc$, we have
\[\int_{V_i- \Phi^{-1}(\Zc)} \frac{(T\circ\Phi)^{-s} }{\abs{y_1 \circ \Phi}^{d+1}} \abs{\det D(\Phi) } {\rm d} \lambda_i=\int_{\Phi(V_i)-\Zc} \frac{T^{-s}}{\abs{y_1}^{d+1}} {\rm d} \lambda\]
for each $i\in\{1,\dots,n\}$. The lemma follows.
\end{proof}

In light of Lemma \ref{lem: integral simpify}, we now need to understand for which values of $s\ge 0$  the integral
\[\int_{V_i- \Phi^{-1}(\Zc)} \frac{(T \circ\Phi)^{-s} }{\abs{y_1 \circ \Phi}^{d+1}} \abs{\det D(\Phi) } {\rm d} \lambda_i\]
converges. We do so using the local expressions of the real analytic functions $f_j\circ\Phi|_{V_{q_i}}$, $g_j \circ\Phi|_{V_{q_i}}$, $y_1\circ\Phi|_{V_{q_i}}$ and $\det D(\Phi)|_{V_{q_i}}$. By factoring their Taylor series in the local coordinates $(z_{i,1},\dots,z_{i,d})$ on $V_{q_i}$, we can write 
\begin{itemize}
\item $f_j\circ\Phi |_{V_{q_i}} = z_{i,1}^{a_{i,j,1}} \cdots z_{i,d}^{a_{i,j,d}} \cdot \hat{f}_{i,j}$ where $a_{i,j,1}, \dots, a_{i,j,d} \in \Zb_{\ge 0}$ and $\hat{f}_{i,j}:V_{q_i}\to\Rb$ is not identically zero on
\[\Zc_{i,r}:=\{(z_{i,1},\dots,z_{i,d})\in V_{q_i}: z_{i,r} = 0\}\] 
for any $r\in\{1,\dots,d\}$. 
\item $g_j\circ\Phi |_{V_{q_i}}= z_{i,1}^{b_{i,j,1}} \cdots z_{i,d}^{b_{i,j,d}} \cdot \hat{g}_{i,j}$ where  $b_{i,j,1}, \dots, b_{i,j,d} \in \Zb_{\ge 0}$ and $\hat{g}_{i,j}:V_{q_i}\to\Rb$ is not identically zero on $\Zc_{i,r}$ for any $r\in\{1,\dots,d\}$. 
\item $y_1\circ\Phi|_{V_{q_i}}= z_{i,1}^{c_{i,1}} \cdots z_{i,d}^{c_{i,d}} \cdot \hat{h}_i$ where $c_{i,1}, \dots, c_{i,d} \in \Zb_{\ge 0}$ and $\hat{h}_i:V_{q_i}\to\Rb$ is not identically zero on $\Zc_{i,r}$ for any $r\in\{1,\dots,d\}$. 
\item $\det D(\Phi)|_{V_{q_i}} = z_{i,1}^{\gamma_{i,1}} \cdots z_{i,d}^{\gamma_{i,d}} \cdot \hat{J}_i$, where $\gamma_{i,1},\dots,\gamma_{i,d} \in \Zb_{\ge 0}$ and $\hat{J}_i:V_{q_i}\to\Rb$ is not identically zero on $\Zc_{i,r}$ for any $r\in\{1,\dots,d\}$. 
\end{itemize}
Using Theorem \ref{thm: res}, we deduce the following lemma about the functions $\hat{f}_{i,j}$, $\hat{g}_{i,j}$, $\hat{h}_i$ and
$$
\hat{W}_{i,s} :=  \left(\frac{ \hat{g}_{i,1}^{\ell_1} \cdots \hat{g}_{i,m}^{\ell_m}}{\hat{f}_{i,1}^{\ell_1} \cdots \hat{f}_{i,m}^{\ell_m}}\right)^s  \frac{\hat{J}_i}{\hat{h}_i^{d+1}}
$$
for all $s\ge 0$. 

\begin{lemma}\label{lem: resolution properties 0}Fix $i\in\{1,\dots,n\}$.
\begin{enumerate}
\item $\hat F_i=\hat{f}_{i,1} \cdots \hat{f}_{i,m} \cdot \hat{g}_{i,1} \cdots \hat{g}_{i,m} \cdot \hat{h}_i$. In particular, $\hat{f}_{i,1}$, $\dots$, $\hat{f}_{i,m}$, $\hat{g}_{i,1}$, $\dots$, $\hat{g}_{i,m}$ and $\hat{h}_i$ are all nowhere-vanishing on $V_{q_i}$.
\item For any $s \ge 0$, $\hat{W}_{i,s}$ is a real analytic function which is bounded on $V_i$ and not identically zero on $\Zc_{i,r}$ for any $r\in\{1,\dots,d\}$. 

\end{enumerate}
\end{lemma}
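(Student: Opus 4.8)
The plan is to derive both statements from the defining factorizations of $f_j\circ\Phi$, $g_j\circ\Phi$, $y_1\circ\Phi$ and $\det D(\Phi)$ on $V_{q_i}$ together with the conclusion of Theorem \ref{thm: res} applied to the polynomial $F$ from \eqref{eqn: F}.

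First I would prove (1). Recall $F = f_1\cdots f_m\cdot g_1\cdots g_m\cdot y_1$, so $F\circ\Phi|_{V_{q_i}}$ is the product of the factorizations displayed above:
\[
F\circ\Phi|_{V_{q_i}} = z_{i,1}^{e_{i,1}}\cdots z_{i,d}^{e_{i,d}}\cdot\big(\hat f_{i,1}\cdots\hat f_{i,m}\cdot\hat g_{i,1}\cdots\hat g_{i,m}\cdot\hat h_i\big),
\]
where $e_{i,r} = \sum_j a_{i,j,r}+\sum_j b_{i,j,r}+c_{i,r}$. On the other hand, Theorem \ref{thm: res}(2) gives $F\circ\Phi|_{V_{q_i}} = z_{i,1}^{k_{i,1}}\cdots z_{i,d}^{k_{i,d}}\cdot\hat F_i$ with $\hat F_i$ nowhere-vanishing. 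Both sides are convergent power series in the $z_{i,r}$ on a neighborhood of $q_i$; by uniqueness of such factorizations into monomial times unit (which holds because each $\hat f_{i,j}$, $\hat g_{i,j}$, $\hat h_i$, being not identically zero on any coordinate hyperplane $\Zc_{i,r}$, contributes no further factor of $z_{i,r}$), we must have $e_{i,r}=k_{i,r}$ for all $r$ and $\hat F_i = \hat f_{i,1}\cdots\hat f_{i,m}\cdot\hat g_{i,1}\cdots\hat g_{i,m}\cdot\hat h_i$. Since $\hat F_i$ is nowhere-vanishing on $V_{q_i}$, so is each factor; this is the content of (1). To justify the uniqueness claim cleanly I would argue locally at an arbitrary point $z^*\in V_{q_i}$: each coordinate $z_{i,r}$ is irreducible in the ring of convergent power series centered at $z^*$, the factor $z_{i,r}^{(\cdot)}$ is determined by the order of vanishing of the relevant function along $\{z_{i,r}=0\}$, and the ``not identically zero on $\Zc_{i,r}$'' hypothesis forces the residual units to have order zero along every $\Zc_{i,r}$.

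Next I would prove (2). Write
\[
\hat W_{i,s} = \left(\frac{\hat g_{i,1}^{\ell_1}\cdots\hat g_{i,m}^{\ell_m}}{\hat f_{i,1}^{\ell_1}\cdots\hat f_{i,m}^{\ell_m}}\right)^s\frac{\hat J_i}{\hat h_i^{d+1}}.
\]
By part (1), $\hat f_{i,j}$, $\hat g_{i,j}$, $\hat h_i$ are real analytic and nowhere-vanishing on $V_{q_i}$, hence each has constant sign on $V_{q_i}$; since the exponents $\ell_j$ and $-(d{+}1)$ can be non-integral, I would note that raising a constant-sign nowhere-vanishing real analytic function to a real power yields a real analytic function (either it is positive, and one uses $x^{\ell}=\exp(\ell\log x)$, or it is negative and one absorbs a sign). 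The function $\hat J_i$ is real analytic on $V_{q_i}$ by Theorem \ref{thm: res}. Therefore $\hat W_{i,s}$ is real analytic on $V_{q_i}$. Boundedness on $V_i$ follows from \eqref{eqn:Vi is compact in Vqi}: $\overline{V_i}$ is a compact subset of $V_{q_i}$ on which $\hat W_{i,s}$ is continuous, so it is bounded there. For the final assertion — not identically zero on any $\Zc_{i,r}$ — I would observe that $\hat W_{i,s}$ is, up to the analytic factor $\hat J_i$, a product of powers of nowhere-vanishing functions, so $\hat W_{i,s}$ vanishes exactly where $\hat J_i$ does; and $\hat J_i$ is not identically zero on any $\Zc_{i,r}$ by the defining property of the factorization $\det D(\Phi)|_{V_{q_i}} = z_{i,1}^{\gamma_{i,1}}\cdots z_{i,d}^{\gamma_{i,d}}\cdot\hat J_i$. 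Hence neither is $\hat W_{i,s}$.

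The main obstacle is the uniqueness-of-factorization step underpinning part (1): one must be careful that ``not identically zero on $\Zc_{i,r}$'' is exactly the right normalization to pin down the monomial prefactor, and that comparing the two factorizations of $F\circ\Phi$ is legitimate pointwise in the local ring of convergent power series. Once that bookkeeping is set up, everything else is a routine consequence of real analyticity plus compactness of $\overline{V_i}$.
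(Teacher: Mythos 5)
Your proposal is correct and follows essentially the same route as the paper: part (1) is proved by matching the monomial exponents in the two factorizations of $F\circ\Phi$ (the paper does this by observing that the quotient $\hat f_{i,1}\cdots\hat h_i/\hat F_i$ equals a monomial whose exponents must be simultaneously $\ge 0$, since $\hat F_i$ is nowhere vanishing, and $\le 0$, since the numerator is not identically zero on any $\Zc_{i,r}$ — the same uniqueness-of-factorization content you invoke), and part (2) follows, as in the paper, from (1), the defining property of $\hat J_i$, and compactness of $\overline{V_i}\subset V_{q_i}$. The only cosmetic caveat is that the exponent-matching is cleanest at the center $q_i$ where the Taylor factorizations are taken (rather than at an arbitrary $z^*\in V_{q_i}$), but this does not change the argument.
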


\begin{proof} Proof of (1). Observe that
\begin{align*}
\frac{ \hat{f}_{i,1} \cdots \hat{f}_{i,m} \cdot \hat{g}_{i,1} \cdots \hat{g}_{i,m} \cdot \hat{h}_i}{ \hat{F}_i} = \prod_{r=1}^dz_{i,r}^{ k_{i,r} - c_{i,r} - \sum_{j=1}^m (a_{i,j,r} + b_{i,j,r}) }.
\end{align*}
By Theorem \ref{thm: res}, $\hat F_i$ is nowhere vanishing, so the left hand side is finite at every point in $V_{q_i}$. Thus 
$$
 k_{i,r} - c_{i,r} - \sum_{j=1}^m (a_{i,j,r} + b_{i,j,r}) \ge 0
 $$
 for all $r\in\{1,\dots,d\}$. Also, by the definition of $\hat f_{i,j}$, $\hat g_{i,j}$ and $\hat h_i$, the left hand side is not identically zero on $\Zc_{i,r}$ for all $r\in\{1,\dots,d\}$. Hence we must also have 
$$
 k_{i,r} - c_{i,r} - \sum_{j=1}^m (a_{i,j,r} + b_{i,j,r}) \le 0
 $$
 for all $r\in\{1,\dots,d\}$. So $\hat F_i=\hat{f}_{i,1} \cdots \hat{f}_{i,m} \cdot \hat{g}_{i,1} \cdots \hat{g}_{i,m} \cdot \hat{h}_i$. 
 
 Proof of (2). By (1) and the definition of $\hat{J}_i$, the function $\hat{W}_{i,s}$ is a real analytic function on $V_{q_i}$ which is not identically zero on $\Zc_{i,r}$ for any 
 $r\in\{1,\dots,d\}$. Since $\overline{V_i} \subset V_{q_i}$ is a compact subset (see Equation~\eqref{eqn:Vi is compact in Vqi}), the function $\hat{W}_{i,s}$ is bounded on $V_i$. 
\end{proof}

For all $i\in\{1,\dots,n\}$ and $r\in\{1,\dots,d\}$, set 
\[\beta_{i,r} := \sum_{j=1}^m \ell_j( b_{i,j,r} - a_{i,j,r}).\]
Since $T$ is positive on $\Rb^d-\Zc$, we can take the absolute value of each term to conclude that 
\begin{align}\label{eqn: integrand local expression}
\frac{(T\circ\Phi)^{-s} }{\abs{y_1 \circ \Phi}^{d+1}} \abs{\det D(\Phi) }&= \abs{\hat{W}_{i,s}}\prod_{r=1}^d|z_{i,r}|^{s\beta_{i,r}-(d+1)c_{i,r}+\gamma_{i,r}}.
\end{align}

The next lemma characterizes when integrals of functions with the above form converge. 

\begin{lemma}\label{lem:simple convergence criteria}  Suppose $i \in \{1,\dots, n\}$ and $W :  V_i \rightarrow \Rb$ is a bounded real analytic function which is not identically zero on the hyperplane $\Zc_{i,r}$ for any $r\in\{1,\dots,d\}$. Then for $\eta_1, \dots, \eta_d \in \Rb$ the integral 
$$
\int_{V_i- \Phi^{-1}(\Zc)} \abs{z_{i,1}}^{\eta_1} \cdots \abs{z_{i,d}}^{\eta_d} \abs{W} {\rm d} \lambda_i
$$
converges if and only if $\eta_r > -1$ for all $r \in \{1,\dots, d\}$. In particular,
\[\int_{\Oc-\Zc} \frac{T^{-s}}{\abs{y_1}^{d+1}} \d \lambda \]
converges if and only if $s\beta_{i,r}-(d+1)c_{i,r}+\gamma_{i,r}>-1$ for all $i\in\{1,\dots,n\}$ and all $r\in\{1,\dots,d\}$.
\end{lemma}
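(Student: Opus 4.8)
The plan is to prove the first assertion by a direct computation on a neighborhood of a point of the zero set, and then derive the ``in particular'' statement by combining this with Lemma~\ref{lem: integral simpify} and the local expression~\eqref{eqn: integrand local expression}. First I would fix $i$ and work in the local coordinates $(z_{i,1},\dots,z_{i,d})$ centered at $q_i$ on $V_{q_i}$, in which $\Phi^{-1}(\Zc)$ is the union of the coordinate hyperplanes $\{z_{i,r}=0\}$ that actually occur (those with $k_{i,r}>0$; for the others the constraint $\eta_r>-1$ will come out automatically since $\lambda_i$ of a full box is finite). The key point is that $V_i$ has compact closure inside $V_{q_i}$, so after shrinking we may assume $V_i$ is contained in a box $\prod_r (-\rho,\rho)$; then $|z_{i,1}|^{\eta_1}\cdots|z_{i,d}|^{\eta_d}$ is a product of one-variable functions and the finiteness of $\int_{-\rho}^{\rho}|t|^{\eta}\,dt$ is exactly the condition $\eta>-1$.

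The direction ``$\eta_r>-1$ for all $r$ $\Rightarrow$ convergence'' is the easy half: since $W$ is bounded on $V_i$, say $|W|\le B$, we bound the integral by $B\prod_{r=1}^d \int_{-\rho}^{\rho}|t|^{\eta_r}\,dt<\infty$ by Fubini/Tonelli. For the converse, ``convergence $\Rightarrow$ $\eta_r>-1$ for all $r$'', I would argue by contraposition: suppose $\eta_{r_0}\le -1$ for some $r_0$. Here is where the hypothesis that $W$ is \emph{not identically zero on $\Zc_{i,r_0}$} is essential. Since $W$ is real analytic and not identically zero on the hyperplane $\{z_{i,r_0}=0\}$, there is a point $q'$ on that hyperplane (which we may take inside $V_i$, after possibly translating the box, using that $\overline{V_i}\subset V_{q_i}$ and the set where $W\ne 0$ on the hyperplane is open and dense) with $W(q')\ne 0$; by continuity $|W|\ge c>0$ on a small box $B'=\prod_r(x_r-\rho',x_r+\rho')$ around $q'$ with $x_{r_0}=0$. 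On $B'$ the other coordinates $z_{i,r}$, $r\ne r_0$, are bounded away from $0$, so $\prod_{r\ne r_0}|z_{i,r}|^{\eta_r}\ge c'>0$ there, and the integral over $B'\setminus\Phi^{-1}(\Zc)$ dominates a constant times $\int_{-\rho'}^{\rho'}|t|^{\eta_{r_0}}\,dt=\infty$. Hence the original integral diverges, contradiction.

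For the ``in particular'' statement: Lemma~\ref{lem: integral simpify} says $\int_{\Oc-\Zc} T^{-s}/|y_1|^{d+1}\,d\lambda$ converges iff each integral $\int_{V_i-\Phi^{-1}(\Zc)} (T\circ\Phi)^{-s}|y_1\circ\Phi|^{-(d+1)}|\det D(\Phi)|\,d\lambda_i$ converges; by~\eqref{eqn: integrand local expression} this integrand equals $|\hat W_{i,s}|\prod_{r=1}^d |z_{i,r}|^{s\beta_{i,r}-(d+1)c_{i,r}+\gamma_{i,r}}$. By Lemma~\ref{lem: resolution properties 0}(2), $\hat W_{i,s}$ is bounded on $V_i$ and not identically zero on any $\Zc_{i,r}$, so the first assertion of the present lemma applies with $W=\hat W_{i,s}$ and $\eta_r=s\beta_{i,r}-(d+1)c_{i,r}+\gamma_{i,r}$, giving convergence of the $i$-th integral iff $s\beta_{i,r}-(d+1)c_{i,r}+\gamma_{i,r}>-1$ for every $r$. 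Intersecting over $i$ and $r$ yields the claim. The main obstacle is the converse direction of the first assertion — specifically, making precise that the non-vanishing of $W$ on the hyperplane $\Zc_{i,r_0}$, together with real analyticity, lets one find a sub-box on which $|W|$ is bounded below while still hugging that hyperplane; once that is set up the divergence is a one-variable computation.
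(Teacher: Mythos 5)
Your proposal is correct and takes essentially the same approach as the paper: the convergence direction from boundedness of $W$ and a product of one-variable integrals, the divergence direction by localizing near a point of the hyperplane $\Zc_{i,r_0}$ inside $V_i$ where $W \neq 0$ and comparing with $\int |t|^{\eta_{r_0}}\,dt = +\infty$, and the ``in particular'' part by feeding $\hat W_{i,s}$ (via Lemma~\ref{lem: resolution properties 0}(2) and Equation~\eqref{eqn: integrand local expression}) into the first statement and invoking Lemma~\ref{lem: integral simpify}. The only detail to add is that the point $q'$ should also be chosen with $z_{i,r}(q') \neq 0$ for all $r \neq r_0$ (possible since the other coordinate hyperplanes meet the open nonvanishing locus of $W|_{\Zc_{i,r_0}}$ in a nowhere dense set), which is exactly what the paper arranges by requiring $\overline{V'} \cap \bigcup_{r \geq 2} \Zc_{i,r} = \emptyset$.
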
 

\begin{proof} Since $W$ is bounded, it is clear that if $\eta_r > -1$ for all $r \in \{1,\dots, d\}$, then the integral converges. 

For the other direction, it suffices to assume that  $\eta_1 \le -1$ and then show that the integral diverges. Since $W$ is not identically zero on $\Zc_{i,1}$, we can find an open set $V^\prime \subset \Zc_{i,1}$ such that $V^\prime \subset V_i$ and $W$ is nowhere vanishing on $V^\prime$. 
By further shrinking $V^\prime$ we can assume that there exists  $\epsilon > 0$ such that $\overline{(0,\epsilon) \times V^\prime} \subset V_i$ and 
$$
\overline{V^\prime} \cap \bigcup_{r=2}^d \Zc_{i,r} = \emptyset.
$$
Then
$$
\alpha: = \min_{ z \in \overline{(0,\epsilon) \times V^\prime}} \abs{z_{i,2}}^{\eta_2} \cdots \abs{z_{i,d}}^{\eta_d} \abs{W}
$$
is positive. Hence, 
\begin{align*}
\int_{V_i- \Phi^{-1}(\Zc)}&  \abs{z_{i,1}}^{\eta_1} \cdots \abs{z_{i,d}}^{\eta_d} \abs{W} {\rm d} \lambda_i \ge
\int_{(0,\epsilon) \times V^\prime} \abs{z_{i,1}}^{\eta_1} \cdots \abs{z_{i,d}}^{\eta_d} \abs{W} {\rm d} \lambda_i \\
&  \ge \alpha {\rm Vol}(V^\prime) \int_0^\epsilon t^{\eta_1} dt = + \infty
\end{align*}
where ${\rm Vol}$ is the measure on $\Zc_{i,1}$ induced by the coordinates $(z_{q,2},\dots,z_{q,d})$ on $\Zc_{i,1}$ and the Lebesgue measure on $\Rb^{d-1}$. 

To prove the second statement of the lemma, first note that by Lemma \ref{lem: resolution properties 0} Claim (2), $\hat{W}_{i,s}$ is a bounded real analytic function on $V_i$ 
and not identically zero on the hyperplane $\Zc_{i,r}$ for any $r\in\{1,\dots,d\}$. So, we may apply the first statement of the lemma to 
Equation \eqref{eqn: integrand local expression} and deduce that
\[\int_{V_i- \Phi^{-1}(\Zc)} \frac{(T\circ \Phi)^{-s} }{\abs{y_1 \circ \Phi}^{d+1}} \abs{\det D(\varphi_i) }  {\rm d} \lambda_i\]
converges if and only if 
$$
s \beta_{i,r}- (d+1)c_{i,r} + \gamma_{i,r} > -1
$$
for all $r\in\{1,\dots,d\}$. The second statement of the lemma now follows from Lemma \ref{lem: integral simpify}.
\end{proof} 

By Lemma \ref{lem:simple convergence criteria}, we now need to know the values of $s$ so that $s\beta_{i,r}-(d+1)c_{i,r}+\gamma_{i,r}>-1$ for all $i\in\{1,\dots,n\}$ and all $r\in\{1,\dots,d\}$. To that end, it is useful to have the following relations between the exponents $\beta_{i,r}$, $c_{i,r}$ and $\gamma_{i,r}$.

\begin{lemma}\label{lem: resolution properties}\
\begin{enumerate}
\item For all $i\in\{1,\dots,n\}$ and all $r\in\{1,\dots,d\}$, $\beta_{i,r}\ge 0$.
\item For all $i\in\{1,\dots,n\}$, $\{ r : c_{i,r} > 0\}\subset\{ r : \beta_{i,r} > 0\}$. 
\item There exist $i\in\{1,\dots,n\}$ and $r\in\{1,\dots,d\}$ such that 
\[c_{i,r} - \gamma_{i,r}\ge 1.\]
\end{enumerate}
In particular, 
$$
\delta : = \max\left\{ \frac{(d+1)c_{i,r}-\gamma_{i,r}-1}{\beta_{i,r}} : i\in\{1,\dots,n\}, \, r\in\{1,\dots,d\}\text{ such that }\beta_{i,r} > 0 \right\}
$$
is a positive real number. 
\end{lemma}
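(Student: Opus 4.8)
The plan is to reduce the three claims to elementary consequences of the resolution data, using two external inputs: the divergence hypothesis $\lim_{y\to z}T(y)=+\infty$ for $z\in\Zc$, and the (trivial) divergence of $\int_{\Oc-\Zc}|y_1|^{-1}\,\d\lambda$. The preliminary step is to record a factorization of $T\circ\Phi$ over each chart $V_{q_i}$, of the same type as those already available for $f_j\circ\Phi$, $g_j\circ\Phi$, $y_1\circ\Phi$ and $\det D(\Phi)$. Since each $T_j=f_j/g_j$ is positive on $\Rb^d-\Zc$, dividing the factorizations of $f_j\circ\Phi$ and $g_j\circ\Phi$ and taking absolute values gives, on $V_{q_i}-\Phi^{-1}(\Zc)$,
\[
T\circ\Phi=\Big(\prod_{r=1}^d|z_{i,r}|^{-\beta_{i,r}}\Big)\widehat T_i,\qquad \widehat T_i:=\prod_{j=1}^m|\hat f_{i,j}/\hat g_{i,j}|^{\ell_j},
\]
and $\widehat T_i$ is continuous and strictly positive on all of $V_{q_i}$ by Lemma~\ref{lem: resolution properties 0}(1). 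Comparing $F\circ\Phi=(f_1\circ\Phi)\cdots(g_m\circ\Phi)(y_1\circ\Phi)$ with the resolution expression $F\circ\Phi=\prod_r z_{i,r}^{k_{i,r}}\hat F_i$ and using that $\hat F_i$ is nowhere vanishing, one reads off $k_{i,r}=c_{i,r}+\sum_j(a_{i,j,r}+b_{i,j,r})$ and $\Phi^{-1}(\Zc)\cap V_{q_i}=\bigcup_{r:\,k_{i,r}>0}\Zc_{i,r}$; these are the bookkeeping identities on which the rest of the argument rests.

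For Claims (1) and (2) I would fix $i$ and $r$. If $k_{i,r}=0$, then $c_{i,r}$ and all the $a_{i,j,r},b_{i,j,r}$ vanish, so $\beta_{i,r}=0$ and there is nothing to prove. If $k_{i,r}>0$, choose $q^\ast\in V_{q_i}$ with $z_{i,r}(q^\ast)=0$ but $z_{i,r'}(q^\ast)\neq 0$ for $r'\neq r$ (possible since $z_{i,1},\dots,z_{i,d}$ form a chart centered at $q_i$); then $q^\ast\in\Zc_{i,r}\subset\Phi^{-1}(\Zc)$, so $\Phi(q^\ast)\in\Zc$. Let $q_n^\ast\to q^\ast$ be obtained by fixing the coordinates $z_{i,r'}$, $r'\neq r$, and sending $z_{i,r}\to 0$; then $q_n^\ast\notin\Phi^{-1}(\Zc)$ (all its coordinates are nonzero) and $\Phi(q_n^\ast)\to\Phi(q^\ast)\in\Zc$, so the divergence hypothesis gives $T(\Phi(q_n^\ast))\to+\infty$. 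But the factorization above shows $T(\Phi(q_n^\ast))=|z_{i,r}(q_n^\ast)|^{-\beta_{i,r}}\cdot\big(\text{a quantity bounded away from }0\text{ and }+\infty\big)$, which forces $\beta_{i,r}>0$. This yields Claim (1), and in fact $\beta_{i,r}>0$ for every $r$ with $k_{i,r}>0$; since $c_{i,r}>0$ implies $k_{i,r}\ge c_{i,r}>0$, Claim (2) follows.

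For Claim (3) I would start from $\int_{\Oc-\Zc}|y_1|^{-1}\,\d\lambda=+\infty$, which is immediate since $\Oc$ contains a cube about $0$ and $\int_{-\epsilon}^{\epsilon}|t|^{-1}\,\d t=+\infty$. Covering $\Oc$ by the sets $\Phi(V_i)$ and changing variables by $\Phi$ (a diffeomorphism away from $\Phi^{-1}(\Zc)$), one finds that $\int_{V_i-\Phi^{-1}(\Zc)}\frac{|\det D(\Phi)|}{|y_1\circ\Phi|}\,\d\lambda_i=+\infty$ for some $i$. The factorizations of $y_1\circ\Phi$ and $\det D(\Phi)$ turn this integrand into $|W_i|\prod_r|z_{i,r}|^{\gamma_{i,r}-c_{i,r}}$, where $W_i:=\hat J_i/\hat h_i$ is real analytic on $V_{q_i}$ (as $\hat h_i$ is nowhere vanishing), bounded on $V_i$ (since $\overline{V_i}\subset V_{q_i}$ is compact), and not identically zero on any $\Zc_{i,r}$. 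Lemma~\ref{lem:simple convergence criteria} then forces $\gamma_{i,r}-c_{i,r}\le-1$, i.e.\ $c_{i,r}-\gamma_{i,r}\ge 1$, for some $r$.

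Finally, for the ``in particular'' clause I would take $(i,r)$ as in Claim (3): then $c_{i,r}\ge\gamma_{i,r}+1\ge 1$, so $\beta_{i,r}>0$ by Claim (2) and the maximum defining $\delta$ is over a non-empty set; moreover $(d+1)c_{i,r}-\gamma_{i,r}-1=d\,c_{i,r}+(c_{i,r}-\gamma_{i,r}-1)\ge d\ge 1$, whence $\delta\ge 1/\beta_{i,r}>0$. I expect the main obstacle to lie in Claims (1) and (2): one must transport the divergence of $T$ along $\Zc$ through the resolution map $\Phi$, which is only a diffeomorphism away from $\Phi^{-1}(\Zc)$ and is typically not injective, and the delicate point is to keep careful track of which coordinate hyperplanes $\Zc_{i,r}$ actually get mapped into $\Zc$ — precisely those with $k_{i,r}>0$. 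Everything else is routine bookkeeping built on Lemmas~\ref{lem: resolution properties 0} and~\ref{lem:simple convergence criteria}.
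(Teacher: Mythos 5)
Your proposal is correct. Claim (3) and the deduction of $\delta>0$ follow the paper's argument essentially verbatim: divergence of $\int_{\Oc-\Zc}|y_1|^{-1}\,\d\lambda$, the cover by the sets $\Phi(V_i)$, the factorization of $|\det D(\Phi)|/|y_1\circ\Phi|$ into $|\hat J_i/\hat h_i|\prod_r|z_{i,r}|^{\gamma_{i,r}-c_{i,r}}$, and Lemma~\ref{lem:simple convergence criteria}. Where you diverge is in Claims (1) and (2): the paper observes that, by the hypothesis $\lim_{y\to z}T(y)=+\infty$, the function $1/(T\circ\Phi)$ extends continuously to all of $M$ and vanishes wherever $y_1\circ\Phi$ does, and then reads off $\beta_{i,r}\ge 0$ and the inclusion $\{c_{i,r}>0\}\subset\{\beta_{i,r}>0\}$ from the local expression $z_{i,1}^{\beta_{i,1}}\cdots z_{i,d}^{\beta_{i,d}}\cdot\hat g^{\ell}/\hat f^{\ell}$, using compactness of $\overline{V_i}\subset V_{q_i}$ to control the nowhere-vanishing factor. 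You instead exploit the exponent identity $k_{i,r}=c_{i,r}+\sum_j(a_{i,j,r}+b_{i,j,r})$ (equivalent to Lemma~\ref{lem: resolution properties 0}(1)) together with an explicit sequence approaching a generic point of $\Zc_{i,r}$ transversally, transporting the divergence of $T$ along it. This pointwise test is valid: the auxiliary factor converges to a positive limit, so divergence forces $\beta_{i,r}>0$, and it in fact yields the slightly stronger statement that $\beta_{i,r}>0$ whenever $k_{i,r}>0$ (not only when $c_{i,r}>0$), while avoiding any appeal to boundedness on $\overline{V_i}$ in this step; the small price is the genericity bookkeeping in choosing $q^\ast$ and verifying $\Phi^{-1}(\Zc)\cap V_{q_i}=\bigcup_{r:\,k_{i,r}>0}\Zc_{i,r}$, which your use of the nowhere-vanishing of $\hat F_i$ handles correctly. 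No gaps.
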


\begin{proof} Proof of (1). 
Since $T$ is positive on $U-\Zc$ and $\lim_{y\to p}T(y)=\infty$ for all $p\in \Zc$, the function 
\[\frac{1}{T\circ\Phi}:M-\Phi^{-1}(\Zc)\to\Rb\] 
extends to a continuous function on all of $M$. 
In particular, for all $i\in\{1,\dots,n\}$,
\begin{equation}\label{eqn:expression for R} 
\frac{1}{T\circ\Phi}\bigg|_{V_i} = z_{i,1}^{\beta_{i,1}} \cdots z_{i,d}^{\beta_{i,d}}  \frac{ \hat{g}_{i,1}^{\ell_1} \cdots \hat{g}_{i,m}^{\ell_m}}{\hat{f}_{i,1}^{\ell_1} \cdots \hat{f}_{i,m}^{\ell_m}}
\end{equation} 
is finite. By Claim (1) of Lemma \ref{lem: resolution properties 0} and the fact that $\overline{V_i} \subset V_{q_i}$ is compact, we see that $\frac{ \hat{g}_{i,1}^{\ell_1} \cdots \hat{g}_{i,m}^{\ell_m}}{\hat{f}_{i,1}^{\ell_1} \cdots \hat{f}_{i,m}^{\ell_m}}$ is bounded and nowhere vanishing on $V_i$, so (1) follows. 

Proof of (2). Since $\lim_{y\to z}T(y)=\infty$ for all $z\in\mathcal Z$, it follows that $\frac{1}{T}$ vanishes on $\Zc$. Equivalently, $\frac{1}{T\circ\Phi}$ vanishes wherever $y_1 \circ\Phi$ vanishes. By Lemma \ref{lem: resolution properties 0} Claim (1) and the fact that $\overline{V_i} \subset V_{q_i}$ is compact, $\hat{h}_i$ and $\frac{ \hat{g}_{i,1}^{\ell_1} \cdots \hat{g}_{i,m}^{\ell_m}}{\hat{f}_{i,1}^{\ell_1} \cdots \hat{f}_{i,m}^{\ell_m}}$ are bounded and nowhere vanishing on $V_i$, so (2) follows from Equation~\eqref{eqn:expression for R}.

Proof of (3). Observe that the integral
\[\int_{\Oc-\Zc} \frac{1}{\abs{y_1}}{\rm d} \lambda\]
diverges. By \eqref{eqn: cover}, 
\[\int_{\Oc-\Zc} \frac{1}{\abs{y_1}}{\rm d} \lambda\le\sum_{i=1}^n \int_{V_i - \Phi^{-1}(\Zc)} \frac{\abs{\det D(\Phi)}}{|y_1\circ\Phi|} {\rm d} \lambda_i=\sum_{i=1}^n\int_{V_i- \Phi^{-1}(\Zc)} \left(\prod_{r=1}^d\abs{z_{i,r}}^{-c_{i,r} + \gamma_{i,r}}\right) \abs{\frac{\hat J_i}{\hat h_i}}  {\rm d} \lambda_i,\]
so there is some $i\in\{1,\dots,n\}$ such that the integral
\[\int_{V_i- \Phi^{-1}(\Zc)} \left(\prod_{r=1}^d\abs{z_{i,r}}^{-c_{i,r} + \gamma_{i,r}}\right) \abs{\frac{\hat J_i}{\hat h_i}}  {\rm d} \lambda_i\] 
diverges. By Lemma \ref{lem: resolution properties 0} Claim (1), $\frac{\hat J_i}{\hat h_i}$ is real analytic on $V_{q_i}$. Also, by the definition of $\hat{J}_i$, $\frac{\hat J_i}{\hat h_i}$ is not identically zero on $\Zc_{i,r}$ for all $r\in\{1,\dots,d\}$. Since $\overline{V_i} \subset V_{q_i}$ is compact, the function $\frac{\hat J_i}{\hat h_i}$ is bounded on $V_i$. So by Lemma~\ref{lem:simple convergence criteria}  there is some $r\in\{1,\dots,d\}$ such that $-c_{i,r} + \gamma_{i,r}\le -1$. This proves (3).

We will now deduce the final claim of the lemma. By (3), there is some $i\in\{1,\dots,n\}$ and $r\in\{1,\dots,d\}$ such that $c_{i,r} - \gamma_{i,r}\ge 1$. Since $\gamma_{i,r}\ge 0$, it follows that $c_{i,r}>0$, so (2) implies that $\beta_{i,r}>0$. Then
\[
\frac{(d+1)c_{i,r}-\gamma_{i,r}-1}{\beta_{i,r}}>0,
\]
which implies that $\delta>0$.
\end{proof}

Combining Lemmas~\ref{lem:simple convergence criteria} and  \ref{lem: resolution properties}, we may now prove Lemma \ref{lem: resolution 2}.

\begin{proof}[Proof of Lemma \ref{lem: resolution 2}]
Proof of (1). Let $\delta>0$ be the quantity specified in the statement of Lemma \ref{lem: resolution properties}. 

Suppose that $s \in (\delta,+\infty)$. Pick any $i\in\{1,\dots,n\}$ and $r\in\{1,\dots,d\}$. If $\beta_{i,r}=0$, then Lemma \ref{lem: resolution properties} Claim (2) implies that $c_{i,r}=0$, in which case 
\[s \beta_{i,r}- (d+1)c_{i,r} + \gamma_{i,r}=\gamma_{i,r} \ge 0 > -1.\]
If $\beta_{i,r}\neq  0$, then $s>\delta\ge \frac{(d+1)c_{i,r}-\gamma_{i,r}-1}{\beta_{i,r}}$. By Lemma \ref{lem: resolution properties} Claim (1), $\beta_{i,r}>0$, so
\[s \beta_{i,r}- (d+1)c_{i,r} + \gamma_{i,r}> -1.\]
It now follows from Lemma \ref{lem:simple convergence criteria} that
\[\int_{\Oc-\Zc} \frac{T^{-s}}{\abs{y_1}^{d+1}} \d \lambda \]  
converges. 

Next, suppose that $s \in[0,\delta]$. Then by definition, there is some $i\in\{1,\dots,n\}$ and $r\in\{1,\dots,d\}$ such that 
\[ s\beta_{i,r}- (d+1)c_{i,r} + \gamma_{i,r}\le -1.\]
Then by Lemma \ref{lem:simple convergence criteria},
\[\int_{\Oc-\Zc} \frac{T^{-s}}{\abs{y_1}^{d+1}} \d \lambda \] 
diverges. This completes the proof of (1).

Proof of (2). Recall that
$$\frac{1}{T\circ\Phi}\bigg|_{V_i} = z_{i,1}^{\beta_{i,1}} \cdots z_{i,d}^{\beta_{i,d}}  R$$
where $R=\frac{ \hat{g}_{i,1}^{\ell_1} \cdots \hat{g}_{i,m}^{\ell_m}}{\hat{f}_{i,1}^{\ell_1} \cdots \hat{f}_{i,m}^{\ell_m}}$  is a nowhere vanishing analytic function
(by Lemma \ref{lem: resolution properties 0}) and that
$$y_1\circ\Phi|_{V_{q_i}}= z_{i,1}^{c_{i,1}} \cdots z_{i,d}^{c_{i,d}} \cdot \hat{h}_i$$
where $ \hat h_i$ is a nowhere vanishing analytic function
(again by Lemma \ref{lem: resolution properties 0}).
Since $\overline{V_i} \subset V_{q_i}$ is compact, $\beta_{i,r}\ge 0$ for all $r$ and  $\beta_{i,r}=0$ whenever $c_{i,r}=0$ (by Lemma \ref{lem: resolution properties}),
there exist $C_i,\epsilon_i>0$ such that
\begin{align}\label{eqn: bound in Vi}
\frac{1}{T\circ\Phi(z)}\le C_i\abs{y_1\circ\Phi(z)}^{\epsilon_i}
\end{align}
for all $z\in V_i$. Set
\[\epsilon:=\max\{\epsilon_1,\dots,\epsilon_n\}.\]
Since the closure of $\Oc$ is compact, there exists $c_0 > 0$ such that 
$$
1 \ge c_0\left(\abs{y_1}+\sqrt{1+y_2^2+\dots+y_d^2}\right)
$$
for all $y \in \Oc$. Hence 
\begin{equation}\label{eqn: stupid bound on 1/y1}
\frac{1}{\abs{y_1}} \ge c_0\left(1+\sqrt{\frac{1+y_2^2+\dots+y_d^2}{y_1^2}}\right)
\end{equation}
for all $y \in \Oc - \Zc$. Set
\[c:=\min\left\{\frac{c_0^{\epsilon_1}}{C_1},\dots,\frac{c_0^{\epsilon_n}}{C_n}\right\}.\]

Fix $y\in \Oc-\Zc$. Then there exist $i\in\{1,\dots,m\}$ and $z\in V_i$ such that $y=\Phi(z)$. Then
\[T(y)\ge\frac{1}{C_i\abs{y_1}^{\epsilon_i}}\ge c\left(1+\sqrt{\frac{1+y_2^2+\dots+y_d^2}{y_1^2}}\right)^{\epsilon},\]
where the first inequality holds by Equation \eqref{eqn: bound in Vi} and the second inequality holds by Equation \eqref{eqn: stupid bound on 1/y1}.

\end{proof}

\section{Entropy gap for peripheral subgroups}\label{sec: relating to integral} 

In this section we prove that the Poincar\'e series associated to any peripheral subgroup diverges at its critical exponent. 

\begin{theorem}[Theorem \ref{peripheral divergent}]\label{thm:entropy gap for rel Anosov}
Suppose $\Gamma\subset \GG$ is a $\Psf_\theta$-Anosov subgroup relative to $\Pc$,  $\phi\in \mathfrak{a}_\theta^*$ and $\delta^\phi(\Gamma) < +\infty$. If $P \in \Pc$, then $Q_P^\phi$ diverges at its critical exponent. 
\end{theorem}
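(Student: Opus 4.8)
The plan is to reduce the divergence of $Q_P^\phi$ at its critical exponent to the asymptotic analysis of products of powers of rational functions carried out in Theorem~\ref{thm:critical exponent of products of rational functions}. First I would apply the reductions of Section~\ref{sec: a helpful reduction}: using Proposition~\ref{prop:reducing to no simple factors} we may assume $Z(\GG)$ is trivial and $\Psf_\theta$ contains no simple factors of $\GG$, so that Theorem~\ref{thm:properties of relatively Anosov representations} applies. By that theorem, $P$ is a cocompact lattice in a closed Lie subgroup $\mathsf H = \mathsf L \ltimes \mathsf U$ with $\mathsf L$ compact and $\mathsf U$ the unipotent radical; moreover $\mathsf H^0 = \mathsf L^0 \times \mathsf U$. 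Passing to the finite-index subgroup $P_0 := P \cap \mathsf H^0$ (which does not change whether the Poincaré series diverges at its critical exponent, nor the value of the critical exponent), and using cocompactness of $P_0$ in $\mathsf H^0$ together with Lemma~\ref{lem:kappa multiplication estimate}, the Poincaré series $Q_{P_0}^\phi(s) = \sum_{p \in P_0} e^{-s \phi(\kappa_\theta(p))}$ is comparable, up to multiplicative constants, to the integral $\int_{\mathsf H^0} e^{-s\phi(\kappa_\theta(h))}\, d\mu_{\mathsf H^0}(h)$ over the Haar measure of $\mathsf H^0$. Since $\mathsf L^0$ is compact, this in turn is comparable to $\int_{\mathsf U} e^{-s\phi(\kappa_\theta(u))}\, d\mu_{\mathsf U}(u)$.

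Next I would use Proposition~\ref{prop:properties of unipotent subgroups}: the exponential map is a diffeomorphism $\mfu \to \mathsf U$ pushing Lebesgue measure on $\mfu \cong \Rb^n$ to a Haar measure, and for each $\alpha \in \Delta$ there is $M_\alpha \in \Nb$, $C_\alpha > 1$ and a positive everywhere-defined rational function $R_\alpha : \mfu \to \Rb$ with $\tfrac{1}{C_\alpha} R_\alpha(Y)^{1/M_\alpha} \le e^{\omega_\alpha(\kappa(\exp Y))} \le C_\alpha R_\alpha(Y)^{1/M_\alpha}$. Writing $\phi = \sum_{\alpha \in \theta} c_\alpha \omega_\alpha$ in the basis $\{\omega_\alpha|_{\mfa_\theta}\}$ and using $\phi(\kappa_\theta(g)) = \phi(\kappa(g))$ (Equation~\eqref{eqn:stupid equality between projections}), we get
$$
e^{-s\phi(\kappa_\theta(\exp Y))} \asymp \prod_{\alpha \in \theta} R_\alpha(Y)^{-s c_\alpha / M_\alpha}
$$
up to constants depending on $s$ but uniform in $Y$. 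Thus $Q_P^\phi(s)$ is, up to bounded multiplicative error, comparable to an integral of the exact shape $\int_{\Rb^n} \big(R_1^{\ell_1}\cdots R_m^{\ell_m}\big)^{-s}\, d\lambda$ appearing in Theorem~\ref{thm:critical exponent of products of rational functions}, where $m = |\theta|$, $R_j = R_{\alpha_j}$ and $\ell_j = c_{\alpha_j}/M_{\alpha_j}$ (one needs $s>0$ to absorb it into the $\ell_j$; the exponents themselves are the $s$-independent numbers $c_{\alpha_j}/M_{\alpha_j}$ and the integral is $\int (\prod R_j^{\ell_j})^{-s}$). The hypothesis $\delta^\phi(\Gamma) < +\infty$ forces $\delta^\phi(P) < +\infty$ (the Poincaré series of a subgroup dominates nothing larger — more precisely $\kappa_\theta$ on $P$ grows, by Theorem~\ref{thm:characterizing finite entropy functionals}, so $\phi(\kappa_\theta(p)) \to \infty$ along $P$), which translates exactly into properness of $R := \prod_j R_j^{\ell_j}$ via part~(4) of Proposition~\ref{prop:properties of unipotent subgroups} together with the growth of $\norm{\kappa(\exp Y)}$ as $\norm{Y} \to \infty$; conversely if $R$ were not proper one would contradict finiteness of the critical exponent.

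Finally, Theorem~\ref{thm:critical exponent of products of rational functions}(1) gives a $\delta = \delta(R) > 0$ such that $\int_{\Rb^n} R^{-s}\, d\lambda$ converges for $s > \delta$ and diverges for $s \in [0,\delta]$. Matching this with the Poincaré series $Q_P^\phi(s)$ via the comparison above, the critical exponent of $Q_P^\phi$ is exactly $\delta^\phi(P) = \delta$ (after rescaling $s$ by the affine factor coming from the $\ell_j$), and divergence on the closed interval $[0,\delta]$ at the left-hand side in particular gives divergence \emph{at} $s = \delta^\phi(P)$. Care is needed that the multiplicative constants relating the sum and the integral are independent of $s$ on a neighborhood of the critical exponent, or at least locally bounded, so that they cannot affect whether the series diverges at $\delta^\phi(P)$; this follows because the constants $C_\alpha$ in Proposition~\ref{prop:properties of unipotent subgroups}(5) are independent of $s$, and the only $s$-dependence is in the exponents, so the comparison $Q_P^\phi(s) \asymp_{C(s)} \int R^{-s}$ has $C(s)$ continuous in $s$. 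I expect the main obstacle to be the bookkeeping in the reduction step — precisely showing that the Poincaré sum over the lattice $P$ is comparable to the Haar integral over $\mathsf H$ with constants uniform enough near the critical exponent, and correctly handling the compact factor $\mathsf L$ and the passage to $\mathsf H^0$ — rather than the rational-function analysis, which is exactly what Theorem~\ref{thm:critical exponent of products of rational functions} was built to supply.
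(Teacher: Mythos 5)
Your proposal follows essentially the same route as the paper: reduce via Proposition~\ref{prop:reducing to no simple factors} to trivial center and no simple factors in $\Psf_\theta$, invoke Theorem~\ref{thm:properties of relatively Anosov representations} to realize $P$ as a cocompact lattice in $\mathsf H=\mathsf L\ltimes\mathsf U$, compare the Poincar\'e series to a Haar integral over the unipotent radical $\mathsf U$ with an $s$-continuous comparison constant, rewrite the integrand using the rational functions of Proposition~\ref{prop:properties of unipotent subgroups}(5), and conclude with Theorem~\ref{thm:critical exponent of products of rational functions}. (The paper organizes the lattice-to-integral comparison slightly differently, projecting $P_0=P\cap\mathsf H^0$ to a lattice $P_1\subset\mathsf U$ rather than integrating over $\mathsf H^0$, but that is cosmetic; also no rescaling of $s$ is needed at the end, since the exponents $c_\alpha/M_\alpha$ are $s$-independent.)

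The one step whose justification as written would fail is the properness of $R=\prod_\alpha R_\alpha^{c_\alpha/M_\alpha}$. Part (4) of Proposition~\ref{prop:properties of unipotent subgroups} is an \emph{upper} bound on $\norm{\kappa(\exp Y)}$, and growth of $\norm{\kappa(\exp Y)}$ does not imply growth of $\phi(\kappa_\theta(\exp Y))$ for a general $\phi\in\mathfrak a_\theta^*$: a priori $\phi\circ\kappa_\theta$ could stay bounded along escaping directions of $\mathsf U$. What you actually get from $\delta^\phi(\Gamma)<+\infty$ is that $\phi(\kappa_\theta(\gamma))\to+\infty$ along escaping sequences in $\Gamma$, hence in $P$ (and for this you only need the trivial implication, not Theorem~\ref{thm:characterizing finite entropy functionals}, whose proof appears later and itself relies on this section, so citing it risks apparent circularity). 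To obtain properness on all of $\mathfrak u$ you must transfer this growth from $P$ to $\mathsf U$, which is exactly what the paper's Lemma~\ref{lem: R is proper} does: for an escaping sequence $\exp(Y_n)\in\mathsf U$, cocompactness of $P$ in $\mathsf H$ produces an escaping sequence $\gamma_n\in P$ with $\gamma_n^{-1}\exp(Y_n)$ bounded, and Lemma~\ref{lem:kappa multiplication estimate} then gives $\phi(\kappa_\theta(\exp(Y_n)))\ge\phi(\kappa_\theta(\gamma_n))-O(1)\to+\infty$. With that repair your argument coincides with the paper's proof.
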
  

Delaying the proof of Theorem~\ref{thm:entropy gap for rel Anosov} for a moment, we observe that it implies that the critical exponent of the
peripheral subgroup is strictly smaller than the critical exponent of the entire group.

\begin{corollary}\label{cor:entropy gap with peripherals} 
Suppose $\Gamma\subset \GG$ is a $\Psf_\theta$-Anosov subgroup relative to $\Pc$,  $\phi\in \mathfrak{a}_\theta^*$ and $\delta^\phi(\Gamma) < +\infty$. If $P \in \Pc$, then $\delta^\phi(P) < \delta^\phi(\Gamma)$. 
\end{corollary}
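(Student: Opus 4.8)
The plan is to obtain the corollary as a formal consequence of Theorem~\ref{thm:entropy gap for rel Anosov} together with the entropy gap criterion Theorem~\ref{thm: entropy gap CZZ3}, applied with $\Gamma_0 = P$. Since $\Gamma$ is $\Psf_\theta$-Anosov relative to $\Pc$ it is, in particular, a non-elementary $\Psf_\theta$-transverse subgroup, and by hypothesis $\delta^\phi(\Gamma) < +\infty$; thus Theorem~\ref{thm: entropy gap CZZ3} will yield $\delta^\phi(\Gamma) > \delta^\phi(P)$ as soon as we check two things: (i) $Q_P^\phi$ diverges at its critical exponent, and (ii) $\Lambda_\theta(P)$ is a proper subset of $\Lambda_\theta(\Gamma)$. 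Point (i) is precisely Theorem~\ref{thm:entropy gap for rel Anosov}, so the only work left is (ii).

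For (ii), I would argue that $\Lambda_\theta(P)$ is a single point. Since $P$ is a peripheral subgroup, it is a maximal parabolic subgroup for the convergence group action of $\Gamma$ on $\partial(\Gamma,\Pc)$; in particular $P$ is infinite and fixes a unique point $p \in \partial(\Gamma,\Pc)$, and the limit set of $P$ in $\partial(\Gamma,\Pc)$ equals $\{p\}$ (a parabolic subgroup of a convergence group has a one-point limit set; equivalently, $P$ preserves a combinatorial horoball in a Groves--Manning cusp space $X$ for $(\Gamma,\Pc)$, so every escaping sequence in $P$ converges, in $X \cup \partial_\infty X$, to the unique boundary point of that horoball). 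Now let $\xi : \partial(\Gamma,\Pc) \to \Lambda_\theta(\Gamma)$ be the limit map of $\Gamma$ and fix a basepoint $b_0 \in X$. If $\{\gamma_n\}$ is a sequence of distinct elements of $P$ with $U_\theta(\gamma_n)$ convergent, then after passing to a subsequence $\gamma_n(b_0) \to x \in \partial_\infty X$, necessarily $x = p$, and Proposition~\ref{prop:compactifications are the same} forces $U_\theta(\gamma_n) \to \xi(p)$. Hence $\Lambda_\theta(P) = \{\xi(p)\}$. On the other hand $\Gamma$ is non-elementary, so $\Lambda_\theta(\Gamma)$ is perfect by Proposition~\ref{prop: convergence group}, hence infinite, and therefore $\Lambda_\theta(P) = \{\xi(p)\} \subsetneq \Lambda_\theta(\Gamma)$, establishing (ii).

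With (i) and (ii) in hand, Theorem~\ref{thm: entropy gap CZZ3} (with $\Gamma_0 = P$) gives $\delta^\phi(\Gamma) > \delta^\phi(P)$, which is the assertion of the corollary. The substantive input is entirely contained in Theorem~\ref{thm:entropy gap for rel Anosov}; the remaining reasoning is soft, and the only place one must be slightly careful is the identification of $\Lambda_\theta(P)$ with a single point, which I expect to be the mildest of obstacles.
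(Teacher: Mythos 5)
Your proposal is correct and follows essentially the same route as the paper: the paper's proof likewise observes that $\Lambda_\theta(P)$ is a single point (hence a proper subset of the perfect set $\Lambda_\theta(\Gamma)$) and then combines Theorem~\ref{thm:entropy gap for rel Anosov} with Theorem~\ref{thm: entropy gap CZZ3}. Your extra detail justifying $\Lambda_\theta(P)=\{\xi(p)\}$ via Proposition~\ref{prop:compactifications are the same} is sound and simply makes explicit what the paper leaves as an observation.
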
 

\begin{proof} Notice that $\Lambda_\theta(P)$ consists of a single point, namely the fixed point of $P$ in $\Lambda_\theta(\Gamma)$. Hence Theorems~\ref{thm: entropy gap CZZ3} and~\ref{thm:entropy gap for rel Anosov} imply that $\delta^\phi(P) < \delta^\phi(\Gamma)$. 
\end{proof}

The rest of the section is devoted to the proof of Theorem~\ref{thm:entropy gap for rel Anosov}, so fix $\Gamma$, $\Pc$ and $\phi$ as in the statement of the theorem.

Let $p : \GG \rightarrow \GG^\prime$ and $\theta^\prime \subset \Delta^\prime$ be as in Proposition~\ref{prop:reducing to no simple factors}. By part (4) of that proposition, $\Gamma^\prime : = p(\Gamma)$ is a $\Psf^\prime_{\theta^\prime}$-Anosov subgroup relative to $\Pc^\prime := \{ p(P) : P \in \Pc\}$. Also let $\phi^\prime \in (\mfa_{\theta^\prime}^\prime)^*$ be the unique functional where $\phi^\prime \circ \d p = \phi$. Then $Q_{p(P)}^{\phi^\prime}=Q_P^\phi$ for all $P \in \Pc$. So by replacing $\GG$ with $\GG^\prime$, we may assume that $\GG$ has trivial center, and that $\Psf_\theta$ contains no simple factors of $\GG$.

Fix $P \in \Pc$. Then by Theorem~\ref{thm:properties of relatively Anosov representations} there exists a closed subgroup $\mathsf{H} \subset \GG$ with finitely many components such that: \begin{enumerate}
\item $P$ is a cocompact lattice in $\mathsf{H}$.
\item $\mathsf{H} = \mathsf{L} \ltimes \mathsf{U}$ where $\mathsf{L}$ is compact and $\mathsf{U}$ is the unipotent radical of $\mathsf{H}$. 
\item $\mathsf{H}^0 = \mathsf{L}^0 \times \mathsf{U}$ and $\mathsf{L}^0$ is Abelian. 
\end{enumerate} 
Let $\mathfrak{u}$ denote the Lie algebra of $\mathsf{U}$. 
 
Since $\omega_\alpha(\kappa_\theta(g))=\omega_\alpha(\kappa(g))$ for all $\alpha\in\theta$ and $g\in\GG$, by Proposition~\ref{prop:properties of unipotent subgroups}, for any $\alpha \in \theta$
there exist $M_\alpha \in \Nb$, $C_\alpha>1$ and a positive everywhere defined rational function $R_\alpha: \mathfrak{u} \rightarrow \Rb$ where 
$$
\frac{1}{C_\alpha} R_\alpha(Y)^{1/M_\alpha} \le e^{\omega_\alpha(\kappa_\theta(\exp(Y)))} \le C_\alpha R_\alpha(Y)^{1/M_\alpha}
$$
for all $Y \in \mathfrak{u}$. Write $\phi = \sum_{\alpha \in \theta} c_\alpha \omega_\alpha$. Then define $R : = \prod_{\alpha \in \theta} R_\alpha^{\abs{c_\alpha}/M_\alpha}$ and $C_\phi: = \prod_{\alpha \in \theta} C_\alpha^{\abs{c_\alpha}}$. Note that
\[C_\phi^{-s} R^{-s}(Y) \le e^{-s\phi(\kappa_\theta(\exp(Y)))} \le C_\phi^s R^{-s}(Y)\]
for all $s\in\Rb$. 

\begin{lemma}\label{lem: R is proper} $R : \mathfrak{u} \rightarrow \Rb$ is proper. \end{lemma}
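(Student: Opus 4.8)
The plan is to argue by contradiction. The point is that once $R$ is shown not to be proper, the comparison displayed just before the lemma immediately produces an escaping sequence in $\mathfrak{u}$ along which $\phi(\kappa_\theta(\exp(\cdot)))$ stays bounded above; pushing this bound down to the cocompact lattice $P$ then contradicts the hypothesis $\delta^\phi(\Gamma)<+\infty$.

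In detail, suppose $R$ is not proper. Since $R$ is continuous and positive, there is a sequence $\{Y_n\}$ in $\mathfrak{u}$ with $\norm{Y_n}\to\infty$ and $M:=\sup_n R(Y_n)<+\infty$. The displayed comparison (with $s=1$, after taking reciprocals) gives $e^{\phi(\kappa_\theta(\exp(Y_n)))}\le C_\phi R(Y_n)\le C_\phi M$, so $\phi(\kappa_\theta(\exp(Y_n)))$ stays bounded above. First I would note that $\exp\colon\mathfrak{u}\to\mathsf{U}$ is a diffeomorphism, hence proper (Proposition~\ref{prop:properties of unipotent subgroups}(1)), so $\{\exp(Y_n)\}$ leaves every compact subset of $\mathsf{U}$; and since $\mathsf{U}$ is closed in the closed subgroup $\mathsf{H}\subset\GG$, it leaves every compact subset of $\mathsf{H}$ as well. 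Using that $P$ is a cocompact lattice in $\mathsf{H}$ (Theorem~\ref{thm:properties of relatively Anosov representations}), I would fix a compact set $K\subset\mathsf{H}$ with $\mathsf{H}=PK$ and write $\exp(Y_n)=p_nk_n$ with $p_n\in P$ and $k_n\in K$; since $\{\exp(Y_n)\}$ escapes every compact set while $\{k_n\}\subset K$, the set $\{p_n:n\in\Nb\}$ must be infinite.

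Next I would transfer the bound to $P$ via Lemma~\ref{lem:kappa multiplication estimate}: from $p_n=\exp(Y_n)k_n^{-1}$ one gets $\norm{\kappa(p_n)-\kappa(\exp(Y_n))}\le\norm{\kappa(k_n^{-1})}\le C'$, where $C':=\sup_{k\in K}\norm{\kappa(k^{-1})}<+\infty$, and since $\phi\circ\kappa_\theta=\phi\circ\kappa$ on $\GG$ by Equation~\eqref{eqn:stupid equality between projections} this yields $\phi(\kappa_\theta(p_n))\le\log(C_\phi M)+\norm{\phi}\,C'=:B$ for all $n$, with $\norm{\phi}$ the norm of $\phi$ as a linear functional on $(\mathfrak{a},\norm{\cdot})$. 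Thus $\phi(\kappa_\theta(\cdot))\le B$ on the infinite subset $\{p_n\}$ of $\Gamma$. Fixing $s_0>\delta^\phi(\Gamma)$, which is possible since $\delta^\phi(\Gamma)<+\infty$, the series $Q_\Gamma^\phi(s_0)$ converges; but its subsum over the infinitely many distinct elements of $\{p_n\}$ is bounded below by an infinite sum of the positive constant $e^{-s_0 B}$, so $Q_\Gamma^\phi(s_0)=+\infty$ --- a contradiction. Hence $R$ is proper.

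The only mildly delicate point is the deduction that $\{p_n\}$ is infinite, which combines properness of $\exp$ with cocompactness of $P$ in $\mathsf{H}$; the remaining steps are routine applications of the displayed comparison, Lemma~\ref{lem:kappa multiplication estimate}, and the definition of the critical exponent, so I do not anticipate any real obstacle here.
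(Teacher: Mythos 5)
Your argument is correct and is essentially the paper's proof run in the contrapositive direction: both use that $\exp\colon\mathfrak{u}\to\mathsf{U}$ is a diffeomorphism, cocompactness of $P$ in $\mathsf{H}$ to replace $\exp(Y_n)$ by nearby lattice elements at bounded $\kappa$-distance (Lemma~\ref{lem:kappa multiplication estimate}), and finiteness of $\delta^\phi(\Gamma)$ to force $\phi\circ\kappa_\theta\to+\infty$ along infinite subsets of $\Gamma$. The only difference is that you spell out the Poincar\'e-series divergence step that the paper leaves implicit, which is fine.
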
 

\begin{proof} Suppose that $\{Y_n\}$ is an escaping sequence in $\mathfrak{u}$. Since $\exp: \mathfrak{u} \rightarrow \mathsf{U}$ is a diffeomorphism, $\{\exp(Y_n)\}$ is an escaping sequence in $\mathsf{U}$ (see Proposition~\ref{prop:properties of unipotent subgroups}). Since $P$ is a cocompact lattice in $\mathsf{H}$, there exists an escaping sequence $\{\gamma_n\}$ in $P$ such that $\{ \gamma_n^{-1}\exp(Y_n)\}$ is relatively compact in $\mathsf{H}$. So there exists $C_0 > 0$ such that 
$$
\norm{\kappa\left( \gamma_n^{-1}\exp(Y_n)\right)} \le C_0
$$
for all $n \ge 1$. Then by Lemma~\ref{lem:kappa multiplication estimate} 
$$
R(Y_n) \ge \frac{1}{C_\phi} e^{\phi(\kappa_\theta(\exp(Y_n)))} \ge \frac{1}{C_\phi} e^{\phi(\kappa_\theta(\gamma_n))} e^{-\norm{\phi}C_0},
$$
where $\norm{\phi}$ is the operator norm of the linear map $\phi:\mathfrak a\to\Rb$. Since $\delta^\phi(\Gamma) < +\infty$ and $\{\gamma_n\}$ is an escaping sequence, we must have $\phi(\kappa_\theta(\gamma_n))\rightarrow +\infty$. Hence $R(Y_n) \rightarrow +\infty$. So $R$ is proper. 
\end{proof} 

Fix a measure $\lambda$ on $\mathfrak{u}$ which is obtained by pulling back the Lebesgue measure on $\Rb^{\dim \mathfrak{u}}$ via some linear isomorphism 
(notice that any two measures obtained this way are scalar multiplies of each other). 
Then the push-forward $\mu : = \exp_*\lambda$ is a Haar measure on $\mathsf{U}$, see Proposition~\ref{prop:properties of unipotent subgroups}. By Theorem~\ref{thm:critical exponent of products of rational functions} there exists $\delta > 0$ such that
$$
Q_R(s): =\int_{\mathfrak{u}} R^{-s} d\lambda
$$
converges when $s \in (\delta, +\infty)$ and diverges when $s \in [0, \delta]$. Hence to complete the proof of Theorem~\ref{thm:entropy gap for rel Anosov} it suffices to show the following. 

\begin{lemma} There exists a continuous function $A:\Rb_{\ge 0} \to\Rb_{>0}$ such that
$$
\frac{1}{A(s)} Q_P^\phi(s) \le Q_R(s)  \le A(s) Q_P^\phi(s)
$$
for all $s\ge 0$.
\end{lemma}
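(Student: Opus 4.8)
The statement compares a sum over the peripheral group $P$ with an integral over the Lie algebra $\mathfrak u$ of its unipotent radical $\mathsf U$. The plan is to use that $P$ is a cocompact lattice in $\mathsf H = \mathsf L \ltimes \mathsf U$ to transfer the counting problem from $P$ to a Haar-measure computation on $\mathsf H$, and then to replace Haar measure on $\mathsf H$ by $\mu = \exp_*\lambda$ on $\mathsf U$ up to a compact factor. Concretely, I would fix a relatively compact fundamental domain $D \subset \mathsf H$ for the right action of $P$, so that $\mathsf H = \bigsqcup_{\gamma \in P} \gamma D$. Then, writing $m$ for a Haar measure on $\mathsf H$, one has $m(\mathsf H) = \infty$ is irrelevant; what matters is that for each $\gamma \in P$, $m(\gamma D) = m(D)$ is a fixed finite constant, and that $\norm{\kappa(h) - \kappa(\gamma)}$ is uniformly bounded for $h \in \gamma D$, by Lemma~\ref{lem:kappa multiplication estimate} together with $\norm{\kappa(\gamma^{-1}h)} \le C_0$ on the relatively compact set $D$. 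Hence $e^{-s\phi(\kappa_\theta(h))}$ is comparable, up to a multiplicative factor $e^{\pm s \norm{\phi} C_0}$, to $e^{-s\phi(\kappa_\theta(\gamma))}$ for all $h \in \gamma D$.

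This comparison gives
$$
\sum_{\gamma \in P} e^{-s\phi(\kappa_\theta(\gamma))} \asymp_s \frac{1}{m(D)} \int_{\mathsf H} e^{-s\phi(\kappa_\theta(h))}\, dm(h),
$$
with constants of the form $e^{\pm s \norm{\phi} C_0}$, which are continuous in $s$. Next I would pass from $\mathsf H$ to $\mathsf U$. Since $\mathsf H$ has finitely many components and $\mathsf H^0 = \mathsf L^0 \times \mathsf U$ with $\mathsf L$ compact, a Haar measure on $\mathsf H$ disintegrates as (Haar on $\mathsf L$) $\times$ (Haar on $\mathsf U$) up to the finite index $[\mathsf H : \mathsf H^0]$; more precisely $\mathsf H = \bigsqcup_{i} \ell_i (\mathsf L^0 \times \mathsf U)$ over coset representatives $\ell_i$. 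On the compact factor $\mathsf L$ (and the finite set of $\ell_i$), $\kappa$ is bounded, so again by Lemma~\ref{lem:kappa multiplication estimate} we may absorb the $\mathsf L$-direction into a constant of the form $e^{\pm s C_1}$ and conclude
$$
\int_{\mathsf H} e^{-s\phi(\kappa_\theta(h))}\, dm(h) \asymp_s \int_{\mathsf U} e^{-s\phi(\kappa_\theta(u))}\, d\mu(u) = \int_{\mathfrak u} e^{-s\phi(\kappa_\theta(\exp Y))}\, d\lambda(Y),
$$
using that $\mu = \exp_*\lambda$ (Proposition~\ref{prop:properties of unipotent subgroups}). Finally, the displayed inequalities $C_\phi^{-s} R^{-s}(Y) \le e^{-s\phi(\kappa_\theta(\exp Y))} \le C_\phi^s R^{-s}(Y)$ established just before the lemma convert the last integral into $Q_R(s)$ up to the factor $C_\phi^{\pm s}$, which is continuous in $s$. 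Multiplying together all the accumulated factors $e^{\pm s\norm{\phi}C_0}$, $e^{\pm s C_1}$, $C_\phi^{\pm s}$, and the constant $m(D)^{\pm 1}$, I obtain a single continuous function $A : \Rb_{\ge 0} \to \Rb_{>0}$ with $A(s)^{-1} Q_P^\phi(s) \le Q_R(s) \le A(s) Q_P^\phi(s)$ for all $s \ge 0$.

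The main obstacle — really the only nonroutine point — is setting up the fundamental domain argument cleanly: one needs a genuinely relatively compact set $D$ with $\mathsf H = \bigsqcup_{\gamma \in P} \gamma D$, which requires $P$ to be a cocompact lattice in $\mathsf H$ (true by Theorem~\ref{thm:properties of relatively Anosov representations}), and one must be careful that $P$ acts on the left here while the lattice/fundamental-domain statement is usually phrased for the quotient $P \backslash \mathsf H$; replacing $D$ by $D^{-1}$ or passing to $\mathsf H / P$ fixes any handedness mismatch, and one should note $P$ may fail to be contained in $\mathsf H^0$, which is why the finite component count enters. Everything else is bookkeeping: each auxiliary space ($D$, $\mathsf L$, the finite set of components) is compact, so $\kappa$ is bounded on it, and Lemma~\ref{lem:kappa multiplication estimate} turns boundedness of $\kappa$ into the desired multiplicative comparison with $s$-dependence that is manifestly continuous (indeed of the form $e^{as+b}$). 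I would end the proof by simply exhibiting $A(s)$ as an explicit product of such exponentials and the constant $\max(m(D), m(D)^{-1})$.
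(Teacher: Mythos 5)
Your argument is correct, but it is organized differently from the paper's. The paper never integrates over $\mathsf{H}$ at all: it performs two group-level reductions, first replacing $P$ by the finite-index subgroup $P_0 = P \cap \mathsf{H}^0$, then replacing $P_0$ by its projection $P_1 = \pi(P_0) \subset \mathsf{U}$ (using that $\ker(\pi|_{P_0})$ is finite because $\mathsf{L}^0$ is compact), and only then compares the sum over the lattice $P_1 \subset \mathsf{U}$ with $\int_{\mathsf{U}} e^{-s\phi(\kappa_\theta(g))}\,d\mu(g)$ -- and it does this last comparison not with an exact fundamental domain but with the softer two-sided trick of a compact $K_1$ whose $P_1$-translates are disjoint and a compact $K_2$ whose translates cover $\mathsf{U}$. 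You instead compare $Q_P^\phi(s)$ directly with the Haar integral over $\mathsf{H}$ via a relatively compact Borel fundamental domain for $P$ in $\mathsf{H}$, and then disintegrate that integral over the compact directions ($\mathsf{L}^0$ and the finitely many components) to land on $\int_{\mathsf{U}}$. Both routes rest on the same ingredients (Lemma~\ref{lem:kappa multiplication estimate}, compactness of $\mathsf{L}$, finitely many components, $\mu = \exp_*\lambda$, and the pointwise comparison of $e^{\phi(\kappa_\theta(\exp Y))}$ with $R(Y)$), so the $s$-dependence of $A(s)$ comes out the same way, as a product of terms $e^{as+b}$. What your version buys is directness -- no auxiliary groups $P_0$, $P_1$ and no finite-kernel bookkeeping -- at the cost of invoking the existence of a relatively compact Borel fundamental domain for a cocompact lattice (standard, but worth a citation, and note that unimodularity of $\mathsf{H}$, or at least consistent use of a left Haar measure with left cosets $\gamma D$, is needed for $m(\gamma D)=m(D)$) and the fact that Haar measure on $\mathsf{H}^0 = \mathsf{L}^0 \times \mathsf{U}$ is the product measure; the paper's version avoids all measure theory on $\mathsf{H}$ and only ever uses the covering/disjointness trick inside $\mathsf{U}$. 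If you want to remove the fundamental-domain technicality entirely, you could run your $\mathsf{H}$-level comparison with the same $K_1$/$K_2$ device the paper uses in $\mathsf{U}$; your handedness remark (left versus right cosets) is a minor point and is resolved exactly as you say.
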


\begin{proof} We prove the lemma via a series of estimates. First let 
$$
P_0 : = \mathsf{H}^0 \cap P = \left(\mathsf{L}^0 \times \mathsf{U}\right) \cap P.
$$ 
Since $\mathsf{H}$ has finitely many connected components, $P_0$ has finite index in $P$. Let $\gamma_1,\dots,\gamma_n\in P$ such that $P/P_0=\{\gamma_1 P_0,\dots,\gamma_n P_0\}$, and let 
\[D:=\norm{\phi}\max_{i=1,\dots,n}\norm{\kappa(\gamma_i)},\] 
where $\norm{\phi}$ is the operator norm of the linear map $\phi:\mathfrak a_\theta \to\Rb$. Then by Lemma~\ref{lem:kappa multiplication estimate}, 
\begin{align}\label{eqn: Q estimates 1}
\frac{e^{-Ds}}{n} Q_{P}^{\phi}(s) \le Q_{P_0}^\phi(s)  \le Q_{P}^{\phi}(s)
\end{align}
for all $s\ge 0$. 

Next, let $\pi : \mathsf{L}^0 \times \mathsf{U} \rightarrow \mathsf{U}$ denote the projection and let $P_1:=\pi(P_0)$. Since $P_0$ is discrete and $\mathsf{L}^0$ is compact, the kernel of $\pi|_{P_0}$ is finite and $P_1$ is discrete. Then by Lemma~\ref{lem:kappa multiplication estimate}, 
\begin{align}\label{eqn: Q estimates 2}
\frac{e^{-Cs}}{B} Q_{P_0}^\phi(s) \le Q_{P_1}^\phi(s)  \le Q_{P_0}^\phi(s)
\end{align}
for all $s\ge 0$, where $B:=\#\ker(\pi|_{P_0})$ and
\[C:=\norm{\phi}\max_{g\in \mathsf{L}^0}\norm{\kappa(g)}.\]

By \eqref{eqn: Q estimates 1} and \eqref{eqn: Q estimates 2}, it now suffices to show that there exists a continuous function $A:\Rb_{\ge 0} \to\Rb_{>0}$ such that
\begin{align}\label{eqn: Q estimates 3}
\frac{1}{A(s)} Q_{P_1}^\phi(s) \le Q_R(s)  \le A(s) Q_{P_1}^\phi(s)
\end{align}
for all $s\ge 0$.

Since $P$ is a cocompact lattice in $\mathsf{H}$, we see that $P_1$ is a cocompact lattice in $\mathsf{U}$. Fix compact neighborhoods $K_1, K_2 \subset \mathsf{U}$ of the identity such that the left $P_1$-translates of $K_1$ are pairwise disjoint and the left $P_1$-translates of $K_2$ cover $\mathsf{U}$. Since $K_1$ and $ K_2$ have non-empty interior,
both $\mu(K_1)$ and $ \mu(K_2)$ are positive. For both $i=1,2$, define
\[E_i:=\norm{\phi}\max_{h\in K_i}\norm{\kappa(h)}.\]
Then by Lemma~\ref{lem:kappa multiplication estimate},
\[e^{-sE_i}e^{-s\phi(\kappa_\theta(g))}\le e^{-s\phi(\kappa_\theta(\gamma))}\le e^{sE_i}e^{-s\phi(\kappa_\theta(g))}\]
for all $\gamma\in P_1$, $g\in\gamma K_i$ and $s\ge 0$. So
$$
\frac{e^{-sE_2}}{\mu(K_2)} \int_{\gamma K_2} e^{-s\phi(\kappa_\theta(g))} d\mu(g) \le e^{-s\phi(\kappa_\theta(\gamma))} \le \frac{e^{sE_1}}{\mu(K_1)}  \int_{\gamma K_1} e^{-s\phi(\kappa_\theta(g))} d\mu(g)
$$
for all $\gamma \in P_1$ and $s> 0$. Then 
\begin{align*}
Q_R(s) & = \int_{\mathfrak{u}} R^{-s}(Y) d\lambda(Y) \le C_\phi^{s}  \int_{\mathfrak{u}} e^{-s\phi(\kappa_\theta(\exp(Y)))} d\lambda(Y)= C_\phi^{s}\int_{\mathsf{U}} e^{-s\phi(\kappa_\theta(g))} d\mu(g) \\
& \le C_\phi^{s} \sum_{\gamma \in P_1} \int_{\gamma K_2} e^{-s\phi(\kappa_\theta(g))} d\mu(g)\le C_\phi^{s} \mu(K_2)e^{sE_2} Q_{P_1}^\phi(s). 
\end{align*}
Likewise, 
\begin{align*}
Q_R(s) & \ge C_\phi^{-s} \mu(K_1)e^{-sE_1} Q_{P_1}^\phi(s), 
\end{align*}
so \eqref{eqn: Q estimates 3} holds.
\end{proof}

\begin{remark}
Notice that the proof implies that $\delta^\phi(P)$ depends only on $\phi$ and $\mathsf{U}$. One may concisely describe $\mathsf{U}$ as the unipotent radical of the Zariski
closure of $P$.
\end{remark}

\section{Divergence of the Poincar\'e series at its critical exponent}\label{sec: adapting DOP}

In this section we use an argument of Dal'bo--Otal--Peign\'e~\cite{DOP} and Corollary \ref{cor:entropy gap with peripherals}  to prove that the Poincar\'e series diverges at its critical exponent. 
A similar argument was used by Blayac--Zhu~\cite{BZ} in their study of  relatively Anosov subgroups of the projective general linear group which preserve a properly convex domain and  Patterson--Sullivan measures defined using the Busemann functions associated to the Hilbert distance on the properly convex domain.

\begin{theorem}[Theorem \ref{intro: rel anosov divergent}]
\label{rel anosov divergence}
If $\Gamma\subset \GG$ is a $\Psf_\theta$-Anosov subgroup relative to $\Pc$,  $\phi\in \mathfrak{a}_\theta^*$ and $\delta^\phi(\Gamma) < +\infty$, then $Q_\Gamma^\phi$ diverges at its critical exponent. 
\end{theorem}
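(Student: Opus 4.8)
The plan is to adapt the Dal'bo--Otal--Peign\'e argument \cite{DOP} (as recently deployed in the relatively Anosov setting by Blayac--Zhu \cite{BZ}) using the entropy gap for peripheral subgroups, Corollary~\ref{cor:entropy gap with peripherals}. Write $\delta := \delta^\phi(\Gamma)$. The divergence will be obtained by comparing $Q_\Gamma^\phi$ with a ``pieced-together'' Poincar\'e series built out of geodesics in a fixed Groves--Manning cusp space $X$ for $(\Gamma,\Pc)$, whose excursions into the combinatorial horoballs are controlled by the peripheral subgroups. The key point is that, because each peripheral Poincar\'e series is \emph{divergent} at $\delta^\phi(P) < \delta$, the contribution of deep horoball excursions is, in an appropriate averaged sense, not too large, which is exactly what fails for convergence-type peripherals and exactly what makes the global series diverge at $\delta$.

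First I would set up the combinatorial/geometric bookkeeping. Fix an adapted finite generating set $S$ and the associated cusp space $X = \Cc_{GM}(\Gamma,\Pc,S)$, with basepoint $\id$. For each $\gamma \in \Gamma$ choose a geodesic from $\id$ to $\gamma$ in $X$ and record its maximal subsegments lying in horoballs; each such subsegment corresponds to a coset $\eta P$ and, after translating by $\eta^{-1}$, to an element of $P$. Using Proposition~\ref{prop:multiplicative estimate v2}, along such a geodesic passing through group elements $\id = f(0), f(t_1), \dots, f(t_k) = \gamma$ one has
$$
\Big\| \kappa_\theta(\gamma) - \sum_{i=1}^{k} \kappa_\theta\big(f(t_{i-1})^{-1} f(t_i)\big) \Big\| \le (k-1)\,C,
$$
so that $\phi(\kappa_\theta(\gamma))$ is, up to an additive error linear in the number of pieces, the sum of the $\phi$-displacements of the pieces. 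The number of pieces is itself controlled (up to a multiplicative constant) by $\d_X(\id,\gamma)$, which by Theorem~\ref{thm:properties of relatively Anosov representations}(1) is comparable to $\d_M(\gamma\Ksf,\Ksf) = \|\kappa(\gamma^{-1}\cdot\id)\|$, hence to $\phi(\kappa_\theta(\gamma))$ when $\delta^\phi(\Gamma)<+\infty$ (here one should also invoke Theorem~\ref{thm:characterizing finite entropy functionals} to control $\phi\circ\kappa_\theta$ below by $\d_M$, after the reduction to $\Psf_\theta$ containing no simple factors as in Section~\ref{sec: relating to integral}).

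Next I would run the DOP dichotomy. Suppose for contradiction that $Q_\Gamma^\phi(\delta) < +\infty$. Decompose the sum over $\Gamma$ according to the ``syllable structure'' of the chosen geodesics: group $\gamma$'s according to the sequence of cosets $\eta_1 P^{(1)}, \dots, \eta_k P^{(k)}$ visited, and within each syllable sum over the corresponding peripheral element. Because $Q_{P}^\phi$ diverges at $\delta^\phi(P)$ and $\delta^\phi(P) < \delta$ (Corollary~\ref{cor:entropy gap with peripherals}), one gets, for each peripheral $P$ and each $\varepsilon>0$, a bound of the form $\sum_{p\in P} e^{-(\delta-\varepsilon)\phi(\kappa_\theta(p))} < +\infty$ with uniform control; feeding this into the syllable decomposition and summing the geometric-type series over the number $k$ of syllables, together with the additive error $(k-1)C$ absorbed into a factor $e^{(\delta-\varepsilon)(k-1)C}$ that is beaten by the decay from the peripheral tails once $\varepsilon$ is small, shows that $Q_\Gamma^\phi(s)$ would in fact still converge for some $s < \delta$ — contradicting the definition of the critical exponent. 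Hence $Q_\Gamma^\phi(\delta) = +\infty$.

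The main obstacle I expect is the combinatorial heart of the DOP argument: organizing the sum over $\Gamma$ by excursion patterns so that (i) every $\gamma$ is counted a bounded number of times, (ii) the accumulated additive errors from Proposition~\ref{prop:multiplicative estimate v2} across $k$ syllables stay genuinely subordinate to the exponential gain $e^{-(\delta - \delta^\phi(P))(\cdots)}$ coming from each peripheral factor, and (iii) the ``transition'' parts of the geodesics between consecutive horoball excursions (which live in the thick part and are a priori not peripheral) are handled — these are the pieces where one uses quasiconvexity of the geodesics and the comparison $\d_X \asymp \d_M \asymp \phi\circ\kappa_\theta$ to see that long transition segments contribute a summable amount at exponent $\delta$. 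Making (ii) quantitative — i.e., choosing $\varepsilon$ uniformly over all finitely many peripheral conjugacy classes so that $\delta - \varepsilon$ still dominates $\sup_P \delta^\phi(P)$ and the series over syllable count converges — is where the argument must be executed with care, but it is a direct transcription of \cite[\S\ 8]{BZ} and \cite{DOP} once the estimates of Sections~\ref{sec: relating to integral} and of Proposition~\ref{prop:multiplicative estimate v2} are in hand.
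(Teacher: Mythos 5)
Your overall framing (peripheral entropy gap, Corollary~\ref{cor:entropy gap with peripherals}, plus a ``DOP-type'' argument) points in the right direction, but the mechanism you propose is not the Dal'bo--Otal--Peign\'e argument, and its pivotal step has a genuine gap. You assume $Q_\Gamma^\phi(\delta)<+\infty$ and claim that a syllable decomposition of geodesics in the Groves--Manning space, fed with convergence of the peripheral series at exponent $\delta-\varepsilon$, yields convergence of $Q_\Gamma^\phi(s)$ for some $s<\delta$. This cannot be extracted from the estimates you list: the syllable ``alphabet'' necessarily contains the non-peripheral transition pieces, whose Poincar\'e series is not controlled at any exponent below $\delta$ (its critical exponent may well equal $\delta$), so the per-syllable factor in your geometric-type series is not less than $1$ for $s<\delta$. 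More fundamentally, your decomposition produces an \emph{upper} bound for $Q_\Gamma^\phi(s)$, and $Q_\Gamma^\phi(s)=+\infty$ for every $s<\delta$ by the very definition of the critical exponent, so any such upper bound must itself diverge; note also that your derivation never genuinely invokes the hypothesis $Q_\Gamma^\phi(\delta)<+\infty$, so if it worked it would prove the outright false statement that a Poincar\'e series converges strictly below its critical exponent. Some device is needed to convert the assumed convergence at $\delta$ into usable information, and pure rearrangement of the series does not provide one.

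The missing ingredient is the Patterson--Sullivan measure together with the Hopf--Tsuji--Sullivan dichotomy. The paper's proof (Lemma~\ref{lem: parabolic point}) constructs a $\phi$-Patterson--Sullivan measure $\mu$ of dimension $\delta$ by Patterson's construction with an auxiliary function $h$, and shows $\mu$ has no atoms: conical points are handled by \cite{CZZ3}, while for a bounded parabolic point $F_0$ one uses exactly the ingredients you describe --- Propositions~\ref{prop:multiplicative estimate v1} and~\ref{prop:multiplicative estimate v2}, the set $\Gamma'$ of elements lying on geodesics from $F_0$ through $\id$ with $P\cdot\Gamma'=\Gamma$, and the convergent tail $Q_P^\phi(\delta^\phi(P)+\epsilon)<+\infty$ supplied by the gap $\delta^\phi(P)<\delta$ --- to show that the approximating measures $\mu_s$ give arbitrarily small mass to suitable neighborhoods of $F_0$ uniformly as $s\searrow\delta$. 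Then, if $Q_\Gamma^\phi(\delta)<+\infty$, Theorem~\ref{thm:dichotomy} forces $\mu(\Lambda_\theta^{\rm con}(\Gamma))=0$; since $\Lambda_\theta(\Gamma)-\Lambda_\theta^{\rm con}(\Gamma)$ is countable and $\mu$ is atomless, $\mu(\Lambda_\theta(\Gamma))=0$, contradicting that $\mu$ is a probability measure. To repair your proposal, the geodesic decomposition and multiplicative estimates should be redeployed inside this no-atom lemma (where they actually belong), rather than in a direct counting contradiction with the definition of the critical exponent.
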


The key result needed to prove Theorem \ref{rel anosov divergence} is the following lemma. 

\begin{lemma}\label{lem: parabolic point}
If $\Gamma\subset \GG$ is a $\Psf_\theta$-Anosov subgroup relative to $\Pc$,  $\phi\in \mathfrak{a}_\theta^*$ and $\delta^\phi(\Gamma) < +\infty$, then
there exists a $\phi$-Patterson--Sullivan measure $\mu$ for $\Gamma$ of dimension $\delta^\phi(\Gamma)$ such that $\mu$ has no atoms.  
\end{lemma}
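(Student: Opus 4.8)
The plan is to produce $\mu$ by Patterson's classical construction and then to prove that it charges no point of $\Lambda_\theta(\Gamma)$. Write $\delta:=\delta^\phi(\Gamma)$. Since we do not yet know that $Q_\Gamma^\phi$ diverges at $\delta$ --- that is the content of Theorem~\ref{rel anosov divergence}, for which this lemma is the main input --- I would first fix a slowly increasing weight $h\colon\Rb_{\ge 0}\to\Rb_{>0}$ in the sense of Patterson, chosen so that $\widetilde Q_\Gamma(s):=\sum_{\gamma\in\Gamma}h\big(\phi(\kappa_\theta(\gamma))\big)e^{-s\phi(\kappa_\theta(\gamma))}$ still has critical exponent $\delta$, diverges at $s=\delta$, and satisfies $h(a+b)\le C\,h(a)h(b)$. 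For $s>\delta$ set $\mu_s:=\widetilde Q_\Gamma(s)^{-1}\sum_\gamma h\big(\phi(\kappa_\theta(\gamma))\big)e^{-s\phi(\kappa_\theta(\gamma))}\delta_{U_\theta(\gamma)}$, the sum taken over the cofinitely many $\gamma$ for which $U_\theta(\gamma)$ is defined (enough, since $\Gamma$ is $\Psf_\theta$-divergent), and let $\mu$ be a weak-$*$ subsequential limit as $s\searrow\delta$. The standard verification --- using continuity of $B_\theta$, the asymptotic comparison of $\kappa_\theta$ and $B_\theta$ along escaping sequences, Lemma~\ref{lem:kappa multiplication estimate}, and the slow variation of $h$ --- shows that $\mu$ is a $\phi$-Patterson--Sullivan measure for $\Gamma$ of dimension $\delta$, and since $\widetilde Q_\Gamma(\delta)=+\infty$ the mass of $\mu_s$ escapes every finite subset of $\Gamma$, so $\operatorname{supp}(\mu)\subset\Lambda_\theta(\Gamma)$. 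Note also that $\delta^\phi(\Gamma)<+\infty$ makes $\phi\circ\kappa_\theta$ proper on $\Gamma$ by Theorem~\ref{thm:characterizing finite entropy functionals}; this will be used repeatedly.

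To show $\mu$ is atomless I would fix a Riemannian distance on $\Fc_\theta$ and a Groves--Manning cusp space $X$ for $(\Gamma,\Pc)$ with basepoint $o$, and split into the two cases afforded by relative hyperbolicity: a point of $\Lambda_\theta(\Gamma)=\partial(\Gamma,\Pc)$ is conical or bounded parabolic. Suppose first $F=\xi(\zeta)$ with $\zeta$ conical. Then I can find $\gamma_n\in\Gamma$ with $\gamma_n o$ lying within bounded distance of the geodesic ray $[o,\zeta)$ in the Cayley-graph part of $X$ and with $d_X(o,\gamma_n o)\to\infty$ (the ray to a conical point meets $\Cc(\Gamma,S)$ arbitrarily far out); then $U_\theta(\gamma_n)\to F$ by Proposition~\ref{prop:compactifications are the same}. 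Put $r_n:=e^{-d_X(o,\gamma_n o)}$. Any $\eta$ with $U_\theta(\eta)$ within $r_n$ of $F$ has a geodesic $[o,\eta o]$ passing boundedly close to $\gamma_n o$, so Proposition~\ref{prop:multiplicative estimate v2} gives $\phi(\kappa_\theta(\eta))\ge\phi(\kappa_\theta(\gamma_n))+\phi(\kappa_\theta(\gamma_n^{-1}\eta))-O(1)$; summing over such $\eta$, reindexing by $\gamma_n^{-1}\eta$, and using $h(a+b)\le C\,h(a)h(b)$ yields $\mu_s(B(F,r_n))\lesssim h(\phi(\kappa_\theta(\gamma_n)))\,e^{-s\phi(\kappa_\theta(\gamma_n))}$ for $s$ close to $\delta$. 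Hence $\mu(\{F\})\le\mu\big(B(F,r_n)\big)\lesssim h(\phi(\kappa_\theta(\gamma_n)))\,e^{-\delta\phi(\kappa_\theta(\gamma_n))}\to 0$ because $\phi(\kappa_\theta(\gamma_n))\to+\infty$ and $h$ is subexponential.

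Now let $F$ be a bounded parabolic point, with $P':=\operatorname{Stab}_\Gamma(F)$ a conjugate of some $P\in\Pc$. The geometric heart of the matter is that if $U_\theta(\gamma)$ lies within a small $r$ of $F$, then $[o,\gamma o]$ fellow-travels the ray to $F$ and therefore enters the combinatorial horoball at $F$ to depth $\asymp\log(1/r)$; analysing this excursion --- and using that $F$ is a \emph{bounded} parabolic point to control the $P'$-action near $F$ --- lets one write $\gamma=p\eta$, with bounded multiplicity, where $p\in P'$ is ``deep'' (word length $\gtrsim 1/r$) and $\eta\in\Gamma$ is arbitrary, and then Proposition~\ref{prop:multiplicative estimate v2} gives $\phi(\kappa_\theta(\gamma))\ge\phi(\kappa_\theta(p))+\phi(\kappa_\theta(\eta))-O(1)$. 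Summing, this produces
$$
\mu_s\big(B(F,r)\big)\ \lesssim\ \Big(\sum_{\substack{p\in P'\\ |p|_{P'}\gtrsim 1/r}} h(\phi(\kappa_\theta(p)))\,e^{-s\phi(\kappa_\theta(p))}\Big)\cdot\frac{1+\widetilde Q_\Gamma(s)}{\widetilde Q_\Gamma(s)}\ \lesssim\ \varepsilon(r)
$$
for $s$ near $\delta$, where $\varepsilon(r)$ denotes the tail of the series $\sum_{p\in P'}h(\phi(\kappa_\theta(p)))\,e^{-\delta\phi(\kappa_\theta(p))}$. But this series \emph{converges}: its critical exponent equals $\delta^\phi(P')=\delta^\phi(P)$, which is strictly less than $\delta$ by Corollary~\ref{cor:entropy gap with peripherals}, and the slowly varying weight $h$ does not change the critical exponent. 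Therefore $\varepsilon(r)\to 0$ as $r\to 0$, so $\mu(\{F\})\le\inf_{r>0}\mu\big(B(F,r)\big)=0$. Combined with the conical case, $\mu$ has no atoms, as required.

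The step I expect to be the main obstacle is the bounded-parabolic one: converting ``$U_\theta(\gamma)$ is near $F$'' into the factorisation $\gamma=p\eta$ with $p$ deep in $P'$, and bounding the multiplicity of that factorisation, demands a careful study of the combinatorial-horoball geometry of $X$ and of the $P'$-action on a neighbourhood of $F$, interlocked with the identification of $U_\theta$ with $\partial_\infty X$ from Proposition~\ref{prop:compactifications are the same} and the additivity-up-to-a-constant of $\kappa_\theta$ along geodesics from Proposition~\ref{prop:multiplicative estimate v2}. This is precisely where the entropy gap $\delta^\phi(P)<\delta^\phi(\Gamma)$ enters, and the argument follows the cusp estimates of Dal'bo--Otal--Peign\'e~\cite{DOP} and of Blayac--Zhu~\cite{BZ}.
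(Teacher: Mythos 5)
Your overall strategy coincides with the paper's: build $\mu$ by Patterson's trick and rule out atoms at conical and at bounded parabolic points, with the parabolic case driven by the entropy gap $\delta^\phi(P)<\delta^\phi(\Gamma)$ of Corollary~\ref{cor:entropy gap with peripherals}, in the spirit of \cite{DOP} and \cite{BZ}. But the step you yourself flag as the main obstacle is a genuine gap, and as written it would not go through: both your conical and your parabolic estimates are phrased for Riemannian balls $B(F,r)\subset\Fc_\theta$ together with quantitative links between $r$ and the Groves--Manning geometry (the radius $r_n=e^{-\d_X(o,\gamma_n o)}$, horoball penetration of depth $\asymp\log(1/r)$, peripheral word length $\gtrsim 1/r$). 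In this generality the only available information is that the limit map $\xi:\partial(\Gamma,\Pc)\to\Lambda_\theta(\Gamma)$ is a homeomorphism; there is no H\"older or shadow-lemma comparison between the metric on $\Fc_\theta$ and the visual geometry of $X$, so none of these quantitative assertions is justified, and in particular the containment ``$U_\theta(\eta)\in B(F,r_n)$ forces $[o,\eta o]$ to pass boundedly close to $\gamma_n o$'' is unsupported. (Qualitative versions --- for each fixed $\gamma_n$, resp.\ each fixed depth, \emph{some} neighborhood of $F$ works --- do follow from Proposition~\ref{prop:compactifications are the same} plus compactness and would suffice; also the ``bounded multiplicity'' you worry about is unnecessary, since overcounting factorizations only helps an upper bound, and the conical case can simply be quoted from \cite[Prop.\ 8.1]{CZZ3}, which is what the paper does.)

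The paper's proof organizes the parabolic case so that no metric comparison is ever needed. It takes Dirac masses at the group elements themselves in the compactification $\Gamma\cup\Lambda_\theta(\Gamma)$, fixes the parabolic point $F_0$ with stabilizer $P$, enumerates $P=\{g_1,g_2,\dots\}$, and defines $\Gamma'$ to be the set of $\gamma\in\Gamma$ admitting a geodesic ray from $\gamma$ through $\id$ to $\xi^{-1}(F_0)$; the neighborhoods of $F_0$ are the group-theoretic sets $V_m=(\Gamma\cup\Lambda_\theta(\Gamma))-\{g_1,\dots,g_m\}\cdot\overline{\Gamma'}$ rather than metric balls. Three facts replace your quantitative claims: $F_0\notin\overline{\Gamma'}$ (Lemma~\ref{lem: Vn construction}), so each $V_m$ is open and contains $F_0$; $P\cdot\Gamma'=\Gamma$, obtained by factoring $\gamma$ at the point where a ray from $\gamma$ to $\xi^{-1}(F_0)$ enters the combinatorial horoball of $P$, so the group elements in $V_m$ lie among $\{g_k\gamma':k>m,\ \gamma'\in\Gamma'\}$; and the two-sided near-additivity $\abs{\phi(\kappa_\theta(g\gamma'))-\phi(\kappa_\theta(g))-\phi(\kappa_\theta(\gamma'))}\le C_1$ for $g\in P$, $\gamma'\in\Gamma'$, which is deduced from Proposition~\ref{prop:multiplicative estimate v1} by a compactness/contradiction argument using precisely that $\overline{\Gamma'}$ avoids $F_0$ --- not from Proposition~\ref{prop:multiplicative estimate v2}, whose hypothesis (intermediate group elements on a geodesic from $\id$) your factorization point need not satisfy. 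Summing, and handling the Patterson weight $h$ as in Lemma~\ref{lem:multiplicative with gamma prime}(3), bounds $\mu_s(V_m)$ by a multiple of the tail $\sum_{k>m}e^{-(\delta^\phi(P)+\epsilon)\phi(\kappa_\theta(g_k))}$, which tends to $0$ by Corollary~\ref{cor:entropy gap with peripherals}. Replacing your ball-and-depth bookkeeping by this $V_m$/$\Gamma'$ device closes the gap and turns your sketch into the paper's proof.
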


Assuming Lemma~\ref{lem: parabolic point}, we prove Theorem \ref{rel anosov divergence}.

\begin{proof}[Proof of Theorem \ref{rel anosov divergence} assuming Lemma~\ref{lem: parabolic point}]
By Lemma~\ref{lem: parabolic point} there exists an atom-less $\phi$-Patterson--Sullivan measure $\mu$ of dimension $\delta^\phi(\Gamma)$. Suppose for a contradiction that $Q_\Gamma^\phi(\delta^\phi(\Gamma)) < +\infty$. Then Theorem~\ref{thm:dichotomy} implies that  $\mu(\Lambda_{\theta}^{\rm con}(\Gamma))=0$. Since $\Lambda_\theta(\Gamma) - \Lambda_\theta^{\rm con}(\Gamma)$ is countable and $\mu$ has no atoms, $\mu(\Lambda_\theta(\Gamma)) = 0$. Since $\mu$ is a probability measure on $\Lambda_\theta(\Gamma)$ this is impossible. 
\end{proof}

We now give the proof of Lemma \ref{lem: parabolic point}.

\begin{proof}[Proof of Lemma \ref{lem: parabolic point}]Let $\delta:=\delta^\phi(\Gamma)$. Endow $\Gamma \cup \Lambda_\theta(\Gamma)$ with the topology from Lemma~\ref{lem: limit set compactifies} and for $x \in \Gamma \cup \Lambda_\theta(\Gamma)$ let $\mathcal D_{x}$ denote the Dirac measure supported on $x$. By~\cite[Lem.\ 3.1]{patterson} there exists a continuous non-decreasing function $h:\Rb^+\to\Rb^+$ such that:
\begin{enumerate}
\item The series
\[
\hat{Q}(s):=\sum_{\gamma\in \Gamma}h\left(e^{\phi(\kappa_\theta(\gamma))}\right)e^{-s\phi(\kappa_\theta(\gamma))}
\]
converges for $s > \delta$ and diverges for $s \le \delta$.
\item For any $\epsilon > 0$ there exists $\nu_0> 0$ such that: if $s > 1$ and $\nu > \nu_0$, then $h(\nu s) \le s^{\epsilon} h(\nu)$. 
\end{enumerate} 
(In the case when $Q_\Gamma^\phi$ diverges at its critical exponent, we can choose $h \equiv 1$.) 

For $s > \delta$ consider the probability measure 
$$
\mu_s:=\frac{1}{\hat{Q}(s)}\sum_{\gamma\in\Gamma} h\left(e^{\phi(\kappa_\theta(\gamma))}\right)e^{-s\phi(\kappa_\theta(\gamma))}\mathcal D_{\gamma}
$$
on $\Gamma \cup \Lambda_\theta(\Gamma)$. By compactness,  there exists $\{s_n\}\subset (\delta,\infty)$ so that $\lim_{n \rightarrow \infty} s_n=\delta$ and
\[
\mu:=\lim_{n \rightarrow \infty} \mu_{s_n}
\]
exists in the weak-$*$. topology. In the proof of~\cite[Prop.\ 3.1]{CZZ3}, we showed that $\mu$ is a $\phi$-Patterson--Sullivan measure for $\Gamma$ of dimension $\delta$.

We will now prove that $\mu$ has no atoms. By~\cite[Prop.\ 8.1]{CZZ3}, if $F \in  \Lambda_\theta^{\rm con}(\Gamma)$, then $\mu(\{F\})=0$. So it suffices to fix a parabolic point $F_0 \in \Lambda_\theta(\Gamma)$ and show that $\mu(\{F_0\})=0$.  By the definition of the weak-$*$ topology, it suffices to find a sequence of open sets $\{V_m\}$ in $\Gamma\cup\Lambda_\theta(\Gamma)$ each of which contains $F_0$, and where
\begin{align}\label{eqn: no parabolic atoms}
\liminf_{m \rightarrow \infty} \, \limsup_{s \searrow \delta}\, \mu_s(V_m) =0.
\end{align}

Let $\xi:\partial(\Gamma,\Pc)\to\Fc_\theta$ be the limit map of $\Gamma$. Let $P$ be the stabilizer in $\Gamma$ of $F_0$ (it is a conjugate of a subgroup in $\Pc$), fix an enumeration $P=\{g_1,g_2,\dots\}$, and let $Q_m:=\{g_1,\dots,g_m\}\subset P$ for each positive integer $m$. 

Fix a Groves--Manning cusp space $X$ for $(\Gamma,\Pc)$, and let $\Gamma^\prime$ be the set of elements $\gamma \in \Gamma$ for which there is a geodesic ray $\sigma : [0,\infty) \rightarrow X$ such that $\sigma(0)=\gamma$, $\sigma(T) = \id$ for some $T \ge 0$ and $\lim_{t \rightarrow \infty} \sigma(t) = \xi^{-1}(F_0)\in\partial_\infty X=\partial(\Gamma,\Pc)$. The next lemma implies that for all positive integers $m$,
\[V_m:=(\Gamma\cup\Lambda_\theta(\Gamma))- Q_m \cdot \overline{\Gamma^\prime}\]
is an open neighborhood of $F_0$, where $\overline{\Gamma'}$ denotes the closure of $\Gamma'$ in $\Gamma\cup\Lambda_\theta(\Gamma)$.

\begin{lemma} \label{lem: Vn construction}
The closed subset $\overline{\Gamma'}\subset \Gamma\cup\Lambda_\theta(\Gamma)$ does not contain $F_0$. In particular, for all positive integers $m$, $V_m\subset\Gamma\cup\Lambda_\theta(\Gamma)$ is an open neighborhood of $F_0$.
\end{lemma}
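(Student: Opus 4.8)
The plan is to prove that $F_0 \notin \overline{\Gamma'}$; the ``in particular'' assertion will then follow formally. Write $p := \xi^{-1}(F_0) \in \partial_\infty X$ and take the basepoint $b_0 := \id$, the identity vertex of $X$. The key observation is that membership in $\Gamma'$ has a clean metric consequence: if $\gamma \in \Gamma'$, then by definition there is a geodesic ray $\sigma:[0,\infty) \to X$ with $\sigma(0) = \gamma$, $\sigma(T) = \id$ for some $T \ge 0$, and $\sigma(t) \to p$ as $t \to \infty$. Hence $\sigma$ restricts to a geodesic from $\gamma$ to $\id$ on $[0,T]$ and to a geodesic ray from $\id$ to $p$ on $[T,\infty)$, and these fit together into the single geodesic $\sigma$. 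Computing Gromov products along the approximating sequence $\sigma(T+k) \to p$, one finds $(\gamma \mid \sigma(T+k))_{\id} = \tfrac12\big(d_X(\id,\gamma) + d_X(\id, \sigma(T+k)) - d_X(\gamma, \sigma(T+k))\big) = \tfrac12\big(T + k - (T+k)\big) = 0$ for every $k$, so by the standard comparison in a hyperbolic space between the Gromov product to the ideal point $p$ and the products along a sequence converging to $p$, one gets $(\gamma \mid p)_{\id} \le C_0$, where $C_0$ depends only on the hyperbolicity constant of $X$ and, crucially, is independent of $\gamma \in \Gamma'$.

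Next I argue by contradiction. Since $\Gamma \cup \Lambda_\theta(\Gamma)$ is metrizable (Lemma~\ref{lem: limit set compactifies}), if $F_0 \in \overline{\Gamma'}$ there is a sequence $\{\gamma_n\}$ in $\Gamma'$ with $\gamma_n \to F_0$; as $F_0 \notin \Gamma$, the elements $\gamma_n$ are eventually pairwise distinct. By Lemma~\ref{lem: limit set compactifies}(4), $\min_{\alpha \in \theta} \alpha(\kappa_\theta(\gamma_n)) \to \infty$ and $U_\theta(\gamma_n) \to F_0$; in particular, by proper discontinuity of the $\Gamma$-action on $X$, the sequence $\{\gamma_n \cdot b_0\}$ leaves every compact subset of $X$. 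Passing to a subsequence and using compactness of $X \cup \partial_\infty X$, we get $\gamma_n \cdot b_0 \to x$ for some $x \in \partial_\infty X$, and then Proposition~\ref{prop:compactifications are the same} gives $U_\theta(\gamma_n) \to \xi(x)$. Since also $U_\theta(\gamma_n) \to F_0$ and $\xi$ is injective, $x = \xi^{-1}(F_0) = p$. Thus $\gamma_n \cdot b_0 \to p$ in $X \cup \partial_\infty X$ (note that $\gamma_n \cdot b_0 = \gamma_n$ as a vertex, since $b_0 = \id$), which by the standard description of the bordification topology means $(\gamma_n \mid p)_{\id} \to \infty$. This contradicts the uniform bound $(\gamma_n \mid p)_{\id} \le C_0$ from the previous paragraph, so $F_0 \notin \overline{\Gamma'}$.

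For the ``in particular'' statement: $\overline{\Gamma'}$ is closed, $Q_m$ is finite, and $\Gamma$ acts on $\Gamma \cup \Lambda_\theta(\Gamma)$ by homeomorphisms (Lemma~\ref{lem: limit set compactifies}(5)), so $Q_m \cdot \overline{\Gamma'} = \bigcup_{i=1}^m g_i \overline{\Gamma'}$ is closed and hence $V_m$ is open. Each $g_i \in Q_m$ lies in $P = {\rm Stab}_\Gamma(F_0)$ and so fixes $F_0$; thus $F_0 \in g_i \overline{\Gamma'}$ would give $F_0 = g_i^{-1} F_0 \in \overline{\Gamma'}$, contradicting the first part. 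Therefore $F_0 \in V_m$.

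I expect the main obstacle to be the first paragraph, namely making rigorous the passage from ``$\id$ lies on a geodesic ray from $\gamma$ to the ideal point $p$'' to the uniform Gromov product bound $(\gamma \mid p)_{\id} \le C_0$: one must handle carefully the definition of the Gromov product to an ideal point (as a supremum/liminf over approximating sequences, well-defined only up to a bounded additive error in a hyperbolic space) and verify that $\{\sigma(T+k)\}_{k}$ is an admissible approximating sequence for $p$. The remaining steps are routine bookkeeping with the two compactifications and the limit-set/convergence-group dictionary already developed in the excerpt.
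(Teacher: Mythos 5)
Your proof is correct, and its first half is exactly the paper's reduction: take a sequence $\{\gamma_n\}$ in $\Gamma'$ converging to $F_0$, use Lemma~\ref{lem: limit set compactifies} to get $U_\theta(\gamma_n)\to F_0$, and use Proposition~\ref{prop:compactifications are the same} together with compactness of $X\cup\partial_\infty X$ (and properness of the action) to conclude that, along a subsequence, $\gamma_n\to \xi^{-1}(F_0)$ in the bordification. Where you genuinely differ is the finishing step. The paper takes, for each $n$, the geodesic ray $\sigma_n$ furnished by the definition of $\Gamma'$, reparametrizes so that the time when it passes through $\id$ becomes time $0$, and extracts by Arzel\`a--Ascoli a limiting bi-infinite geodesic both of whose endpoints equal $\xi^{-1}(F_0)$, which is impossible in a Gromov hyperbolic space. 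You instead make the same geometric fact quantitative: since $\id$ lies on a geodesic ray from $\gamma$ to $p=\xi^{-1}(F_0)$, your computation $(\gamma\mid\sigma(T+k))_{\id}=0$ plus the standard $2\delta$-comparison between approximating sequences for an ideal point gives a bound $(\gamma\mid p)_{\id}\le C_0$ uniform over $\gamma\in\Gamma'$, while $\gamma_n\to p$ in the bordification forces $(\gamma_n\mid p)_{\id}\to\infty$. Both finishes are standard; yours avoids the limiting-geodesic argument at the cost of the (routine but convention-sensitive) bookkeeping with Gromov products at ideal points, which you correctly flag and which does go through in a proper geodesic hyperbolic space such as the Groves--Manning cusp space. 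Your treatment of the ``in particular'' statement --- $Q_m\cdot\overline{\Gamma'}$ is a finite union of closed sets since $\Gamma$ acts by homeomorphisms, and $F_0\in g_i\overline{\Gamma'}$ would force $F_0=g_i^{-1}F_0\in\overline{\Gamma'}$ because $g_i\in P$ fixes $F_0$ --- is also correct and slightly more explicit than the paper's one-line remark.
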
 

\begin{proof} 
Suppose for a contradiction that there exists a sequence $\{\gamma_n\}$ in $\Gamma^\prime$ converging to $F_0$. Then  Lemma~\ref{lem: limit set compactifies} implies that $U_\theta(\gamma_n) \rightarrow F_0$. Hence, if we view $\{\gamma_n\}$ as a sequence in $X$, then by Proposition~\ref{prop:compactifications are the same} and taking a subsequence if necessary, we must have $\gamma_n=\gamma_n(\id) \rightarrow \xi^{-1}(F_0)$. 

For each $n$ fix a geodesic ray $\sigma_n : [0,\infty) \rightarrow X$ such that $\sigma_n(0)=\gamma_n$, $\sigma_n(T_n) = \id$ for some $T_n \ge 0$ and $\lim_{t \rightarrow \infty} \sigma_n(t) = \xi^{-1}(F_0)$. Passing to a subsequence we can suppose that the geodesics $\{\sigma_n(\cdot+T_n)\}$ converges locally uniformly to a geodesic $\sigma : \Rb \rightarrow X$. Then, since $\gamma_n \rightarrow \xi^{-1}(F_0)$, we have 
$$
\lim_{t \rightarrow - \infty} \sigma(t) =\lim_{n \rightarrow \infty} \sigma_n(0) =\lim_{n \rightarrow \infty} \gamma_n=  \xi^{-1}(F_0)=\lim_{t \rightarrow + \infty} \sigma(t),
$$
which is impossible. 

Since $Q_m$ is finite for all $m$, the second claim follows immediately from the first.
\end{proof} 

Since $P$ is conjugate to a subgroup in $\Pc$, by Corollary \ref{cor:entropy gap with peripherals}, 
$$\epsilon:=\frac{\delta^\phi(\Gamma)-\delta^\phi(P)}{2}>0.$$
To prove that Equation \eqref{eqn: no parabolic atoms} holds, we prove the following lemma. This will be used to give an upper bound on $\mu_s(V_m)$ involving the tail of the Poincar\'e series $Q_P^\phi(\delta^\phi(P)+\epsilon)$.

\begin{lemma}\label{lem:multiplicative with gamma prime} \ \begin{enumerate}
\item $P \cdot \Gamma^\prime = \Gamma$. 
\item There exists $C_1 > 0$ such that 
$$
\left| \phi(\kappa_\theta(g\gamma))-\phi(\kappa_\theta(g))-\phi(\kappa_\theta(\gamma)) \right| \le C_1
$$
for all $\gamma \in \Gamma^\prime$ and $g \in P$.
\item There exists $C_2 > 0$ such that 
$$
 h\left(e^{\phi(\kappa_\theta(g_k))+\phi(\kappa_\theta(\gamma))+C_1}\right) \le C_2 e^{\epsilon\phi(\kappa_\theta(g_k))}h\left(e^{\phi(\kappa_\theta(\gamma))}\right)
$$
for all $\gamma \in \Gamma^\prime$ and $k \ge 1$. 
\end{enumerate}

\end{lemma}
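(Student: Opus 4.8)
The plan is to prove the three parts in turn. Parts (2) and (3) are the multiplicative and analytic estimates that will later let us dominate $\mu_s(V_m)$ by a constant multiple of the tail beyond $Q_m$ of the convergent series $\sum_{g\in P}e^{-(s-\epsilon)\phi(\kappa_\theta(g))}$; part (1) is purely a statement about the coarse geometry of the Groves--Manning cusp space $X$, and is where the real work lies.

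For part (2) I would argue by contradiction: if no such $C_1$ exists, pick $g_n\in P$ and $\gamma_n\in\Gamma'$ with $\abs{\phi(\kappa_\theta(g_n\gamma_n))-\phi(\kappa_\theta(g_n))-\phi(\kappa_\theta(\gamma_n))}\to\infty$. Rewriting this quantity as $\phi(\kappa(g_n\gamma_n)-\kappa(g_n)-\kappa(\gamma_n))$ (using $\phi\circ\kappa_\theta=\phi\circ\kappa$) and grouping the three terms in the two possible ways, Lemma~\ref{lem:kappa multiplication estimate} forces both $\{g_n\}$ and $\{\gamma_n\}$ to be escaping in $\Gamma$. Since $P$ is a peripheral subgroup, $\Lambda_\theta(P)=\{F_0\}$, so $U_\theta(g_n^{-1})\to F_0$. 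Passing to a subsequence, Lemma~\ref{lem: limit set compactifies} gives $\gamma_n\to F$ in $\Gamma\cup\Lambda_\theta(\Gamma)$ for some $F\in\overline{\Gamma'}\cap\Lambda_\theta(\Gamma)$; in particular $U_\theta(\gamma_n)\to F$, while Lemma~\ref{lem: Vn construction} forces $F\ne F_0$. Hence $\d_{\Fc_\theta}(U_\theta(g_n^{-1}),U_\theta(\gamma_n))\to\d_{\Fc_\theta}(F_0,F)>0$, and Proposition~\ref{prop:multiplicative estimate v1}, applied with $\epsilon_0:=\tfrac{1}{2}\d_{\Fc_\theta}(F_0,F)$, bounds $\norm{\kappa_\theta(g_n\gamma_n)-\kappa_\theta(g_n)-\kappa_\theta(\gamma_n)}$, hence the quantity in question, uniformly for large $n$ --- a contradiction. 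Escaping is genuinely forced, so no separate treatment of bounded sequences is needed.

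For part (3) I would combine three ingredients: monotonicity and positivity of the function $h$ fixed in the proof of Lemma~\ref{lem: parabolic point}; its sub-exponential growth, namely that for the given $\epsilon$ there is $\nu_0>0$ with $h(\nu s)\le s^{\epsilon}h(\nu)$ whenever $s>1$ and $\nu>\nu_0$; and the fact that, since $\delta^\phi(\Gamma)<+\infty$, Theorem~\ref{thm:characterizing finite entropy functionals} makes $\phi(\kappa_\theta(\,\cdot\,))$ bounded below on $\Gamma$, say by $-D_0$. Putting $a:=\phi(\kappa_\theta(g_k))\ge -D_0$ and $b:=\phi(\kappa_\theta(\gamma))\ge -D_0$, monotonicity gives $h(e^{a+b+C_1})\le h(e^{\abs a+C_1}e^{b})$, and $e^{\abs a+C_1}>1$ since $C_1>0$. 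Applying the sub-exponential estimate with base $e^{b}$ when $e^{b}>\nu_0$, and with base $\nu_0e^{C_1}$ when $e^{b}\le\nu_0$ (using $h(e^{b})\ge h(e^{-D_0})>0$ in the latter case), bounds $h(e^{a+b+C_1})$ by a fixed multiple of $e^{\epsilon\abs a}h(e^{b})$; and $e^{\epsilon\abs a}\le e^{2\epsilon D_0}e^{\epsilon a}$. Taking $C_2$ to be the largest of the finitely many constants that occur completes the argument; this part is routine once the split on the size of $e^{b}$ is made.

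Part (1) is the main obstacle. Let $B$ be the combinatorial horoball of $X$ whose unique boundary point at infinity is $\xi^{-1}(F_0)$; since $F_0$ is a bounded parabolic point with stabilizer $P$, the group $P$ preserves $B$ and acts cocompactly --- in fact freely and transitively --- on the level-one vertex set of $B$, which is a coset of a peripheral subgroup of $(\Gamma,\Pc)$ lying inside the embedded Cayley graph of $\Gamma$ in $X$. Given $\gamma\in\Gamma$, I would take a geodesic ray from $\gamma$ to $\xi^{-1}(F_0)$; being aimed at the center of $B$, it must eventually enter $B$ through a level-one vertex and then ascend the (essentially unique) vertical ray to $\xi^{-1}(F_0)$. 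Comparing it with a geodesic ray from $\id$ to $\xi^{-1}(F_0)$ --- which enters $B$ at a level-one vertex within bounded distance of $\id$, as $\id\in P$ sits near that level --- and using the transitive $P$-action on level-one vertices together with the rigidity of vertical rays, I would produce $p\in P$ and a geodesic ray from $\gamma$ to $\xi^{-1}(F_0)$ passing through $p$; translating by $p^{-1}$ then exhibits $p^{-1}\gamma\in\Gamma'$, so $\gamma\in P\Gamma'$. The delicate point is to upgrade ``passes within bounded distance of $p$'' to ``passes exactly through $p$'', since geodesics of $X$ are only coarsely unique; I expect to resolve this using that geodesic rays to the center of a combinatorial horoball issuing from its level-one vertices are honest vertical edge-paths --- so any two of them agree once sufficiently high --- and that $P$ permutes these rays.
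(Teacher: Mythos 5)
Your route is the paper's own, and parts (2) and (3) are essentially correct as written. For (2) you argue exactly as the paper does: Lemma~\ref{lem:kappa multiplication estimate} forces both sequences to escape, $g_n^{-1}\to F_0$ in $\Gamma\cup\Lambda_\theta(\Gamma)$ since $g_n\in P$, a subsequential limit $F\in\overline{\Gamma'}\cap\Lambda_\theta(\Gamma)$ of $\{\gamma_n\}$ differs from $F_0$ by Lemma~\ref{lem: Vn construction}, and Proposition~\ref{prop:multiplicative estimate v1} gives the contradiction. For (3), the paper isolates the finite set $\Gamma''=\{\gamma\in\Gamma':\phi(\kappa_\theta(\gamma))\le\log\nu_0\}$ and absorbs it into the constant, whereas you use a global lower bound $-D_0$ for $\phi\circ\kappa_\theta$ on $\Gamma$ (available by properness, i.e.\ condition (1) of Theorem~\ref{thm:characterizing finite entropy functionals}); the two devices are interchangeable, and both, like yours, use positivity of $h$ at finitely many (or one) specific values. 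The small bookkeeping issues in your case split (e.g.\ guaranteeing the multiplier is strictly bigger than $1$ when $\phi(\kappa_\theta(g_k))=0$) are trivially arranged.

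For (1) your mechanism is again the paper's, but you make it harder than it is, and the patch you propose for your ``delicate point'' rests on a false statement. You do not need the ray to pass through a prechosen $p$, so no coarse-uniqueness issue ever arises. The only role of the vertical ray over $\id$ is this: a geodesic ray $\sigma$ from $\gamma$ to $\xi^{-1}(F_0)$ stays within bounded distance of it, and since the vertical ray's distance to $X-\mathcal{H}_P$ tends to infinity, $\sigma$ eventually lies (and stays) in the horoball $\mathcal{H}_P$. Any edge path entering the levels $\ge 2$ of $\mathcal{H}_P$ from outside must cross a level-one vertex, and the level-one vertices of this horoball are precisely the elements of $P$; so $\sigma$ passes \emph{exactly} through some $p\in P$ (for instance at its last exit from the complement of $\mathcal{H}_P$). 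Since $p$ fixes $\xi^{-1}(F_0)$, the translate $p^{-1}\sigma$ runs from $p^{-1}\gamma$ through $\id$ to $\xi^{-1}(F_0)$, giving $\gamma\in P\cdot\Gamma'$ --- which is exactly the paper's proof, and is already contained in the first sentence of your sketch. By contrast, the rigidity you invoke is not true: geodesic rays from a level-one vertex to the ideal point of a combinatorial horoball need not be vertical edge-paths (they may move horizontally before ascending), and the vertical rays over two distinct vertices never coincide at any level; fortunately nothing in the argument requires it, nor is the transitivity of $P$ on level-one vertices needed.
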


\begin{proof}
Proof of (1). Fix $\gamma \in \Gamma$. Then fix a geodesic ray $\sigma : [0,\infty) \rightarrow X$ with $\sigma(0) = \gamma$ and $\lim_{t \rightarrow \infty} \sigma(t) = \xi^{-1}(F_0)$. Let $\mathcal{H}_P\subset X$ denote the combinatorial horoball associated to $P$. Notice that the geodesic ray $\sigma_0 : [0,\infty) \rightarrow X$ which satisfies $\sigma_0(n) = (\id,n) \in \mathcal{H}_P$ for all $n \in \Nb$ also limits to $\xi^{-1}(F_0)$. Hence 
$$
\sup_{t \ge 0} \, \d_X( \sigma(t), \sigma_0(t)) < +\infty.
$$
Since 
$$
\lim_{t \rightarrow \infty} \d_X(\sigma_0(t), X - \mathcal{H}_P) = +\infty, 
$$
 there exists $T \ge 0$ such that  $\sigma(T) \in P$ and $\sigma(t) \in \mathcal{H}_P$ for all $t \ge T$. Then $\sigma(T)^{-1} \gamma \in \Gamma^\prime$. Since $\gamma\in\Gamma$ was arbitrary, (1) holds.

Proof of (2). Suppose not. Then for each $n \ge 1$ there exist $\gamma_n \in \Gamma^\prime$ and $g_n \in P$ such that 
$$
\abs{ \phi(\kappa_\theta(g_n\gamma_n))-\phi(\kappa_\theta(g_n))-\phi(\kappa_\theta(\gamma_n)) } \ge n.
$$
Lemma~\ref{lem:kappa multiplication estimate} implies that $\{\gamma_n\}$ and $\{g_n\}$ are both escaping sequences in $\Gamma$. Since $\{g_n\} \subset P$, in the topology on $\Gamma \cup \Lambda_\theta(\Gamma)$ we have $g_n^{-1} \rightarrow F_0$. Passing to a subsequence we can also assume that 
$$
F : = \lim_{n \rightarrow \infty} \gamma_n \in \Lambda_\theta(\Gamma)
$$
exists. By Lemma \ref{lem: Vn construction}, $F_0 \neq F$. So by Lemma~\ref{lem: limit set compactifies} and Proposition~\ref{prop:multiplicative estimate v1}, we have 
$$
\limsup_{n \rightarrow \infty} \abs{ \phi(\kappa_\theta(g_n\gamma_n))-\Big(\phi(\kappa_\theta(g_n))+\phi(\kappa_\theta(\gamma_n))\Big) } < +\infty
$$
and hence a contradiction. 

Proof of (3). Let $\nu_0>0$ be a constant such that if $s > 1$ and $\nu > \nu_0$, then $h(\nu s) \le s^{\epsilon} h(\nu)$. Let $C_1>0$ be the constant from (2), and fix $C_1^\prime \ge C_1$ such that 
$$
\phi(\kappa_\theta(g_k)) +C_1^\prime > 0 \quad \text{and} \quad \phi(\kappa_\theta(\gamma)) +C_1^\prime > \log \nu_0
$$
for all $\gamma \in \Gamma^\prime$ and $k \ge 1$. Also let 
$$
\Gamma^{\prime\prime} : = \{ \gamma \in \Gamma^\prime : \phi(\kappa_\theta(\gamma)) \le \log \nu_0\}.
$$
If $\gamma \notin \Gamma^{\prime\prime}$, then 
$$
 h\left(e^{\phi(\kappa_\theta(g_k))+\phi(\kappa_\theta(\gamma))+C_1}\right) \le  h\left(e^{\phi(\kappa_\theta(g_k))+\phi(\kappa_\theta(\gamma))+C_1^\prime }\right) \le e^{\epsilon C_1^\prime} e^{\epsilon\phi(\kappa_\theta(g_k))}h\left(e^{\phi(\kappa_\theta(\gamma))}\right).
$$
On the other hand, if $\gamma \in \Gamma^{\prime\prime}$, then
$$
 h\left(e^{\phi(\kappa_\theta(g_k))+\phi(\kappa_\theta(\gamma))+C_1}\right) \le  h\left(e^{\phi(\kappa_\theta(g_k))+\phi(\kappa_\theta(\gamma))+2C_1^\prime }\right) \le e^{\epsilon C_1^\prime} e^{\epsilon\phi(\kappa_\theta(g_k))}h\left(e^{\phi(\kappa_\theta(\gamma))+C_1^\prime}\right).
$$
So (3) holds with
\[
C_2 : = e^{\epsilon C_1^\prime} \max \left\{ \frac{h\left(e^{\phi(\kappa_\theta(\gamma))+C_1^\prime}\right)}{h\left(e^{\phi(\kappa_\theta(\gamma))}\right)} : \gamma \in \Gamma^{\prime\prime}\right\},
\]
which is finite since $\Gamma^{\prime\prime}$ is finite. 
\end{proof} 

If $s>\delta$, then by Lemma \ref{lem:multiplicative with gamma prime},
\begin{align*}
\mu_s(V_m) & \le \frac{1}{\hat{Q}(s)} \sum_{k > m} \sum_{\gamma \in \Gamma^\prime} h\left(e^{\phi(\kappa_\theta(g_k\gamma))}\right)e^{-s\phi(\kappa_\theta(g_k\gamma))}\\
& \le \frac{e^{sC_1}}{\hat{Q}(s)} \sum_{k > m} \sum_{\gamma \in \Gamma^\prime} h\left(e^{\phi(\kappa_\theta(g_k))+\phi(\kappa_\theta(\gamma))+C_1}\right)e^{-s\phi(\kappa_\theta(g_k)) -s\phi(\kappa_\theta(\gamma))}\\
& \le \frac{C_2e^{sC_1}}{\hat{Q}(s)} \sum_{k > m} \sum_{\gamma \in \Gamma^\prime} e^{-(s-\epsilon)\phi(\kappa_\theta(g_k))}h\left(e^{\phi(\kappa_\theta(\gamma))}\right)e^{-s\phi(\kappa_\theta(\gamma))}\\
& \le C_2e^{sC_1}\sum_{k > m} e^{-(\delta^\phi(P)+\epsilon)\phi(\kappa_\theta(g_k))}.
\end{align*}
Since $Q_{P}^\phi(\delta^\phi(P)+\epsilon) < +\infty$, it follows that
\[
\liminf_{m \rightarrow \infty}\,  \limsup_{s \searrow \delta} \, \mu_{s}(V_m) \le C_2 e^{\delta C_1} \liminf_{m \rightarrow \infty}\sum_{k >m} e^{-(\delta^\phi(P)+\epsilon)\phi(\kappa_\theta(g_k))}=0,
\]
so Equation \eqref{eqn: no parabolic atoms} holds.
\end{proof}

\begin{remark}[The elementary case]\label{rmk:main theorem in elementary case} In this remark we sketch why the Poincar\'e series of an  infinite elementary transverse groups diverges at its critical exponent. 

Suppose $\Gamma\subset \GG$ is an infinite $\Psf_\theta$-transverse subgroup, $\#\Lambda_\theta(\Gamma) \le 2$,  $\phi\in \mathfrak{a}_\theta^*$ and $\delta^\phi(\Gamma) < +\infty$. 

\medskip 
\noindent \emph{Case 1:} Suppose $\Lambda_\theta(\Gamma)  =\{ F^+, F^-\}$. Then one can show that there is an infinite order element $\gamma \in \Gamma$ such that $\langle\gamma\rangle$ has finite index in $\Gamma$. Moreover, we can assume that  $\gamma^n(F) \rightarrow F^+$ for all $F  \in \Fc_\theta$ transverse to $F^-$. Then arguing as in Case 2 of the proof of Corollary \ref{cor:entropy gap rel anosov 1} below, one can show that $\delta^\phi(\Gamma) = 0$. Hence, since $\Gamma$ is infinite, we have
$$
Q_\Gamma^\phi(0) = \sum_{\gamma \in \Gamma} 1 = +\infty. 
$$

\medskip 
\noindent \emph{Case 2:} Suppose $\Lambda_\theta(\Gamma)  =\{ F^+\}$. Using Proposition~\ref{prop:reducing to no simple factors} we may assume that $\GG$ has trivial center, and that $\Psf_\theta$ contains no simple factors of $\GG$. Then one can adapt the proof of Theorem~\ref{thm:properties of relatively Anosov representations} part (2) to show that there exists a closed subgroup $\mathsf{H} \subset \GG$ with finitely many components such that: \begin{enumerate}
\item $\Gamma$ is a cocompact lattice in $\mathsf{H}$.
\item $\mathsf{H} = \mathsf{L} \ltimes \mathsf{U}$ where $\mathsf{L}$ is compact and $\mathsf{U}$ is the unipotent radical of $\mathsf{H}$. 
\item $\mathsf{H}^0 = \mathsf{L}^0 \times \mathsf{U}$ and $\mathsf{L}^0$ is Abelian. 
\end{enumerate} 
Finally, one can use the proof of Theorem \ref{thm:entropy gap for rel Anosov} to show that $Q_\Gamma^\phi$ diverges at its critical exponent.

\end{remark}

\section{Relatively quasiconvex subgroups}\label{sec: quasiconvex}

 Suppose that $\Gamma$ is hyperbolic relative to $\Pc$. Given a subgroup $\Gamma_0 \subset \Gamma$, the \emph{limit set of $\Gamma_0$}, denoted $\Lambda(\Gamma_0)$,  
is the set of all points $x \in \partial(\Gamma, \Pc)$ where there is a sequence $\{h_n\}$ in $\Gamma_0$ and $y \in \partial(\Gamma, \Pc)$
such that $h_n(z) \rightarrow x$ for all $z \in \partial(\Gamma, \Pc) -\{y\}$. Notice that the action of $\Gamma_0$ on $\Lambda(\Gamma_0)$ is a convergence group action. Then $\Gamma_0$ is \emph{relatively quasiconvex} if $\Gamma_0$ acts geometrically finitely on $\Lambda(\Gamma_0)$, see~\cite[Defn.\ 6.2]{Hruska}. In this section we prove that infinite index, relatively quasiconvex subgroups of a relatively Anosov group
 have strictly lower critical exponent. 
 
\begin{corollary}[Corollary \ref{cor:entropy gap rel anosov}]
\label{cor:entropy gap rel anosov 1}
Suppose $\Gamma \subset \GG$ is a $\Psf_\theta$-Anosov subgroup relative to $\Pc$, $\phi \in \mfa_\theta^*$ and $\delta^\phi(\Gamma) < +\infty$. 
If $\Gamma_0$ is infinite index relatively quasiconvex subgroup of $(\Gamma, \Pc)$, then
$$\delta^\phi(\Gamma)>\delta^\phi(\Gamma_0).$$
\end{corollary}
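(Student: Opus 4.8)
The plan is to obtain the strict inequality from the entropy gap criterion of Theorem~\ref{thm: entropy gap CZZ3}, whose hypotheses on the subgroup are that its Poincar\'e series diverges at its critical exponent and that its limit set is a proper subset of $\Lambda_\theta(\Gamma)$. Since $Q_{\Gamma_0}^\phi \le Q_\Gamma^\phi$ term by term, we always have $\delta^\phi(\Gamma_0) \le \delta^\phi(\Gamma) < +\infty$. The divergence at the critical exponent will come from Theorem~\ref{rel anosov divergence}, which however requires the subgroup to be non-elementary, so I will split into two cases according to whether $\Gamma_0$ is elementary.

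\emph{Case 1: $\Gamma_0$ is non-elementary.} By Proposition~\ref{prop:rel quasiconvex are rel Anosov}, $\Gamma_0$ is itself $\Psf_\theta$-Anosov relative to a suitable collection of subgroups, so, as $\delta^\phi(\Gamma_0) < +\infty$, Theorem~\ref{rel anosov divergence} applied to $\Gamma_0$ shows that $Q_{\Gamma_0}^\phi$ diverges at its critical exponent. By Lemma~\ref{lem:limit set is a proper subset}, the limit set $\Lambda(\Gamma_0) \subset \partial(\Gamma,\Pc)$ is a proper subset of $\partial(\Gamma,\Pc)$; applying the limit map $\xi$, which is an equivariant homeomorphism onto $\Lambda_\theta(\Gamma)$ and which carries $\Lambda(\Gamma_0)$ to $\Lambda_\theta(\Gamma_0)$ by Proposition~\ref{prop:compactifications are the same}, we get $\Lambda_\theta(\Gamma_0) \subsetneq \Lambda_\theta(\Gamma)$. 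As $\Gamma$ is non-elementary and $\Psf_\theta$-transverse, Theorem~\ref{thm: entropy gap CZZ3} then gives $\delta^\phi(\Gamma) > \delta^\phi(\Gamma_0)$.

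\emph{Case 2: $\Gamma_0$ is elementary.} Then $\Gamma_0$ is finite, virtually cyclic, or parabolic. If $\Gamma_0$ is parabolic it is conjugate into some $P \in \Pc$, and Corollary~\ref{cor:entropy gap with peripherals} gives $\delta^\phi(\Gamma_0) \le \delta^\phi(P) < \delta^\phi(\Gamma)$. Otherwise I claim $\delta^\phi(\Gamma_0) = 0$: this is clear when $\Gamma_0$ is finite, and when $\langle\gamma\rangle$ has finite index in $\Gamma_0$ with $\gamma$ loxodromic one has $\phi(\kappa_\theta(\gamma^n)) = \abs{n}\,\ell^\phi(\gamma) + O(1)$ as $n \to +\infty$ and $\phi(\kappa_\theta(\gamma^n)) = \abs{n}\,\ell^\phi(\gamma^{-1}) + O(1)$ as $n \to -\infty$, while $\delta^\phi(\langle\gamma\rangle) < +\infty$ forces $\ell^\phi(\gamma), \ell^\phi(\gamma^{-1}) > 0$, so $Q_{\langle\gamma\rangle}^\phi(s) < +\infty$ for every $s > 0$ and hence, by finite index and Lemma~\ref{lem:kappa multiplication estimate}, $Q_{\Gamma_0}^\phi(s) < +\infty$ for every $s > 0$. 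It then remains only to show $\delta^\phi(\Gamma) > 0$. Since $\Gamma$ is non-elementary it contains a free subgroup $H$ of rank two; by subadditivity of the Cartan projection (Lemma~\ref{lem:kappa multiplication estimate}) there is $C > 0$ with $\phi(\kappa_\theta(h)) \le C\abs{h}_H$ for all $h \in H$, and since $H$ has exponential growth the series $Q_H^\phi(s)$ diverges for all sufficiently small $s > 0$; therefore $\delta^\phi(\Gamma) \ge \delta^\phi(H) > 0 = \delta^\phi(\Gamma_0)$.

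The main obstacle is really the elementary case: Theorem~\ref{rel anosov divergence}, and with it the direct application of Theorem~\ref{thm: entropy gap CZZ3}, is only available for non-elementary subgroups, so the finite and virtually cyclic subcases must be dealt with separately, and there the key point is not the vanishing $\delta^\phi(\Gamma_0) = 0$ but the strict positivity $\delta^\phi(\Gamma) > 0$, which I extract from a free Schottky subgroup of $\Gamma$ using the (always valid) at-most-linear growth of $\phi \circ \kappa_\theta$ in word length. A secondary, routine point in Case~1 is reconciling the Bowditch limit set $\Lambda(\Gamma_0) \subset \partial(\Gamma,\Pc)$ with the flag limit set $\Lambda_\theta(\Gamma_0) \subset \Fc_\theta$, which is immediate from Proposition~\ref{prop:compactifications are the same}.
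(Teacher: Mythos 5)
Your proof is correct and follows essentially the same route as the paper: the non-elementary case via Proposition~\ref{prop:rel quasiconvex are rel Anosov}, Lemma~\ref{lem:limit set is a proper subset}, Theorem~\ref{rel anosov divergence} and Theorem~\ref{thm: entropy gap CZZ3}, and the elementary cases split into parabolic (handled by Corollary~\ref{cor:entropy gap with peripherals}), finite, and virtually cyclic with a loxodromic generator, with $\delta^\phi(\Gamma)>0$ extracted from a rank-two free subgroup and the at-most-linear growth of $\phi\circ\kappa_\theta$ in word length, exactly as the paper intends. The only point to flag is that your assertion $\phi(\kappa_\theta(\gamma^n)) = \abs{n}\,\ell^\phi(\gamma)+O(1)$ for the loxodromic element is precisely what Proposition~\ref{prop:multiplicative estimate v1} supplies (applied to powers of $\gamma$, whose attracting and repelling flags are transverse), and should be cited as such; likewise the identification $\xi(\Lambda(\Gamma_0))=\Lambda_\theta(\Gamma_0)$ is established in the proof of Proposition~\ref{prop:rel quasiconvex are rel Anosov}, which is a cleaner reference than Proposition~\ref{prop:compactifications are the same}.
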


\begin{remark} \label{rem: Hruska}
Since we will be using results from Hruska \cite{Hruska} extensively in this section, we remark that the definition of relatively hyperbolic group used in \cite[Defn.\ 3.1]{Hruska} is strictly weaker than our Definition~\ref{defn:RH}. To obtain our Definition~\ref{defn:RH} from Hruska's definition, one needs to further require that $\Gamma$ 
be non-elementary and finitely generated, and that every $P\in\Pc$ is finitely generated. By \cite[Cor.\ 9.2]{Hruska}, these additional conditions follow if $\Gamma$ is non-elementary and every $P\in\Pc$ is \emph{slender}, i.e. any subgroup of $P$ is finitely generated. \end{remark}

As mentioned in the Introduction, the two results needed to deduce Corollary~\ref{cor:entropy gap rel anosov 1} from Theorems \ref{intro: rel anosov divergent} and \ref{thm: entropy gap CZZ3} are stated as Lemma \ref{lem:limit set is a proper subset} and Proposition \ref{prop:rel quasiconvex are rel Anosov} below. They both follow from results of Hruska \cite{Hruska}.

\begin{lemma}\label{lem:limit set is a proper subset} Suppose $\Gamma$ is hyperbolic relative to $\Pc$ and $\Gamma_0 \subset \Gamma$ is relatively quasiconvex. If $\Gamma_0$ has infinite index in $\Gamma$, then $\Lambda(\Gamma_0)$ is a closed proper subset of $\partial(\Gamma, \Pc)$. \end{lemma}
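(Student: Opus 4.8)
The plan is to prove the two assertions separately: first that $\Lambda(\Gamma_0)$ is closed, then that it is a proper subset of $\partial(\Gamma,\Pc)$. The closedness statement is essentially formal. Since $\Gamma_0$ is relatively quasiconvex, it acts geometrically finitely on $\Lambda(\Gamma_0)$ by a convergence group action; in particular $\Lambda(\Gamma_0)$ is a compact $\Gamma_0$-invariant subset of $\partial(\Gamma,\Pc)$. More directly, $\Lambda(\Gamma_0)$ is by definition the set of accumulation points of a $\Gamma_0$-orbit in $\partial(\Gamma,\Pc)$ (or equivalently of the orbit $\Gamma_0 \cdot b_0$ in a Groves--Manning cusp space $X$ for $(\Gamma,\Pc)$, projected to the Gromov boundary), and the set of accumulation points of any subset of a compact metrizable space is automatically closed. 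So closedness requires only unwinding definitions.

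For properness, the main point is to rule out $\Lambda(\Gamma_0) = \partial(\Gamma,\Pc)$. First I would invoke a structural fact from Hruska \cite{Hruska}: a relatively quasiconvex subgroup $\Gamma_0$ is itself relatively hyperbolic with respect to the collection $\Pc_0$ of infinite intersections $\Gamma_0 \cap g P g^{-1}$ (for $P \in \Pc$, $g \in \Gamma$), and $\partial(\Gamma_0, \Pc_0)$ is $\Gamma_0$-equivariantly identified with $\Lambda(\Gamma_0) \subset \partial(\Gamma,\Pc)$ (see \cite[Thm.~1.2 and Cor.~8.4 or similar]{Hruska}). If $\Lambda(\Gamma_0)$ were all of $\partial(\Gamma,\Pc)$, then $\Gamma_0$ would act geometrically finitely --- hence as a geometrically finite convergence group --- on $\partial(\Gamma,\Pc)$, which is exactly the Bowditch boundary on which $\Gamma$ acts geometrically finitely. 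Now I would use the rigidity of geometrically finite convergence group actions on a fixed boundary: if both $\Gamma_0 \subset \Gamma$ act geometrically finitely on the same perfect compact metrizable space $M = \partial(\Gamma,\Pc)$ with the induced dynamics, then $\Gamma_0$ has finite index in $\Gamma$. Concretely, one shows the quotient $\Gamma_0 \backslash (M \times M \setminus \Delta)$ is compact (geometric finiteness) and compares it with $\Gamma \backslash (M \times M \setminus \Delta)$, or equivalently one uses that a geometrically finite subgroup acting on the full boundary must contain finite-index information about all the peripherals and be quasiconvex of full limit set --- classical arguments (going back to the Kleinian case, and stated for relatively hyperbolic groups in Hruska) give that a relatively quasiconvex subgroup with $\Lambda(\Gamma_0) = \partial(\Gamma,\Pc)$ is of finite index. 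This contradicts the hypothesis that $\Gamma_0$ has infinite index.

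The step I expect to be the main obstacle is pinning down the precise citation in \cite{Hruska} (or supplementing it with a short argument) for the implication ``relatively quasiconvex with full limit set $\Longrightarrow$ finite index.'' In the word-hyperbolic case this is standard (a quasiconvex subgroup with full limit set has finite index), and the relatively hyperbolic analogue holds, but one must be careful about the convention mismatch flagged in Remark~\ref{rem: Hruska}: Hruska's relative hyperbolicity is weaker than Definition~\ref{defn:RH}, so one should check that the peripherals here are slender (true for peripherals of relatively Anosov groups, since by Theorem~\ref{thm:properties of relatively Anosov representations} each $P$ is virtually nilpotent, hence slender) in order to apply \cite[Cor.~9.2]{Hruska} and stay within the setting where Hruska's theorems give what we want. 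If a clean citation is unavailable, the fallback is: use geometric finiteness of $\Gamma_0$ on $\partial(\Gamma,\Pc)$ to get a $\Gamma_0$-cocompact subset of $X$ up to bounded neighborhoods of the cusps, then a standard Milnor--Svarc / covering-space counting argument on $\Gamma_0 \backslash X$ versus $\Gamma \backslash X$ forces $[\Gamma : \Gamma_0] < \infty$.
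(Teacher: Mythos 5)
Your reduction has a genuine gap at exactly the step that carries all the content of the lemma. Closedness is indeed routine, and with the paper's definition of relative quasiconvexity (a geometrically finite action on $\Lambda(\Gamma_0)$) the hypothesis $\Lambda(\Gamma_0)=\partial(\Gamma,\Pc)$ already says that $\Gamma_0$ acts geometrically finitely on the full Bowditch boundary, so no identification of $\partial(\Gamma_0,\Pc_0)$ with $\Lambda(\Gamma_0)$ is even needed. What remains, and what you do not prove, is the implication that a relatively quasiconvex subgroup with full limit set has finite index. That statement is true, but it does not appear to be stated in \cite{Hruska}, so the citation you lean on is not available, and your fallback is only a slogan. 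To run the ``Milnor--Svarc / counting'' fallback you would need to (i) invoke the non-trivial equivalence between geometric finiteness of the boundary action of $\Gamma_0$ and cocompactness of its action on the complement of an invariant horoball family in (the weak hull of) its limit set inside the cusp space $X$; (ii) check that every parabolic point of $\Gamma$ is a bounded parabolic point of $\Gamma_0$ whose $\Gamma_0$-stabilizer has finite index in its $\Gamma$-stabilizer, so that the Groves--Manning horoballs themselves can serve as that family; and (iii) deduce that the Cayley-graph part of $X$, hence the orbit of $\Gamma$, lies in a bounded neighborhood of $\Gamma_0$, and conclude finite index from proper discontinuity. None of this is carried out in the proposal, and it is precisely the substance of the lemma. (Also, the digression on slenderness of the peripherals is beside the point here: the lemma is a purely group-theoretic statement about a relatively hyperbolic pair in the sense of Definition~\ref{defn:RH}, with no Lie-theoretic input, and slenderness is only relevant for applying Hruska's Theorem 9.1 in Proposition~\ref{prop:rel quasiconvex are rel Anosov}.)

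For comparison, the paper proves the decisive implication directly and elementarily. Using Hruska's geodesic characterization of relative quasiconvexity (\cite[Defn.\ 6.6, Prop.\ 7.5, 7.6]{Hruska}), one fixes a bi-infinite geodesic $\sigma_0$ in the cusp space with $\id$ within distance $r$ of it; for any $g\in\Gamma$, the translate $g\circ\sigma_0$ has its endpoints in $\Lambda(\Gamma_0)=\partial(\Gamma,\Pc)$, so it can be approximated by geodesics with endpoints in $\Gamma_0$, and the limiting geodesic both fellow-travels $g\circ\sigma_0$ and stays $R$-close to $\Gamma_0$ wherever it is close to $\Gamma$; hence $g\in\mathcal{N}_{R+c+r}(\Gamma_0)$, so $\Gamma\subset\mathcal{N}_{R+c+r}(\Gamma_0)$ and $[\Gamma:\Gamma_0]<\infty$. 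Your intended route can be made rigorous along the lines of (i)--(iii) above, but as written the key step is outsourced to a result you neither prove nor correctly cite.
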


\begin{proof} We will prove the contrapositive: if $\Lambda(\Gamma_0) = \partial(\Gamma, \Pc)$, then $\Gamma_0$ has finite index in $\Gamma$. 

Let $X$ be  a Groves--Manning cusp space for $(\Gamma, \Pc)$. Given a subset $A \subset X$, we will let $\mathcal{N}_r(A)$ denote the closed $r$-neighborhood of $A$ in $X$. Since $X$ is Gromov hyperbolic, there exists $c> 0$ such that if $\sigma_1, \sigma_2 : \Rb \rightarrow X$ are geodesics with 
$$
\lim_{t \rightarrow \pm \infty} \sigma_1(t) = \lim_{t \rightarrow \pm \infty} \sigma_2(t),
$$
then 
$$
\sigma_1 \subset \mathcal{N}_c(\sigma_2) \quad \text{and} \quad \sigma_2 \subset \mathcal{N}_c(\sigma_1).
$$

Fix a geodesic $\sigma_0 : \Rb \rightarrow X$. Then fix $r > 0$ such that 
$$
\id \in \mathcal{N}_r(\sigma_0). 
$$
 An equivalent definition \cite[Defn.\ 6.6]{Hruska} of relatively quasiconvex subgroups implies that there exists $R > 0$ such that: if $s : [0,T] \rightarrow X$ is a geodesic segment with endpoints in $\Gamma_0$, then 
$$
s \cap \mathcal{N}_{c+r+1}(\Gamma) \subset \mathcal{N}_R(\Gamma_0),
$$
see \cite[Prop.\ 7.5 and 7.6]{Hruska}.

Fix $g \in \Gamma$ and let $\sigma : = g \circ \sigma_0 : \Rb \rightarrow X$. Since $\Lambda(\Gamma_0) = \partial(\Gamma, \Pc)$, there exist sequences $\{h_n^-\}$, $\{h_n^+\}$ in $\Gamma_0$ such that 
$$
\lim_{n \rightarrow \infty} h_n^\pm = \lim_{t \rightarrow \pm \infty} \sigma(t). 
$$
Let $\sigma_n$ be a geodesic in $X$ joining $h_n^-$ to $h_n^+$. After possibly reparametrizing and passing to a subsequence we may suppose that $\sigma_n$ converges locally uniformly to a geodesic  $\sigma_\infty : \Rb \rightarrow X$. Then
$$
\sigma_\infty \cap \mathcal{N}_{c+r}(\Gamma) \subset \mathcal{N}_{R}(\Gamma_0).
$$
Further,
$$
\lim_{t \rightarrow \pm \infty} \sigma_\infty(t) = \lim_{n \rightarrow \infty} h_n^\pm = \lim_{t \rightarrow \pm \infty} \sigma(t),
$$
and so $g \in \mathcal{N}_{r}(\sigma) \subset \mathcal{N}_{c+r}(\sigma_\infty)$. Hence,
$$
g \in \mathcal{N}_{R+c+r}(\Gamma_0). 
$$
Since $g \in \Gamma$ was arbitrary, 
$$
\Gamma \subset \mathcal{N}_{R+c+r}(\Gamma_0) 
$$
and thus $\Gamma_0 \subset \Gamma$ has finite index. 
\end{proof}

\begin{proposition}\label{prop:rel quasiconvex are rel Anosov} Suppose $\Gamma \subset \GG$ is a $\Psf_\theta$-Anosov subgroup relative to $\Pc$. Assume $\Gamma_0 \subset \Gamma$ is 
non-elementary and relatively quasiconvex. Let $\Pc_0$ denote a set of representatives of the conjugacy classes in $\Gamma_0$ of the intersection of the peripheral subgroups of $\Gamma$ with $\Gamma_0$. Then: 
\begin{enumerate}
\item $(\Gamma_0, \Pc_0)$ is relatively hyperbolic (in the sense of Definition~\ref{defn:RH}).
\item $\Gamma_0$ is a $\Psf_\theta$-Anosov subgroup relative to $\Pc_0$. 
\end{enumerate} 
\end{proposition}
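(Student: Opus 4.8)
The plan is to deduce (1) from the work of Hruska and then bootstrap to (2) by checking the defining properties of a relatively Anosov subgroup, using the ambient limit map of $\Gamma$ restricted to $\Lambda(\Gamma_0)$. For (1), I would invoke Hruska's theorem \cite[Thm.\ 9.1]{Hruska}, which states that a relatively quasiconvex subgroup $\Gamma_0$ of a relatively hyperbolic group is itself hyperbolic relative to the induced peripheral structure $\Pc_0$ (the conjugacy classes in $\Gamma_0$ of the infinite subgroups arising as $\Gamma_0 \cap gPg^{-1}$). The only thing to check, as flagged in Remark \ref{rem: Hruska}, is that the output satisfies our stronger Definition \ref{defn:RH}: this needs $\Gamma_0$ non-elementary (which is an explicit hypothesis), each member of $\Pc_0$ finitely generated, and $\Gamma_0$ finitely generated. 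The peripheral subgroups $P \in \Pc$ of $\Gamma$ are virtually nilpotent (by Theorem \ref{thm:properties of relatively Anosov representations}(2), $P$ is a cocompact lattice in $\mathsf{L}\ltimes\mathsf{U}$ with $\mathsf{L}$ compact, so $P$ is virtually nilpotent), hence slender; any subgroup of a slender group is finitely generated, so each $\Gamma_0 \cap gPg^{-1}$ is finitely generated, and then \cite[Cor.\ 9.2]{Hruska} also gives finite generation of $\Gamma_0$. This establishes (1).

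For (2) I must produce a continuous $\Gamma_0$-equivariant homeomorphism $\xi_0 : \partial(\Gamma_0,\Pc_0) \to \Lambda_\theta(\Gamma_0)$, after first checking $\Gamma_0$ is $\Psf_\theta$-transverse. Since $\Gamma_0 \subset \Gamma$ and $\Gamma$ is $\Psf_\theta$-divergent, $\alpha\circ\kappa$ is proper on $\Gamma$ for $\alpha\in\theta$, hence proper on $\Gamma_0$; and $\Lambda_\theta(\Gamma_0) \subset \Lambda_\theta(\Gamma)$, so it is a transverse subset. Thus $\Gamma_0$ is $\Psf_\theta$-transverse. Next, by \cite[Thm.\ 10.1]{Hruska} (or the discussion around relatively quasiconvex limit sets), there is a natural $\Gamma_0$-equivariant homeomorphism between the Bowditch boundary $\partial(\Gamma_0,\Pc_0)$ and the limit set $\Lambda(\Gamma_0) \subset \partial(\Gamma,\Pc)$; call it $\iota$. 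Let $\xi : \partial(\Gamma,\Pc)\to\Fc_\theta$ be the limit map of $\Gamma$, and set $\xi_0 := \xi|_{\Lambda(\Gamma_0)} \circ \iota$. This is continuous, injective (as $\xi$ is a homeomorphism onto its image) and $\Gamma_0$-equivariant. It remains to identify its image with $\Lambda_\theta(\Gamma_0)$: one inclusion is that $\xi(\Lambda(\Gamma_0))$ consists of accumulation points of $\{U_\theta(\gamma): \gamma\in\Gamma_0\}$ — this follows because if $\{\gamma_n\}\subset\Gamma_0$ has $\gamma_n \to x \in \Lambda(\Gamma_0)$ in the convergence action sense, then (viewing $\gamma_n$ in $\Gamma$ and using Proposition \ref{prop:compactifications are the same} with a Groves--Manning cusp space for $(\Gamma,\Pc)$, together with Lemma \ref{lem: limit set compactifies}) $U_\theta(\gamma_n)\to\xi(x)$; conversely any accumulation point of $\{U_\theta(\gamma):\gamma\in\Gamma_0\}$ arises this way after passing to a subsequence, since such a sequence is escaping and one extracts a limit point $x\in\Lambda(\Gamma_0)$. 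Hence $\xi_0$ is a homeomorphism $\partial(\Gamma_0,\Pc_0)\to\Lambda_\theta(\Gamma_0)$, which is exactly the condition making $\Gamma_0$ $\Psf_\theta$-Anosov relative to $\Pc_0$.

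The main obstacle I anticipate is the bookkeeping in the second paragraph's image identification: one must be careful that "convergence $\gamma_n(z)\to x$ in $\partial(\Gamma,\Pc)$" is compatible, via Proposition \ref{prop:compactifications are the same}, with convergence of $U_\theta(\gamma_n)$ in $\Fc_\theta$, \emph{and} that the induced peripheral structure $\Pc_0$ is the correct one so that Hruska's boundary-identification $\iota$ genuinely lands in $\partial(\Gamma,\Pc)$ as stated — this is where \cite[Thm.\ 10.1]{Hruska} and the slenderness of peripherals are doing real work. The divergence and transversality properties, by contrast, are immediate from inclusion $\Gamma_0\subset\Gamma$, and (1) is a direct citation modulo the slenderness remark. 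A secondary subtlety is ensuring $\Gamma_0$ is genuinely relatively hyperbolic in our sense even when some $\Gamma_0\cap gPg^{-1}$ is finite (such intersections are simply omitted from $\Pc_0$ by definition, since peripherals are required to be infinite), and that the remaining members of $\Pc_0$ are pairwise non-conjugate representatives — both handled by Hruska's construction.
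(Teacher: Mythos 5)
Your proposal is correct and follows the same skeleton as the paper's proof: part (1) is Hruska's theorem \cite[Thm.\ 9.1]{Hruska} together with a slenderness check on the peripheral subgroups, and part (2) identifies $\partial(\Gamma_0,\Pc_0)$ with $\Lambda(\Gamma_0)\subset\partial(\Gamma,\Pc)$ and shows that the restriction of the ambient limit map $\xi$ has image exactly $\Lambda_\theta(\Gamma_0)$. The differences are only in the auxiliary inputs, and two of them are glossed. For slenderness, the paper first reduces via Proposition~\ref{prop:reducing to no simple factors} (the kernel is finite, so finite generation is unaffected), because Theorem~\ref{thm:properties of relatively Anosov representations} is only stated when $Z(\GG)$ is trivial and $\Psf_\theta$ contains no simple factors of $\GG$; you invoke its part (2) without this reduction. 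Moreover, ``cocompact lattice in $\mathsf{L}\ltimes\mathsf{U}$ with $\mathsf{L}$ compact $\Rightarrow$ virtually nilpotent $\Rightarrow$ slender'' is true but needs a word: virtual nilpotence uses $\mathsf{H}^0=\mathsf{L}^0\times\mathsf{U}$ with $\mathsf{L}^0$ abelian (or polynomial growth plus Gromov), and slenderness requires finite generation of $P$ (e.g.\ $\Qb$ is abelian but not slender); the paper instead quotes Auslander's theorem \cite[Thm.\ 11.1]{KL} to get finite generation of arbitrary subgroups of $P$ directly (Lemma~\ref{lem:peripheral subgroups are slender}). For the image identification in (2), you route through the Groves--Manning space and Proposition~\ref{prop:compactifications are the same}; this needs the standard but unstated fact that convergence $h_n(z)\to x$ for the boundary action forces orbit convergence $h_n(b_0)\to x$ in $X\cup\partial_\infty X$, whereas the paper argues purely with the flag dynamics of Proposition~\ref{prop:characterizing convergence in general symmetric case} and the transversality of $\Lambda_\theta(\Gamma)$, avoiding the cusp space. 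None of these is a genuine gap; all are routine fixes, and the overall argument is sound.
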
 

\begin{proof}
Proof of (1). Hruska \cite[Thm.\ 9.1]{Hruska} proved that relatively quasiconvex subgroups of relatively hyperbolic groups in his weaker sense (see Remark \ref{rem: Hruska}) are also relatively hyperbolic. Thus, it suffices to show that every $P\in\Pc_0$ is slender. 

\begin{lemma}\label{lem:peripheral subgroups are slender} Suppose $\Gamma \subset \GG$ is a $\Psf_\theta$-Anosov subgroup relative to $\Pc$. If $P \in \Pc$ and $Q \subset P$ is a subgroup, then $Q$ is finitely generated. 
\end{lemma} 

\begin{proof} Using Proposition~\ref{prop:reducing to no simple factors} we may assume that $Z(\GG)$ is trivial and $\Psf_\theta$ contains no simple factors of $\GG$ (notice that if $p : \GG \rightarrow \GG^\prime$ is as in the proposition, then $p|_\Gamma$ has finite kernel and hence $Q$ is finitely generated if and only if $p(Q)$ is finitely generated). By Theorem~\ref{thm:properties of relatively Anosov representations} there exists a closed subgroup $\mathsf{H} \subset \GG$ with finitely many components such that: 
\begin{enumerate}
\item $P$ is a cocompact lattice in $\mathsf{H}$.
\item $\mathsf{H} = \mathsf{L} \ltimes \mathsf{U}$ where $\mathsf{L}$ is compact and $\mathsf{U}$ is the unipotent radical of $\mathsf{H}$. 
\end{enumerate} 
Then Auslander's theorem (see for instance~\cite[Thm.\ 11.1]{KL}) implies that $Q$ is finitely generated.

\end{proof}


Proof of (2). By definition, $\Gamma_0$ is a $\Psf_\theta$-transverse subgroup of $\GG$. Also, by (1), $(\Gamma_0, \Pc_0)$ is relatively hyperbolic, and so we may identify 
$$
\partial(\Gamma_0,\Pc_0)=\Lambda(\Gamma_0) \subset \partial(\Gamma, \Pc). 
$$
Since $\Gamma$ is a $\Psf_\theta$-Anosov subgroup relative to $\Pc$, there is a $\Gamma$-equivariant homeomorphism $\xi: \partial(\Gamma, \Pc) \rightarrow \Lambda_\theta(\Gamma)$. Thus, to show that $\Gamma_0$ is a $\Psf_\theta$-Anosov subgroup relative to $\Pc_0$, it suffices to show that $\xi(  \Lambda(\Gamma_0)) = \Lambda_\theta(\Gamma_0)$. 

Fix any $F^+ \in \Lambda_\theta(\Gamma_0)$. Then by Proposition~\ref{prop:characterizing convergence in general symmetric case} there is $\{h_n\}$ in $\Gamma_0$ and $F^- \in \Lambda_\theta(\Gamma)$ such that $h_n(F) \rightarrow F^+$ for all $F \in \Fc_\theta$ transverse to $F^-$. By definition, $F^\pm = \xi(x^\pm)$ for some $x^\pm \in \partial(\Gamma, \Pc)$. Also, since $\Gamma_0$ acts on $\Lambda(\Gamma_0)$ as a convergence group, by passing to a subsequence, we can suppose that there exist $y^\pm \in \Lambda(\Gamma_0)$ such that $h_n(z) \rightarrow y^+$ for all $z \in \Lambda(\Gamma_0) -\{y^-\}$. Since $\Gamma_0$ is non-elementary, we may fix $z \in  \Lambda(\Gamma_0) -\{x^-, y^-\}$. Then 
$$
F^+ = \lim_{n \rightarrow \infty} h_n (\xi(z)) = \lim_{n \rightarrow \infty} \xi(h_n(z)) = \xi( y^+). 
$$
Since $F^+$ was arbitrary, it follows that $\Lambda_\theta(\Gamma_0) \subset \xi(  \Lambda(\Gamma_0))$. A very similar argument shows that $\xi(  \Lambda(\Gamma_0)) \subset \Lambda_\theta(\Gamma_0)$. 
\end{proof} 


We may now give the proof of Corollary~\ref{cor:entropy gap rel anosov 1}.

\begin{proof}[Proof of Corollary~\ref{cor:entropy gap rel anosov 1}] First, notice that if $\Gamma_0\subset\Gamma$ is an elementary subgroup, then it is either finite (in which case $\Lambda (\Gamma_0)$ is empty), conjugate to a subgroup of a peripheral subgroup of $\Gamma$
(in which case $\Lambda(\Gamma_0)$ is a single point)
or virtually a cyclic group generated by a hyperbolic element (in which case $\Lambda(\Gamma_0)$ consists of the the attracting and repelling fixed points of the hyperbolic element). If $\Gamma_0$ is non-elementary,
then $\Lambda(\Gamma_0)$ is perfect. See the discussion in \cite[Sec.\ 3.1]{Hruska} for more details.

\medskip 
\noindent \emph{Case 1:}  Suppose $\Gamma_0$ is non-elementary. Let $\Pc_0$ denote a set of representatives of the conjugacy classes in $\Gamma_0$ of the intersection of the peripheral subgroups of $\Gamma$ with $\Gamma_0$. Then Proposition~\ref{prop:rel quasiconvex are rel Anosov} implies that $\Gamma_0$ is a $\Psf_\theta$-Anosov subgroup relative to $\Pc_0$. By Lemma~\ref{lem:limit set is a proper subset}, $\Lambda_\theta(\Gamma_0)$ is a proper subset of $\Lambda_\theta(\Gamma)$. By Theorem~\ref{intro: rel anosov divergent}, $Q^\phi_{\Gamma_0}$ converges at its critical exponent. So, by Theorem~\ref{thm: entropy gap CZZ3}, 
$$
\delta^\phi(\Gamma_0) < \delta^\phi(\Gamma). 
$$

\medskip 
\noindent \emph{Case 2:} Assume $\#\Lambda(\Gamma_0) = 2$. Then there is an infinite order hyperbolic element $\gamma \in \Gamma_0$ such that $\langle\gamma\rangle$ has finite index in $\Gamma_0$. Since $\Gamma$ acts as a convergence group on $\Lambda_\theta(\Gamma)$, we can label the fixed 
points $F^+, F^- \in \Lambda_\theta(\Gamma)$ of $\gamma$ so that $\gamma^n(F) \rightarrow F^+$ for all $F  \in \Fc_\theta$ transverse to $F^-$. 
By Proposition~\ref{prop:characterizing convergence in general symmetric case}, $U_\theta(\gamma^{-n}) \rightarrow F^-$ and $U_\theta(\gamma^n) \rightarrow F^+$. Then by Proposition~\ref{prop:multiplicative estimate v1} there exists $C > 0$ such that 
$$
\phi(\kappa_\theta(\gamma^{n+m})) \ge \phi(\kappa_\theta(\gamma^n)) + \phi(\kappa_\theta(\gamma^m)) -C
$$
for all $n,m \ge 1$. This estimate implies that $\delta^\phi(\langle\gamma\rangle) = 0$, and hence that $\delta^\phi(\Gamma_0)=0$.

Since $\Gamma$ is non-elementary,  it contains a free subgroup of rank two and hence 
$$
\delta^\phi(\Gamma_0)=0< \delta^\phi(\Gamma).
$$

\medskip 
\noindent \emph{Case 3:} Assume $\#\Lambda(\Gamma_0) = 1$. Then, after conjugating, there is a peripheral subgroup $P \in \Pc$ with $\Gamma_0 \subset P$. Then by Corollary~\ref{cor:entropy gap with peripherals} we have 
$$
\delta^\phi(\Gamma_0) \le \delta^\phi(P) < \delta^\phi(\Gamma). 
$$

\medskip 
\noindent \emph{Case 4:} Assume $\#\Lambda(\Gamma_0) = 0$. Then $\Gamma_0$ is finite and so $\delta^\phi(\Gamma_0) = 0$. 
So, as in Case 2,
\[
\delta^\phi(\Gamma_0) = 0< \delta^\phi(\Gamma).\qedhere
\]

\end{proof}

\section{Characterizing linear functions with finite critical exponent}
\label{sec:characterizing finite entropy functionals}

In this section, we give a complete analysis of which linear functionals in $\mathfrak a_\theta^*$ have associated Poincar\'e series with finite critical exponents. This generalizes the results of Sambarino~\cite{sambarino-dichotomy} for Anosov groups.

Given a subgroup $\Gamma\subset\GG$, the \emph{$\theta$-Benoist limit cone of $\Gamma$}, denoted $B_\theta(\Gamma)\subset \mathfrak{a}_\theta$, is the set of vectors $X \in \mathfrak{a}_\theta$ for which there exists a sequence $\{\gamma_n\}$ of distinct elements of $\Gamma$ and a 
sequence $\{r_n\}$ in $\mathbb R_+$ so that $r_n\kappa_\theta(\gamma_n)$ converges to $X$. 

As in Proposition~\ref{prop:reducing to no simple factors}, let  $p : \GG \rightarrow \GG^\prime$ denote the projection map of $\GG$ onto $\GG^\prime:=\GG/\mathsf{H}$, where $\mathsf{H}$ is the product of $Z(\GG)$ and the simple factors of $\GG$ contained in $\Psf_\theta$. Notice that the Benoist limit cones of a group $\Gamma \subset \GG$ and its projection $\Gamma^\prime: = p(\Gamma) \subset \GG^\prime$ satisfy
$$
\d p(B_\theta(\Gamma)) = B_{\theta^\prime}(\Gamma^\prime).
$$

We prove the following expanded version of Theorem~\ref{thm:characterizing finite entropy functionals}.

\begin{theorem}\label{thm:characterizing finite entropy functionals in paper} Suppose $\Gamma\subset \GG$ is a $\Psf_\theta$-Anosov subgroup relative to $\Pc$ and  $\phi\in \mathfrak{a}_\theta^*$. The following are equivalent: 
\begin{enumerate}
\item $\lim_{n \rightarrow \infty} \phi(\kappa_\theta(\gamma_n))=+\infty$ for every sequence of distinct elements $\{\gamma_n\}$ in $\Gamma$.
\item $\delta^\phi(\Gamma) < +\infty$. 
\item 
If  $x_0\in M' : =\GG'/\Ksf'$, where $\mathsf{K}':=p(\mathsf{K})$, then there exist constants $c\ge 1,C \ge 0$ such that 
$$
\frac{1}{c} \d_{M'}(\gamma(x_0), x_0)- C \le \phi(\kappa_\theta(\gamma)) \le c \d_{M'}(\gamma(x_0), x_0)+ C
$$
for all $\gamma \in \Gamma$ (where $\d_{M'}$ is the distance defined on $M'$ in Section~\ref{sec: ss Lie group background}). 
\item $\phi(Y)>0$ for all $Y\in B_{\theta}(\Gamma)-\{0\}$.
\item If $X$ is a Groves--Manning cusp space for $(\Gamma, \Pc)$, then there exist constants $c,C > 0$ such that 
$$
 \phi(\kappa_\theta(\gamma)) \ge c \d_X(\gamma, \id) - C
$$
for all $\gamma \in \Gamma$. 

\end{enumerate} 

\end{theorem}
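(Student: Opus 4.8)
The plan is to make two reductions that render all but one implication routine. First, using Proposition~\ref{prop:reducing to no simple factors} with quotient $p:\GG\to\GG'=\GG/\mathsf H$, $\mathsf H=Z(\GG)\prod\{\GG_j\subset\Psf_\theta\}$, reduce to the case $Z(\GG)=1$ and $\Psf_\theta$ containing no simple factor of $\GG$: here $p|_\Gamma$ has finite kernel (each $\alpha\circ\kappa$, $\alpha\in\theta$, is proper on $\Gamma$ but vanishes identically on $\Gamma\cap\ker p$), $\Gamma':=p(\Gamma)$ is $\Psf'_{\theta'}$-Anosov relative to $\{p(P)\}$, and $\d p$ carries $\kappa_\theta,B_\theta,\phi$ to $\kappa'_{\theta'},B'_{\theta'},\phi':=\phi\circ(\d p)^{-1}$, so conditions (1),(2),(4) transfer verbatim, (3) because $M'=\GG'/\Ksf'$ is literally the space in its statement, and (5) because a Groves--Manning cusp space for $(\Gamma,\Pc)$ serves, up to the finite kernel, as one for $(\Gamma',\Pc')$. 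Now Theorem~\ref{thm:properties of relatively Anosov representations} applies and $M'=M$. The second reduction is a metric comparison: since $\Psf_\theta$ contains no simple factor one has $\{H\in\mfa^+:\omega_\alpha(H)=0\ \forall\alpha\in\theta\}=\{0\}$ (the inverse Cartan matrix of each simple factor is entrywise positive), so, as $\norm\cdot$ is equivalent to $H\mapsto\max_\alpha|\omega_\alpha(H)|$ and $\omega_\alpha(\kappa_\theta(g))=\omega_\alpha(\kappa(g))$ for $\alpha\in\theta$, homogeneity and compactness give $\norm{\kappa_\theta(g)}\asymp\norm{\kappa(g)}=\d_M(g\Ksf,\Ksf)$ for all $g$; with Theorem~\ref{thm:properties of relatively Anosov representations}(1),
$$\norm{\kappa_\theta(\gamma)}\ \asymp\ \d_M(\gamma\Ksf,\Ksf)\ \asymp\ \d_X(\gamma,\id)\qquad(\gamma\in\Gamma)$$
for any cusp space $X$ of $(\Gamma,\Pc)$. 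Since $\phi(\kappa_\theta(\gamma))=\phi(\kappa(\gamma))\le\norm\phi\,\norm{\kappa(\gamma)}$, the upper bounds in (3) and (5) are automatic, so those conditions reduce to their lower bounds.

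With this in place the implications among (2),(3),(4),(5) are soft. The cone $B_\theta(\Gamma)$ is closed (a diagonal argument), so $B_\theta(\Gamma)$ intersected with the unit sphere of $\mfa_\theta$ is compact and (4) gives $\phi\ge\epsilon$ there; since every accumulation direction of $\{\kappa_\theta(\gamma_n)/\norm{\kappa_\theta(\gamma_n)}\}$ along an escaping sequence lies in $B_\theta(\Gamma)$, all but finitely many $\gamma$ satisfy $\phi(\kappa_\theta(\gamma))\ge\tfrac\epsilon2\norm{\kappa_\theta(\gamma)}$, hence $\phi(\kappa_\theta(\gamma))\ge\tfrac\epsilon2\norm{\kappa_\theta(\gamma)}-C$ for all $\gamma$, which the metric comparison converts into (3) and (5). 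Conversely (5), hence (3), implies (4): if $0\neq X=\lim r_n\kappa_\theta(\gamma_n)\in B_\theta(\Gamma)$ with the $\gamma_n$ distinct, then $\gamma_n$ escapes, $r_n\norm{\kappa_\theta(\gamma_n)}\to\norm X$, and $\phi(X)=\lim r_n\phi(\kappa_\theta(\gamma_n))\ge c'\norm X>0$ using $\phi(\kappa_\theta(\gamma))\ge c'\norm{\kappa_\theta(\gamma)}-C'$. Also (3),(5)$\Rightarrow$(2): $\{\gamma:\phi(\kappa_\theta(\gamma))\le M\}$ lies in a ball of radius $\lesssim M$ in $M$, and $\Gamma$-orbits in the symmetric space grow at most exponentially. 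And (2)$\Rightarrow$(1): were $\{\gamma:\phi(\kappa_\theta(\gamma))\le M\}$ infinite, $Q_\Gamma^\phi(s)=+\infty$ for every $s>0$. So (2),(3),(4),(5) are equivalent and each implies (1).

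It remains to prove (1)$\Rightarrow$(2); by the above it suffices to deduce (5). Fix $P\in\Pc$. By Theorem~\ref{thm:properties of relatively Anosov representations}(2), $P$ is a cocompact lattice in $\mathsf H=\mathsf L\ltimes\mathsf U$ with $\mathsf L$ compact; writing $\phi=\sum_{\alpha\in\theta}c_\alpha\omega_\alpha$ and using Proposition~\ref{prop:properties of unipotent subgroups}(5), the function $R:=\prod_{\alpha\in\theta}R_\alpha^{c_\alpha/M_\alpha}$ on $\mfu$ (the Lie algebra of $\mathsf U$) satisfies $e^{\phi(\kappa_\theta(\exp Y))}\asymp R(Y)$. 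The argument of Lemma~\ref{lem: R is proper}, which uses only that escaping sequences in $\Gamma$ have $\phi(\kappa_\theta(\cdot))\to+\infty$ — precisely hypothesis (1) — shows $R$ is proper, so Theorem~\ref{thm:critical exponent of products of rational functions}(2) gives $R(Y)\ge c(1+\norm Y)^\epsilon$, i.e. $\phi(\kappa_\theta(\exp Y))\ge\epsilon\log(1+\norm Y)-C$; since the $P$-word length of the lattice point nearest $\exp Y$ is polynomially bounded in $\norm Y$ and $\d_X$ grows logarithmically in word length along a combinatorial horoball, this becomes $\phi(\kappa_\theta(p))\ge c_P\,\d_X(p,\id)-C_P$ for all $p\in P$. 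For general $\gamma\in\Gamma$ one takes a geodesic $f:[0,T]\to X$ from $\id$ to $\gamma$, splits it into maximal horoball excursions and complementary Cayley-graph arcs (endpoints in $\Gamma$), and uses Proposition~\ref{prop:multiplicative estimate v2} to compare $\phi(\kappa_\theta(\gamma))$ with the sum of the increments: the peripheral bound governs the excursions, while on the Cayley-graph arcs, where $\d_X$ agrees with the word metric, one controls $\phi\circ\kappa_\theta$ via Proposition~\ref{prop:multiplicative estimate v1} and the properness from (1). Summing yields $\phi(\kappa_\theta(\gamma))\ge c\,\d_X(\gamma,\id)-C$, which is (5). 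Once (1)$\Leftrightarrow$(4) is established, the stated consequence about the $\theta$-Benoist limit cone is a restatement of (4), and $\d p(B_\theta(\Gamma))=B_{\theta'}(\Gamma')$ makes it insensitive to the reduction.

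\emph{Main obstacle.} The serious step is the last one: upgrading the peripheral estimate $\phi(\kappa_\theta(p))\gtrsim\d_X(p,\id)$ to the global lower bound (5), where one must control $\phi\circ\kappa_\theta$ along the "thick" Cayley-graph portions of a cusp-space geodesic — on which, a priori, $\phi\circ\kappa_\theta$ of a single increment is only bounded below by a constant. This is the relatively Anosov analogue of the fact that for Anosov groups $\phi\circ\kappa_\theta$ is coarsely bi-Lipschitz to the word metric precisely when $\phi$ is positive on the limit cone, and it is where the geometry of the cusped Cayley graph and the multiplicative estimates of Section~\ref{sec:rel hyp groups background}–type must be combined with care.
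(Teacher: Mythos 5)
Most of your outline does track the paper's proof: the reduction via Proposition~\ref{prop:reducing to no simple factors}, the soft equivalences among (2), (3), (4), (5) (your route (4)$\Rightarrow$(3),(5) through closedness of $B_\theta(\Gamma)$ and the comparison $\norm{\kappa_\theta(g)}\asymp\norm{\kappa(g)}$ when $\Psf_\theta$ contains no simple factor is a correct, slightly reorganized version of what the paper extracts from Theorem~\ref{thm:properties of relatively Anosov representations} together with (4)$\Rightarrow$(1)), the volume-growth argument for $\Rightarrow$(2), and, for (1)$\Rightarrow$(5), the peripheral lower bound obtained by rerunning Lemma~\ref{lem: R is proper} under hypothesis (1) alone and invoking Theorem~\ref{thm:critical exponent of products of rational functions}(2); that is exactly the paper's Lemma~\ref{lem: good growth along peripherals} (your conversion to $\d_X$ via word length and horoball depth, instead of Proposition~\ref{prop:properties of unipotent subgroups}(4) and Theorem~\ref{thm:properties of relatively Anosov representations}(1), is an acceptable variant).

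The genuine gap is in the global step of (1)$\Rightarrow$(5), which you yourself label the ``main obstacle'' without resolving it. Your plan --- split the geodesic into horoball excursions and complementary Cayley-graph arcs and control the arcs ``via Proposition~\ref{prop:multiplicative estimate v1} and the properness from (1)'' --- does not work as stated: Proposition~\ref{prop:multiplicative estimate v2} costs an additive error $C_0$ at every concatenation point, while on a thick arc a single increment is a priori bounded below only by a constant, so partitioning at (say) every group element along the arc gives a sum of bounded gains minus a proportional number of $C_0$'s and yields no linear lower bound; moreover the claim that $\d_X$ agrees with the word metric on Cayley-graph arcs is false (the cusp-space metric on $\Gamma$ is in general much smaller than the word metric). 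The missing idea, which is the heart of the paper's argument, is a threshold-and-maximal-partition device: since the $\Gamma$-action on $X$ is properly discontinuous, hypothesis (1) produces $T_0>0$ with $\phi(\kappa_\theta(\gamma))>1+C_0$ whenever $\d_X(\gamma,\id)\ge T_0$ (Equation~\eqref{eqn:thick translation}); one then takes a partition of the geodesic by group elements with consecutive gaps $\ge T_0$ and a maximal number of partition points. Every gap of length at most $(B+2)T_0$ then beats the additivity error by at least $1$, hence contributes $\ge c(t_{n+1}-t_n)+C_0$, and every longer gap is forced, by maximality, to lie (outside $T_0$-neighborhoods of its endpoints) in a single combinatorial horoball, where Lemma~\ref{lem: good growth along peripherals} supplies a gain linear in its length. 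Without this choice of scale tied to $C_0$, your decomposition does not produce the lower bound $\phi(\kappa_\theta(\gamma))\ge c\,\d_X(\gamma,\id)-C$.
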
 

Indeed, statements (1) and (2) of Theorem~\ref{thm:characterizing finite entropy functionals} are the same as statements (1) and (2) of Theorem~\ref{thm:characterizing finite entropy functionals in paper}. Furthermore, when $\Psf_\theta$ contains no simple factors of $\GG$, then $\GG^\prime = \GG / Z(\GG)$. Thus, in this case $M = M^\prime$ and so statement (3) of Theorem~\ref{thm:characterizing finite entropy functionals} is equivalent to statement (3) of  Theorem~\ref{thm:characterizing finite entropy functionals in paper}.

We will now prove Theorem~\ref{thm:characterizing finite entropy functionals in paper}. By part (3) of Proposition~\ref{prop:reducing to no simple factors}, we may assume that $\GG$ has trivial center and $\Psf_\theta$ contains no simple factors of $\GG$, in which case $M'=M:=\GG/\Ksf$. Then note that any one of (2), (3), (4) or (5) immediately imply (1), and (3) also immediately implies (4). Since $\Psf_\theta$ contains no simple factors of $\GG$, the equivalence of (3) and (5) follows from Theorem~\ref{thm:properties of relatively Anosov representations}. So it suffices to show that (3) implies (2) and (1) implies (5).

\medskip\noindent
{\em Proof of} (3) $\Rightarrow$ (2): Without loss of generality we may assume that $x_0 := \Ksf \in M=\GG/\Ksf$. By assumption, there exist constants $c,C > 0$ such that 
$$
 \phi(\kappa_\theta(\gamma)) \ge c \d_M(\gamma(x_0), x_0)- C
$$
for all $\gamma \in \Gamma$. Then $\delta^\phi(\Gamma)\le \frac{1}{c}\delta_M(\Gamma)$, where 
\[
\delta_M(\Gamma):=\lim_{T\to\infty}\frac{\log\#\left\{\gamma\in\Gamma:\d_M( \gamma(x_0), x_0) <T\right\}}{T}.
\]

Recall that the volume growth entropy of $M$ is 
$$
h(M) : = \limsup_{T \rightarrow \infty} \frac{\log {\rm Vol}_M( B_T(x_0) )}{T} 
$$
where ${\rm Vol}_M$ is the Riemannian volume on $M$ and $B_r(x_0) \subset M$ is the open ball of radius $r > 0$. Since $M$ has bounded sectional curvature, volume comparison theorems imply that $h(M) <+\infty$.

Fix $r_0 > 0$ and for $T > 0$ let $\Gamma_T : = \left\{\gamma\in\Gamma:\d_M(\gamma(x_0), x_0) <T\right\}$. Then
$$
\#\Gamma_T = \frac{1}{{\rm Vol}_M(B_{r_0}(x_0))} \sum_{\gamma \in \Gamma_T} {\rm Vol}_M(B_{r_0}(\gamma x_0)) \le \frac{\#\Gamma_{2r_0} }{{\rm Vol}_M(B_{r_0}(x_0))} {\rm Vol}_M(B_{T+r_0}(x_0)). 
$$
Thus $\delta_M(\Gamma) \le h(M)< +\infty$.
\qed

\medskip

The proof that (1) implies (5) is more technical, so we provide  a brief outline. We first use Proposition~\ref{prop:properties of unipotent subgroups} to provide a lower bound for $\phi\circ\kappa_\theta$
on peripheral subgroups, see Lemma~\ref{lem: good growth along peripherals}. We then divide a geodesic joining $\id$ to $\gamma\in\Gamma$ in the Groves--Manning cusp space $X$ into segments $\overline{\gamma_i\gamma_{i+1}}$
with endpoints in $\Gamma$ which either (a) have a pre-chosen size guaranteeing that   $\phi(\kappa_\theta(\rho(\gamma_{i+1}\gamma_i^{-1})))$ is large enough, or (b) are at least
as long as the pre-chosen size and lie entirely in a cusped portion of $X$. We then apply Proposition~\ref{prop:multiplicative estimate v2} to show that the image
of the Cartan projections are roughly additive along the segment.

\medskip\noindent
{\em Proof of} (1) $\Rightarrow$ (5): Suppose that 
$$
\lim_{n \rightarrow \infty} \phi(\kappa_\theta(\gamma_n))=+\infty
$$
for every sequence of distinct elements $\{\gamma_n\}$ in $\Gamma$.

Fix a Groves--Manning cusp space $X$ for $(\Gamma, \Pc)$. We first control the growth of peripheral elements.

\begin{lemma}\label{lem: good growth along peripherals} There exist $c_1, C_1 > 0$ such that: if $P \in \Pc$ and $\upsilon \in P$, then 
$$
\phi(\kappa_\theta(\upsilon)) \ge c_1 \d_X(\upsilon, \id) - C_1.
$$
\end{lemma}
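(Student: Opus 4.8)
The plan is to prove the inequality for one fixed $P\in\Pc$ with constants depending on $P$, and then take the minimum of the resulting $c_1$'s and the maximum of the resulting $C_1$'s over the finite collection $\Pc$. So fix $P\in\Pc$. Since we are working under the standing reduction that $Z(\GG)$ is trivial and $\Psf_\theta$ contains no simple factors of $\GG$, Theorem~\ref{thm:properties of relatively Anosov representations} applies and gives a closed subgroup $\mathsf H=\mathsf L\ltimes\mathsf U$ with $\mathsf L$ compact, $\mathsf U$ the unipotent radical, in which $P$ is a cocompact lattice; write $\mathfrak u$ for the Lie algebra of $\mathsf U$.

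The first half of the argument produces a lower bound $\phi(\kappa_\theta(\exp Y))\ge \epsilon\log(1+\norm{Y})-C'$ for all $Y\in\mathfrak u$. Writing $\phi=\sum_{\alpha\in\theta}c_\alpha\omega_\alpha$ and raising the two-sided estimates $\frac{1}{C_\alpha}R_\alpha(Y)^{1/M_\alpha}\le e^{\omega_\alpha(\kappa_\theta(\exp Y))}\le C_\alpha R_\alpha(Y)^{1/M_\alpha}$ of Proposition~\ref{prop:properties of unipotent subgroups}(5) to the signed power $c_\alpha$ and multiplying, one gets $C_\phi^{-1}T(Y)\le e^{\phi(\kappa_\theta(\exp Y))}\le C_\phi T(Y)$, where $T:=\prod_{\alpha\in\theta}R_\alpha^{c_\alpha/M_\alpha}$ and $C_\phi:=\prod_\alpha C_\alpha^{\abs{c_\alpha}}$. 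Arguing exactly as in the proof of Lemma~\ref{lem: R is proper}, but invoking hypothesis (1) in place of the finiteness of $\delta^\phi(\Gamma)$ (legitimate, since we are proving (1)$\Rightarrow$(5)), one shows $\phi(\kappa_\theta(\exp Y_n))\to+\infty$ whenever $\{Y_n\}$ escapes in $\mathfrak u$; combined with $e^{\phi(\kappa_\theta(\exp Y))}\le C_\phi T(Y)$ this shows $T$ is proper. As $T$ is a product of real powers of everywhere-defined positive rational functions on $\mathfrak u\cong\Rb^{\dim\mathfrak u}$, Theorem~\ref{thm:critical exponent of products of rational functions}(2) gives $c,\epsilon>0$ with $T(Y)\ge c(1+\norm{Y})^\epsilon$, and taking logarithms yields the claimed lower bound.

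The second half is the geometric comparison $\d_X(\upsilon,\id)\le C_2\log(2+\norm{Y_\upsilon})+C_2'$ for $\upsilon\in P$, where $\upsilon=\ell_\upsilon u_\upsilon$ with $\ell_\upsilon\in\mathsf L$, $u_\upsilon\in\mathsf U$, and $u_\upsilon=\exp Y_\upsilon$. First I would bound the word length of $\upsilon$ in the adapted generating set: by the Milnor--Švarc lemma the orbit map $p\mapsto p$ is a quasi-isometry $(P,\abs{\cdot}_{S\cap P})\to(\mathsf H,d_{\mathsf H})$ for a left-invariant Riemannian metric $d_{\mathsf H}$ on $\mathsf H$; going through $\ell_\upsilon$ and using left-invariance, $d_{\mathsf H}(\upsilon,\id)\le d_{\mathsf H}(u_\upsilon,\id)+\operatorname{diam}_{d_{\mathsf H}}(\mathsf L)$, while $d_{\mathsf H}(\exp Y_\upsilon,\id)\le\norm{Y_\upsilon}$ since the path $t\mapsto\exp(tY_\upsilon)$ has $d_{\mathsf H}$-length $\norm{Y_\upsilon}$ (its velocity is the left-translate of $Y_\upsilon$). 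Hence $\abs{\upsilon}_{S\cap P}\le a\norm{Y_\upsilon}+b$. Then, $\upsilon$ and $\id$ lie on level $1$ of the combinatorial horoball over $\Cc(P,S\cap P)$ attached at the coset $\id\cdot P$, and the up--across--down path through level $\lceil\log_2\abs{\upsilon}_{S\cap P}\rceil+1$ gives $\d_X(\upsilon,\id)\le 2\log_2(1+\abs{\upsilon}_{S\cap P})+3$ (the elementary horoball distance estimate, cf.\ \cite[Lem.\ 3.10]{GrovesManning}); the trivial case $\upsilon=\id$ being vacuous. Chaining the two bounds gives the displayed inequality.

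Finally I would assemble the pieces. For $\upsilon\in P$, Lemma~\ref{lem:kappa multiplication estimate} and the compactness of $\mathsf L$ give $\abs{\phi(\kappa_\theta(\upsilon))-\phi(\kappa_\theta(u_\upsilon))}\le\norm{\phi}\max_{\ell\in\mathsf L}\norm{\kappa(\ell)}=:C''$, so $\phi(\kappa_\theta(\upsilon))\ge\epsilon\log(1+\norm{Y_\upsilon})-C'-C''\ge\tfrac{\epsilon}{C_2}\bigl(\d_X(\upsilon,\id)-C_2'\bigr)-C'-C''$, which is the desired inequality for this $P$ with $c_1=\epsilon/C_2$. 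Taking the worst constants over the finite set $\Pc$ completes the proof. The step I expect to be the main obstacle is the geometric comparison in the third paragraph: getting the word-length versus Lie-algebra-coordinate bound and the horoball distance estimate stated precisely enough that they compose without an error, and correctly handling the nonabelian $\mathsf L$-factor via left-invariance. The analytic input (properness of $T$ together with Theorem~\ref{thm:critical exponent of products of rational functions}) is essentially a re-run of material already in the paper.
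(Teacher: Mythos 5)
Your argument is correct in outline, and its second half is genuinely different from the paper's. The first half (writing $\phi=\sum_{\alpha\in\theta}c_\alpha\omega_\alpha$, combining the estimates of Proposition~\ref{prop:properties of unipotent subgroups}(5), proving properness of the resulting product of powers of rational functions from hypothesis (1), and invoking Theorem~\ref{thm:critical exponent of products of rational functions}(2) to get $\phi(\kappa_\theta(\exp Y))\ge\epsilon\log(1+\norm{Y})-C'$) is exactly what the paper does, and your remark that hypothesis (1) substitutes for the finiteness of $\delta^\phi(\Gamma)$ in the properness argument of Lemma~\ref{lem: R is proper} is the right way to read that citation. Where you diverge is in converting the logarithmic bound in $\norm{Y}$ into a linear bound in $\d_X(\upsilon,\id)$: the paper uses Proposition~\ref{prop:properties of unipotent subgroups}(4) ($\norm{\kappa(\exp Y)}\le A+A\log(1+\norm{Y})$) to obtain $\phi(\kappa_\theta(\upsilon))\ge c\norm{\kappa(\upsilon)}-C$ on $P$, and then appeals to part (1) of Theorem~\ref{thm:properties of relatively Anosov representations} (the quasi-isometry between $\d_X$ and $\d_M$, which rests on the results of Zhu--Zimmer), whereas you bound $\d_X(\upsilon,\id)$ directly by $O(\log\norm{Y_\upsilon})$ via Milnor--\v{S}varc for the cocompact lattice $P\subset\mathsf H$ together with the up--across--down path in the combinatorial horoball. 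Your route is more self-contained for this lemma (it avoids both Proposition~\ref{prop:properties of unipotent subgroups}(4) and the deep input of Theorem~\ref{thm:properties of relatively Anosov representations}(1)), at the cost of redoing some coarse geometry in the Groves--Manning space; the paper's route is shorter given the machinery it already has in place and yields the intermediate comparison with $\norm{\kappa(\upsilon)}$, which is in the spirit of the equivalence (3)$\Leftrightarrow$(5).

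One spot needs a small patch: $\mathsf H$ (equivalently $\mathsf L$) may be disconnected, so a left-invariant \emph{Riemannian} metric on $\mathsf H$ assigns infinite distance between components, and both ``$\operatorname{diam}_{d_{\mathsf H}}(\mathsf L)$'' and the Milnor--\v{S}varc hypothesis (proper geodesic space) fail as written. This is easily repaired: either replace $d_{\mathsf H}$ by a word metric on $\mathsf H$ with respect to a compact generating set (for which $d_{\mathsf H}(\exp Y,\id)\le\norm{Y}+O(1)$ still holds by subdividing the one-parameter path, and Milnor--\v{S}varc applies), or run the argument for $P_0=P\cap\mathsf H^0$, which is a cocompact lattice in the connected group $\mathsf H^0=\mathsf L^0\times\mathsf U$ and has finite index in $P$, and absorb the finitely many coset representatives into the constants via Lemma~\ref{lem:kappa multiplication estimate}. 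With that adjustment (and the harmless identification of the two norms on $\mathfrak u$ used in the two halves), the constants chain as you describe and the proof is complete.
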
 

\begin{proof}
Recall that $\d_M$ was defined so that $\d_M(g\Ksf, \Ksf) = \norm{\kappa(g)}$ for all $g \in \GG$. Then by  Theorem~\ref{thm:properties of relatively Anosov representations} it suffices to find $c_1, C_1 > 0$ such that: if $P \in \Pc$ and $\upsilon \in P$, then 
$$
\phi(\kappa_\theta(\upsilon)) \ge c_1 \norm{\kappa(\upsilon)} - C_1. 
$$
Then, since $\Pc$ is finite, it is enough to fix $P \in \Pc$ and find constants $c_P, C_P > 0$ such that: if $\upsilon \in P$, then 
$$
\phi(\kappa_\theta(\upsilon)) \ge c_P \norm{\kappa(\upsilon)} - C_P.
$$

By Theorem~\ref{thm:properties of relatively Anosov representations} there exists a closed subgroup $\mathsf{H} \subset \GG$ with finitely many components such that: \begin{enumerate}
\item $P$ is a cocompact lattice in $\mathsf{H}$.
\item $\mathsf{H} = \mathsf{L} \ltimes \mathsf{U}$ where $\mathsf{L}$ is compact and $\mathsf{U}$ is the unipotent radical of $\mathsf{H}$. 
\end{enumerate} 
Let $\mathfrak{u}$ denote the Lie algebra of $\mathsf{U}$. By Proposition~\ref{prop:properties of unipotent subgroups}, $\mathsf{U} = \exp(\mathfrak{u})$. By the same proposition, for any $\alpha \in \theta$ there exist $M_\alpha \in \Nb$, $C_\alpha>1$ and a positive everywhere defined rational function $R_\alpha: \mathfrak{u} \rightarrow \Rb$ where 
$$
\frac{1}{C_\alpha} R_\alpha(Y)^{1/M_\alpha} \le e^{\omega_\alpha(\kappa(\exp(Y)))} \le C_\alpha R_\alpha(Y)^{1/M_\alpha}
$$
for all $Y \in \mathfrak{u}$. 

Write $\phi = \sum_{\alpha \in \theta} c_\alpha \omega_\alpha$, and define $R : = \prod_{\alpha \in \theta} R_\alpha^{\abs{c_\alpha}/M_\alpha}$ and $C_\phi: = \prod_{\alpha \in \theta} C_\alpha^{\abs{c_\alpha}}$. Then 
\[\frac{1}{C_\phi} R(Y) \le e^{\phi(\kappa_\theta(\exp(Y)))} \le C_\phi R(Y)\] 
for all $s\in\Rb$. We proved in Lemma~\ref{lem: R is proper} that $R$ is positive and proper, so by Theorem~\ref{thm:critical exponent of products of rational functions}  there exist $c_2, \epsilon > 0$ such that 
$$
R(Y) \ge c_2(1+\norm{Y})^\epsilon
$$
for all $Y \in \mathfrak{u}$. By Proposition~\ref{prop:properties of unipotent subgroups}, there exist $A > 0$ such that 
$$
\norm{\kappa(\exp(Y)) } \le A+A\log (1+\norm{Y})
$$
for all $Y \in \mathfrak{u}$. Finally, let 
$$
M : = \max \{ \norm{\kappa(\ell)} : \ell \in \mathsf{L}\}.
$$

If $\upsilon \in P$, then $\upsilon = \ell \exp(Y)$ for some $\ell \in \mathsf{L}$ and $Y \in \mathfrak{u}$. So by Lemma~\ref{lem:kappa multiplication estimate},
\begin{align*}
\phi(\kappa_\theta(\upsilon)) & \ge \phi(\kappa_\theta(\exp(Y))) - M\norm{\phi}\\
& \ge \log R(Y) - M\norm{\phi}  - \log C_\phi \\
& \ge \epsilon \log(1+ \norm{Y}) - M\norm{\phi}  - \log C_\phi + \log c_2 \\
& \ge \frac{\epsilon}{A} \norm{\kappa(\exp(Y)) }- \frac{\epsilon}{A} - M\norm{\phi}  - \log C_\phi + \log c_2 \\
& \ge \frac{\epsilon}{A} \norm{\kappa(\upsilon) }- \frac{\epsilon}{A}M - \frac{\epsilon}{A} - M\norm{\phi}  - \log C_\phi + \log c_2. \qedhere
\end{align*}
\end{proof}

By Proposition~\ref{prop:multiplicative estimate v2} there exists $C_0 > 0$ such that: if $f : [0,T] \rightarrow X$ is a geodesic with $f(0)=\id$, and $f(t_1) ,f(t_2) \in \Gamma$ for some $0\le t_1 \le t_2\le T$, then 
\begin{equation}\label{eqn: multiplicative along geodesics} 
\abs{\phi\big(\kappa_\theta(f(t_2))\big) - \phi\big( \kappa_\theta(f(t_1))\big)- \phi\big( \kappa_\theta(f(t_1)^{-1}f(t_2)) \big)} \le C_0. 
\end{equation} 
By hypothesis, $\lim_{n \rightarrow \infty} \phi(\kappa_\theta(\gamma_n))=+\infty$  for every sequence of distinct elements $\{\gamma_n\}$ in $\Gamma$, so there exists $T_0 > 0$ such that: if $\gamma \in \Gamma$ and $\d_X(\gamma, \id) \ge T_0$, then 
\begin{equation}\label{eqn:thick translation} 
\phi(\kappa_\theta(\gamma)) > 1 + C_0. 
\end{equation}
Then let 
$$
C_2: = \max\{ \abs{\phi(\kappa_\theta(\gamma))} : \d_X(\gamma, \id) < T_0\},
$$
let 
$$
B : = \frac{2}{c_1 T_0}\left( 3C_0 + C_1+2C_2 + c_1 T_0\right) > 2,
$$
and let 
$$
c : = \min\left\{ \frac{c_1}{2}, \frac{1}{(B+2) T_0}\right\}\quad\text{and}\quad C:=C_2+cT_0. 
$$
We will show that (5) holds with $c$ and $C$ as described above.

Fix $\gamma \in \Gamma$ and let $f : [0,T] \rightarrow X$ be a geodesic with $f(0) = \id$ and $f(T) = \gamma$. 
If $T < T_0$, then 
$$
\phi(\kappa_\theta(\gamma)) \ge -C_2 \ge c  \d_X(\gamma, \id) - C. 
$$
If $T\ge T_0$, fix a partition 
$$
0 =t_0 < t_1 < \dots < t_m = T
$$
with the following properties: 
\begin{enumerate}
\item $t_{n+1} - t_n \ge T_0$ for $n =0, \dots, m-1$,
\item $\gamma_n:=f(t_n) \in \Gamma$ for $n=0,\dots, m$ and
\item if $0 =s_0 < s_1 < \dots < s_{m^\prime} = T$ is another partition with the first two properties, then $m^\prime \le m$. 
\end{enumerate} 
Then by Equation~\eqref{eqn: multiplicative along geodesics}, 
$$
\phi(\kappa_\theta(\gamma)) =\sum_{n=0}^{m-1} \Big(\phi(\kappa_\theta(\gamma_{n+1}))-\phi(\kappa_\theta(\gamma_n))\Big)\ge \sum_{n=0}^{m-1} \Big(\phi(\kappa_\theta(\gamma_n^{-1}\gamma_{n+1}))-C_0 \Big). 
$$
Thus to complete the proof it suffices to verify that 
\begin{align}\label{eqn: pieces}
\phi(\kappa_\theta(\gamma_n^{-1}\gamma_{n+1})) \ge c (t_{n+1}-t_n)+C_0
\end{align}
for each $n=0,\dots, m-1$. Indeed, if this were the case, then
\[\phi(\kappa_\theta(\gamma)) \ge c \sum_{n=0}^{m-1} (t_{n+1}-t_n)=cT \ge c\d_X(\gamma,\id)-C. \]

We will now prove Equation \eqref{eqn: pieces}. Fix $n \in \{0,\dots, m-1\}$. If $t_{n+1}-t_n < (B+2) T_0$, then Equation~\eqref{eqn:thick translation}  implies that
$$
\phi(\kappa_\theta(\gamma_n^{-1}\gamma_{n+1})) \ge 1+C_0 \ge c(t_{n+1}-t_n) + C_0. 
$$
If $t_{n+1} - t_n >  (B+2) T_0$, then by the maximality of the partition and the fact that $B+2>4$, there exist $P \in \Pc$, $\eta \in \Gamma$ and $a,b \in [t_n,t_{n+1}]$ such that: 
\begin{enumerate}
\item $a \in [t_n, t_n+T_0)$, $b \in (t_{n+1}- T_0, t_{n+1}]$ (hence $a<b$),
\item $f(a), f(b) \in \eta P$, and 
\item $f|_{[a,b]}$ is contained in the combinatorial horoball associated to $\eta P$. 
\end{enumerate} 
Then by applying Equation~\eqref{eqn: multiplicative along geodesics} to the geodesic $t\mapsto\gamma_n^{-1}f(t)$, 
\begin{align*}
\phi(\kappa_\theta(\gamma_n^{-1}\gamma_{n+1})) & \ge \phi(\kappa_\theta(\gamma_n^{-1}f(a)))+\phi(\kappa_\theta(f(a)^{-1}f(b)))+\phi(\kappa_\theta(f(b)^{-1}\gamma_{n+1}))-2C_0 \\
& \ge \phi(\kappa_\theta(f(a)^{-1}f(b)))  -2C_0- 2C_2,
\end{align*}
where the last equality holds by the definition of $C_2$. Since $f(a)^{-1}f(b) \in P$ and 
$$
 \d_X(f(a)^{-1}f(b), \id)=\d_X(f(a), f(b)) > B T_0,
 $$
by Lemma~\ref{lem: good growth along peripherals}, we then have 
 \begin{align*}
\phi(\kappa_\theta(\gamma_n^{-1}\gamma_{n+1}))& \ge c_1 \d_X(f(a)^{-1}f(b), \id) -2C_0-C_1-2C_2\\
& \ge \frac{c_1}{2} \d_X(f(a), f(b)) +C_0 + c_1 T_0 \\
& \ge c(t_n - t_{n-1}) + C_0. 
\end{align*}
This completes the proof.
\qed

\appendix

\section{Proof of Proposition \ref{prop:properties of unipotent subgroups}}\label{app: unipotent subgroups}

In this appendix, we prove Proposition \ref{prop:properties of unipotent subgroups}. We start with an observation about the linear case. 

\begin{lemma}\label{lem:estimate on sigma1 when linear} If $\mathsf{U} \subset \SL(d,\Rb)$ is a connected unipotent group with Lie algebra $\mathfrak{u}$, then there exist $C_0 > 1$ and a positive polynomial $P : \mathfrak{u} \rightarrow \Rb$ 
such that 
$$
\frac{1}{C_0} P(Y)^{1/2} \le \sigma_1(e^Y) \le C_0P(Y)^{1/2}
$$
for all $Y \in \mathfrak{u}$. 
\end{lemma}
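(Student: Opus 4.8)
The plan is to take for $P$ essentially the squared Hilbert--Schmidt norm of $e^Y$, and then compare that norm with the operator norm, which is exactly $\sigma_1$.

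First I would note that every $Y \in \mathfrak{u}$ is a nilpotent matrix: since $\mathsf{U}$ is connected and unipotent, $e^{tY}$ is unipotent for every $t \in \Rb$, so every eigenvalue of $Y$ vanishes and hence $Y^d = 0$ (alternatively, Engel's theorem shows $\mathfrak{u}$ consists of nilpotent endomorphisms). Therefore the exponential series terminates,
$$
e^Y = \sum_{k=0}^{d-1} \frac{1}{k!}\, Y^k,
$$
so each entry $(e^Y)_{ij}$ is a polynomial in the entries of $Y$, and hence a polynomial function on $\mathfrak{u} \cong \Rb^{\dim\mathfrak{u}}$. I would then define
$$
P(Y) := \norm{e^Y}_{\mathrm{HS}}^2 = \sum_{i,j=1}^d \big((e^Y)_{ij}\big)^2,
$$
which is a polynomial function on $\mathfrak{u}$. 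It is everywhere positive: since $\tr Y = 0$ we have $\det(e^Y) = e^{\tr Y} = 1$, so the singular values of $e^Y$ multiply to $1$, forcing $\sigma_1(e^Y) \ge 1$ and thus $P(Y) \ge \sigma_1(e^Y)^2 \ge 1 > 0$.

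Next I would invoke the elementary comparison of matrix norms: if $\sigma_1 \ge \cdots \ge \sigma_d$ are the singular values of a matrix $A$, then
$$
\sigma_1 = \norm{A}_{\mathrm{op}} \le \norm{A}_{\mathrm{HS}} = \Big(\sum_{i=1}^d \sigma_i^2\Big)^{1/2} \le \sqrt{d}\,\sigma_1 .
$$
Applying this with $A = e^Y$ gives
$$
\frac{1}{\sqrt{d}}\, P(Y)^{1/2} \le \sigma_1(e^Y) \le P(Y)^{1/2}
$$
for all $Y \in \mathfrak{u}$, so the lemma holds with $C_0 := \max\{2,\sqrt{d}\}$, the case $d = 1$ being trivial (then $\mathsf{U}$ is trivial and one may take $P \equiv 1$).

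I do not expect a genuine obstacle here; the only points that need a moment's care are the nilpotency of elements of $\mathfrak{u}$ (which is what makes $e^Y$ polynomial in $Y$) and the positivity of $P$, and both follow immediately from $\det(e^Y) = 1$. If one instead wanted to avoid the norm comparison, one could argue via the characteristic polynomial of $(e^Y)^{\mathsf{T}} e^Y$, whose coefficients are polynomials in $Y$ and whose largest root is $\sigma_1(e^Y)^2$, but the Hilbert--Schmidt route above is cleaner.
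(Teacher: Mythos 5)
Your proof is correct and follows essentially the same route as the paper: both take $P(Y)=\norm{e^Y}_{\mathrm{HS}}^2$, use nilpotency of $\mathfrak{u}$ to see that $P$ is a polynomial, and compare the Hilbert--Schmidt norm with $\sigma_1$ (the paper quotes equivalence of norms on $\mathrm{End}(\Rb^d)$, while you use the explicit bound $\sigma_1\le\norm{\cdot}_{\mathrm{HS}}\le\sqrt{d}\,\sigma_1$, which is the same idea with a concrete constant).
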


\begin{proof} Define $P : \mathfrak{u} \rightarrow \Rb$ by 
$$
P(Y) = \sum_{1 \le i,j \le d} \left[e^Y\right]_{i,j}^2.
$$
Since $\mathfrak{u}$ is nilpotent, see ~\cite[Section 4.8]{BorelBook},  $P$ is a polynomial. Observe that the Euclidean norm $\norm{\cdot}_{\rm Euc}:\mathrm{End}(\Rb^d)\to\Rb$ and the first singular value $\sigma_1:\mathrm{End}(\Rb^d)\to\Rb$ are both norms on the vector space $\mathrm{End}(\Rb^d)$, so there exists $C_0>1$ such that
\[\frac{1}{C_0}\le\frac{\sigma_1(X)}{\norm{X}_{\rm Euc}}\le C_0\]
for all $X\in\mathrm{End}(\Rb^d)$. Since $P(Y)^{1/2}=\norm{e^Y}_{\rm Euc}$ for all $Y \in \mathfrak{u}$, the lemma follows.
\end{proof} 

\begin{proof}[Proof of Proposition~\ref{prop:properties of unipotent subgroups}] \
Proof of (1). Since $Z(\GG)$ is trivial, $\Usf$ is isomorphic to ${\rm Ad}(\Usf)$. Since each element of ${\rm Ad}(\Usf)$ is unipotent, the matrix logarithm 
$$
\log(A) = \sum_{n=1}^\infty (-1)^{n+1} \frac{(A-\id)^n}{n}
$$
is well defined on ${\rm Ad}(\Usf)$. So the exponential map of $\mathsf{SL}(\mathfrak{g})$ induces a diffeomorphism
\hbox{${\rm ad}(\mathfrak{u}) \rightarrow {\rm Ad}(\mathsf{U})$}, which implies that the exponential map of $\GG$ induces a diffeomorphism $\mathfrak{u} \rightarrow \mathsf{U}$.

Proof of (2). See for instance~\cite[Prop.\ 10.14]{LieGroupsSanMartin}. 

Proof of (3). See for instance \cite[Prop.\ 3.4.2]{ZimmerBook}. 

Proof of (4). Let $d := \dim \mathfrak{g}$ and fix a linear  identification $\mathfrak{g} = \Rb^d$. This induces identifications $\mathsf{GL}(\mathfrak{g})=\mathsf{GL}(d, \Rb)$ and $\mathfrak{sl}(\mathfrak{g})=\mathfrak{sl}(d,\Rb)$. Using the root space decomposition, we can pick our identification so that ${\rm ad}(\mathfrak{a})$ is a subgroup of the diagonal matrices in $\mathsf{GL}(\mathfrak{g})=\mathsf{GL}(d, \Rb)$

Since $Z(\mathsf{G})$ is trivial, ${\rm ad}: \mathfrak{g} \rightarrow \mathfrak{sl}(\mathfrak{g})$ is injective, so the map 
\[\sigma_1\circ{\rm ad}:\mfg\to\Rb\] 
is a norm on $\mathfrak{g}$. Hence there exists $C_1 > 1$ such that 
$$
\norm{X} \le  C_1 \sigma_1( {\rm ad}(X) )
$$
for all $X \in \mathfrak{g}$. Then, since ${\rm ad}(\mathfrak{a})$ is a subgroup of the diagonal matrices, 
$$
\norm{Y} \le C_1 \sigma_1( {\rm ad}(Y) )=C_1\log \sigma_1({\rm Ad}(e^Y))
$$
for all $Y \in \mathfrak{a}$. Hence, by the $\mathsf{KAK}$-decomposition, 
\begin{align*}
\norm{ \kappa(g)} \le C_1  \log \sigma_1({\rm Ad}(e^{\kappa(g)})) \le C_1C_2^2  \log \sigma_1({\rm Ad}(g))
\end{align*}
for all $g \in \GG$, where
$$
C_2 : = \max_{k \in \Ksf}\, \sigma_1( {\rm Ad}(k) ).
$$
So by Lemma~\ref{lem:estimate on sigma1 when linear} and part (3),
\begin{align*}
\norm{\kappa(e^Y)} &\le C_1C_2^2  \log \sigma_1(e^{{\rm ad}(Y)})\le C_1C_2^2\log C_0+\frac{C_1C_2^2}{2}\log P({\rm ad}(Y))
\end{align*}
for all $Y \in \mathfrak{u}$. Thus, to prove (4), it suffices to show that there is some $A,a>0$ such that 
\begin{align}\label{eqn: dfkgokdf}
P({\rm ad}(Y))\le A(1+\norm{Y})^a
\end{align}
for all $Y\in\mfu$.

Again, since $Z(\mathsf{G})$ is trivial, the map $\norm{\cdot}':\mathfrak g\to\Rb$ given by
\[\norm{Y}':=\max_{1\le i,j\le d}\abs{[{\rm ad}(Y)]_{i,j}}.\]
is a norm, and so is bilipschitz to $\norm{\cdot}$. At the same time, observe that there is a polynomial function $Q$ of one variable with positive coefficients such that 
\[P({\rm ad}(Y))\le Q(\norm{Y}')\]
for all $Y\in\mathfrak u$. Observe that there is some $B,b>0$ such that
\[Q(\norm{Y}')\le B(1+\norm{Y}')^b\]
for all $Y\in\mathfrak u$, so Equation \eqref{eqn: dfkgokdf} holds.
 So (4) follows.

Proof of (5). For $\psi \in \sum_{\alpha \in \Delta} \Zb_{\ge 0} \cdot \omega_\alpha$ let $\chi_\psi : = \psi + \sum_{\alpha \in \Delta}  \omega_\alpha$. By Proposition~\ref{prop:reduction to the linear case}, for each such $\psi$ there exist $d_\psi, N_\psi \in \Nb$ and a irreducible representation $\Phi_\psi : \GG \rightarrow \SL(d_\psi,\Rb)$ such that 
$$
e^{\chi_\psi(\kappa(g))} =  \sigma_1(\Phi_\psi(g))^{1/N_\psi }
$$
for all $g \in \GG$. By part (3), $\Phi_\psi(\mathsf{U})\subset\SL(d_\psi,\Rb)$ is unipotent with Lie algebra $\d\Phi_\psi(\mathfrak{u})$. So by Lemma~\ref{lem:estimate on sigma1 when linear} there exist $A_\psi > 1$ and a positive polynomial $P_\psi: \mathfrak{u} \rightarrow \Rb$ such that 
\begin{equation}\label{eqn:bound for chi psi}
\frac{1}{A_\psi} P_\psi(Y)^{1/(2N_\psi)} \le e^{\chi_\psi(\kappa(e^Y))}  \le A_\psi P_\psi(Y)^{1/(2N_\psi)}
\end{equation} 
for all $Y \in \mathfrak{u}$. Since $\omega_\alpha = \chi_{\omega_\alpha} - \chi_0$, (5) follows from Equation~\eqref{eqn:bound for chi psi} with 
$$
R_\alpha(Y): = \frac{P_{\omega_\alpha}(Y)^{N_0}}{P_{0}(Y)^{N_{\omega_\alpha}}},
$$
$M_\alpha := 2N_0 N_{\omega_\alpha}$ and $C_\alpha := A_{\omega_\alpha} A_0$. 
\end{proof} 

\section{Proof of Theorem~\ref{thm:properties of relatively Anosov representations}}\label{appendix:properties of pers}

In this appendix we prove Theorem~\ref{thm:properties of relatively Anosov representations}. As mentioned before, Theorem~\ref{thm:properties of relatively Anosov representations} was established in~\cite{zhu-zimmer1} in the special case when $\GG = \SL(d,\Rb)$. In the following argument we use Proposition~\ref{prop:reduction to the linear case} to reduce to this special case. 

\begin{theorem}Assume $Z(\mathsf{G})$ is trivial and $\Psf_\theta$ contains no simple factors of $\GG$. Suppose $\Gamma\subset \GG$ is a $\Psf_\theta$-Anosov subgroup relative to $\Pc$.
\begin{enumerate}
\item If $X$ is a Groves--Manning cusp space for $(\Gamma, \Pc)$ and $M :=\GG/\Ksf$ is a Riemannian symmetric space associated to $\GG$, then there exist $c > 1$, $C> 0$ such that 
$$
\frac{1}{c} \d_M(\gamma\Ksf, \Ksf)- C \le \d_X(\gamma, \id) \le c \d_M(\gamma\Ksf, \Ksf) + C
$$
for all $\gamma \in \Gamma$. 
\item If $P \in \Pc$, then $P$ is a cocompact lattice in a closed Lie group $\mathsf{H}$ with finitely many components. Moreover,
\begin{enumerate} 
\item $\mathsf{H} = \mathsf{L} \ltimes \mathsf{U}$ where $\mathsf{L}$ is compact and $\mathsf{U}$ is the unipotent radical of $\mathsf{H}$. 
\item $\mathsf{H}^0 = \mathsf{L}^0 \times \mathsf{U}$ and $\mathsf{L}^0$ is Abelian. 
\end{enumerate} 
\end{enumerate} 

\end{theorem}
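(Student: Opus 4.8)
The plan is to deduce the theorem from the case $\GG=\SL(d,\Rb)$, which is established in~\cite{zhu-zimmer1}, using the linearization provided by Proposition~\ref{prop:reduction to the linear case}. Apply that proposition to the weight $\chi:=\sum_{\alpha\in\theta}\omega_\alpha$ to obtain $d\in\Nb$, an irreducible representation $\Phi:\GG\to\SL(d,\Rb)$, and a $\Phi$-equivariant embedding $\xi:\Fc_\theta\to\Fc_{1,d-1}(\Rb^d)$. I would first record three facts about $\Phi$. First, $\Phi$ is faithful: since $\Psf_\theta$ contains no simple factor of $\GG$, each simple factor $\GG_j$ of $\GG$ has a simple root in $\theta$, so the highest weight of $\Phi$ (a positive multiple of $\chi$) is nontrivial on $\GG_j$; hence $\ker\Phi$ contains no simple factor, so it is discrete and normal in $\GG$, and therefore trivial since $Z(\GG)$ is trivial. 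Second, after choosing an inner product on $\Rb^d$ for which $\Phi(\Ksf)$ is orthogonal and $\d\Phi(\mfp)$ consists of symmetric matrices, the singular values of $\Phi(g)$ are exactly the numbers $e^{\lambda(\kappa(g))}$ with $\lambda$ running over the weights of $\Phi$; as $\Phi$ is faithful these weights span $\mfa^*$, so there is $c_0\ge 1$ with
$$
\frac{1}{c_0}\norm{\kappa(g)}\le\norm{\kappa^\prime(\Phi(g))}\le c_0\norm{\kappa(g)}\qquad(g\in\GG),
$$
where $\kappa^\prime$ is the Cartan projection of $\SL(d,\Rb)$; in particular $\Phi$ is proper, hence a closed embedding, and it is a morphism of algebraic groups with Zariski-closed image. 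Third, by Proposition~\ref{prop:reduction to the linear case}(6), $\Phi(\Gamma)$ is $\Psf_{1,d-1}$-Anosov relative to $\Pc^\prime:=\{\Phi(P):P\in\Pc\}$.

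For part (1), note that $\Phi|_\Gamma:\Gamma\to\Phi(\Gamma)$ is an isomorphism carrying $\Pc$ onto $\Pc^\prime$; it sends an adapted generating set $S$ for $(\Gamma,\Pc)$ to one for $(\Phi(\Gamma),\Pc^\prime)$ and induces a $\Gamma$-equivariant isometry between the Groves--Manning cusp spaces $X=\Cc_{GM}(\Gamma,\Pc,S)$ and $X^\prime=\Cc_{GM}(\Phi(\Gamma),\Pc^\prime,\Phi(S))$. Since cusp spaces built from different adapted generating sets are $\Gamma$-equivariantly quasi-isometric, it suffices to prove the estimate for this $X$, and there $\d_X(\gamma,\id)=\d_{X^\prime}(\Phi(\gamma),\id)$. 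The case $\GG=\SL(d,\Rb)$ of the theorem, applied to $\Phi(\Gamma)$, gives $c^\prime>1$ and $C^\prime>0$ with $\frac{1}{c^\prime}\norm{\kappa^\prime(\Phi(\gamma))}-C^\prime\le\d_{X^\prime}(\Phi(\gamma),\id)\le c^\prime\norm{\kappa^\prime(\Phi(\gamma))}+C^\prime$ for all $\gamma\in\Gamma$; combining this with the displayed inequality above and with $\d_M(\gamma\Ksf,\Ksf)=\norm{\kappa(\gamma)}$ yields part (1).

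For part (2), apply the case $\GG=\SL(d,\Rb)$ of the theorem to the peripheral subgroup $\Phi(P)$ of $\Phi(\Gamma)$: it is a cocompact lattice in a closed subgroup $\mathsf{H}^\prime\subset\SL(d,\Rb)$ with finitely many components, where $\mathsf{H}^\prime=\mathsf{L}^\prime\ltimes\mathsf{U}^\prime$ with $\mathsf{L}^\prime$ compact, $\mathsf{U}^\prime$ the unipotent radical, $(\mathsf{H}^\prime)^0=(\mathsf{L}^\prime)^0\times\mathsf{U}^\prime$, and $(\mathsf{L}^\prime)^0$ abelian. Put $\mathsf{H}:=\Phi^{-1}(\mathsf{H}^\prime)$, a closed subgroup of $\GG$; since $\Phi$ is injective it identifies $\mathsf{H}$ with the closed subgroup $\mathsf{H}^\prime\cap\Phi(\GG)$ of $\mathsf{H}^\prime$, and because the cocompact lattice $\Phi(P)$ of $\mathsf{H}^\prime$ lies in $\mathsf{H}^\prime\cap\Phi(\GG)$ it is a cocompact lattice there as well; hence $P$ is a cocompact lattice in $\mathsf{H}$. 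The structural assertions are then transferred along $\Phi$: a compact linear group has no nontrivial unipotent element, so every unipotent element of $\mathsf{H}^\prime$ lies in $\mathsf{U}^\prime$, and combined with Proposition~\ref{prop:properties of unipotent subgroups}(3) (so that unipotence is detected through $\Phi$ in both directions) this shows that $\mathsf{U}:=\Phi^{-1}(\mathsf{U}^\prime)$ --- connected because, under the above identification, it is a Zariski-closed subgroup of the unipotent group $\mathsf{U}^\prime$ --- is the unipotent radical of $\mathsf{H}$. The quotient $\mathsf{H}/\mathsf{U}$ is identified by $\Phi$ with a subgroup of the compact group $\mathsf{H}^\prime/\mathsf{U}^\prime=\mathsf{L}^\prime$, which I claim is compact; granting this, a Levi--Mostow decomposition gives $\mathsf{H}=\mathsf{L}\ltimes\mathsf{U}$ with $\mathsf{L}$ compact, $\mathsf{H}$ has finitely many components (as $\mathsf{U}\subseteq\mathsf{H}^0$ and $\mathsf{H}/\mathsf{U}$ is a compact Lie group), and since $\Phi$ maps $\mathsf{L}^0$ into $(\mathsf{L}^\prime)^0$ (a compact connected subgroup of $(\mathsf{H}^\prime)^0=(\mathsf{L}^\prime)^0\times\mathsf{U}^\prime$ projects trivially to $\mathsf{U}^\prime$), which is abelian and centralizes $\mathsf{U}^\prime$, we get $\mathsf{H}^0=\mathsf{L}^0\times\mathsf{U}$ with $\mathsf{L}^0$ abelian.

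The step I expect to be the main obstacle is the claim that $\mathsf{H}/\mathsf{U}$ is compact, equivalently that the image of $\mathsf{H}^\prime\cap\Phi(\GG)$ in $\mathsf{L}^\prime$ is closed --- a priori a dense, non-closed image (a line in a torus) would be possible. This is where the algebraic structure is essential: one uses that the group $\mathsf{H}^\prime$ produced in~\cite{zhu-zimmer1} can be taken to be the group of real points of an algebraic subgroup of $\SL(d,\Rb)$ (its unipotent radical being algebraic and its compact Levi factor being the real points of an anisotropic reductive group), so that $\mathsf{H}^\prime\cap\Phi(\GG)$ is again the real points of an algebraic group; then $\mathsf{H}^\prime\cap\Phi(\GG)\to\mathsf{L}^\prime$ is an algebraic morphism whose kernel $\mathsf{U}$ is unipotent, and since the Galois cohomology of a unipotent group over $\Rb$ vanishes, its image is the full group of real points of the image algebraic subgroup, which is anisotropic and hence compact. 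The only other point needing care is the faithfulness of $\Phi$, which is precisely where the hypothesis that $\Psf_\theta$ contains no simple factor of $\GG$ enters; this is elementary.
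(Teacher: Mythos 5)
Your overall route is the paper's: apply Proposition~\ref{prop:reduction to the linear case} with $\chi=\sum_{\alpha\in\theta}\omega_\alpha$ to get a faithful $\Phi:\GG\to\SL(d,\Rb)$, invoke the $\SL(d,\Rb)$ case from~\cite{zhu-zimmer1} for $\Phi(\Gamma)$, and pull everything back. Your part (1) is essentially correct and matches the paper's argument (the paper compares $\norm{\kappa(g)}$ with $\log\sigma_1(\Phi(g))$ using Mostow's conjugation of $\d\Phi(\mfa)$ into diagonal matrices; your ``weights span $\mfa^*$'' comparison of $\norm{\kappa(g)}$ with $\norm{\kappa'(\Phi(g))}$ is a fine variant). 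The construction $\mathsf{H}:=\Phi^{-1}(\mathsf{H}')$ in part (2) and the observation that cocompactness of $\Phi(P)$ passes to the closed intermediate subgroup $\mathsf{H}'\cap\Phi(\GG)$ are also exactly what the paper does.

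The genuine gap is in the structural transfer in part (2), precisely at the step you flag as the main obstacle. Your argument there rests on the claims that $\Phi$ has Zariski-closed image and that $\mathsf{H}'\cap\Phi(\GG)$ is the group of real points of an algebraic subgroup; both are false in general. If $\GG_\star$ denotes the Zariski closure of $\Phi(\GG)$, then $\Phi(\GG)$ is only the Hausdorff identity component $\GG_\star^0$ (already for the adjoint representation of $\PSL(2,\Rb)$ the image is $\mathsf{SO}(2,1)^0$, which is not Zariski closed), so $\mathsf{H}'\cap\Phi(\GG)$ is merely an open finite-index subgroup of a real algebraic group, and the Galois-cohomology step ($H^1(\Rb,\mathsf{U}')=0$ forcing surjectivity onto real points) does not apply to it as stated; likewise your connectedness argument for $\mathsf{U}=\Phi^{-1}(\mathsf{U}')$ leans on the same false premise. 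The missing ingredient---supplied by the paper as a separate lemma---is that $\GG_\star$ is semisimple and $\Phi(\GG)=\GG_\star^0$, combined with the fact that the $\mathsf{H}'=\mathsf{H}_\star$ of~\cite{zhu-zimmer1} is the Zariski closure of $\Phi(P)$, hence contained in $\GG_\star$. Granting this, your worry evaporates without any cohomology or abstract Levi--Mostow splitting: $\mathsf{U}_\star$ is connected, hence $\mathsf{U}_\star\subset\GG_\star^0=\Phi(\GG)$, and $\mathsf{H}_\star\cap\Phi(\GG)=\mathsf{H}_\star\cap\GG_\star^0$ is a union of finitely many components of $\mathsf{H}_\star$ containing $\mathsf{U}_\star$; writing $h=\ell u$ with $\ell\in\mathsf{L}_\star$, $u\in\mathsf{U}_\star$ shows $\mathsf{H}_\star\cap\GG_\star^0=(\mathsf{L}_\star\cap\GG_\star^0)\ltimes\mathsf{U}_\star$ with $\mathsf{L}_\star\cap\GG_\star^0$ compact (a closed subgroup of $\mathsf{L}_\star$), and the statements about $\mathsf{H}^0$, the unipotent radical, and finiteness of components then transfer through the isomorphism $\Phi:\GG\to\GG_\star^0$ exactly as you intend. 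So the conclusion you aim for is correct, but the justification you give for compactness of $\mathsf{H}/\mathsf{U}$ needs to be replaced by the identification $\Phi(\GG)=\GG_\star^0$ and the ``union of components'' observation.
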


Fix $\GG$, $\Psf_\theta$, $\Gamma$, and $\Pc$ satisfy the assumptions of Theorem~\ref{thm:properties of relatively Anosov representations}. Let $\chi : = \sum_{\alpha \in \theta} \omega_\alpha$, and let $N \in \Nb$, $\Phi : \GG \rightarrow \SL(d,\Rb)$ and $\xi : \Fc_\theta \rightarrow \Fc_{1,d-1}(\Rb^d)$ satisfy Proposition~\ref{prop:reduction to the linear case} for $\chi$. Let $\GG_\star$ denote the Zariski closure of $\Phi(\GG)$ in $\SL(d,\Rb)$.

\begin{lemma}\label{lem:properties of Gstar} $\Phi$ is injective, $\GG_\star$ is semisimple and $\Phi(\GG)=\GG_\star^0$. 
\end{lemma}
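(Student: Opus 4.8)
The plan is to argue in three stages: injectivity of $\Phi$, then connectedness and semisimplicity of $\GG_\star$, and finally the identification $\Phi(\GG)=\GG_\star^0$.

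\emph{Injectivity.} Write $\mfg=\bigoplus_{j=1}^m\mfg_j$ as the sum of its simple ideals, with corresponding simple factors $\GG_j\subset\GG$. The kernel $\ker\Phi$ is closed and normal, so $\mathrm{Lie}(\ker\Phi)$ is a subsum of the $\mfg_j$. If it contained some $\mfg_j$, then $\Phi$ would be trivial on $\GG_j$, so $\sigma_1(\Phi(g))=1$ for all $g\in\GG_j$, and part (2) of Proposition~\ref{prop:reduction to the linear case} would give $\chi(\kappa(g))=0$ for all $g\in\GG_j$. But $\kappa(\GG_j)=\mfa_j^+$, and since $\Psf_\theta$ contains no simple factor of $\GG$ the set $\theta$ meets the simple roots of $\GG_j$; hence $\chi=\sum_{\alpha\in\theta}\omega_\alpha$ restricts on $\mfa_j$ to a nonempty sum of fundamental weights, which is strictly positive on the interior of $\mfa_j^+$ — a contradiction. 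So $\ker\Phi$ is discrete, hence central (a discrete normal subgroup of a connected group is central), hence trivial because $Z(\GG)$ is trivial.

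\emph{Structure of $\GG_\star$.} Since $\Phi$ is irreducible, $\Phi(\GG)$ preserves no proper nonzero subspace of $\Rb^d$, and because the stabilizer of a subspace is Zariski closed, neither does $\GG_\star$. By Kolchin's theorem the unipotent radical of $\GG_\star^0$ has a nonzero fixed vector, whose fixed space is $\GG_\star$-invariant; so this unipotent radical is trivial and $\GG_\star^0$ is reductive. Now $\Phi(\GG)$ is a connected semisimple Lie group, hence perfect, and it lies in $\GG_\star^0$, so $\Phi(\GG)=[\Phi(\GG),\Phi(\GG)]\subset[\GG_\star^0,\GG_\star^0]$; since the derived subgroup of a connected algebraic group is Zariski closed, taking Zariski closures yields $\GG_\star\subset[\GG_\star^0,\GG_\star^0]\subset\GG_\star^0\subset\GG_\star$. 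Therefore $\GG_\star=\GG_\star^0=[\GG_\star^0,\GG_\star^0]$ is a connected reductive group equal to its own derived subgroup, i.e.\ semisimple.

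\emph{Identification.} It remains to show $\Phi(\GG)=\GG_\star^0\,(=\GG_\star)$; as both groups are connected, it suffices to match their Lie algebras. Put $\mfh:=d\Phi(\mfg)$. Then $\Phi(\GG)$ normalizes $\mfh$, so the Zariski-closed subgroup $N_{\GG_\star}(\mfh)$, containing the Zariski-dense $\Phi(\GG)$, equals $\GG_\star$; thus $\mfh$ is an ideal of the semisimple Lie algebra $\mathrm{Lie}(\GG_\star)$, hence is the sum of some of its simple ideals. Letting $\mathsf N\subset\GG_\star$ be the product of the corresponding simple factors, $\mathsf N$ is a Zariski-closed connected subgroup with $\mathrm{Lie}(\mathsf N)=\mfh$, so $\Phi(\GG)\subset\mathsf N$ (the connected Lie subgroup with a prescribed Lie algebra being unique); since $\mathsf N$ is Zariski closed and $\Phi(\GG)$ Zariski dense in $\GG_\star$, we get $\GG_\star=\mathsf N$, whence $\mfh=\mathrm{Lie}(\GG_\star)$ and $\Phi(\GG)=\GG_\star^0$. (Equivalently, one may simply quote that a semisimple Lie subalgebra of $\mathfrak{sl}(d,\Rb)$ is algebraic, which gives $\mathrm{Lie}(\GG_\star)=\mfh$ directly.) I expect the main subtlety to be in this last stage: one must carefully distinguish a semisimple $\Rb$-algebraic group from the Hausdorff identity component of its real points — which is exactly why the statement asserts $\Phi(\GG)=\GG_\star^0$ rather than $\Phi(\GG)=\GG_\star$ — and the identification of Lie algebras rests on algebraicity of the semisimple subalgebra $\mfh$. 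The injectivity step is routine once one records the strict positivity of $\chi$ on each factor's open Weyl chamber, which is where the hypothesis on $\Psf_\theta$ is used.
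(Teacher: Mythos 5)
Your proof is correct, but it takes a genuinely different route from the paper at each of the three stages, so a comparison is worthwhile. For injectivity, the paper observes that the $\Phi$-equivariant embedding $\xi:\Fc_\theta\to\Fc_{1,d-1}(\Rb^d)$ forces $\ker\Phi$ to act trivially on $\Fc_\theta$, hence $\ker\Phi\subset\Psf_\theta$, so the hypothesis that $\Psf_\theta$ contains no simple factor immediately makes $\ker\Phi$ discrete; you instead rule out a simple factor $\GG_j\subset\ker\Phi$ via the identity $\log\sigma_1(\Phi(g))=N\chi(\kappa(g))$ and the strict positivity of $\chi|_{\mfa_j}$ on the open chamber, which uses the same hypothesis (now in the form $\theta\cap\Delta_j\neq\emptyset$) through part (2) rather than the boundary map of Proposition~\ref{prop:reduction to the linear case}. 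Both are fine; the paper's is a one-liner, yours is self-contained at the level of weights. For semisimplicity, the paper simply cites \cite[Lem.\ 2.19]{BCLS} (irreducibility plus a proximal element), whereas you argue directly: Kolchin plus irreducibility kills the unipotent radical, and perfectness of $\Phi(\GG)$ together with Zariski density upgrades reductive to semisimple; this buys independence from the proximality input at the cost of a little algebraic-group bookkeeping. For the last step, the paper uses the same ``$\d\Phi(\mfg)$ is an ideal by Zariski density'' observation but then argues by contradiction with Schur's lemma to exclude an extra simple factor of $\GG_\star^0$, while you identify $\mathrm{Lie}(\GG_\star)=\d\Phi(\mfg)$ directly (via the Zariski-closed product of the relevant simple factors, or by algebraicity of semisimple subalgebras) and conclude by uniqueness of the connected Lie subgroup with a given Lie algebra; your version avoids Schur's lemma and is arguably cleaner. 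One small imprecision to fix: in your second stage, ``$\GG_\star=\GG_\star^0$'' and ``the derived subgroup of a connected algebraic group is Zariski closed'' should be read with $\GG_\star^0$ the \emph{Zariski} identity component (which is automatic, since the Zariski closure of the connected group $\Phi(\GG)$ is Zariski connected); for the Hausdorff identity component -- which is what $\GG_\star^0$ means in the lemma, per the paper's convention -- equality with $\GG_\star$ can genuinely fail (e.g.\ $\mathsf{SO}_0(2,1)$ has Zariski closure $\mathsf{SO}(2,1)$ with two Hausdorff components), so the parenthetical ``$(=\GG_\star)$'' in your third stage should be dropped; your actual stage-three argument correctly only uses the equality of Lie algebras and does deliver $\Phi(\GG)=\GG_\star^0$ in the Hausdorff sense, as you yourself note at the end.
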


\begin{proof} 
Since $\GG$ is semisimple, $\ker \Phi$ is either discrete or contains a simple factor of $\GG$. Since $\xi : \Fc_\theta \rightarrow \Fc_{1,d-1}(\Rb^d)$ is a $\Phi$-equivariant embedding, $\ker \Phi$ must act trivially on $\Fc_\theta$. So $\ker \Phi \subset \Psf_\theta$. By assumption $\Psf_\theta$ contains no simple factors of $\GG$, so $\ker \Phi$ is discrete. Since $\GG$ is connected, for every $g\in \GG$, there is a continuous path in $\GG$ connecting $\id$ and $g$. Since $\ker \Phi$ is normal, this implies that for any $h\in\ker \Phi$, there is a continuous path in $\ker \Phi$ between $h$ and $ghg^{-1}$. The discreteness of $\ker \Phi$ then implies that $ghg^{-1}=h$. Since both $g$ and $h$ are arbitrary, we see that $\ker \Phi$ is contained in the center of $\GG$, and so $\ker \Phi$ is trivial. 

By construction $\Phi(\GG)\subset\SL(d,\Rb)$ is irreducible and contains a proximal element and hence $\GG_\star$ is a semisimple Lie group by ~\cite[Lem.\ 2.19]{BCLS}. 

Since $\d\Phi(\mathfrak g)$ is the Lie algebra of $\Phi(\GG)$, 
$$
{\rm Ad}(h) \d\Phi(\mathfrak{g}) = \d\Phi(\mathfrak{g}) 
$$
for all $h \in \Phi(\GG)$. So 
$$
{\rm Ad}(h) \d\Phi(\mathfrak{g}) = \d\Phi(\mathfrak{g}) 
$$
for all $h \in \GG_\star$. Since $\GG$ is connected, $\Phi(\GG)$ is a connected normal subgroup of $\GG_\star^0$, and thus is an almost direct product of simple factors of $\GG_\star^0$. 

Suppose for contradiction that there is a simple factor $\mathsf H\subset \GG^0_\star$ that does not lie in $\Phi(\GG)$. Since $\Phi$ is irreducible and $\mathsf H$ commutes with $\Phi(\GG)$, we may apply Schur's lemma to deduce that $\mathsf H\cong\Rb$, which is impossible since $\GG_\star^0$ is semisimple. Thus, $\Phi(\GG)=\GG_\star^0$. 
\end{proof}

Recall that $\d_M$ satisfies $\d_M(g\Ksf, \Ksf) = \norm{\kappa(g)}$ for all $g \in \GG$. So to prove part (1) it suffices to prove the following. 

\begin{lemma} If $X$ is a Groves--Manning cusp space for $(\Gamma, \Pc)$, then there exist $c > 1$, $C> 0$ such that 
$$
\frac{1}{c}\norm{\kappa(\gamma)} - C \le \d_X(\gamma, \id) \le c \norm{\kappa(\gamma)} + C
$$
for all $\gamma \in \Gamma$. 

\end{lemma}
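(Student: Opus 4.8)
The plan is to reduce this to the case $\GG=\SL(d,\Rb)$, which is Zhu--Zimmer's theorem \cite{zhu-zimmer1}, by pushing everything forward along the representation $\Phi\colon\GG\to\SL(d,\Rb)$ fixed above (the one supplied by Proposition~\ref{prop:reduction to the linear case} for $\chi=\sum_{\alpha\in\theta}\omega_\alpha$). Write $\Gamma_\star:=\Phi(\Gamma)$ and $\Pc_\star:=\{\Phi(P):P\in\Pc\}$. By Proposition~\ref{prop:reduction to the linear case}(6), $\Gamma_\star\subset\SL(d,\Rb)$ is $\Psf_{1,d-1}$-Anosov relative to $\Pc_\star$, and by Lemma~\ref{lem:properties of Gstar} the homomorphism $\Phi$ is injective, so $\Phi|_\Gamma$ is an isomorphism of the relatively hyperbolic pair $(\Gamma,\Pc)$ onto $(\Gamma_\star,\Pc_\star)$. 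The proof will then assemble three facts: (i) $\Phi$ identifies a Groves--Manning cusp space of $(\Gamma,\Pc)$ with one of $(\Gamma_\star,\Pc_\star)$, so that cusp-space distances are preserved; (ii) the case $\GG=\SL(d,\Rb)$ of part (1), applied to $\Gamma_\star$; and (iii) a coarse comparison between $\norm{\kappa_{\SL(d,\Rb)}(\Phi(\gamma))}$ and $\norm{\kappa(\gamma)}$.

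For (i): fix an adapted finite symmetric generating set $S$ of $(\Gamma,\Pc)$. Since $\Phi$ is injective, $\Phi(S)$ is an adapted finite symmetric generating set of $(\Gamma_\star,\Pc_\star)$ (indeed $\Phi(S)\cap\Phi(P)=\Phi(S\cap P)$ generates $\Phi(P)$ for each $P\in\Pc$), and $\Phi$ induces a graph isomorphism $\Cc_{GM}(\Gamma,\Pc,S)\to\Cc_{GM}(\Gamma_\star,\Pc_\star,\Phi(S))$ intertwining the left group actions and sending the vertex $\id$ to $\id$. Hence $\d_X(\gamma,\id)=\d_{X_\star}(\Phi(\gamma),\id)$ for $X:=\Cc_{GM}(\Gamma,\Pc,S)$ and $X_\star:=\Cc_{GM}(\Gamma_\star,\Pc_\star,\Phi(S))$. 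Since $\gamma\mapsto\d_X(\gamma,\id)$ is, up to multiplicative and additive constants, independent of the chosen adapted generating set (any two Groves--Manning cusp spaces for a relatively hyperbolic pair are $\Gamma$-equivariantly quasi-isometric), it suffices to prove the lemma for this $X$. For (ii): \cite{zhu-zimmer1} gives $c_1>1$, $C_1>0$ with
\[
\frac{1}{c_1}\norm{\kappa_{\SL(d,\Rb)}(\Phi(\gamma))}-C_1\le\d_{X_\star}(\Phi(\gamma),\id)\le c_1\norm{\kappa_{\SL(d,\Rb)}(\Phi(\gamma))}+C_1
\]
for all $\gamma\in\Gamma$, using $\d_{\SL(d,\Rb)/\SO}(g\SO,\SO)=\norm{\kappa_{\SL(d,\Rb)}(g)}$.

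It remains to establish (iii): there are $c_2\ge1$, $C_2\ge0$ with $\tfrac{1}{c_2}\norm{\kappa(g)}-C_2\le\norm{\kappa_{\SL(d,\Rb)}(\Phi(g))}\le c_2\norm{\kappa(g)}$ for all $g\in\GG$. The upper bound follows from Proposition~\ref{prop:reduction to the linear case}(2): that proposition gives $\log\sigma_1(\Phi(g))=N\chi(\kappa(g))$, hence also $\log\sigma_d(\Phi(g))=-N\chi(\kappa(g^{-1}))=-N(\iota^*\chi)(\kappa(g))$, and since $\log\sigma_d(\Phi(g))\le\log\sigma_i(\Phi(g))\le\log\sigma_1(\Phi(g))$ for every $i$, all the $\log\sigma_i(\Phi(g))$ are controlled by a fixed multiple of $\norm{\kappa(g)}$. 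The lower bound is the standard comparison of Cartan projections under a faithful homomorphism: conjugating $\Phi$ by an element of $\SL(d,\Rb)$ (which changes $\norm{\kappa_{\SL(d,\Rb)}(\Phi(\cdot))}$ by at most a bounded amount, by Lemma~\ref{lem:kappa multiplication estimate}) one may assume $\Phi$ is compatible with the Cartan decompositions, so $\Phi(\Ksf)\subset\SO$ and $\d\Phi(\mfa)\subset\mfa_{\SL(d,\Rb)}$; then a $\mathsf{KAK}$-decomposition $g=me^{\kappa(g)}\ell$ together with Lemma~\ref{lem:kappa multiplication estimate} shows $\norm{\kappa_{\SL(d,\Rb)}(\Phi(g))}$ agrees with $\norm{\kappa_{\SL(d,\Rb)}(e^{\d\Phi(\kappa(g))})}$ up to $2\max_{k\in\Ksf}\norm{\kappa_{\SL(d,\Rb)}(\Phi(k))}$, and since $\d\Phi|_\mfa$ is an injective linear map and the Weyl group acts on $\mfa_{\SL(d,\Rb)}$ by isometries, $\norm{\kappa_{\SL(d,\Rb)}(e^{\d\Phi(\kappa(g))})}=\norm{\d\Phi(\kappa(g))}$ is bounded below by $c_2^{-1}\norm{\kappa(g)}$. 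Combining (i)--(iii) yields the lemma. The only non-formal input is Zhu--Zimmer's theorem in (ii); everything else is bookkeeping, the one point deserving care being that $\d\Phi(\kappa(g))$ need not be Weyl-dominant in $\mfa_{\SL(d,\Rb)}$, so $\kappa_{\SL(d,\Rb)}(e^{\d\Phi(\kappa(g))})$ is only the dominant representative of $\d\Phi(\kappa(g))$ in its Weyl orbit — which is harmless since the Weyl group acts isometrically.
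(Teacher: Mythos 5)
Your proposal is correct and takes essentially the same route as the paper's proof: both reduce to $\SL(d,\Rb)$ via the representation $\Phi$ from Proposition~\ref{prop:reduction to the linear case}, invoke Zhu--Zimmer's theorem to compare $\d_X(\gamma,\id)$ with the Cartan projection of $\Phi(\gamma)$ in $\SL(d,\Rb)$, and then compare that quantity with $\norm{\kappa(\gamma)}$ using the $\mathsf{KAK}$-decomposition, Mostow's theorem (to put $\d\Phi(\mfa)$ in diagonal position) and the injectivity of $\d\Phi$. The differences are only bookkeeping: the paper routes the comparison through $\log\sigma_1(\Phi(\gamma))$ together with the norm equivalence of $\sigma_1\circ\d\Phi$ on $\mathfrak{g}$, whereas you bound the norm of the full Cartan projection of $\Phi(\gamma)$ directly, using the identity $\log\sigma_1(\Phi(g))=N\chi(\kappa(g))$ (applied to $g$ and $g^{-1}$) for the upper bound and a Cartan-compatible conjugation for the lower bound, and you spell out the identification of Groves--Manning cusp spaces that the paper leaves implicit.
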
 

\begin{proof} Let $N : = \SL(d,\Rb) / \mathsf{SO}(d)$ be the symmetric space associated to $ \SL(d,\Rb)$ and let $x_0 : = \mathsf{SO}(d) \in N$. Since $\Phi(\Gamma)$ is $\Psf_{1,d-1}$-Anosov relative to $\Pc$ and $\ker \Phi$ is trivial, by~\cite[Thm.\ 1.7]{zhu-zimmer1}, there exist $c_1 > 1$, $C_1> 0$ such that 
$$
\frac{1}{c_1}\d_{N}(\Phi(\gamma) x_0, x_0) - C_1 \le \d_X(\gamma, \id) \le c_1 \d_{N}(\Phi(\gamma) x_0, x_0) + C_1
$$
for all $\gamma \in \Gamma$. Then there exist $c_2> 1$ such that 
\begin{equation}\label{eqn:distance in Mprime and sigma1}
\frac{1}{c_2}\log \sigma_1(\Phi(\gamma)) - C_1 \le \d_X(\gamma, \id) \le c_2 \log \sigma_1(\Phi(\gamma))  + C_1
\end{equation} 
for all $\gamma \in \Gamma$. 

Lemma~\ref{lem:properties of Gstar} implies that $\d\Phi(\mfa)$ is a Cartan subspace of the Lie algebra of $\GG_\star$. By~\cite[Thm.\ 7]{Mostow55},  $\d\Phi(\mfa)$ is conjugate to a subspace of the symmetric matrices in $\mathfrak{sl}(d,\Rb)$, which in turn implies that $\d\Phi(\mfa)$ is conjugate to a subspace of the diagonal matrices. So there exists $c_3 > 1$ such that 
$$
\frac{1}{c_3} e^{\sigma_1(\d\Phi(X))} \le \sigma_1(e^{\d\Phi(X)}) \le c_3e^{\sigma_1(\d\Phi(X))}
$$
for all $X \in \mathfrak{a}$. Since $\Phi$ is injective, so is $\d\Phi$. Hence there exists $c_4 > 1$ such that 
$$
\frac{1}{c_4} \sigma_1(\d\Phi(X)) \le \norm{X} \le c_4 \sigma_1(\d\Phi(X))
$$
for all $X \in \mathfrak{g}$. Finally, since $\Ksf$ is compact, 
$$
D : = \max_{k\in\Ksf} \sigma_1(\Phi(k))
$$
 is finite. 
 
 Now if $g \in \GG$, then by the $\mathsf{KAK}$-decomposition,
 \begin{align*}
 \log \sigma_1(\Phi(g)) &\le 2 \log D + \log \sigma_1\left(e^{\d\Phi(\kappa(g))}\right) \\
 &\le 2 \log D +\log c_3+ \sigma_1( \d\Phi(\kappa(g)))\\
 & \le 2 \log D +\log c_3+ c_4 \norm{\kappa(g)}
\end{align*}
and likewise 
 $$
 \log \sigma_1(\Phi(g)) \ge -2 \log D -\log c_3 + \frac{1}{c_4} \norm{\kappa(g)}. 
 $$
  Combining these estimates with Equation~\eqref{eqn:distance in Mprime and sigma1} completes the proof. 
\end{proof}

\begin{lemma}\label{lem:properties of Zariski closure in appendix}
 If $P \in \Pc$, then $P$ is a cocompact lattice in a closed Lie subgroup $\mathsf{H} \subset \GG$ with finitely many components. Moreover,
\begin{enumerate} 
\item $\mathsf{H} = \mathsf{L} \ltimes \mathsf{U}$ where $\mathsf{L}$ is compact and $\mathsf{U}$ is the unipotent radical of $\mathsf{H}$. 
\item $\mathsf{H}^0 = \mathsf{L}^0 \times \mathsf{U}$ and $\mathsf{L}^0$ is Abelian. 
\end{enumerate}
\end{lemma}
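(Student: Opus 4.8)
The plan is to deduce the statement from the $\SL(d,\Rb)$ case of Theorem~\ref{thm:properties of relatively Anosov representations}, which was established in \cite{zhu-zimmer1}, using the faithful representation $\Phi\colon\GG\to\SL(d,\Rb)$ and the $\Phi$-equivariant embedding $\xi\colon\Fc_\theta\to\Fc_{1,d-1}(\Rb^d)$ provided by Proposition~\ref{prop:reduction to the linear case} for $\chi:=\sum_{\alpha\in\theta}\omega_\alpha$, exactly as in the earlier arguments of this appendix. Recall from Lemma~\ref{lem:properties of Gstar} that $\Phi$ is injective, that $\GG_\star$ is semisimple, and that $\Phi(\GG)=\GG_\star^0$. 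Since $\GG_\star$, being the Zariski closure of $\Phi(\GG)$, is Zariski closed and hence closed in $\SL(d,\Rb)$, its Euclidean identity component $\GG_\star^0$ is a closed subgroup; and a continuous bijective homomorphism between the second countable Lie groups $\GG$ and $\GG_\star^0$ is automatically an isomorphism of Lie groups. Thus $\Phi$ is an isomorphism of Lie groups $\GG\to\Phi(\GG)=\GG_\star^0$, so $\Phi^{-1}$ is defined and continuous on $\Phi(\GG)$.

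Fix $P\in\Pc$. By Proposition~\ref{prop:reduction to the linear case}(6), $\Phi(\Gamma)$ is $\Psf_{1,d-1}$-Anosov relative to $\{\Phi(P):P\in\Pc\}$, so applying the $\SL(d,\Rb)$ case of Theorem~\ref{thm:properties of relatively Anosov representations} to $\Phi(P)$ produces a closed subgroup $\mathsf{H}_\star\subset\SL(d,\Rb)$ with finitely many components in which $\Phi(P)$ is a cocompact lattice, such that $\mathsf{H}_\star=\mathsf{L}_\star\ltimes\mathsf{U}_\star$ with $\mathsf{L}_\star$ compact and $\mathsf{U}_\star$ the unipotent radical of $\mathsf{H}_\star$, $\mathsf{H}_\star^0=\mathsf{L}_\star^0\times\mathsf{U}_\star$, and $\mathsf{L}_\star^0$ Abelian. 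The heart of the argument is to show that $\mathsf{U}_\star\subset\Phi(\GG)$; once this is known, everything descends to $\GG$ by intersecting $\mathsf{H}_\star$ with $\Phi(\GG)$ and applying $\Phi^{-1}$.

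To prove $\mathsf{U}_\star\subset\Phi(\GG)$, I will use that since $\Phi(P)$ is a cocompact lattice in $\mathsf{H}_\star=\mathsf{L}_\star\ltimes\mathsf{U}_\star$ with $\mathsf{L}_\star$ compact, Auslander's theorem (cf.\ \cite[Thm.\ 11.1]{KL}) gives that $\Phi(P)\cap\mathsf{U}_\star$ is a cocompact lattice in $\mathsf{U}_\star$; and a lattice in a simply connected nilpotent Lie group is Zariski dense, so the Zariski closure of $\Phi(P)\cap\mathsf{U}_\star$ is all of $\mathsf{U}_\star$. Since $\Phi(P)\subset\Phi(\GG)\subset\GG_\star$ and $\GG_\star$ is Zariski closed, it follows that $\mathsf{U}_\star\subset\GG_\star$, and since $\mathsf{U}_\star$ is connected, $\mathsf{U}_\star\subset\GG_\star^0=\Phi(\GG)$.

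Granting $\mathsf{U}_\star\subset\Phi(\GG)$, I will set $\mathsf{H}:=\Phi^{-1}(\mathsf{H}_\star\cap\Phi(\GG))$, $\mathsf{U}:=\Phi^{-1}(\mathsf{U}_\star)$, and $\mathsf{L}:=\Phi^{-1}(\mathsf{L}_\star\cap\Phi(\GG))$, and transport the structure of $\mathsf{H}_\star$ through $\Phi^{-1}$. Since $\mathsf{H}_\star\cap\Phi(\GG)$ is closed in $\mathsf{H}_\star$ and contains $\Phi(P)$, the space $\Phi(P)\backslash(\mathsf{H}_\star\cap\Phi(\GG))$ embeds as a closed subset of the compact space $\Phi(P)\backslash\mathsf{H}_\star$, so $\Phi(P)$ is a cocompact lattice in $\mathsf{H}_\star\cap\Phi(\GG)$, and hence $P$ is a cocompact lattice in $\mathsf{H}$. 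Because $\mathsf{U}_\star\subset\mathsf{H}_\star\cap\Phi(\GG)$ and $\mathsf{U}_\star$ is connected, $(\mathsf{H}_\star\cap\Phi(\GG))/\mathsf{U}_\star$ is a closed subgroup of the compact group $\mathsf{H}_\star/\mathsf{U}_\star\cong\mathsf{L}_\star$, hence is compact with finitely many components, so $\mathsf{H}$ has finitely many components. Since $\mathsf{U}_\star\subset\Phi(\GG)$, the decompositions $\mathsf{H}_\star=\mathsf{L}_\star\ltimes\mathsf{U}_\star$ and $\mathsf{H}_\star^0=\mathsf{L}_\star^0\times\mathsf{U}_\star$ restrict to $\mathsf{H}_\star\cap\Phi(\GG)=(\mathsf{L}_\star\cap\Phi(\GG))\ltimes\mathsf{U}_\star$ and $\mathsf{H}_\star^0\cap\Phi(\GG)=(\mathsf{L}_\star^0\cap\Phi(\GG))\times\mathsf{U}_\star$; applying $\Phi^{-1}$ and passing to identity components gives $\mathsf{H}=\mathsf{L}\ltimes\mathsf{U}$ with $\mathsf{L}$ compact and $\mathsf{H}^0=\mathsf{L}^0\times\mathsf{U}$ with $\mathsf{L}^0$ a subtorus of the torus $\Phi^{-1}(\mathsf{L}_\star^0)$, hence Abelian. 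Finally, $\mathsf{U}$ is connected, normal in $\mathsf{H}$, and unipotent in $\GG$ in the sense of Section~\ref{sec: ss Lie group background} --- the unipotency because $\Phi$ is injective and respects Jordan decompositions, so nilpotency of $\d\Phi(X)$ forces the semisimple part of $X\in\mathfrak g$ to vanish, whence the Lie algebra $\d\Phi^{-1}(\mathfrak u_\star)$ of $\mathsf{U}$ consists of nilpotent elements --- and $\mathsf{H}^0/\mathsf{U}\cong\mathsf{L}^0$ is a torus, which contains no nontrivial unipotent subgroup, so $\mathsf{U}$ is the unipotent radical of $\mathsf{H}$. The main obstacle is the containment $\mathsf{U}_\star\subset\Phi(\GG)$: this is where the Zariski density of lattices in nilpotent groups, together with the fact that $\GG_\star$ is the Zariski closure of $\Phi(\GG)$, are both essential. (If the version of \cite{zhu-zimmer1} one cites already produces $\mathsf{H}_\star$ as the real points of the Zariski closure of $\Phi(P)$ --- as the Remark following Theorem~\ref{thm:entropy gap for rel Anosov} suggests --- then $\mathsf{H}_\star\subset\GG_\star$ is immediate and this step shortens considerably.)
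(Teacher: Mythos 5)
Your overall strategy (reduce to the $\SL(d,\Rb)$ case via $\Phi$, then pull the structure of $\mathsf{H}_\star$ back through $\Phi^{-1}$ after showing $\mathsf{U}_\star\subset\Phi(\GG)$) is the same as the paper's, and the final transport step is fine. But the step you call the heart of the argument has a genuine gap: Auslander's theorem does \emph{not} say that $\Phi(P)\cap\mathsf{U}_\star$ is a cocompact lattice in $\mathsf{U}_\star$, and that claim is simply false in the generality you are working in. A cocompact lattice in $\mathsf{L}_\star^0\times\mathsf{U}_\star$ can meet $\mathsf{U}_\star$ trivially: take $\mathsf{L}_\star^0\times\mathsf{U}_\star\cong \mathsf{SO}(2)\times\Rb$ and the cyclic group generated by $(R_\alpha,1)$ with $\alpha$ irrational; this is exactly what happens for a peripheral subgroup generated by a screw parabolic (e.g.\ a rank-one cusp of a geometrically finite subgroup of $\mathsf{SO}_0(4,1)$ whose holonomy is an elliptic-times-unipotent element with irrational rotation angle), where the Zariski closure of $\Phi(P)$ has unipotent radical $\mathsf{U}_\star\cong\Rb$ but $\Phi(P)\cap\mathsf{U}_\star=\{\id\}$. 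What Auslander-type results give (and what the paper uses elsewhere, e.g.\ in Section 7) is that the \emph{projection} of $\Phi(P)\cap\mathsf{H}_\star^0$ to $\mathsf{U}_\star$ along the compact factor is a cocompact lattice; the intersection itself need not be, so your "Zariski density of the lattice $\Phi(P)\cap\mathsf{U}_\star$ in $\mathsf{U}_\star$" argument collapses.

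The containment $\mathsf{U}_\star\subset\Phi(\GG)$ is nevertheless true, but for the reason you relegate to your final parenthetical: the results of Zhu--Zimmer cited in the paper ([zhu-zimmer1, Prop.\ 4.2 and Thm.\ 8.1]) produce $\mathsf{H}_\star$ precisely as the Zariski closure of $\Phi(P)$ in $\SL(d,\Rb)$. Since $\Phi(P)\subset\GG_\star$ and $\GG_\star$ is Zariski closed, $\mathsf{H}_\star\subset\GG_\star$ is automatic, and then $\mathsf{U}_\star$, being connected, lies in $\GG_\star^0=\Phi(\GG)$ by Lemma~\ref{lem:properties of Gstar}. This is exactly how the paper proceeds (it defines $\mathsf{H}_\star$ as that Zariski closure and then takes $\mathsf{U}=\Phi^{-1}(\mathsf{U}_\star)$, $\mathsf{L}=\Phi^{-1}(\mathsf{L}_\star\cap\GG_\star^0)$, $\mathsf{H}=\Phi^{-1}(\mathsf{H}_\star\cap\GG_\star^0)$). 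So to repair your write-up, replace the Auslander/Zariski-density paragraph by the observation that $\mathsf{H}_\star$ is the Zariski closure of $\Phi(P)$; without that identification the containment is not just unproven but unprovable by your route, since the intersection with $\mathsf{U}_\star$ carries no information in the screw-parabolic case. The remainder of your argument (cocompactness of $\Phi(P)$ in $\mathsf{H}_\star\cap\Phi(\GG)$, finiteness of components, restriction of the decompositions, and the verification that $\mathsf{U}$ is the unipotent radical of $\mathsf{H}$) is correct and in fact supplies details the paper leaves implicit.
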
 

\begin{proof} 

Let $\mathsf{H}_\star$ denote the Zariski closure of $\Phi(P)$ in $\SL(d,\Rb)$. Since $\Phi(\Gamma)$ is $\Psf_{1,d-1}$-Anosov relative to $\Pc$, ~\cite[Prop.\ 4.2 and Thm.\ 8.1]{zhu-zimmer1} imply that  $\Phi(P)$ is a cocompact lattice in $\mathsf{H}_\star$. Moreover 
\begin{enumerate} 
\item $\mathsf{H}_\star = \mathsf{L}_\star \ltimes \mathsf{U}_\star$ where $\mathsf{L}_\star$ is compact and $\mathsf{U}_\star$ is the unipotent radical of $\mathsf{H}_\star$. 
\item $\mathsf{H}^0_\star = \mathsf{L}^0_\star \times \mathsf{U}_\star$ and $\mathsf{L}^0_\star$ is Abelian. 
\end{enumerate}
Then $\mathsf{U} := \Phi^{-1}(\mathsf{U}_\star)$, $\mathsf{L}: =\Phi^{-1}(\mathsf{L}_\star \cap \mathsf{G}_\star^0)$ and $\mathsf{H} := \Phi^{-1}(\mathsf{H}_\star \cap \mathsf{G}_\star^0)$ satisfy the lemma.  
\end{proof}

\end{document}